\numberwithin{equation}{section}
\DeclareMathOperator*{\osc}{osc}
\theoremstyle{plain}
\newtheorem{theorem}{Theorem}[section]
\newtheorem{lemma}[theorem]{Lemma}
\newtheorem{corollary}[theorem]{Corollary}
\theoremstyle{definition}
\newtheorem{definition}[theorem]{Definition}
\newtheorem{remark}[theorem]{Remark}
\newtheorem*{notation}{Notation}
\title[Fully nonlinear equations]{Boundary H\"older gradient estimates for fully nonlinear degenerate or singular parabolic equations}
\author{Hyungsung Yun}
\address{School of Mathematics, Korea Institute for Advanced Study, Seoul 02455, Republic of Korea}
\email{hyungsung@kias.re.kr}
\subjclass[2020]{Primary 35B65; Secondary 35D40, 35K65, 35K67}
\keywords{Boundary regularity, Fully nonlinear parabolic equations, Degenerate equations, Singular equations}
\thanks{Hyungsung Yun has been supported by the KIAS Individual Grant (No. MG097801) at Korea Institute for Advanced Study.}
\begin{document}
\begin{abstract}
We study boundary regularity of viscosity solutions to fully nonlinear degenerate or singular parabolic equations. The gradient-dependent degeneracy or singularity, along with the time derivative, introduces significant challenges beyond the elliptic case. By combining compactness methods, barrier constructions, regularization techniques, and the boundary regularity of small perturbation solutions, we establish boundary H\"older gradient estimates that unify and extend previous results.	
\end{abstract}

\maketitle
%
%

\section{Introduction}
We study the boundary regularity of viscosity solutions to the Dirichlet problem for the fully nonlinear degenerate ($\gamma>0$) or singular ($-1<\gamma<0$) parabolic equation
\begin{equation} \label{eq:main}
\left\{\begin{aligned}
	u_t &= |Du|^\gamma F(D^2 u) + f  && \text{in } \Omega \\
	u&=g && \text{on } \partial_p \Omega,
\end{aligned}\right.
\end{equation}
where $\Omega \subset \mathbb{R}^{n+1}$ is a domain with $C^{1,\alpha}$ parabolic boundary, $f \in C(\Omega)\cap L^\infty(\Omega)$, and $g \in C^{1,\alpha}(\partial_p \Omega)$. The operator $F$ is assumed to be uniformly elliptic and continuous on the space of symmetric matrices.

The main objective of this paper is to establish the boundary $C^{1,\alpha}$-regularity of viscosity solutions to \eqref{eq:main} under the assumption that $F$ is convex (or concave). While the analysis of this equation presents various difficulties, two central challenges arise from the presence of $|Du|^\gamma$ and $u_t$. The degeneracy or singularity induced by $|Du|^\gamma$ prevents the direct application of classical techniques for uniformly parabolic equations to derive $C^{1,\alpha}$-regularity. Moreover, the time derivative $u_t$ obstructs the use of certain key tools developed for degenerate elliptic equations, such as the cutting lemma introduced by Imbert–Silvestre \cite{IS13}, thereby requiring fundamentally different approaches in the parabolic setting. To overcome this difficulty, we develop a strategy that combines barrier constructions, comparison principles, compactness methods, regularized equations with uniform $C^{1,\alpha}$-estimates, and the boundary regularity of small perturbation solutions to derive the desired boundary estimates.
\subsection{Main results}
The boundary regularity of solutions to \eqref{eq:main} can be established by first analyzing a model problem, specifically, the Dirichlet problem with zero boundary data on a flat boundary. This strategy is inspired by the approach developed by Lian--Zhang \cite{LZ20, LZ22}.
\begin{theorem}  [Boundary $C_\gamma^{1,\alpha}$-estimates on a flat boundary] \label{thm:c1a_flat}
Let $\gamma >-1$. Suppose that $F$ satisfies \textnormal{\ref{F1}}, \textnormal{\ref{F2}}, and $u$ is a viscosity solution to 
\begin{equation} \label{model}  
\left\{\begin{aligned}
	u_t &= |Du + p|^\gamma F(D^2 u)  && \text{in } Q_1^+ \coloneqq Q_1 \cap \{x_n>0\} \\
	u&=0 && \text{on } S_1 \coloneqq Q_1 \cap \{x_n=0\}
\end{aligned}\right.
\end{equation}
with $|p| \leq 1$ and $p_n=0$. Then $u \in C_\gamma^{1,\bar{\alpha}}(0,0)$ for some $\bar{\alpha}\in(0,1)$ with $\bar\alpha<\frac{1}{1+\gamma}$, that is, there exists a constant $a\in \mathbb{R}$ such that
\begin{equation*}
	|u(x,t)-ax_n| \leq C_\star(|x|^{1+\bar{\alpha}}+|t|^{\frac{1+\bar{\alpha}}{2-\bar{\alpha} \gamma}}) \quad \text{for all }(x,t) \in Q_{1/2}^+
\end{equation*}
and $|a| \leq C_\star$, where $C_\star>0$ is a constant depending only on $n$, $\lambda$, $\Lambda$, $\gamma$, and $\|u\|_{L^\infty(Q_1^+)}$.
\end{theorem}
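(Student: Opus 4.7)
The plan is to prove the estimate by an iterative improvement of flatness, carried out at the intrinsic parabolic scaling suggested by the target exponents and combined with a dichotomy that separates a uniformly parabolic regime from a genuinely degenerate or singular one. Set
\[
\widetilde Q_r := B_r(0) \times (-r^{2 - \bar\alpha \gamma},\, 0], \qquad \widetilde Q_r^+ := \widetilde Q_r \cap \{x_n > 0\}.
\]
The core step is a one-step improvement of flatness: for a small $\rho \in (0, 1/2)$ to be fixed, produce $a \in \mathbb{R}$ with $|a| \le C$ such that $\|u - a x_n\|_{L^\infty(\widetilde Q_\rho^+)} \le \rho^{1+\bar\alpha}$ whenever $\|u\|_{L^\infty(\widetilde Q_1^+)} \le 1$ and $u$ solves \eqref{model} with $|p|\le 1$, $p_n = 0$. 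Iterating this statement at the geometric scales $\rho^k$ after intrinsic rescaling yields a Cauchy sequence of slopes $\{a_k\}$ whose limit $a \in \mathbb{R}$ satisfies $|a|\le C_\star$ and produces the desired pointwise decay at the origin.

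To prove the one-step improvement I would use a dichotomy based on the size of the effective drift. In the \emph{non-degenerate regime} $|p| \ge \mu_0$, where $\mu_0 > 0$ is a small threshold depending only on the structural constants, the coefficient $|Du + p|^\gamma$ is bounded above and below on any region where $|Du|$ is controlled; such control follows from interior Lipschitz estimates for $|Du|^\gamma$-type operators coupled with a half-space barrier that forces linear growth off the boundary. Dividing the equation by this coefficient converts it to a uniformly parabolic equation with a convex operator and zero Dirichlet data on the flat boundary $S_1$, to which classical boundary $C^{1,\bar\alpha}$-regularity applies and delivers the required affine approximation. In the \emph{degenerate or singular regime} $|p| < \mu_0$, I would argue by contradiction and compactness: assuming the improvement fails, extract a sequence $(u_k, p_k, F_k, \gamma_k)$ with $\gamma_k \to \gamma^\star \in (-1,\infty)$, $p_k \to p_\star$, $|p_\star| \le \mu_0$, each satisfying the hypotheses but violating the conclusion at scale $\rho$. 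Interior $C^\alpha$-estimates, together with comparison against barriers of the form $C x_n^\beta$ built from the $\gamma$-weighted Pucci structure, provide uniform $C^\alpha$ bounds up to $S_{1/2}$, so a subsequence converges locally uniformly to a limit $u_\infty$ solving a limit equation with zero boundary data. If $p_\star \neq 0$ the limit is a viscosity solution of a uniformly parabolic equation and the required improvement is classical; if $p_\star = 0$ the limit solves the pure model equation, whose flat-boundary $C^{1,\bar\alpha}$-regularity must be established without circularity.

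The $p_\star = 0$ case is where the remaining ingredients enter. I would approximate the model equation by the uniformly parabolic family
\[
u_t = (\varepsilon + |Du|^2)^{\gamma/2} F(D^2 u),
\]
for which boundary $C^{1,\bar\alpha}$ estimates are classical, and then invoke the boundary regularity theorem for small perturbation solutions to obtain bounds uniform in $\varepsilon$ as $\varepsilon \to 0$. Passing to the limit in $\varepsilon$ supplies the required affine approximation for $u_\infty$, contradicting the failure hypothesis and closing the one-step improvement.

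The principal obstacle I expect is precisely the $p_\star = 0$ branch of the compactness step: producing $C^{1,\bar\alpha}$-regularity at the flat boundary for the limit equation uniformly in the regularization parameter, without implicitly invoking what is to be proved. The combination of regularization with the small perturbation boundary estimate is the decisive tool there. A secondary technical point is the bookkeeping of the drift under the intrinsic rescaling: each rescaling at scale $\rho^k$ transforms $p$ into a new drift that absorbs the normal correction $a_k e_n$ from the previous step, so the one-step improvement must be stable uniformly in the drift parameter — this is why the statement is formulated with $p$ as a free parameter with $|p|\le 1$ rather than only for the pure model $p=0$.
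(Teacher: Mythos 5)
There is a genuine gap, concentrated exactly where you flagged your main concern. Your plan correctly splits into a non-degenerate branch (drift bounded below, divide through and invoke uniformly parabolic boundary theory), a degenerate branch with compactness, and within the latter a residual case $p_\star=0$. You then propose to resolve the $p_\star=0$ case by approximating with $u_t = (\varepsilon + |Du|^2)^{\gamma/2}F(D^2u)$ and "invoking the boundary regularity theorem for small perturbation solutions to obtain bounds uniform in $\varepsilon$." This does not close the argument. The small perturbation theorem in this setting (the paper's \Cref{lem:small_bdry}) requires the drift to be bounded away from zero, $1/2 \le |p| \le 2$; its proof is a compactness argument whose limit equation is uniformly parabolic precisely because $|p|$ stays nondegenerate. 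With $p = 0$ and $\varepsilon \to 0$ there is no uniformly parabolic limit, the equation genuinely degenerates or becomes singular at the zero set of $Du$, and neither "classical boundary $C^{1,\bar\alpha}$ estimates" (whose constants blow up as $\varepsilon\to0$) nor the small perturbation result applies. So the step you identify as decisive does not in fact give $\varepsilon$-uniform bounds; the circularity you worried about is real and unresolved by that combination.

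What actually closes the $p_\star = 0$ case in the paper is a different mechanism: after regularizing to make $u$ smooth, one differentiates the equation with respect to spatial directions, builds the auxiliary function $w = \max\{D_q u - \tilde l + \rho|Du|^2,\, 0\}$, shows it is a subsolution of a uniformly parabolic equation on the regions where $|Du+p|$ is bounded below, extends it by zero across the flat boundary (which requires a separate barrier argument controlling $D_e u$ on $S_{1/2}$), and then applies the weak Harnack inequality to obtain a geometric improvement of the oscillation of $Du$ in intrinsically scaled cylinders. Iterating, the iteration either runs forever (in which case $|Du(0,0)|=0$ and the decay of $Du$ directly gives the affine approximation with $a=0$) or terminates after finitely many steps, at which point the effective drift or the regularization parameter has been amplified to size one, and one lands in either the small perturbation regime or the uniformly parabolic regime. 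Your proposal, by contrast, contains no mechanism to improve the oscillation of $Du$ near the boundary when both the drift and the gradient are small; the compactness step alone cannot manufacture one, because the limit equation in the $p_\star = 0$ case is exactly the model problem whose boundary regularity is what must be proved.
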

We denote by $\bar\alpha$ the constant appearing in \Cref{thm:c1a_flat}. We are now in a position to state our main theorems.
\begin{theorem} [Boundary $C^{1,\alpha}$-estimates for the degenerate equations]\label{thm:main_deg}
Let $\alpha \in (0,\bar\alpha)$, $\gamma \ge 0$, and $(x_0,t_0) \in \partial_p \Omega$. Suppose that  $F$ satisfies \textnormal{\ref{F1}}, \textnormal{\ref{F2}},  
\begin{equation*}
	f \in C(\Omega) \cap L^\infty(\Omega), \quad 
	g \in C^{1,\alpha}_\gamma (x_0,t_0), \quad 
	\partial_p \Omega \in C^{1,\alpha}_\gamma (x_0,t_0),
\end{equation*}
and $u \in C(\overline{\Omega})$ is a viscosity solution to 
\begin{equation*}  
\left\{\begin{aligned}
	u_t &= |Du|^\gamma F(D^2 u)  + f&& \text{in } \Omega\cap Q_1(x_0,t_0)\\
	u&=g && \text{on } \partial_p \Omega\cap Q_1(x_0,t_0).
\end{aligned}\right.
\end{equation*}
Then $u \in C^{1,\alpha}(x_0,t_0)$, that is, there exists a linear function $L(x)$ such that
\begin{equation*}
	|u(x,t)-L(x)| \leq C(|x-x_0|^{1+\alpha}+|t-t_0|^{\frac{1+\alpha}{2}}) \quad \text{for all } (x,t) \in \Omega \cap Q_1(x_0,t_0)
\end{equation*}
and $|DL| \leq C$, where $C>0$ is a constant depending only on $n$, $\lambda$, $\Lambda$, $\gamma$, $\alpha$, $\|u\|_{L^\infty(\Omega)}$, $\|f\|_{L^\infty(\Omega)}$, $\|g\|_{C_\gamma^{1,\alpha}(x_0,t_0)}$, and $[\partial_p \Omega]_{C_\gamma^{1,\alpha}(x_0,t_0)}$.
\end{theorem}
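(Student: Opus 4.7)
The plan is to reduce the general boundary problem to the flat homogeneous model governed by Theorem~\ref{thm:c1a_flat} by flattening and subtraction, and then to iterate at geometric scales using a small-perturbation-solutions argument.

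\emph{Reduction to the flat model.} After translating $(x_0, t_0)$ to the origin, I intend to straighten $\partial_p \Omega$ near the origin by a $C^{1,\alpha}_\gamma$ diffeomorphism $\Phi$ whose norm is controlled by $[\partial_p \Omega]_{C^{1,\alpha}_\gamma(x_0, t_0)}$. The transformed equation retains the same uniform ellipticity structure, up to a perturbation that vanishes to the right order at the origin. Subtracting a $C^{1,\alpha}_\gamma$ extension $\tilde g$ of the boundary data and peeling off the normal component of $D_x \tilde g(0,0)$ as an additional linear correction, I reduce to a function $v$ that vanishes on $S_1$ and satisfies, modulo controlled errors,
\begin{equation*}
v_t = |Dv + p|^\gamma F(D^2 v) + f_0 \quad \text{in } Q_1^+,
\end{equation*}
with $p_n = 0$, $|p|$ bounded, and $\|f_0\|_{L^\infty(Q_1^+)}$ controlled in terms of the data.

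\emph{Iteration at geometric scales.} Following the scheme of Lian--Zhang \cite{LZ20, LZ22}, I plan to construct inductively a sequence of slopes $\{a_k\} \subset \mathbb{R}$ with $|a_k| \le C$ and $|a_k - a_{k-1}| \le C \rho^{(k-1)\alpha}$ for a small fixed $\rho \in (0, 1/2)$, satisfying
\begin{equation*}
\sup_{Q_{\rho^k}^+} |v(x, t) - a_k x_n| \le C \rho^{k(1+\alpha)}.
\end{equation*}
The limit slope $a = \lim a_k$, pulled back through $\Phi^{-1}$, delivers the linear function $L$ in the conclusion, and the telescoping decay of $\{a_k\}$ translates into the claimed pointwise $C^{1,\alpha}$ modulus at $(x_0, t_0)$.

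\emph{Inductive step.} To advance from scale $\rho^k$ to $\rho^{k+1}$, I rescale $v - a_k x_n$ to the unit cylinder via the natural degenerate parabolic scaling $(x, t) \mapsto (\rho^k x, \rho^{k(2 - \alpha \gamma)} t)$ with amplitude $\rho^{-k(1+\alpha)}$. The rescaled function satisfies a model-type equation with forcing and boundary perturbations of size $O(\rho^{k(1 - \alpha(1+\gamma))})$, which decays geometrically since $\alpha < \bar\alpha \le 1/(1+\gamma)$. By a compactness argument combined with the boundary regularity of small perturbation solutions, the rescaled function is $L^\infty$-close to a solution of the exact homogeneous model of Theorem~\ref{thm:c1a_flat}; applying that theorem supplies an affine approximation whose slope, transferred back via the small-perturbation principle, produces the next $a_{k+1}$ with the required H\"older decay.

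\emph{Main obstacle.} The principal technical difficulty lies in executing the inductive step uniformly in $k$. One must simultaneously manage three perturbation sources---the forcing $f$, the geometric error from flattening $\partial_p \Omega$, and the $C^{1,\alpha}_\gamma$ modulus of $g$---while keeping the gradient shift in the rescaled equation bounded and essentially tangential despite the successive subtractions of $a_j x_n$, and while preserving the structural hypotheses \textnormal{\ref{F1}}--\textnormal{\ref{F2}} at every scale. The strict inequality $\alpha < \bar\alpha$ is exactly what delivers the positive decay exponent $1 - \alpha(1+\gamma) > 0$ required for the compactness/small-perturbation step to close the iteration.
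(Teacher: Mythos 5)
Your high-level scheme—rescaling by $(x,t)\mapsto(\rho^k x,\rho^{k(2-\alpha\gamma)}t)$, improving the affine approximation one scale at a time, invoking compactness against the flat homogeneous model of Theorem~\ref{thm:c1a_flat}, and exploiting $\alpha<\bar\alpha<\tfrac{1}{1+\gamma}$ to make the perturbations decay—is indeed the right skeleton and matches the paper's Lemmas~\ref{lem:gen_1st}--\ref{lem:degenerate}. However, your \emph{reduction to the flat model} contains two genuine gaps, both rooted in the same obstruction.

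First, you propose to flatten $\partial_p\Omega$ by a $C^{1,\alpha}_\gamma$ diffeomorphism $\Phi$. The pulled-back Hessian is $D^2u=D\Phi^T(D^2v\circ\Phi)D\Phi+\sum_k(D_kv)\,D^2\Phi^k$, and for a boundary that is merely $C^{1,\alpha}$ the second derivatives $D^2\Phi^k$ do not exist as functions (they are only distributions). Consequently the transformed problem is not of the form $v_t=G(D^2v,Dv,y,t)$ with a bounded-measurable operator satisfying \ref{F1}; the whole viscosity framework and the comparison/compactness machinery it rests on are unavailable. The paper explicitly sidesteps this by never flattening: it works directly in $\Omega\cap Q_r^{r^\alpha}$, controls $\osc_{Q_r^{r^\alpha}}\partial_p\Omega$ quantitatively via Definition~\ref{def:domain}, and sends a rescaled sequence of solutions to a limit that solves the flat model by the Arzel\`a--Ascoli theorem together with Theorem~\ref{thm:stability1}, with the boundary Lipschitz bound of Lemma~\ref{lem:bdry_lip1} guaranteeing that the limit vanishes on $S_{3/4}$.

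Second, you subtract a $C^{1,\alpha}_\gamma$ extension $\tilde g$ of the boundary data. Formally $v=u-\tilde g$ would satisfy $v_t=|Dv+D\tilde g|^\gamma F(D^2v+D^2\tilde g)+f-\tilde g_t$, but $D^2\tilde g$ and $\tilde g_t$ do not exist for a function that is only $C^{1,\alpha}_\gamma$ in the pointwise sense of Definition~\ref{def:function}; one cannot write an equation for $v$ at all. The paper instead subtracts only the smooth affine function $a+p\cdot x$ with $p_n=0$ (the \emph{tangential} part of the linearization of $g$, not the normal one you propose), absorbs the normal slope $b$ into the evolving sequence $a_k$, and keeps track of the residual boundary values of $u-a-p\cdot x$ by the $L^\infty$ smallness condition \eqref{cond:g_bdry} rather than by subtraction. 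You should also be aware that the inductive step genuinely bifurcates into two regimes---$p\neq0$, in which the iteration terminates after finitely many steps and one lands in the small-perturbation regime of Theorem~\ref{lem:small_bdry} with $1/2\le|p|\le2$, versus $p=0$, in which the iteration may run indefinitely; your write-up conflates these, and the terminating case is in fact where the heavy lifting of Sections~\ref{sec:small_pert}--\ref{sec:model_problem} is actually used.
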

\begin{theorem} [Boundary $C_\gamma^{1,\alpha}$-estimates for the singular equations]\label{thm:main_sing}
Let $\alpha \in (0,\bar\alpha)$, $\gamma \in (-1, 0)$, and $(x_0,t_0) \in \partial_p \Omega$. Suppose that  $F$ satisfies \textnormal{\ref{F1}}, \textnormal{\ref{F2}},  
\begin{equation*}
	f \in C(\Omega) \cap L^\infty(\Omega), \quad 
	g \in C^{1,\alpha} (x_0,t_0), \quad 
	\partial_p \Omega \in C^{1,\alpha} (x_0,t_0),
\end{equation*}
and $u \in C(\overline{\Omega})$ is a viscosity solution to 
\begin{equation*}  
\left\{\begin{aligned}
	u_t &= |Du|^\gamma F(D^2 u)  + f&& \text{in } \Omega\cap Q_1(x_0,t_0)\\
	u&=g && \text{on } \partial_p \Omega\cap Q_1(x_0,t_0).
\end{aligned}\right.
\end{equation*}
Then $u \in C_\gamma^{1,\alpha}(x_0,t_0)$, that is, there exists a linear function $L(x)$ such that
\begin{equation*}
	|u(x,t)-L(x)| \leq C(|x-x_0|^{1+\alpha}+|t-t_0|^{\frac{1+\alpha}{2-\alpha\gamma}}) \quad \text{for all } (x,t) \in \Omega \cap Q_1(x_0,t_0)
\end{equation*}
and $|DL| \leq C$, where $C>0$ is a constant depending only on $n$, $\lambda$, $\Lambda$, $\gamma$, $\alpha$, $\|u\|_{L^\infty(\Omega)}$, $\|f\|_{L^\infty(\Omega)}$, $\|g\|_{C^{1,\alpha}(x_0,t_0)}$, and $[\partial_p \Omega]_{C^{1,\alpha}(x_0,t_0)}$.
\end{theorem}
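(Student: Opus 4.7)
The plan is to reduce \Cref{thm:main_sing} to the flat-boundary model \Cref{thm:c1a_flat} via a flatten-and-subtract procedure, and then to run an improvement-of-flatness iteration in the intrinsic singular parabolic scaling $t\sim r^{2-\alpha\gamma}$. The hypothesis $\alpha<\bar\alpha$ is what provides room to absorb the perturbations from the curved boundary, nonzero boundary data, and bounded source at each step.

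First I normalize. Translate $(x_0,t_0)$ to the origin, apply a $C^{1,\alpha}$ diffeomorphism $\Psi$ flattening $\partial_p\Omega$ locally to $\{x_n=0\}$ (which deteriorates the ellipticity constants only slightly), and subtract the tangential affine part $L_g(x):=g(0,0)+\nabla_{\mathrm{tan}}g(0,0)\cdot x$ of $g$ at the origin. A rescaling of $(u,x,t)$ reduces to the setting where $w:=u\circ\Psi^{-1}-L_g$ solves
\[
w_t=|Dw+p|^\gamma\bar F(D^2 w)+\bar f \quad\text{on } Q_1^+,
\]
with $p:=\nabla_{\mathrm{tan}}g(0,0)$ so that $p_n=0$ and $|p|\leq 1$, with $\|w\|_{L^\infty}\leq 1$, with $\|\bar f\|_{L^\infty}\leq\varepsilon_0$ for an $\varepsilon_0$ to be chosen, and with boundary values satisfying $|w(x,t)|\leq\varepsilon_0(|x|^{1+\alpha}+|t|^{(1+\alpha)/2})$ on $\{x_n=0\}$. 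Since $\gamma<0$, the temporal factor $|t|^{(1+\alpha)/2}$ locally dominates the target modulus $|t|^{(1+\alpha)/(2-\alpha\gamma)}$.

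The core is an improvement-of-flatness lemma, proved by contradiction and compactness: there exist $\mu=\mu(n,\lambda,\Lambda,\gamma,\alpha)\in(0,1)$ and $C>0$ such that for $\varepsilon_0$ small, some scalar $a$ with $|a|\leq C$ achieves
\[
\|w-ax_n\|_{L^\infty(B_\mu^+\times(-\mu^{2-\alpha\gamma},0))}\leq\mu^{1+\alpha}.
\]
A failing sequence $\{(w^j,\bar f^j,p^j)\}$ with vanishing $\bar f^j$ and boundary data would, via uniform compactness bounds coming from the regularized-equation $C^{1,\alpha}$-estimates and viscosity stability, subconverge to some $w^\infty$ solving the model $w^\infty_t=|Dw^\infty+p_\infty|^\gamma F_\infty(D^2 w^\infty)$ on $Q_1^+$ with zero data on $S_1$, $|p_\infty|\leq 1$, and $(p_\infty)_n=0$; \Cref{thm:c1a_flat} furnishes $a_\infty\in\mathbb{R}$ with decay $|x|^{1+\bar\alpha}+|t|^{(1+\bar\alpha)/(2-\bar\alpha\gamma)}$, and the inequality $(2-\alpha\gamma)(1+\bar\alpha)>(1+\alpha)(2-\bar\alpha\gamma)$ (a direct consequence of $\bar\alpha>\alpha$ and $\gamma>-1$) guarantees this decay yields the required $\mu^{1+\alpha}/2$ bound on the intrinsic $\alpha$-cylinder when $\mu$ is chosen small, contradicting the failure. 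Iterating at $r_k=\mu^k$ via the rescaling $v_k(x,t):=(w(r_k x,r_k^{2-\alpha\gamma}t)-a_k r_k x_n)/r_k^{1+\alpha}$, which satisfies the same normalized equation with source $r_k^{1-\alpha(1+\gamma)}\bar f\to 0$ (since $\alpha(1+\gamma)<1$) and geometrically decaying boundary data, produces $\{a_k\}$ with $|a_{k+1}-a_k|\leq C\mu^{k\alpha}$; setting $a:=\lim a_k$, $L(x):=L_g(x)+ax_n$, and for each $(x,t)\in\Omega\cap Q_1(x_0,t_0)$ choosing $k$ with $r_k\sim\max(|x-x_0|,|t-t_0|^{1/(2-\alpha\gamma)})$ converts the inductive bounds into the claimed estimate.

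The main obstacle is the stability step inside the improvement-of-flatness lemma: the singular coefficient $|Dw+p|^\gamma$ with $\gamma<0$ is not Lipschitz in $Dw$ near the degenerate set $\{Dw+p=0\}$, so naive viscosity-solution stability is unavailable, and moreover the rescaled shift $r_k^{-\alpha}(p+a_k e_n)$ in the iteration can drift outside the range required by \Cref{thm:c1a_flat}. Both issues are handled by the techniques announced in the abstract: one approximates the equation by the uniformly elliptic family obtained by replacing $|Dw+p|^\gamma$ with $(|Dw+p|^2+\delta^2)^{\gamma/2}$, applies the regularized $C^{1,\alpha}$-estimates and the boundary small-perturbation theorem on this family, and only then sends $\delta\to 0$ to recover the estimate for the original singular equation.
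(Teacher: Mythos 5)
Your overall architecture (reduce to the flat model \Cref{thm:c1a_flat}, then run an improvement-of-flatness iteration in the intrinsic scaling $t\sim r^{2-\alpha\gamma}$, handling the singularity via the regularized family $(|Dw+p|^2+\delta^2)^{\gamma/2}$) resembles the paper's in spirit, and your algebra $(2-\alpha\gamma)(1+\bar\alpha)-(1+\alpha)(2-\bar\alpha\gamma)=(\bar\alpha-\alpha)(2+\gamma)>0$ and the source decay $r_k^{1-\alpha(1+\gamma)}\to 0$ are both correct. However, your first step contains a genuine gap that the paper is specifically constructed to avoid.

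\textbf{The flattening step fails for $C^{1,\alpha}$ boundaries.} You propose to ``apply a $C^{1,\alpha}$ diffeomorphism $\Psi$ flattening $\partial_p\Omega$ locally to $\{x_n=0\}$'' and then work with $w=u\circ\Psi^{-1}-L_g$ solving an equation of the same form. This cannot be done when $\partial_p\Omega$ is only $C^{1,\alpha}$: the flattening map has merely $C^{\alpha}$ Jacobian, so $D^2\Psi$ does not exist classically, and the transformation of $D^2u$ produces terms involving $D^2\Psi$ that are only distributions. The transformed problem is not of the stated form $w_t=|Dw+p|^\gamma\bar F(D^2w)+\bar f$ with an $(x,t)$-independent operator; the issue is not a small deterioration of the ellipticity constants but rather that the equation ceases to be a well-posed fully nonlinear second-order PDE in the usual sense. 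This is precisely the obstruction that the paper identifies and circumvents by adopting the Lian--Zhang strategy of \emph{not} flattening. In the paper (see the statement ``we adopt a compactness method to derive boundary regularity without applying any flattening transformations'' and the proof via \Cref{lem:gen_1st}, \Cref{lem:gen_next}, and \Cref{lem:singular}), the curved domain enters only through the quantity $\osc_{Q_r^{r^\alpha}}\partial_p\Omega$, which is driven to zero in the compactness argument; the limit problem is then automatically the flat model \eqref{model}, and no change of variables is ever performed on the equation.

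\textbf{A secondary point on the iteration stopping.} You remark that ``the rescaled shift $r_k^{-\alpha}(p+a_k e_n)$ in the iteration can drift outside the range required by \Cref{thm:c1a_flat}'' and propose to handle this via the small-perturbation theorem. The paper does indeed resolve this in the corresponding Lemmas \ref{lem:degenerate}/\ref{lem:singular} by a stopping-time argument: the improvement-of-flatness iteration is carried out as long as both $|a_k|\leq\tau^{k\alpha}$ and $|p|\leq\tau^{k\alpha}$ hold, and at the first failure one passes to a rescaled problem with $1/2\leq|\tilde p|\leq 2$ that is uniformly parabolic in effect and falls under the small-perturbation regime (\Cref{lem:small_bdry}). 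Your description of this mechanism is essentially right, but to make it a proof you would need to state the stopping time and the two-case structure explicitly, as the paper does; in particular, the control $|a_k|\leq C_\star\tau^{(k-1)\alpha}$ from the flat-model decay exponent $\bar\alpha>\alpha$ is exactly what makes the iterate stay in the admissible band until the stopping time.

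In short, replace the flattening step by the paper's compactness reduction to the flat model (with the oscillation condition on $\partial_p\Omega$ as the smallness parameter), and make the stopping-time/two-case structure of the iteration explicit; then the rest of your outline aligns with the paper's argument.
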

In contrast to the interior regularity theory developed in \cite{IJS19, LLY24}, where the space-time scaling remains fixed, the boundary regularity of solutions to \eqref{eq:main} depends not only on the regularity of the boundary data $g$, but also on the regularity of the parabolic boundary $\partial_p \Omega$. As a result, the appropriate space-time scaling near the boundary varies with the range of $\gamma$. The underlying reason is that, when $Du(x_0,t_0)\neq 0$, the equation behaves, roughly speaking, like a uniformly parabolic one. In such cases, the solution inherits regularity both from the structure of the equation and from the external date, namely, $g$ and $\partial_p \Omega$, and the actual boundary regularity of $u$ is governed by the lower of these two contributions. Consequently, depending on the value of $\gamma$, it is natural to impose the more favorable condition among $g, \partial_p \Omega \in C^{1,\alpha}(x_0,t_0)$ and $g, \partial_p \Omega \in C_\gamma^{1,\alpha}(x_0,t_0)$, but the resulting regularity of $u$ will still be limited to the lesser one. This phenomenon was first observed in \cite{LLYZ25b}.
\subsection{Related works} 
The study of fully nonlinear degenerate or singular parabolic equations of the form
\begin{equation}\label{eq:local}
	u_t =|Du|^\gamma F(D^2u) +f 
\end{equation}
has been actively pursued in recent years. Early foundational work by Demengel \cite{Dem11} established comparison principles and existence results for viscosity solutions. For the case $\gamma>0$, Argiolas--Charro--Peral \cite{ACP11} obtained ABP-type estimates. In the broader range $\gamma>-1$, H\"older continuity of the gradient was established by Lee–Lee–Yun \cite{LLY24}. More recently, Lee--Lian--Yun--Zhang \cite{LLYZ25a} established time derivative estimates and optimal regularity results for $\gamma>0$, while Arya--Julin \cite{AJ25} proved Harnack inequalities for the same range.

In the elliptic setting, i.e.,
\begin{equation*}
	|Du|^\gamma F(D^2u) =f 
\end{equation*}
the $C^{1,\alpha}$-regularity for $\gamma>0$ was established by Imbert--Silvestre \cite{IS13}. This was further extended to boundary regularity by Birindelli--Demengel \cite{BD14}, and to optimal regularity by Araújo--Ricarte--Teixeira \cite{ART15}. In addition, $W^{2,\delta}$ type estimates were studied in Byun--Kim--Oh \cite{BKO25}. The literature on this topic has since grown rapidly, with various contributions addressing diverse aspects of degenerate elliptic equations (see, e.g., \cite{APPT22, AS23, BBLL23, BBLL22b, BJdSRR23, BJdSR23, BPRT20, BKO25b, dSRRV23, dSR20, dSV21a, dSV21b, FRZ21}).

Another important class is the quasilinear model
\begin{equation*}
	u_t = |Du|^\gamma \Delta_p u + f.
\end{equation*}
For the case $\gamma=2-p$, Jin--Silvestre \cite{JS17} obtained H\"older gradient estimates in the homogeneous case, and Imbert--Jin--Silvestre \cite{IJS19} extended this to the range $\gamma>-1$. More recently, Lee--Lian--Yun--Zhang \cite{LLYZ25b} developed boundary H\"older gradient estimates in this framework. Further contributions to the theory of degenerate or singular quasilinear parabolic equations include \cite{Att20, AP18, AR20, FZ21, FZ23, FPS25, PV20}.

\subsection{Strategy of the proof}
Despite significant progress in both elliptic and parabolic settings, many aspects of boundary regularity for fully nonlinear degenerate or singular parabolic equations remain poorly understood. In particular, the interplay between the gradient-dependent degeneracy and the boundary data presents delicate challenges. The present work aims to address this issue by developing a unified boundary regularity theory for viscosity solutions to fully nonlinear parabolic equations with $|Du|^\gamma$-type degeneracy or singularity.

The proof of our main results builds upon several key analytical techniques developed in recent literature. We outline the strategy as follows:

\begin{enumerate}[label=\text{(\roman*)}]

\item \textbf{Improvement of the oscillation of \( Du \).}  
We follow the strategy of \cite{IJS19, JS17}, where the improvement of oscillation plays a central role in establishing the H\"older gradient estimates. In particular, we develop a mechanism in \Cref{sec:bdry_reg_flat} that allows one to improve the oscillation of $Du$ under the assumption that the set where the gradient stays away from size one has positive measure. The argument relies fundamentally on the weak Harnack inequality, which enables a quantitative control of the upper bound of $Du$ in small cylinders.

\item \textbf{Boundary regularity theory without flattening the boundary.}  
Inspired by the approach in \cite{LZ20, LZ22}, we adopt a compactness method to derive boundary regularity without applying any flattening transformations. Instead of modifying the geometry of the domain, we reduce the boundary behavior to that of a model problem with flat boundary and zero Dirichlet data. We establish boundary regularity for the model problem in \Cref{sec:model_problem}. These results serve as the foundation for the problem in general domains, which we develop in \Cref{sec:gen_bdry}.

\item \textbf{Techniques for fully nonlinear degenerate or singular parabolic equations.}  
In line with the framework developed in \cite{LLY24}, we treat the fully nonlinear operator $F$ via suitable regularization. This allows us to differentiate the equation and obtain uniform estimates for smooth approximating solutions. The uniform $C_\gamma^{1,\alpha}$-estimates for the regularized equations play a crucial role in establishing the regularity of solutions to the original degenerate or singular problem.

\item \textbf{Boundary regularity theory for parabolic $p$-Laplace type equations.}  
We also follow the strategy in \cite{LLYZ25b}, which provides a boundary regularity theory for parabolic $p$-Laplace type equations. This includes the boundary regularity of small perturbation solutions and the analysis of how the regularity of the boundary data propagates to the solution in the degenerate or singular regime. We tailor these methods to suit the fully nonlinear equation, which is essential for establishing boundary regularity in the degenerate or singular setting.
\end{enumerate}

\subsection{Organization of the paper}
The paper is organized as follows. In \Cref{sec:pre}, we introduce the necessary preliminaries, including notations, structural conditions on the operator, definitions of viscosity solutions, and known regularity results. \Cref{sec:small_pert} is devoted to establishing boundary regularity of small perturbation solutions to the regularized problem. In \Cref{sec:model_problem}, we analyze a model problem with zero boundary data on a flat boundary and prove \Cref{thm:c1a_flat}, which serves as the foundation for the general case. \Cref{sec:gen_bdry} presents the proof of the main theorems. The argument proceeds via contradiction, using compactness techniques to reduce to the model problem.
%
%
\section{Preliminaries} \label{sec:pre}
In this section, we summarize some basic notations, definitions and known regularity results that will be used throughout the paper. 
\begin{notation}
Let us display some basic notations as follows.
\begin{itemize}
	\item $\mathcal{S}^n = \{ M \in  \mathbb{R}^{n\times n} : M=M^T\}$.
	\item $B_r(x_0)=\{x\in \mathbb{R}^{n} : |x-x_0|<r\}$ and $B_r=B_r(0)$.
	\item $B_r^+(x_0)= \{x\in B_r(x_0)  : x_n >0\}$ and $B_r^+=B_r^+(0)$.
	\item $Q_r(x_0,t_0)=B_r(x_0)\times (t_0-r^{2},t_0]$ and $Q_r=Q_r(0,0)$.
	\item $Q_r^+(x_0,t_0)=\{(x,t) \in Q_r(x_0,t_0) : x_n>0 \}$ and $Q_r^+=Q^+_r(0,0)$.
	\item $Q^\rho_r=B_r\times (-\rho^{-\gamma}r^2,0]$ and $Q^{\rho+}_r=B_r^+\times (-\rho^{-\gamma}r^2,0]$.
	\item $S_r=\{x\in B_r : x_n=0\} \times (-r^2,0]$.
	\item We denote $\{e_1,e_2,\cdots, e_n\}$ as the standard basis of $\mathbb{R}^n$.
	\item We denotes $\|M\|$ as the Frobenius norm of $M=(M_{ij})_{1\leq i,j\leq n} \in \mathbb{R}^{n\times n}$, i.e.,
\begin{equation*}
    \|M\| = \left(\sum_{i,j=1}^n |M_{ij}|^2\right)^{1/2}.
\end{equation*}
	\item For a unit vector $e\in\mathbb{R}^n$, we denote by $D_e u$ the directional derivative of $u$ in the direction $e$.
\end{itemize}
\end{notation}
\subsection{Structural hypotheses on $F$}  
In the regularity theory of fully nonlinear equations, Pucci's extremal operators are employed to quantitatively characterize the uniform ellipticity condition in a precise and explicit manner, facilitating the analysis of the structure and behavior of general nonlinear operators. Given fixed ellipticity constants, they play a central role as canonical representatives of the maximal and minimal operators within the class of uniformly elliptic operators. For this reason, Pucci's extremal operators are regarded as indispensable tools throughout the theory of viscosity solutions to fully nonlinear equations. We now introduce the precise definition of these operators.
\begin{definition} [Pucci's extremal operators] 
Given ellipticity constants $0<\lambda \leq \Lambda$, the \textit{Pucci's extremal operators} $\mathcal{M}_{\lambda, \Lambda}^{+}$ and $\mathcal{M}_{\lambda, \Lambda}^{-}$ for a symmetric matrix $M\in\mathcal{S}^n$ are defined by
\begin{align*}
	\mathcal{M}_{\lambda, \Lambda}^{+}(M) \coloneqq \sup_{\lambda I \leq A \leq \Lambda I} \text{tr} \,(AM) \quad \text{and} \quad 
	\mathcal{M}^{-}_{\lambda, \Lambda}(M)  \coloneqq \inf_{\lambda I \leq A \leq \Lambda I} \text{tr} \,(AM).
\end{align*}
\end{definition}

In our main theorems, we assume the following structural conditions on the fully nonlinear operator $F:\mathcal{S}^n \to \mathbb{R}$.
\begin{enumerate} [label=\text{(F\arabic*)}]
	\item \label{F1} $F$ is uniformly elliptic, that is,  there exist constants $\lambda$ and $\Lambda$ with $0<\lambda \leq \Lambda$ such that \begin{equation*}
	\mathcal{M}^{-}_{\lambda, \Lambda}(M-N) \leq F(M) - F(N) \leq \mathcal{M}^{+}_{\lambda, \Lambda}(M-N)
	\quad \text{for all } M,N \in \mathcal{S}^n.
\end{equation*}
In addition, $F(O)=0$.
	\item \label{F2} $F$ is either concave or convex.
\end{enumerate}
\subsection{Regularization of $F$} \label{sec:regul_F}
We introduce a method to regularize $F$. By considering 
\begin{equation*}
	\widetilde{F}(M)=F\left(\frac{M+M^T}{2}\right)
\end{equation*}
instead of $F(M)$, we can understand $F$ as an operator defined in $\mathbb{R}^{n\times n}$. Let  $\psi\in C_c^{\infty}(\mathbb{R}^{n\times n})$  be a standard mollifier with 
\begin{equation*}
	\int_{\mathbb{R}^{n\times n}}\psi(M)\,dM=1 \quad \text{and} \quad \textnormal{supp}(\psi) \subset  \{M : \|M\| \leq 1 \}.
\end{equation*}

We define $F^\varepsilon$ as 
\begin{equation} \label{mollification}
	F^\varepsilon(M)\coloneqq  F*\psi_\varepsilon(M) = \int_{\mathbb{R}^{n\times n}} F(M-N) \psi_\varepsilon(N)\,dN \quad \text{for } \psi_{\varepsilon}(M)\coloneqq \varepsilon^{-n^2} \psi(M/\varepsilon). 
\end{equation}
Then $F^\varepsilon$ is smooth and $F^\varepsilon \rightarrow F$ uniformly in $\mathcal{S}^n$. Furthermore, for $M \in \mathbb{R}^{n\times n}$, we have
\begin{align*}
	|F^\varepsilon(M)-F(M)| &= \int_{\|N\|\leq \varepsilon} | F(M-N)-F(M) |\psi_\varepsilon(N)\,dN  \leq C\varepsilon.
\end{align*}
The structural hypotheses on $F$ are preserved under this mollification.
\begin{lemma} \label{lem-F-approx}
Let $F^\varepsilon$  denote the mollification of $F$ given by \eqref{mollification}. Then the following statements hold:
\begin{enumerate} [label=(\roman*)]
\item If $F$ satisfies \textnormal{\ref{F1}}, then $F^\varepsilon$ also satisfies \textnormal{\ref{F1}}.
\item If $F$ satisfies \textnormal{\ref{F2}}, then $F^\varepsilon$ also satisfies \textnormal{\ref{F2}}.
\end{enumerate}
\end{lemma}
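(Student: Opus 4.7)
The strategy is to unwind the definition in \eqref{mollification} and push both hypotheses through the convolution pointwise, exploiting the fact that $\psi_\varepsilon\geq 0$ and $\int\psi_\varepsilon=1$. The only genuine subtlety is that $F$ is originally defined only on $\mathcal{S}^n$, whereas the integration runs over all of $\mathbb{R}^{n\times n}$; this is handled by the symmetrization $\widetilde{F}(M)=F((M+M^T)/2)$ already recorded in the paper, so that for any $M\in\mathcal{S}^n$ and $N\in\mathbb{R}^{n\times n}$ the quantity $\widetilde{F}(M-N)=F(M-N_s)$ makes sense, where $N_s:=(N+N^T)/2\in\mathcal{S}^n$.

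For part (i), I would fix $M,P\in\mathcal{S}^n$, write
\begin{equation*}
F^\varepsilon(M)-F^\varepsilon(P)=\int_{\mathbb{R}^{n\times n}}\bigl[F(M-N_s)-F(P-N_s)\bigr]\,\psi_\varepsilon(N)\,dN,
\end{equation*}
and apply \ref{F1} to the integrand to get the pointwise bracketing
$\mathcal{M}^-_{\lambda,\Lambda}(M-P)\le F(M-N_s)-F(P-N_s)\le\mathcal{M}^+_{\lambda,\Lambda}(M-P)$, which is independent of $N$. Integrating against $\psi_\varepsilon$ and using $\int\psi_\varepsilon=1$ gives the desired ellipticity for $F^\varepsilon$. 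The normalization $F(O)=0$ is the only piece of \ref{F1} that is not literally preserved (one has $|F^\varepsilon(O)|\le C\varepsilon$ from the uniform convergence estimate already noted in the paper), but this can be absorbed by replacing $F^\varepsilon$ with $F^\varepsilon-F^\varepsilon(O)$, which preserves both the ellipticity inequality and the convexity/concavity verified below; I will note this normalization step explicitly.

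For part (ii), I would assume $F$ is convex (the concave case is identical with reversed inequalities). For $\theta\in[0,1]$ and $M,P\in\mathcal{S}^n$, linearity of $N\mapsto N_s$ and convexity of $F$ on $\mathcal{S}^n$ give
\begin{equation*}
F\bigl(\theta M+(1-\theta)P-N_s\bigr)=F\bigl(\theta(M-N_s)+(1-\theta)(P-N_s)\bigr)\le\theta F(M-N_s)+(1-\theta)F(P-N_s).
\end{equation*}
Multiplying by $\psi_\varepsilon(N)\ge0$ and integrating yields $F^\varepsilon(\theta M+(1-\theta)P)\le\theta F^\varepsilon(M)+(1-\theta)F^\varepsilon(P)$, which is exactly \ref{F2}.

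No step here presents a serious obstacle; the main thing to be careful about is the symmetrization bookkeeping so that everything in sight is evaluated at a symmetric argument before applying \ref{F1} or \ref{F2}, and the small cosmetic fix for $F^\varepsilon(O)$.
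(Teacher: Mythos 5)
Your argument is correct and follows the same route as the paper: apply \ref{F1} (resp.\ the convexity/concavity inequality) to the integrand pointwise, then integrate against the nonnegative, unit-mass mollifier. Your extra remark about the normalization $F(O)=0$ is a genuine observation: the paper's proof does not address it, and indeed $F^\varepsilon(O)=\int F(-N)\psi_\varepsilon(N)\,dN$ need not vanish, so your fix of replacing $F^\varepsilon$ by $F^\varepsilon-F^\varepsilon(O)$ (which changes neither the ellipticity inequalities nor convexity/concavity, and alters the operator only by an $O(\varepsilon)$ constant) is the right way to make the statement literally true as written.
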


\begin{proof}
(i) Since $F$ satisfies \textnormal{\ref{F1}}, for $M, N\in\mathbb{R}^{n\times n}$, we have
\begin{align*}
	F^\varepsilon(M)-F^\varepsilon(N)&=\int_{\mathbb{R}^{n\times n}} \big( F(M-P) - F(N-P) \big) \psi_\varepsilon(P)\,dP \\
	&\leq \int_{\mathbb{R}^{n\times n}} \mathcal{M}_{\lambda,\Lambda}^+(M-N)\psi_\varepsilon(P)\,dP =  \mathcal{M}_{\lambda,\Lambda}^+(M-N). 
\end{align*}
Similarly, we have $F^\varepsilon(M)-F^\varepsilon(N) \ge \mathcal{M}_{\lambda,\Lambda}^-(M-N)$, and hence $F^\varepsilon$ also satisfies \textnormal{\ref{F1}}. \\

\noindent (ii) It suffices to consider the case when $F$ is concave. Since $F$ is concave, for any $M_1, M_2 \in \mathbb{R}^{n\times n}$ and $s\in [0,1]$, we have
\begin{align*}
	F^\varepsilon(s M_1 + (1-s)M_2) &=  \int_{\mathbb{R}^{n\times n}} F(s M_1 + (1-s)M_2-N) \psi_\varepsilon(N)\,dN \\
	&=\int_{\mathbb{R}^{n\times n}} F\big(s (M_1-N) + (1-s)(M_2-N)\big) \psi_\varepsilon(N)\,dN \\
	&\ge\int_{\mathbb{R}^{n\times n}} \big(sF(M_1-N) + (1-s)F(M_2-N)\big) \psi_\varepsilon(N)\,dN \\
	&=sF^\varepsilon(M_1) + (1-s) F^\varepsilon(M_2), 
\end{align*}
and hence $F^\varepsilon$ is also concave. 
\end{proof}

The interior $C^{1,\alpha}$-estimates for \eqref{eq:local} adopt an approach that compensates for the lack of regularity of solutions by regularizaing the equation as follows 
\begin{equation*} 
	u_t = (|Du|^2+\varepsilon^2)^{\gamma/2} F^\varepsilon(D^2u) + f , 
\end{equation*}
see \cite{LLY24}.
However, unlike the interior case, the boundary $C^{1,\alpha}$-estimates seek regularity under prescribed boundary values, and such regularizaing is not appropriate.
Consequently, the regularized equation becomes slightly more involved and takes the following form:
\begin{equation} \label{eq:general} 
	u_t = (|\nu Du + p|^2+\varepsilon^2)^{\gamma/2} F^\varepsilon(D^2u) + f.
\end{equation}
\subsection{Viscosity solutions}
We now introduce the concept of solutions to fully nonlinear parabolic equations:
\begin{equation}\label{eq:general'}  
	u_t = (|\nu Du + p|^2+\varepsilon^2)^{\gamma/2} F(D^2u) + f.
\end{equation}

\begin{definition} [Test functions]
	Let $u:\Omega \to \mathbb{R}$ be a continuous function. The function $\varphi : \Omega \to \mathbb{R}$ is called \textit{test function} if it is a function of class $C^2$ in $x$ and $C^1$ in $t$.
\end{definition}

We say that the function $\varphi$ touches $u$ from above (resp. below) at $(x_0,t_0)$ if there exists an open neighborhood $U$ of $(x_0,t_0)$ such that 
\begin{equation*}
	u \leq (\text{resp.} \ge)~ \varphi  \quad \text{in } U \qquad  \text{and} \qquad u(x_0,t_0)= \varphi(x_0,t_0). 
\end{equation*}
\begin{definition}[Viscosity solutions]  
Let $F:\mathcal{S}^n \to \mathbb{R}$ be a fully nonlinear operator and let $u, f:\Omega \to \mathbb{R}$ be  continuous functions. We say that $u$ is a \textit{viscosity subsoution} (resp. \textit{viscosity supersoution}) to \eqref{eq:general'} in $\Omega$ if for any $(x_0,t_0)\in \Omega $, the following two statements hold: 
\begin{enumerate} [label=(\roman*)]
\item For any test function $\varphi$ touching $u$ from above (resp. below) at $(x_0,t_0)$, \\
if $\nu D\varphi(x_0,t_0) + p \neq 0$, then
	\begin{equation*}
		\varphi_t (x_0,t_0)\leq (\text{resp.} \ge)~  (|\nu D\varphi(x_0,t_0) + p|^2+\varepsilon^2)^{\gamma/2} F(D^2\varphi(x_0,t_0)) + f(x_0,t_0).
	\end{equation*}
\item Let $v(x,t)=\nu u(x,t) + p\cdot x$. For any function $\psi \in C^1(t_0-\delta, t_0 +\delta)$ for some $\delta>0$, if 
\begin{equation*}
	v(x_0,t_0) -\psi(t_0) \ge (\text{resp.} \leq)~  v(x_0,t) - \psi(t) \quad \text{for all } t \in  (t_0-\delta, t_0 +\delta)
\end{equation*}
and
\begin{equation*}
	\sup_{t\in (t_0-\delta, t_0 +\delta)} (\text{resp.} \inf)~(v(x,t) -\psi(t)) = v(x_0,t_0) -\psi(t_0) \quad \text{for all } x \in B_{\delta}(x_0),
\end{equation*}
then 
\begin{equation*}
	\psi'(t_0) \leq (\text{resp.} \ge)~ \nu f(x_0,t_0).
\end{equation*}
\end{enumerate}
\end{definition}
\begin{remark} \label{rmk:eq_tr}
If $\nu \neq 0$, then $u$ is a viscosity solution to \eqref{eq:general'} in $\Omega$ if and only if $v=\nu u + p\cdot x$ is a viscosity solution to 
\begin{equation*}
	v_t = (|Dv|^2+\varepsilon^2)^{\gamma/2}\widetilde{F}(D^2 v)+ \nu f \quad \text{in } \Omega ,
\end{equation*}
where $\widetilde{F}(M)= \nu F(\nu^{-1} M)$. 
\end{remark}
\subsection{H\"older spaces}
Degenerate or singular parabolic equations exhibit a space-time scaling that differs from that of uniformly parabolic equations. For detailed discussion on the relevant scaling, we refer the reader to \cite{IJS19, KLY23, KLY24, LLY24, LY25, LLYZ25a}. Accordingly, it is necessary to introduce H\"older spaces that are adapted to both the space-time scaling and the regularity structure, in a way that reflects both the regularity of solutions to \eqref{eq:main} as well as the geometry of $\partial_p \Omega$.
\begin{definition} \label{def:function}
	Let $A \in \mathbb{R}^{n+1}$ be a bounded set and $u: A \to \mathbb{R}$ be a function. For $\alpha \in (0, 1)$ with $\alpha <\frac{1}{1+\gamma}$, we say that $u$ is $C_\gamma^{1,\alpha}$ at $(x_0,t_0) \in A$ (denoted by $u \in C_\gamma^{1,\alpha}(x_0,t_0)$) if there exist constants $N>0$, $r>0$ and a linear function $L(x)$ such that 
	\begin{equation} \label{c1a-est}
		|u(x,t)-L(x)| \leq N(|x-x_0|^{1+\alpha}+|t-t_0|^{\frac{1+\alpha}{2-\alpha\gamma}}) \quad \text{for all } (x,t) \in A \cap Q_r^{r^\alpha}(x_0,t_0).
	\end{equation}
	We define 
	\begin{equation*}
		[u]_{C_\gamma^{1,\alpha}(x_0,t_0)} = \inf \{N \mid \eqref{c1a-est} \text{ holds with some } N \text{ and } L \}
	\end{equation*}
and
	\begin{equation*}
		\|u\|_{C_\gamma^{1,\alpha}(x_0,t_0)} =  [u]_{C_\gamma^{1,\alpha} (x_0,t_0)} + |L(x_0)| + |DL|.
	\end{equation*}
\end{definition}

\begin{definition} \label{def:domain}
	Let $\Omega \subset \mathbb{R}^{n+1}$ be a bounded domain. For $\alpha \in (0,1)$ with $\alpha<\frac{1}{1+\gamma}$, we say that $\partial_p \Omega$ is $C_\gamma^{1,\alpha}$ at $(x_0,t_0) \in \partial_p \Omega$ (denoted by $\partial_p \Omega \in C_\gamma^{1,\alpha}(x_0,t_0)$), if there exist constants $N>0$, $r>0$ and a new coordinate system $(x,t)$ such that $(x_0,t_0)$ is mapped to the origin and the plane $x_n=0$ is tangent to $\partial_p \Omega$ at the origin, in which the following conditions hold:
\begin{equation}\label{c1a_d}
		\begin{aligned}
			&\{ (x,t) \in Q_r^{r^\alpha} \mid x_n >   N(|x|^{1+\alpha}+|t|^{\frac{1+\alpha}{2-\alpha\gamma}}) \} \subset Q_r^{r^\alpha} \cap \Omega \quad \text{and} \\
			&\{ (x,t) \in Q_r^{r^\alpha} \mid x_n <  - N(|x|^{1+\alpha}+|t|^{\frac{1+\alpha}{2-\alpha\gamma}}) \} \subset Q_r^{r^\alpha} \cap \Omega^c.
		\end{aligned}
	\end{equation}
We define 
	\begin{equation*}
		[\partial_p \Omega]_{C_\gamma^{1,\alpha}(x_0,t_0)} = \inf \{N \mid \eqref{c1a_d} \text{ holds with some } N \}.
	\end{equation*}
\end{definition}

In the case $\gamma=0$, the space $C_\gamma^{1,\alpha}(x_0,t_0)$ coincides with the classical H\"older space, so we omit $\gamma$ and denote it simply by $C^{1,\alpha}(x_0,t_0)$.

\begin{definition}
	Let $\Omega \subset \mathbb{R}^{n+1}$ be a bounded domain. If there exist constants $N>0$, $r>0$ and $\rho$ such that 
\begin{equation} \label{osc_cond}
	\{ (x,t) \in Q_r^\rho \mid x_n >   N  \} \subset Q_r^\rho \cap \Omega \quad \text{and} \quad
	\{ (x,t) \in Q_r^\rho \mid x_n <  - N  \} \subset Q_r^\rho \cap \Omega^c,
\end{equation}
then we define 
\begin{equation*}
	\osc_{Q_r^\rho} \partial_p \Omega  = \inf \{N \mid \eqref{osc_cond} \text{ holds with some } N \}.
\end{equation*}
\end{definition}
\subsection{Known regularity results}
Before presenting the comparison principle, we emphasize that a key analytic tool for the boundary regularity is the construction of appropriate barrier functions. By comparing the solution with these barriers via the comparison principle, we are able to analyze the boundary behavior of viscosity solutions to \eqref{eq:general'}.

Such a comparison principle can be found in \cite[Theorem 8.3]{CIL92} when $\varepsilon \neq 0$, and in \cite[Theorem 1]{Del11} for the case $\varepsilon =0$.
\begin{theorem}  [Comparison principle] \label{thm:comp_prin}
Let $u$ and $v$ be a viscosity subsolution and a viscosity supersolution to \eqref{eq:general'} in $\Omega$,
respectively. If $u \leq v$ on $\partial_p \Omega$, then $u\leq v$ in $\Omega$.
\end{theorem}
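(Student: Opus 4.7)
The plan is to argue by contradiction via the standard doubling of variables technique, adapted to handle the gradient-dependent degeneracy or singularity. Suppose $M := \sup_{\overline{\Omega}}(u-v) > 0$. Since $u \le v$ on $\partial_p \Omega$ and both functions are continuous up to the boundary, the supremum is attained at some interior point $(\hat x, \hat t) \in \Omega$, and to prevent maximizers from escaping to the final time we introduce a small perturbation $-\eta/(T-t)$ for $\eta \searrow 0$, where $T$ bounds the time-interval of $\Omega$.

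The doubled functional I would consider is
\begin{equation*}
	\Phi_{\alpha}(x,y,t,s) = u(x,t) - v(y,s) - \frac{\alpha}{4}|x-y|^4 - \frac{\alpha}{2}(t-s)^2 - \frac{\eta}{T-t},
\end{equation*}
where the quartic spatial penalty is chosen so that the associated gradient vector $p_\alpha := \alpha|x_\alpha - y_\alpha|^2(x_\alpha - y_\alpha)$ vanishes at the same rate as $|x_\alpha - y_\alpha|$ diverges, which is the usual device for handling $|Du|^{\gamma}$-type weights. Let $(x_\alpha,y_\alpha,t_\alpha,s_\alpha)$ be a maximizer. Standard penalization estimates give $\alpha|x_\alpha-y_\alpha|^4 + \alpha(t_\alpha-s_\alpha)^2 \to 0$, and any limit point of $(x_\alpha, t_\alpha)$ lies in $\Omega$ provided $\eta$ is small. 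Applying the parabolic theorem of sums (Crandall--Ishii--Lions) then produces $(\tau_1, q_\alpha, X), (\tau_2, q_\alpha, Y)$ in the closed parabolic semijets of $u$ and $v$ respectively, with $q_\alpha = p_\alpha$, $\tau_1 - \tau_2 = \eta/(T-t_\alpha)^2$, and the matrix inequality $X \le Y$ (after the usual argument with the quartic penalty, one obtains $X - Y \le 0$ in the sense of an ellipticity-friendly bound).

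In the generic case $\nu p_\alpha + p \neq 0$ (which in particular covers the full regime $\varepsilon > 0$, since then the weight is bounded away from zero and one simply inserts the test function into the pointwise viscosity inequalities), subtracting the subsolution and supersolution inequalities gives
\begin{equation*}
	\tfrac{\eta}{(T-t_\alpha)^2} \le (|\nu p_\alpha + p|^2+\varepsilon^2)^{\gamma/2}\bigl(F(X) - F(Y)\bigr) + o(1) \le (|\nu p_\alpha + p|^2+\varepsilon^2)^{\gamma/2}\mathcal{M}_{\lambda,\Lambda}^+(X-Y) + o(1) \le o(1),
\end{equation*}
using \ref{F1} together with $X \le Y$, which contradicts $\eta > 0$ after sending $\alpha \to \infty$ and then $\eta \to 0$.

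The main obstacle is the singular regime $\varepsilon = 0$ combined with the degenerate configuration $\nu p_\alpha + p = 0$, where the coefficient $(|\nu p_\alpha + p|^2)^{\gamma/2}$ is either zero (when $\gamma>0$) or blows up (when $\gamma<0$), and the pointwise viscosity inequality from the first clause of the definition of viscosity solution is simply unavailable. This is exactly why the definition contains the second, time-only clause: at such exceptional points, where the doubled test function behaves locally like a function of $t$ alone after the change of variables $v \mapsto \nu u + p\cdot x$, one uses the alternative characterization of the subsolution/supersolution property via the ordinary differential inequality $\psi'(t_0) \le \nu f(x_0,t_0)$ on $\psi(t)$, and similarly the opposite inequality for $v$. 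Subtracting the two produces $\tau_1 - \tau_2 \le 0$, again contradicting $\tau_1 - \tau_2 = \eta/(T-t_\alpha)^2 > 0$. Combining both cases and invoking \cite{CIL92} (for $\varepsilon \neq 0$) and \cite{Del11} (for $\varepsilon = 0$) to justify the modification with flat test functions in the singular case closes the argument.
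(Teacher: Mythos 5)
Your proposal correctly reconstructs what the cited references do; the paper itself provides \emph{no} proof of \Cref{thm:comp_prin} and simply defers to \cite[Theorem 8.3]{CIL92} for $\varepsilon\neq 0$ and \cite[Theorem 1]{Del11} for $\varepsilon=0$. So the relevant comparison is not proposal-versus-paper-proof but proposal-versus-cited-proof, and your outline of the doubling-of-variables machinery (quartic spatial penalty, parabolic theorem of sums, $X\le Y$ from evaluating the matrix inequality on $(\xi,\xi)$, $-\eta/(T-t)$ to keep the maximizer off the terminal slice) is accurate.

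There is, however, one step that you describe too loosely. After the normalization $w=\nu u + p\cdot x$ of \Cref{rmk:eq_tr} (valid when $\nu>0$; the case $\nu=0$ with $(p,\varepsilon)\neq(0,0)$ is uniformly parabolic), the degenerate configuration is precisely $Dw=0$ at the maximizer, i.e.\ $x_\alpha = y_\alpha$, and the quartic penalty has vanishing gradient \emph{and} Hessian there — this is exactly what the fourth power is engineered for. But it is \emph{not} true that the resulting test function then ``behaves locally like a function of $t$ alone'': it is $\tfrac{\alpha}{4}|x-y_\alpha|^4$, which grows quartically away from $y_\alpha$ and is in no way flat in $x$. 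Clause (ii) of the definition, on the other hand, requires a strict flatness structure: that $\sup_t\bigl(w(x,t)-\psi(t)\bigr)$ is \emph{constant} in $x$ over a ball. Bridging the quartic-penalty contact to the flat-in-$x$ clause requires a separate lemma of the Ohnuma--Sato/Demengel type: roughly, if a $C^{2,1}$ test function touches $u$ from above at $(x_0,t_0)$ with $D\varphi(x_0,t_0)=0$, $D^2\varphi(x_0,t_0)=0$, and a suitable modulus of continuity on the remainder, then $u$ can also be touched from above by a function of $t$ alone with the same time derivative at $t_0$. That lemma is the genuinely hard technical content of the singular comparison principle. You acknowledge it implicitly by invoking \cite{Del11} at the very end, so there is no outright error; but the phrase ``behaves locally like a function of $t$ alone'' papers over the only nontrivial step. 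Since you ultimately defer that step to the same references the paper does, your proposal is effectively the same proof as the paper, with a correct (if compressed) sketch of the surrounding doubling argument; the flat-test-function replacement lemma is the missing rung if you wanted the argument to be self-contained.
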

We now present two types of stability results, each playing a distinct role in the analysis. The first stability theorem, which can be found in \cite[Theorem 2.10]{LLY24}, is used to identify the limiting equation satisfied by a locally uniformly convergent sequence of viscosity solutions, typically obtained via compactness arguments such as the Arzelà--Ascoli theorem. The second stability theorem ensures that the regularized approximations converge back to a viscosity solution of the original equation. This is particularly crucial for recovering solutions to degenerate or singular equations through regularization techniques.
\begin{theorem}  [Stability theorem 1] \label{thm:stability1}
Assume that $\mathcal{F}_k, \mathcal{F} : \mathcal{S}^n \times \mathbb{R}^n \times \mathbb{R} \times \Omega \to \mathbb{R}$ are continuous satisfying $\mathcal{F}_k \to \mathcal{F}$ locally uniformly in $\mathcal{S}^n \times \mathbb{R}^n \times \mathbb{R} \times \Omega$ and
\begin{equation*}
	\mathcal{F}_k(M,p,r, x,t) \leq \mathcal{F}_k(N,p,s, x,t) \quad \text{whenever } s\leq r \text{ and } M \leq N.
\end{equation*}
Let $u_k$ be a viscosity subsolution (resp. supersolution) to
\begin{equation*}
	\partial_t u_k = \mathcal{F}_k(D^2u_k,Du_k,u_k, x,t) \quad \text{in } \Omega.
\end{equation*}
If $u_k \to u$ locally uniformly in $\Omega$, then $u$ is a viscosity subsolution (resp. supersolution) to 
\begin{equation*}
	u_t = \mathcal{F}(D^2u,Du,u, x,t) \quad \text{in } \Omega.
\end{equation*}
\end{theorem}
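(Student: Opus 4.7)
The plan is to carry out the classical perturbation argument for parabolic viscosity solutions, adapted to a sequence of parameter-dependent operators. I will prove only the subsolution case; the supersolution case is entirely symmetric.

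Fix $(x_0,t_0) \in \Omega$ and let $\varphi$ be a test function touching $u$ from above at $(x_0,t_0)$. First, I would make the touching strict by replacing $\varphi$ with
\begin{equation*}
    \widetilde{\varphi}(x,t) \coloneqq \varphi(x,t) + \delta\bigl(|x-x_0|^2 + (t-t_0)^2\bigr)
\end{equation*}
for small $\delta>0$, so that $u - \widetilde{\varphi}$ attains a strict maximum at $(x_0,t_0)$ on some closed cylinder $\overline{Q_r(x_0,t_0)} \subset \Omega$. Since $u_k \to u$ uniformly on this compact set, for each sufficiently large $k$ the function $u_k - \widetilde{\varphi}$ attains its maximum over $\overline{Q_r(x_0,t_0)}$ at some point $(x_k,t_k)$. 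A standard compactness argument, using the strictness of the maximum of $u - \widetilde{\varphi}$ together with uniform convergence, would then give $(x_k,t_k) \to (x_0,t_0)$; in particular, $(x_k,t_k)$ will lie in the parabolic interior of $\overline{Q_r(x_0,t_0)}$ for all large $k$.

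At this interior maximum, the function $\widetilde{\varphi} + c_k$ with $c_k \coloneqq u_k(x_k,t_k) - \widetilde{\varphi}(x_k,t_k)$ touches $u_k$ from above, so the viscosity subsolution property of $u_k$ yields
\begin{equation*}
    \widetilde{\varphi}_t(x_k,t_k) \leq \mathcal{F}_k\bigl(D^2\widetilde{\varphi}(x_k,t_k),\, D\widetilde{\varphi}(x_k,t_k),\, u_k(x_k,t_k),\, x_k,t_k\bigr).
\end{equation*}
Passing to the limit $k \to \infty$ and invoking the local uniform convergence $\mathcal{F}_k \to \mathcal{F}$, the continuity of $\widetilde{\varphi}$ together with its derivatives, and $u_k(x_k,t_k) \to u(x_0,t_0)$, I would conclude
\begin{equation*}
    \varphi_t(x_0,t_0) \leq \mathcal{F}\bigl(D^2\varphi(x_0,t_0) + 2\delta I,\, D\varphi(x_0,t_0),\, u(x_0,t_0),\, x_0,t_0\bigr).
\end{equation*}
Finally, letting $\delta \to 0$ and using continuity of $\mathcal{F}$ eliminates the $2\delta I$ term and completes the subsolution case.

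The main technical delicacy, though routine, is the selection and convergence $(x_k,t_k) \to (x_0,t_0)$ of the maximizers inside a closed parabolic cylinder: the strictness produced by the quadratic perturbation and the uniform convergence on the compact cylinder are both essential to rule out a limit on the parabolic boundary of $\overline{Q_r(x_0,t_0)}$. The monotonicity hypothesis on $\mathcal{F}_k$ is used only to make the definition of viscosity solutions consistent (degenerate ellipticity); the actual passage to the limit relies solely on the local uniform convergence and continuity of the operators.
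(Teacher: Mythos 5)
Your argument is the standard compactness/perturbation route; the paper itself does not prove this result but cites \cite[Theorem 2.10]{LLY24}, which proceeds the same way, so your approach is the right one.

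There is one genuine but easily fixable gap. You maximize $u_k - \widetilde\varphi$ over the backward parabolic cylinder $\overline{Q_r(x_0,t_0)} = \overline{B_r(x_0)} \times [t_0 - r^2, t_0]$ and conclude that the maximizers $(x_k,t_k)$ lie in its parabolic interior. However, the parabolic interior includes the top face $\{t = t_0\}$, and $(x_0,t_0)$ is itself on that face, so you cannot rule out $t_k = t_0$ for infinitely many $k$. At such a point the touching of $\widetilde\varphi + c_k$ with $u_k$ holds only for $t \leq t_0$, i.e.\ on a backward neighborhood; but this paper defines touching from above to mean $u_k \leq \widetilde\varphi + c_k$ on a full open neighborhood of $(x_k,t_k)$, and that is what the subsolution definition of $u_k$ requires before the inequality can be invoked. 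The clean fix is to maximize instead over a full spacetime box $\overline{B_r(x_0)} \times [t_0 - r^2, t_0 + r^2] \subset \Omega$; this is legitimate because the hypothesis that $\varphi$ touches $u$ from above already gives $u \leq \varphi$ on a full open neighborhood of $(x_0,t_0)$. Then $(x_0,t_0)$ is a topological interior point of the box, the maximizers $(x_k,t_k)$ converge into its interior, and the touching at $(x_k,t_k)$ is automatically of the open-neighborhood kind the paper requires. With that modification the rest of your argument — making the maximum strict via the quadratic perturbation, locating near-maximizers by uniform convergence, passing to the limit via local uniform convergence of $\mathcal{F}_k$ and continuity of $\widetilde\varphi$ and its derivatives, and finally sending $\delta \to 0$ — is correct, and your observation that the monotonicity hypothesis on $\mathcal{F}_k$ plays no role in the limit passage is also correct.
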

\begin{theorem}  [Stability theorem 2] \label{thm:stability2}
Let $u^\varepsilon$ be a viscosity subsolution (resp. supersolution) to \eqref{eq:general} in $\Omega$. If $u^\varepsilon \to u$ locally uniformly in $\Omega$, then $u$ is a viscosity subsolution (resp. supersolution) to 
\begin{equation*}
	u_t = |\nu Du + p|^{\gamma} F(D^2u) + f \quad \text{in } \Omega.
\end{equation*}
\end{theorem}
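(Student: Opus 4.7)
The plan is to verify both condition (i) and the slicing condition (ii) of the viscosity subsolution definition for the limit $u$; the supersolution case is entirely symmetric. The argument rests on a standard compactness framework adapted to the potentially singular right-hand side.

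For a test function $\varphi$ touching $u$ from above at $(x_0,t_0)$, I would first make the touching strict by adding $\eta(|x-x_0|^{2\ell}+(t-t_0)^2)$ for a small parameter $\eta>0$ and large exponent $\ell$ (so as not to disturb $D\varphi$ and $D^2\varphi$ at $(x_0,t_0)$), and then use the local uniform convergence $u^{\varepsilon_j}\to u$ to extract $(x_j,t_j)\to(x_0,t_0)$ at which $\varphi + c_j$ touches $u^{\varepsilon_j}$ from above with $c_j\to 0$. To verify (i), I restrict to the regime $\nu D\varphi(x_0,t_0)+p\neq 0$, so that by continuity the same holds at $(x_j,t_j)$ for large $j$ and condition (i) for $u^{\varepsilon_j}$ applies. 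Passing $j\to\infty$ using the uniform convergence $F^{\varepsilon_j}\to F$ from \Cref{sec:regul_F} and the continuity of $(|z|^2+\varepsilon_j^2)^{\gamma/2}\to|z|^{\gamma}$ away from $z=0$, and then letting $\eta\to 0$, gives the desired inequality for $u$.

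The verification of (ii) is the substantive part. Given $\psi\in C^1(t_0-\delta,t_0+\delta)$ satisfying the slicing condition for $v=\nu u + p\cdot x$ at $(x_0,t_0)$, I normalize $\psi(t_0)=v(x_0,t_0)$ (so that the slicing bound reads $v-\psi\le 0$) and introduce
$$\varphi(x,t) = \frac{\psi(t) - p\cdot x}{\nu} + K|x-x_0|^{2m} + \theta(t-t_0)^2,$$
where the integer $m$ is chosen so that $2m(1+\gamma)>2+\gamma$, and $K,\theta>0$ are fixed. A direct computation, using the slicing bound and the strict $\theta(t-t_0)^2$ factor, shows that $\varphi$ touches $u$ from above strictly at $(x_0,t_0)$ with $\nu D\varphi(x_0,t_0)+p=0$ and $D^2\varphi(x_0,t_0)=0$. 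The compactness setup then yields $(x_j,t_j)\to(x_0,t_0)$ where $\varphi+c_j$ touches $u^{\varepsilon_j}$ from above. When $x_j\neq x_0$, the identities
$$|\nu D\varphi(x_j,t_j)+p| = 2m\nu K|x_j-x_0|^{2m-1},\qquad \|D^2\varphi(x_j,t_j)\|\leq CK|x_j-x_0|^{2m-2},$$
combined with the bound $|F^{\varepsilon_j}(D^2\varphi)|\leq C\|D^2\varphi\|+C\varepsilon_j$, reduce the analysis to estimating the singular factor. Splitting into the regimes $\varepsilon_j\lesssim|x_j-x_0|^{2m-1}$ and $\varepsilon_j\gtrsim|x_j-x_0|^{2m-1}$, the choice of $m$ gives
$$(|\nu D\varphi+p|^2+\varepsilon_j^2)^{\gamma/2}\,F^{\varepsilon_j}(D^2\varphi)\to 0$$
in both regimes, and condition (i) for $u^{\varepsilon_j}$ followed by passage to the limit yields $\psi'(t_0)\le\nu f(x_0,t_0)$.

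The hardest step is precisely this estimate in the gradient-vanishing case for singular $\gamma\in(-1,0)$: the singular factor may blow up, and the delicate balance with the Hessian decay forces the threshold $2m(1+\gamma)>2+\gamma$, which remarkably arises from both regimes of the case split and is what makes the argument go through. A secondary technical point is the coincidental case $x_j = x_0$, which I would treat either by an additional small perturbation of $\varphi$ designed to displace the maximizer of $u^{\varepsilon_j}-\varphi$ off $x_0$ and then pass to the previous case in a double limit, or by applying condition (ii) for $u^{\varepsilon_j}$ directly with a shifted $\psi^j$ whose slicing property at $(x_0,t_j)$ is inherited from that of $v$ together with the uniform convergence $v^{\varepsilon_j}\to v$.
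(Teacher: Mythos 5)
Your argument is a direct, from-scratch proof, whereas the paper simply reduces the case $\gamma\ge 0$ to Stability Theorem 1 (\Cref{thm:stability1}), using that the operator $(M,z)\mapsto(|z|^2+\varepsilon^2)^{\gamma/2}F^\varepsilon(M)$ is continuous all the way down to $z=0$ and $\varepsilon=0$ when $\gamma\ge 0$, and for $\gamma<0$ it cites the approximation argument of Ohnuma--Sato \cite{MK97}. Your verification of condition (i) and your core estimate for condition (ii), including the threshold $2m(1+\gamma)>2+\gamma$ for the auxiliary test function $\varphi(x,t)=(\psi(t)-p\cdot x)/\nu+K|x-x_0|^{2m}+\theta(t-t_0)^2$ and the regime split around $\varepsilon_j\lesssim|x_j-x_0|^{2m-1}$, are correct and in the spirit of \cite{MK97}. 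It would be worth noting that the $\gamma\ge 0$ case is already covered by \Cref{thm:stability1} without any of this machinery, since no singularity occurs; your proof is really only needed for $-1<\gamma<0$.

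However, the case $x_j=x_0$ contains a genuine gap, and neither of your two proposed fixes resolves it as stated. When the maximum of $u^{\varepsilon_j}-\varphi$ is attained at $(x_0,t_j)$, one has $\nu D\varphi(x_0,t_j)+p=0$, so condition (i) in the paper's Definition of viscosity solutions does not apply. Your ``shifted $\psi^j$'' proposal then requires the slicing conditions for $v^j=\nu u^{\varepsilon_j}+p\cdot x$ at $(x_0,t_j)$, but the touching only yields
\begin{equation*}
v^j(x,t)-\psi^j(t)\;\le\; v^j(x_0,t_j)-\psi^j(t_j)\;+\;\nu K|x-x_0|^{2m}
\end{equation*}
with $\psi^j(t)=\psi(t)+\nu\theta(t-t_0)^2$. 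This gives part (a) of the slicing condition (take $x=x_0$) but not the \emph{equality} demanded in part (b), namely $\sup_t(v^j(x,t)-\psi^j(t))=v^j(x_0,t_j)-\psi^j(t_j)$ for all $x\in B_\delta(x_0)$: you only obtain a one-sided inequality with slack $\nu K|x-x_0|^{2m}$, and there is no matching lower bound. Your alternative ``perturb $\varphi$ to displace the maximizer'' suggestion could in principle work, but it is not spelled out, and adding a small linear term does not by itself guarantee the new maximizer avoids $x_0$. The clean way to close this gap is to observe that for $\varepsilon_j>0$ the coefficient $(|\nu q+p|^2+\varepsilon_j^2)^{\gamma/2}$ is smooth and bounded near $q=-p/\nu$, so the paper's weak viscosity notion coincides with the standard one for $\varepsilon_j>0$; then condition (i) applies at $(x_0,t_j)$ even with vanishing gradient, giving $\varphi_t(x_0,t_j)\le\varepsilon_j^{\gamma}F^{\varepsilon_j}(O)+f(x_0,t_j)$, and $\varepsilon_j^\gamma|F^{\varepsilon_j}(O)|\le C\varepsilon_j^{1+\gamma}\to 0$ since $\gamma>-1$. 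But this equivalence of notions for $\varepsilon>0$ is itself a nontrivial lemma (essentially Proposition~2.2.3 of \cite{MK97} in the elliptic variable, adapted to the parabolic and $p$-shifted setting) and would have to be proved rather than assumed.
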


\begin{proof}
The case $\gamma \ge 0$ follows from \Cref{thm:stability1}. For $\gamma < 0$, the desired conclusion can be obtained by substituting \eqref{eq:general} for the approximate equation in \cite[Section 6]{MK97}.
\end{proof}

We introduce the weak Harnack inequality which can be bound in \cite[Theorem 2.4.15]{IS13b} and \cite[Corollary 4.14]{Wan92}. This fundamental estimate provides a quantitative lower bound in terms of integral averages, and plays a crucial role in regularity theory. Once established, the weak Harnack inequality allows us to derive the improvement of oscillation, which is essential in proving H\"older continuity.
\begin{theorem}  [Weak Harnack inequality] \label{thm:WHI}
There exists a constants $\varepsilon_0> 0$ such that if $u$ be a nonnegative viscosity supersolution to
\begin{equation*}
	u_t =\mathcal{M}_{\lambda,\Lambda}^-(D^2 u) + f \quad \text{in } Q_1,
\end{equation*}
then
\begin{equation*}
	\left(\frac{1}{|Q_{1/2}(0,-3/4)|} \int_{Q_{1/2}(0,-3/4)} u^{\varepsilon_0} \,dx\,dt \right)^\frac{1}{\varepsilon_0} \leq C \left( \inf_{Q_{1/2}} u + \|f\|_{L^{n+1}(Q_1)}\right),
\end{equation*}
where $C>0$ is a constant depending only on $n$, $\lambda$ and $\Lambda$. 
\end{theorem}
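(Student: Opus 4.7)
Since Theorem~2.14 is a classical result cited from \cite{IS13b, Wan92}, the proof I would propose follows the Krylov--Safonov scheme adapted to the parabolic setting with Pucci extremal operators. The strategy has three main analytic ingredients: a parabolic Aleksandrov--Bakelman--Pucci (ABP) estimate, a pointwise-to-measure estimate obtained through barrier comparison, and a Calderón--Zygmund cube decomposition iterated to produce exponential decay of level sets.

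First I would establish a parabolic ABP estimate in the Krylov--Tso form: for any viscosity supersolution $w$ to $w_t - \mathcal{M}^-_{\lambda,\Lambda}(D^2 w) \leq f$ in $Q_1$ with $w \geq 0$ on $\partial_p Q_1$, one has
\[
\inf_{Q_1} w \geq -C \|f^+\|_{L^{n+1}(K)},
\]
where $K$ is the contact set of the parabolic monotone concave envelope of $-w^-$. This quantifies how the source term controls the negative oscillation of $w$ and serves as the parabolic analogue of Aleksandrov's maximum principle.

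The critical lemma is a point-to-measure estimate: if $u \geq 0$ in $Q_1$ is a supersolution with $\inf_{Q_{1/2}} u \leq 1$, then there exist universal constants $M>1$, $\mu \in (0,1)$, and a cylinder $Q^*$ strictly earlier in time such that $|\{u \leq M\} \cap Q^*| \geq \mu |Q^*|$. I would prove this by constructing an explicit radial barrier $\varphi$ satisfying $\varphi_t - \mathcal{M}^+_{\lambda,\Lambda}(D^2 \varphi) \leq 0$ together with $\varphi \leq u$ on $\partial_p Q^*$ outside a small set; applying the ABP estimate from the previous step to $u-\varphi$ then forces the contact set to occupy a definite fraction of $Q^*$. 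Iterating this lemma through a parabolic Calderón--Zygmund cube decomposition (the Wang construction) yields
\[
|\{u > M^k\} \cap Q| \leq C(1-\mu)^k |Q|,
\]
and summing over $k$ delivers the $L^{\varepsilon_0}$ decay with $\varepsilon_0 \sim \log(1-\mu)^{-1}/\log M$. A covering and normalization argument then converts this into the integral-to-infimum inequality between $Q_{1/2}(0,-3/4)$ and $Q_{1/2}$ that is claimed.

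The principal technical difficulty lies in the barrier construction within the point-to-measure step. Unlike the elliptic case, the barrier must simultaneously handle the time derivative, so one typically picks $\varphi$ of the form $\eta(t)(R^2 - |x|^2)_+^{p}$ with exponent $p$, radius $R$, and cutoff $\eta(t)$ tuned so that $\varphi$ solves the maximal Pucci equation in an annular region while remaining small at the cylindrical sides. The temporal asymmetry imposed by $\varphi_t$ is precisely what forces the cylinder $Q_{1/2}(0,-3/4)$ in the conclusion to lie earlier in time than $Q_{1/2}$; carrying out this construction rigorously within the viscosity framework, and verifying the correct geometric positioning of the barrier, is the most delicate part of the argument.
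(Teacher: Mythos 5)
The paper does not prove this theorem; it is stated as a known result and cited directly from \cite{IS13b} (Theorem 2.4.15) and \cite{Wan92} (Corollary 4.14). Your sketch is a faithful outline of the Krylov--Safonov/Wang argument that underlies those references — parabolic ABP estimate, barrier-based point-to-measure lemma, and parabolic Calder\'on--Zygmund decomposition — so it matches the canonical proof of the cited result, including the correct observation that the time-asymmetry of the barrier is what shifts the integration cylinder $Q_{1/2}(0,-3/4)$ earlier in time relative to $Q_{1/2}$.
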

\begin{lemma}  [Improvement of oscillation] \label{lem:IO}
Let $u$ be a nonnegative viscosity supersolution to 
\begin{equation*}
	u_t =  \mathcal{M}^-_{\lambda,\Lambda}(D^2 u) \quad \text{in } Q_1.
\end{equation*}
Then for each $\mu \in (0,1)$, there exist $r>0$ and $c>0$, where $r$ depends only on $n$ and $\mu$; and $c$ depends only on $n$, $\lambda$, $\Lambda$, $\gamma$ and $\mu$ such that if 
\begin{equation*}
	|\{(x,t) \in Q_1: u \ge 1\}| > \mu |Q_1|,
\end{equation*}
then
\begin{equation*}
	u \ge c \quad \text{in } Q_r.
\end{equation*}
\end{lemma}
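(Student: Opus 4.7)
The plan is to deduce the lemma from the weak Harnack inequality (\Cref{thm:WHI}) via a covering argument that locates a past cylinder of positive $u$-mass, followed by a parabolic chaining step to propagate the resulting pointwise bound into $Q_r$. Note that the equation $u_t = \mathcal{M}^-_{\lambda,\Lambda}(D^2 u)$ does not involve $\gamma$, so the stated $\gamma$-dependence of $c$ enters only vacuously.

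First I would truncate. Since $\mathcal{M}^-_{\lambda,\Lambda}$ is concave and the constant $1$ is a viscosity solution, $\tilde u \coloneqq \min(u,1)$ is again a nonnegative viscosity supersolution on $Q_1$, and the hypothesis persists: $|\{\tilde u = 1\} \cap Q_1| \geq |\{u \geq 1\} \cap Q_1| > \mu|Q_1|$. Replacing $u$ by $\tilde u$, I may assume $0 \leq u \leq 1$ and $|E| > \mu|Q_1|$, where $E \coloneqq \{u = 1\} \cap Q_1$, so that $u^{\varepsilon_0} \geq \mathbf{1}_E$.

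Next I would locate a past cylinder carrying a definite fraction of $E$. For each admissible center $(y,s)$ with $Q_\rho(y,s) \subset Q_1$, the weak Harnack integrates $u^{\varepsilon_0}$ over $P(y,s) \coloneqq Q_{\rho/2}(y, s - 3\rho^2/4)$, and the union $\bigcup P(y,s)$ equals $B_{1-\rho/2} \times (-1, -3\rho^2/4]$, whose complement in $Q_1$ has measure $O(\rho)|Q_1|$. Choosing $\rho = \rho(n,\mu)$ small so that this complement has measure less than $\mu|Q_1|/2$ forces $|E \cap \bigcup P(y,s)| \geq \mu|Q_1|/2$, and a Besicovitch-type covering then yields a particular $(y_0,s_0)$ with $|E \cap P(y_0,s_0)| \geq c_1(n,\mu)\,|Q_{\rho/2}|$. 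Applying \Cref{thm:WHI} (rescaled and translated) on $Q_\rho(y_0,s_0)$ gives
\[
\inf_{Q_{\rho/2}(y_0,s_0)} u \;\geq\; C^{-1} c_1^{1/\varepsilon_0} \;\eqqcolon\; c_2(n,\lambda,\Lambda,\mu) \;>\; 0.
\]

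Finally I would propagate this pointwise bound forward in time to $Q_r$ by a parabolic Harnack chain. The hot cylinder $Q_{\rho/2}(y_0,s_0)$ lies in the fixed compact region $B_{1-\rho/2} \times (-1+\rho^2/4, 0]$ but need not contain the origin, so I would construct a finite sequence of cylinders whose past portions are contained in regions on which a lower bound $u \geq c_k$ is already known; one application of \Cref{thm:WHI} then upgrades this to $u \geq C^{-1} c_k$ on a later cylinder. Because $(y_0,s_0)$ varies in a set of controlled diameter, a uniformly bounded number $N = N(n,\mu)$ of such steps suffices to cover $Q_r$ for every sufficiently small $r = r(n,\mu)$, yielding the final bound $c = C^{-N} c_2 > 0$. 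The only genuinely delicate point is precisely this chaining, since parabolic weak Harnack is inherently one-sided in time; the chain must therefore be orchestrated to march strictly forward in time while drifting spatially toward the origin at each step.
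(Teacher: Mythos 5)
Your overall strategy --- truncate, locate a hot cylinder by covering, apply the weak Harnack, then chain forward to $Q_r$ --- is the right one and matches what the paper (which defers to \cite{JS17}) does. The observation that the equation for $\mathcal{M}^-_{\lambda,\Lambda}$ contains no $\gamma$, so the $\gamma$-dependence of $c$ is vacuous, is also correct.

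However, the chaining step as written has a genuine gap, and your own caveat at the end flags it without resolving it. After the covering argument you only know that $(y_0,s_0)$ satisfies $Q_\rho(y_0,s_0)\subset Q_1$, so $s_0$ may be as large as $0$, and $E$ can indeed concentrate near the top of $Q_1$ so as to force this. In that case the hot cylinder is $B_{\rho/2}(y_0)\times(-\rho^2/4,0]$, pinned at the temporal boundary. The weak Harnack only propagates forward in time, yet any subsequent cylinder must also have its top time $s'\le 0$; the only way to keep chaining is then to shrink the radii geometrically (roughly $\rho_{k+1}\lesssim\rho_k/\sqrt{2}$), and the resulting total spatial drift is $\sum_k O(\rho_k) = O(\rho)$, not $O(1)$. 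So if $|y_0|$ is close to $1-\rho$, the chain can never reach $B_r$ for small $r$, no matter how many steps are taken. The claim that ``a uniformly bounded number $N(n,\mu)$ of steps suffices'' is therefore false as stated: the obstruction is temporal, and the ``controlled diameter'' of the set of admissible $(y_0,s_0)$ is irrelevant.

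The standard fix must be built in before the covering: first discard the top time-slab $B_1\times(-\theta,0]$ for some $\theta=\theta(\mu)\in(0,\mu/4)$, which removes at most $\theta\,|Q_1|<\tfrac{\mu}{4}|Q_1|$ of the mass of $E$; then shrink $\rho$ so that the spatial annulus $B_1\setminus B_{1-\rho/2}$ and the bottom sliver also cost less than $\tfrac{\mu}{4}|Q_1|$ each. The covering now produces a hot cylinder with $s_0\le -\theta+O(\rho^2)\le -\theta/2$, and a chain of cylinders of \emph{fixed} radius $\rho'\sim\min(\rho,\sqrt{\theta})$, chosen so that the $\lesssim 1/\rho'$ steps consume total time $<\theta/2$, does reach $Q_r$ with $r\sim\rho'$ and $c=C^{-N}c_2$, $N=N(n,\mu)$. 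Everything else in your outline (the truncation via concavity of $\mathcal{M}^-$, the Besicovitch selection, the single rescaled application of \Cref{thm:WHI}) is fine.
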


\begin{proof}
Using \Cref{thm:WHI}, the proof is exactly the same as in \cite[Proposition 2.3]{JS17}.
\end{proof}
We require the following theorem in order to guarantee the smoothness of solutions to \eqref{eq:general}. This result was established in \cite[Theorem 1.2]{LLY24}. Although the assumption $F\in C^{1,\kappa}(\mathcal{S}^n)$ is imposed in that theorem, this condition is in fact not essential.
\begin{theorem}  [Solvability of the Cauchy-Dirichlet problmes] \label{thm:LLY24-12}
Assume that $F$ satisfies \textnormal{\ref{F1}}, \textnormal{\ref{F2}}, and let $u$ be a viscosity solution to 
\begin{equation} \label{eq:solvability}  
\left\{\begin{aligned}
	u_t &= (|Du|^2 + 1)^{\gamma/2} F(D^2 u) && \text{in } Q_1 \\
	u&=g && \text{on } \partial_p Q_1
\end{aligned}\right.
\end{equation}
with $\gamma >-2$. If $g\in C^{2,\alpha}(\overline{Q_1})$ for some $\alpha\in(1/2,1)$, and if $g$ satisfies 
\begin{equation*}
	g_t =(|Dg|^2 + 1)^{\gamma/2} F(D^2 g) \quad \text{on } \partial B_1 \times \{t=-1\},
\end{equation*}
then the Cauchy--Dirichlet problem \eqref{eq:solvability} admits a unique solution in $C^{2,\alpha}(\overline{Q_1})$.
\end{theorem}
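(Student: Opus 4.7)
The plan is to bootstrap from [LLY24, Theorem 1.2], which is proved under the extra hypothesis $F \in C^{1,\kappa}(\mathcal{S}^n)$, by a standard regularization-and-compactness argument. The idea is to replace $F$ with a smooth mollification, solve the regularized Cauchy--Dirichlet problem using that theorem, and pass to the limit, using the comparison principle for uniqueness.

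First I would mollify $F$ via \eqref{mollification} to obtain $F^\varepsilon$, which is smooth and thus lies in $C^{1,\kappa}(\mathcal{S}^n)$ for every $\kappa \in (0,1)$. By \Cref{lem-F-approx}, $F^\varepsilon$ still satisfies \ref{F1} and \ref{F2}, and $F^\varepsilon \to F$ uniformly on $\mathcal{S}^n$. In order to invoke [LLY24, Theorem 1.2] for each $\varepsilon$, I would simultaneously perturb $g$ to some $g^\varepsilon \in C^{2,\alpha}(\overline{Q_1})$ satisfying the $F^\varepsilon$-compatibility condition $g^\varepsilon_t = (|Dg^\varepsilon|^2 + 1)^{\gamma/2} F^\varepsilon(D^2 g^\varepsilon)$ on $\partial B_1 \times \{t = -1\}$, with $g^\varepsilon \to g$ in $C^{2,\alpha}(\overline{Q_1})$; such a perturbation exists because the original compatibility holds for $F$ and $F^\varepsilon \to F$ uniformly. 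Then [LLY24, Theorem 1.2] yields a unique $u^\varepsilon \in C^{2,\alpha}(\overline{Q_1})$ solving
$$u^\varepsilon_t = (|Du^\varepsilon|^2 + 1)^{\gamma/2} F^\varepsilon(D^2 u^\varepsilon) \quad \text{in } Q_1, \qquad u^\varepsilon = g^\varepsilon \text{ on } \partial_p Q_1.$$

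Next I would establish a uniform $C^{2,\alpha}(\overline{Q_1})$ bound for the family $\{u^\varepsilon\}$ with constants independent of $\varepsilon$. Boundedness in $L^\infty$ follows from \Cref{thm:comp_prin} against barriers built from $g^\varepsilon$, and the crucial point is that the $C^{2,\alpha}$ estimate underlying [LLY24, Theorem 1.2] depends only on $n$, $\lambda$, $\Lambda$, $\gamma$, $\alpha$, $\|g^\varepsilon\|_{C^{2,\alpha}}$, and $\|u^\varepsilon\|_{L^\infty}$, rather than on any quantitative norm of $F^\varepsilon$ beyond \ref{F1} and \ref{F2}. Once the uniform estimate is established, Arzel\`a--Ascoli produces $u \in C^{2,\alpha}(\overline{Q_1})$ with $u^\varepsilon \to u$ along a subsequence, and \Cref{thm:stability1} applied to the operators $\mathcal{F}_\varepsilon(M, p) = (|p|^2 + 1)^{\gamma/2} F^\varepsilon(M)$, which converge locally uniformly to $(|p|^2 + 1)^{\gamma/2} F(M)$, identifies $u$ as a viscosity solution of \eqref{eq:solvability}. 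Boundary convergence gives $u = g$ on $\partial_p Q_1$, and uniqueness is immediate from \Cref{thm:comp_prin}.

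The main obstacle will be verifying that the $C^{2,\alpha}$ a priori estimate in [LLY24] is genuinely uniform in $\varepsilon$, i.e., that the dependence on $\|F\|_{C^{1,\kappa}}$ in that paper is cosmetic rather than structural. This amounts to re-reading their derivation and confirming that every appeal to the smoothness of $F$, typically used for differentiating the equation in order to apply Krylov--Safonov theory to directional derivatives $D_e u^\varepsilon$, can be replaced by an Evans--Krylov argument applied directly at the level of $F^\varepsilon$ using only \ref{F1} and \ref{F2}. The convexity/concavity in \ref{F2} is exactly what makes this possible, so that the final constant depends only on the ellipticity constants and not on any further smoothness of $F^\varepsilon$.
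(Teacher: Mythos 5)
Your proposal is correct and mirrors the paper's argument: both mollify $F$ to $F^\varepsilon$, invoke the solvability theorem of \cite{LLY24} for the smooth $F^\varepsilon$, and pass to the limit $\varepsilon\to 0$ after checking that the $C^{2,\alpha}$ estimate is uniform in $\varepsilon$. The paper resolves the concern you flag by directly inspecting \cite{LLY24} and observing that the only places where $F\in C^{1,\kappa}(\mathcal{S}^n)$ is used are Lemmas 3.6 and 3.7 there, whose final constants (e.g.\ the $C$ in $v_t\leq\mathcal{M}^+_{\lambda',\Lambda'}(D^2v)+C|Dv|^2$) are already independent of $\|F\|_{C^{1,\kappa}(\mathcal{S}^n)}$, so no replacement by a separate Evans--Krylov argument is needed.
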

\begin{proof}
Since the proof is exactly the same as that of \cite[Theorem 1.2]{LLY24}, we only explain how the assumption $F\in C^{1,\kappa}(\mathcal{S}^n)$ can be removed. Indeed, the only places where this assumption is used are \cite[Lemma 3.6]{LLY24} and \cite[Lemma 3.7]{LLY24}, but the final estimates in the proofs of these lemmas involve constants that are independent of $\|F\|_{C^{1,\kappa}(\mathcal{S}^n)}$. For instance, at the end of Step 2 in the proof of \cite[Lemma 3.6]{LLY24}, the constant $C$ in 
\begin{equation*}
	v_t \leq \mathcal{M}_{\lambda',\Lambda'}^+(D^2 v) + C |Dv|^2
\end{equation*}
does not depend on $F\in C^{1,\kappa}(\mathcal{S}^n)$.

Therefore, by considering a mollified approximation $F^\varepsilon$ in place of $F$, proving these lemmas for $F^\varepsilon$, and then passing to the limit as $\varepsilon \to 0$, we obtain the desired conclusion.
\end{proof}
By combining \Cref{thm:LLY24-12} with a bootstrap argument, we obtain the following regularity result which can be found in \cite[Theorem 4.12]{LLY24}.
\begin{theorem} [Interior $C^\infty$-regularity] \ \label{thm:LLY24-412}
Assume that $F$ satisfies \textnormal{\ref{F1}}, \textnormal{\ref{F2}}, and let $u$ be a viscosity solution to 
\begin{equation*}  
\left\{\begin{aligned}
	u_t &= (|Du|^2 + \varepsilon^2)^{\gamma/2} F^\varepsilon(D^2 u) && \text{in } Q_1 \\
	u&=g && \text{on } \partial_p Q_1
\end{aligned}\right.
\end{equation*}
with $\gamma >-2$. If $g\in C(\overline{Q_1})$, then the Cauchy--Dirichlet problem \eqref{eq:solvability} admits a unique solution in $C(\overline{Q_1})\cap C_{\textnormal{loc}}^{\infty}(Q_1)$.
\end{theorem}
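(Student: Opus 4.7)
The plan is to handle uniqueness by the comparison principle, existence by approximating $g$ uniformly by smooth data meeting the compatibility hypothesis of \Cref{thm:LLY24-12}, and interior smoothness by a standard bootstrap for the non-degenerate uniformly parabolic equation that appears once $\varepsilon>0$ is fixed.

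For uniqueness, if $u_1,u_2\in C(\overline{Q_1})$ both solve the Cauchy--Dirichlet problem with data $g$, \Cref{thm:comp_prin} (applied with $\nu=1$, $p=0$, and nonlinearity $F^\varepsilon$) gives $u_1\le u_2$ and $u_2\le u_1$. For existence, I would approximate $g$ uniformly by a sequence $g_k\in C^{2,\alpha}(\overline{Q_1})$ that additionally satisfies
\[
(g_k)_t = (|Dg_k|^2+\varepsilon^2)^{\gamma/2} F^\varepsilon(D^2 g_k) \quad \text{on } \partial B_1\times\{t=-1\},
\]
and apply \Cref{thm:LLY24-12} (with $F^\varepsilon$ in place of $F$ and the $\varepsilon^2$ added inside the coefficient) to obtain unique classical solutions $u_k\in C^{2,\alpha}(\overline{Q_1})$. \Cref{thm:comp_prin} then yields
\[
\|u_k-u_j\|_{L^\infty(Q_1)}\le \|g_k-g_j\|_{L^\infty(\partial_p Q_1)},
\]
so $\{u_k\}$ is Cauchy in $C(\overline{Q_1})$ and converges to some $u\in C(\overline{Q_1})$ with $u=g$ on $\partial_p Q_1$; \Cref{thm:stability2} confirms that $u$ is a viscosity solution.

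For interior smoothness, since $\varepsilon>0$ the nonlinearity $H(p,M):=(|p|^2+\varepsilon^2)^{\gamma/2}F^\varepsilon(M)$ is smooth in $(p,M)$, and on every compact subset of $Q_1$ the coefficient $(|Du_k|^2+\varepsilon^2)^{\gamma/2}$ is bounded above and below as soon as one has a local gradient bound for $u_k$. Thus the equation is uniformly parabolic on compact sets. Starting from a local $C^{2,\alpha}$ bound obtained via Evans--Krylov (available by the convexity or concavity of $F^\varepsilon$), differentiating the equation along a spatial direction shows that $D_e u_k$ solves a linear uniformly parabolic equation with smooth coefficients, and Schauder estimates iterated to all orders produce uniform interior $C^k$-bounds depending only on $\|u_k\|_{L^\infty(Q_1)}$, $\varepsilon$, $\gamma$, and the distance to $\partial_p Q_1$. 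These bounds pass to the limit, yielding $u\in C^\infty_{\textnormal{loc}}(Q_1)$.

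The main technical obstacle is constructing $g_k$: one must approximate $g$ uniformly while also enforcing the compatibility PDE on the codimension-one corner $\partial B_1\times\{t=-1\}$. I would first mollify a continuous extension of $g$ to produce $\widetilde g_k\in C^{2,\alpha}(\overline{Q_1})$ with $\widetilde g_k\to g$ uniformly, then correct $\widetilde g_k$ by a smooth function supported in a thin tubular neighborhood of that corner whose width shrinks to zero. Such a correction can be made arbitrarily small in $L^\infty$ yet prescribes any desired value of $\partial_t g_k-(|Dg_k|^2+\varepsilon^2)^{\gamma/2}F^\varepsilon(D^2 g_k)$ on the corner, so the modification preserves both the $C^{2,\alpha}$-regularity of each $g_k$ and the uniform convergence $g_k\to g$.
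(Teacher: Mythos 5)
Your reconstruction matches the argument the paper indicates: the paper itself only cites \cite[Theorem 4.12]{LLY24} with the remark that the result follows ``by combining \Cref{thm:LLY24-12} with a bootstrap argument,'' and that is precisely your structure (approximate the data by compatible smooth boundary values, solve by \Cref{thm:LLY24-12}, pass to the limit by comparison, then bootstrap interior regularity). Two points need tightening, though. First, to identify the uniform limit $u$ of the $u_k$ as a viscosity solution with $\varepsilon$ fixed, you should invoke \Cref{thm:stability1}, not \Cref{thm:stability2}; the latter is the stability statement for the limit $\varepsilon\to 0$, which is not what you are taking here. Second, the interior smoothness step is not quite self-contained as written. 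The uniform local gradient bound that you rely on to make $(|Du_k|^2+\varepsilon^2)^{\gamma/2}$ bounded above and below on compact sets is itself a nontrivial estimate -- it is exactly \Cref{lem:int-lip} (with $\nu=1$, $p=0$) for $\gamma>-1$, and requires the corresponding estimate of \cite{LLY24} for $\gamma\in(-2,-1]$ -- and should be cited rather than assumed. Moreover, Evans--Krylov as usually stated treats $u_t = G(D^2u)$ without lower-order dependence; here the coefficient depends on $Du$, so the right order is: gradient bound $\Rightarrow$ uniformly parabolic on compacta $\Rightarrow$ uniform interior $C^{1,\alpha}$ (Krylov--Safonov theory) $\Rightarrow$ treat $(|Du_k|^2+\varepsilon^2)^{\gamma/2}$ as a $C^\alpha$ coefficient bounded away from $0$ and $\infty$ and apply the variable-coefficient interior $C^{2,\alpha}$ estimate of \cite{Wan92} $\Rightarrow$ differentiate and iterate Schauder, as you describe. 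With those amendments the argument is sound.
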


We now state two boundary regularity results for fully nonlinear uniformly parabolic equations. The first is a boundary $C^{1,\alpha}$-regularity of functions in the class of solutions. The second theorem establishes boundary $C^{2,\alpha}$-regularity under structural assumptions on the nonlinearity. These results can be found in \cite[Theorem 2.8]{LZ22} and \cite[Theorem 1.16]{LZ22}, respectively.
\begin{theorem} [Boundary $C^{1,\alpha}$-regularity]  \label{thm:LZ22-28}
Let $u$ be a function satisfying $u=0$ on $S_1$ and
\begin{equation*}
	\mathcal{M}_{\lambda,\Lambda}^-(D^2 u) \leq u_t \leq \mathcal{M}_{\lambda,\Lambda}^+(D^2 u) \quad\text{in } Q_1^+ \text{ in the viscosity sense.}
\end{equation*}
Then $u\in C^{1,\alpha}(0,0)$ for some $\alpha\in(0,1)$, that is, there exists a constant $a\in \mathbb{R}$ such that
\begin{equation*}
	|u(x,t) -a x_n| \leq C\|u\|_{L^\infty(Q_1^+)} x_n (|x|^\alpha+|t|^{\alpha/2}) \quad \text{for all } (x,t) \in Q_{1/2}^+,
\end{equation*}
where $C>0$ is a constant depending only on $n$, $\lambda$, and $\Lambda$. 
\end{theorem}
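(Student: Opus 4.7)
The plan is to combine a boundary Lipschitz estimate with an iterative improvement-of-flatness argument, in the spirit of Krylov--Safonov-type boundary regularity for the Pucci class. As a preliminary step, I would establish the boundary Lipschitz bound
\begin{equation*}
	|u(x,t)| \le C \|u\|_{L^\infty(Q_1^+)}\, x_n \quad \text{for } (x,t) \in Q_{3/4}^+,
\end{equation*}
by constructing an explicit barrier that is a classical supersolution to $w_t = \mathcal{M}^+_{\lambda,\Lambda}(D^2 w)$, dominates $|u|$ on the parabolic boundary of $Q_{3/4}^+$, and is comparable to $x_n$ near $\{x_n = 0\}$; the comparison principle (\Cref{thm:comp_prin}) applied to $\pm u$ then yields the two-sided bound.

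The heart of the argument is the following improvement-of-flatness lemma: there exist universal constants $\rho \in (0,1)$, $\alpha \in (0,1)$, and $C_0 > 0$ such that, whenever $u$ satisfies the hypotheses of the theorem with $\|u\|_{L^\infty(Q_1^+)} \le 1$, one can find $a \in \mathbb{R}$ with $|a| \le C_0$ and
\begin{equation*}
	\|u - a x_n\|_{L^\infty(Q_\rho^+)} \le \rho^{1+\alpha}.
\end{equation*}
I would prove this by contradiction and compactness. Suppose the claim fails; then there is a sequence $u_k$ for which no such $a$ exists. The interior $C^{1,\alpha}$ estimate for the Pucci class combined with the boundary Lipschitz bound above provides equicontinuity up to $S_1$, so along a subsequence $u_k \to u_\infty$ locally uniformly. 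The limit $u_\infty$ still satisfies the Pucci inequalities with zero boundary data on $S_1$ and $|u_\infty| \le C x_n$. A boundary H\"older estimate for the quotient $u_\infty/x_n$, obtained from barrier arguments and the weak Harnack inequality (\Cref{thm:WHI}), shows that $a_\infty \coloneqq \lim_{x_n \to 0^+} u_\infty(x_n e_n,0)/x_n$ exists, is universally bounded, and satisfies $|u_\infty - a_\infty x_n| \le C x_n(|x|^{\alpha'} + |t|^{\alpha'/2})$ for a universal $\alpha' > \alpha$. Choosing $\rho$ sufficiently small contradicts the negation of the claim.

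Once the improvement-of-flatness lemma is in hand, rescaling and iterating produces a sequence $\{a_k\} \subset \mathbb{R}$ with $|a_{k+1} - a_k| \le C_0 \rho^{k\alpha}$ and $\|u - a_k x_n\|_{L^\infty(Q_{\rho^k}^+)} \le \rho^{k(1+\alpha)} \|u\|_{L^\infty(Q_1^+)}$. Geometric summability gives $a \coloneqq \lim_k a_k$ with $|a| \le C \|u\|_\infty$, and interpolating the tail estimate with the boundary Lipschitz bound (which supplies the extra factor of $x_n$, since both $u$ and $a x_n$ vanish on $\{x_n=0\}$) yields the claimed inequality on $Q_{1/2}^+$. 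The principal obstacle is the boundary H\"older regularity of $u_\infty/x_n$ in the compactness step: because the Pucci class is not closed under differentiation, one cannot differentiate the equation to reduce to an interior problem; instead, the oscillation of $u_\infty/x_n$ must be controlled directly via dyadic applications of the weak Harnack inequality to combinations of the form $M x_n - u_\infty$ and $u_\infty - m x_n$, together with Hopf-type barriers to sustain the dichotomy at each scale.
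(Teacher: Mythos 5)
The paper does not supply a proof of this statement; it cites it as \cite[Theorem 2.8]{LZ22}, so there is no internal proof to compare against. Your sketch must therefore be judged on its own.

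The core of what you describe---a boundary Lipschitz bound via comparison with an explicit barrier, followed by a Krylov-type dyadic oscillation improvement for $u/x_n$ using the weak Harnack inequality applied to the nonnegative supersolutions $M x_n - u$ and $u - m x_n$ together with Hopf barriers, and finally an iteration plus interpolation against the rescaled Lipschitz bound to recover the extra factor of $x_n$---is the correct and standard route to this boundary $C^{1,\alpha}$ estimate for the Pucci class, and you have the right ingredients.

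However, the compactness wrapper you place around the improvement-of-flatness step is circular and should be removed. The Pucci class $\mathcal{M}^-_{\lambda,\Lambda}(D^2 u)\leq u_t\leq \mathcal{M}^+_{\lambda,\Lambda}(D^2 u)$ is closed under locally uniform limits, so the limit $u_\infty$ satisfies \emph{exactly} the same hypotheses as each $u_k$; there is no simpler limiting equation and no gain in regularity for $u_\infty$ a priori. The ``boundary H\"older estimate for $u_\infty/x_n$'' that you invoke to reach the contradiction is, up to reformulation, the content of the theorem itself. If you can establish it directly via the dyadic Harnack/Hopf argument---as you sketch in your final paragraph---then that argument applies verbatim to $u$ and proves the theorem outright, with no need to pass to a subsequence. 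If you cannot, the compactness step gives you nothing to work with, because $u_\infty$ is no better than $u_k$. Compactness improvement-of-flatness is powerful when the blow-up limit solves a cleaner equation (constant coefficients, linear, translation invariant), whose solutions have $C^{2,\alpha}$ boundary regularity that outstrips the target $C^{1,\alpha}$; that structure is absent here. Discard the compactness detour and carry out the direct oscillation argument you correctly flag as the ``principal obstacle''---that argument, together with your iteration and interpolation, \emph{is} the proof.

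One small imprecision: the boundary Lipschitz bound gives $|u|\leq Cx_n$, not $|u-ax_n|\leq Cx_n$ directly. To produce the factor $x_n$ in the conclusion, apply the Lipschitz barrier at each dyadic scale to the rescaled function $v_k(x,t)=\rho^{-k}\bigl(u(\rho^k x,\rho^{2k}t)-a_k\rho^k x_n\bigr)$, whose sup-norm you have already controlled by $\rho^{k\alpha}$; the Pucci inequalities and zero boundary condition pass to $v_k$, so the barrier gives $|v_k|\leq C\rho^{k\alpha}x_n$ on $Q^+_{1/2}$, and scaling back yields the pointwise form on the annular region $Q^+_{\rho^k/2}\setminus Q^+_{\rho^{k+1}/2}$.
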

\begin{theorem} [Boundary $C^{2,\alpha}$-regularity]  \label{thm:LZ22-116}
Assume that $\mathcal{F} : \mathcal{S}^n \times Q_1^+ \to \mathbb{R}$ satisfies \textnormal{\ref{F1}} for all $(x,t) \in Q_1^+$. Suppose that there exist constnats $\beta \in (0,1)$, $r>0$, and nonnegative functions $f_1,f_2 \in C^\beta(0,0)$ with $f_1(0,0)=f_2(0,0)$ such that $\mathcal{F}$ satisfies
\begin{equation*}
	|\mathcal{F}(M,x,t)-\mathcal{F}(M,0,0)| \leq f_1(x,t) \|M\| +f_2(x,t) \quad \text{for all } M\in\mathcal{S}^n \text{ and } (x,t)\in\overline{Q_r^+}.
\end{equation*}
Let $u$ be a viscosity solution to 
\begin{equation*}  
\left\{\begin{aligned}
	u_t &= \mathcal{F}(D^2 u,x,t) && \text{in } Q_1^+ \\
	u&=0 && \text{on } \partial_p Q_1^+.
\end{aligned}\right.
\end{equation*}
Then $u\in C^{2,\alpha}(0,0)$ for some $\alpha \in (0, \beta)$, that is, there exists a polynomial $P(x,t)$ with $\deg P \leq 2$ such that
\begin{equation*}
	|u(x,t) -P(x,t)| \leq C(\|u\|_{L^\infty(Q_1^+)} + \|f_2\|_{C^\alpha(0,0)}) (|x|^{2+\alpha}+|t|^{\frac{2+\alpha}{2}}) \quad \text{for all } (x,t) \in Q_{1/2}^+,
\end{equation*}
where $C>0$ is a constant depending only on $n$, $\lambda$, $\Lambda$, $\alpha$, and $\|f_1\|_{C^\alpha(0,0)}$. 
\end{theorem}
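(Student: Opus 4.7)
The natural approach is the standard perturbation-plus-iteration scheme that upgrades \Cref{thm:LZ22-28} from $C^{1,\alpha}$ to $C^{2,\alpha}$ at the boundary point $(0,0)$. \Cref{thm:LZ22-28} already supplies a linear approximation $a x_n$ at $(0,0)$. I would inductively construct, at each dyadic scale $r_k = \rho^k$ with a universal $\rho \in (0,1)$, a polynomial $P_k(x,t)$ of parabolic degree at most $2$ (quadratic in $x$, linear in $t$) with $P_k \equiv 0$ on $\{x_n = 0\}$, solving the frozen-coefficient equation $\partial_t P_k = \mathcal{F}(D^2 P_k, 0, 0)$, and satisfying $\sup_{Q_{r_k}^+} |u - P_k| \leq N r_k^{2+\alpha}$ with $\|D^2 P_k\| \leq M$ uniformly in $k$. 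Imposing that $P_k$ solves the frozen equation is essential because it makes the rescaled error $v(x,t) = r_k^{-(2+\alpha)}\bigl[u(r_k x, r_k^2 t) - P_k(r_k x, r_k^2 t)\bigr]$ satisfy a clean translated equation on $Q_1^+$ with zero boundary data on $S_1$.

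The iteration step proceeds as follows. Using uniform ellipticity and the structural hypothesis on $\mathcal{F}$, a direct computation shows that $v$ satisfies the two-sided Pucci inequalities
\begin{equation*}
  \mathcal{M}^-_{\lambda,\Lambda}(D^2 v) - h \leq v_t - G_k(D^2 v) \leq \mathcal{M}^+_{\lambda,\Lambda}(D^2 v) + h \quad \text{in } Q_1^+,
\end{equation*}
where $G_k(M) = r_k^{-\alpha}\bigl[\mathcal{F}(D^2 P_k + r_k^\alpha M, 0, 0) - \mathcal{F}(D^2 P_k, 0, 0)\bigr]$ is a uniformly elliptic, $(x,t)$-independent operator with $G_k(0) = 0$, and $\|h\|_{L^\infty(Q_1^+)} \leq C r_k^{\beta - \alpha}$ follows from the $C^\beta$-hypothesis on $f_1, f_2$ together with the decomposition $D^2 u = D^2 P_k + r_k^\alpha D^2 v$, provided $\alpha < \beta$ and $M$ stays bounded along the iteration. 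Comparison against Pucci barriers with the solution $w$ of the frozen problem $w_t = G_k(D^2 w)$ in $Q_1^+$ with $w = v$ on $\partial_p Q_1^+$ yields $\|v - w\|_{L^\infty(Q_1^+)} \leq C r_k^{\beta - \alpha}$, while boundary $C^{2,\alpha_0}$-regularity for the constant-coefficient problem on the flat half-space with zero Dirichlet data produces a polynomial $Q$ of parabolic degree $\leq 2$ with $\sup_{Q_\rho^+} |w - Q| \leq C \rho^{2+\alpha_0}$. Taking $\alpha < \min\{\alpha_0, \beta\}$, setting
\begin{equation*}
  P_{k+1}(x,t) = P_k(x,t) + r_k^{2+\alpha}\, Q\!\left(\frac{x}{r_k}, \frac{t}{r_k^2}\right),
\end{equation*}
and choosing $\rho$ small depending only on the universal constants closes the induction; summing the telescoping increments produces the limiting polynomial $P$ and the asserted estimate.

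The main obstacle is the boundary $C^{2,\alpha_0}$-estimate for the frozen operator $G_k$ on the flat half-space with zero Dirichlet data. Since the hypotheses do not explicitly require $\mathcal{F}(\cdot, 0, 0)$ to be convex or concave, Evans-Krylov theory is not directly applicable. I would circumvent this by odd reflection across $\{x_n = 0\}$: because $G_k$ has no explicit gradient dependence and $w \equiv 0$ on $S_1$, the odd extension of $w$ is a viscosity solution on all of $Q_1$ of a uniformly parabolic $(x,t)$-independent equation with the same ellipticity constants, so the required boundary $C^{2,\alpha_0}$-estimate reduces to an interior $C^{2,\alpha_0}$-estimate for the reflected equation. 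A second subtlety, tightly linked to the first, is the control of $\|D^2 u\|$ appearing in the freezing error: the splitting $D^2 u = D^2 P_k + r_k^\alpha D^2 v$ lets one absorb the $r_k^\alpha \|D^2 v\|$ contribution into the Pucci part only when $\alpha < \beta$, which is precisely the restriction stated in the theorem. Propagating the uniform bound $\|D^2 P_k\| \leq M$ along the iteration is then a standard telescoping argument once $\rho$ is fixed.
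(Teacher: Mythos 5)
The paper does not prove this theorem: it is cited directly from Lian--Zhang \cite{LZ22} (Theorem 1.16 there), so there is no in-paper proof to compare with. Your high-level perturbation-and-iteration scheme is in the right spirit, but the key technical step --- boundary $C^{2,\alpha_0}$ regularity for the frozen constant-coefficient half-space problem --- cannot be obtained by odd reflection, and this is a genuine gap.

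Two things go wrong with the reflection. First, writing $\tilde w(x',x_n,t) := -w(x',-x_n,t)$ for $x_n<0$ gives $D^2\tilde w(x,t) = -R\,(D^2 w)(Rx,t)\,R$, where $R$ is the orthogonal reflection fixing $x'$ and flipping $x_n$; hence in the lower half-cylinder $\tilde w$ solves $\tilde w_t = \widehat G_k(D^2\tilde w)$ with $\widehat G_k(M) := -G_k(-RMR)$. In general $\widehat G_k \neq G_k$ (for instance $G_k = \mathcal{M}^+_{\lambda,\Lambda}$ reflects to $\widehat G_k = \mathcal{M}^-_{\lambda,\Lambda}$), so the extended problem has an operator discontinuity across $\{x_n=0\}$, and interior $C^{2,\alpha}$ at a point sitting on that discontinuity is not a standard estimate. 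Second, even if the reflected operator happened to coincide with $G_k$, an interior $C^{2,\alpha}$ estimate for a general $(x,t)$-independent uniformly parabolic fully nonlinear equation is \emph{not} available without convexity or concavity (Evans--Krylov is needed, and by Nadirashvili--Vl\u{a}du\c{t}-type counterexamples interior $C^{2}$ genuinely fails in the nonconvex case). The statement you are proving assumes only \ref{F1}, not \ref{F2}, so the interior reduction is doomed on both counts. The boundary $C^{2,\alpha}$ estimate holds without convexity precisely because the flat boundary with zero Dirichlet data annihilates all tangential and mixed second derivatives of the approximating polynomial at the origin, leaving only the $u_{nn}$ coefficient, which is then recovered algebraically from the frozen equation $P_t = \mathcal{F}(D^2P,0,0)$ via the implicit function theorem. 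Reflecting to an interior problem throws this boundary structure away. The correct route for the model step is to apply \Cref{thm:LZ22-28} to tangential difference quotients of $w$ (which still vanish on $S_1$), obtain decay of the tangential Hessian components and of $w_t$, and then use uniform ellipticity to pin down $w_{nn}(0,0)$; your outer perturbation iteration can then be run as you describe.
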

The concept of parabolic semijets provides a generalized notion of second-order contact in the viscosity framework and serves as a foundation for formulating key tools in the theory.
\begin{definition}[Parabolic semijets] 
	 Let $u: Q_1 \to \mathbb{R}$ be an upper (resp. lower) semicontinuous function and let $(x, t)\in Q_1$.
\begin{enumerate}[label=(\roman*)]
	\item A \textit{parabolic superjet} $\mathscr{P}^{2, +}u(x, t)$ (resp. \textit{parabolic subjet} $\mathscr{P}^{2, -}u(x, t)$) is the set of all triples $(\sigma, p, X) \in \mathbb{R} \times \mathbb{R}^n \times \mathcal{S}^n$ such that
\begin{align*}
	u(y, s) &\leq (\textnormal{resp.} \ge)~ u(x,t)+\sigma(s-t)+p \cdot(y-x) \\
	&\quad +\frac{1}{2} (y-x)^T X (y-x) +o(|y-x|^2+|s-t|) \quad \text{as $(y, s) \to (x, t)$}. 
\end{align*}
	\item A \textit{limiting superjet} $\overline{\mathscr{P}}^{2, +}u(x, t)$ (resp. \textit{limiting subjet} $\overline{\mathscr{P}}^{2, -}u(x, t)$) is the set of all triples $(\sigma, p, X) \in \mathbb{R} \times \mathbb{R}^n \times \mathcal{S}^n$ for which there exists a sequence $\{(x_n, t_n, \sigma_k, p_k, X_k)\}_{k=1}^{\infty}$ such that $(\sigma_k, p_k, X_k) \in \mathscr{P}^{2, +}u(x_k, t_k)$ (resp. $\mathscr{P}^{2, -}u(x_k, t_k)$) and 
	\begin{equation*}
		\text{$(x_k, t_k, u(x_k, t_k), \sigma_k, p_k, X_k) \to (x, t, u(x, t), \sigma, p, X)$} \quad \text{as } k \to \infty.
	\end{equation*}
\end{enumerate}	
\end{definition}

One of the most powerful and widely used tools in the analysis of viscosity solutions is Jensen--Ishii’s lemma. 
This lemma enables one to perform second-order differential tests at a local maximum of an auxiliary function involving several semicontinuous functions, even in the absence of classical differentiability. Although originally developed in the context of comparison principles, the lemma has found broad applications in establishing various regularity results, including H\"older and Lipschitz estimates. In this work, we employ the parabolic version of Jensen--Ishii’s lemma as a central component in the derivation of such estimates. The lemma can be found in \cite[Theorem 8.3]{CIL92}.
\begin{lemma}[Jensen--Ishii's lemma]\label{lem:Jen-Ish}
Let $u,v: Q_1 \to\mathbb{R}$ be upper semicontinuous functions and let $\varphi : B_1 \times B_1 \times (-1, 0) \to \mathbb{R}$ be a function that is $(x, y, t) \mapsto \varphi(x,y, t)$ is twice continuously differentiable in $(x,y)$ and once continuously differentiable in $t$. Suppose that
\begin{equation*}
	 w(x,y, t) = u(x, t) + v(y, t)-\varphi(x,y,t)
\end{equation*}
attains a local maximum at $(x_0, y_0, t_0) \in B_1 \times B_1 \times (-1, 0)$. Moreover, assume that there exists a constant $r>0$ such that for every $K>0$, one can find a constant $C>0$ satisfying the following property: if
\begin{equation*}
	 |x-x_0| + |t-t_0| \leq r , \quad
	 |y-y_0| + |t-t_0| \leq r, 
\end{equation*}
\begin{equation*}
	 (\upsilon_1, q_1, M) \in \mathscr{P}^{2, +}u(x, t), \quad 
	 (\upsilon_2, q_2, N) \in \mathscr{P}^{2, +}v(y, t), 
\end{equation*}
\begin{equation*}
	|u(x, t)| +  |q_1| + \|M\| \leq K, \quad \text{and} \quad
	|v(y, t)| + |q_2| + \|N\| \leq K,
\end{equation*}
then $\upsilon_1 \leq C$ and $\upsilon_2 \leq C$.
Then for each $\delta>0$, there exist $\sigma_1, \sigma_2 \in \mathbb{R}$ and $X, Y \in \mathcal{S}^n$ such that
\begin{enumerate}[label=(\roman*)]
	\item $(\sigma_1, p_1, X) \in \overline{\mathscr{P}}^{2, +}u(x_0, t_0)$ and $(\sigma_2, p_2, Y) \in \overline{\mathscr{P}}^{2, +}v(y_0, t_0)$, 
	\item $-(\delta^{-1}+\|A\|) \begin{pmatrix}
			I & O \\
			O & I
		\end{pmatrix} \leq 
		\begin{pmatrix}
			X & O \\
			O & Y
		\end{pmatrix}
		\leq A+ \delta A^2,$
	\item $\sigma_1 + \sigma_2=\varphi_t(x_0, y_0, t_0)$,
\end{enumerate}
where $p_1 = D_{x}\varphi(x_0, y_0, t_0)$, $p_2 =D_{y}\varphi(x_0, y_0, t_0)$, and $A=D_{(x,y)}^2\varphi (x_0, y_0, t_0)$.
\end{lemma}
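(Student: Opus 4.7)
The plan is to follow the classical Crandall--Ishii--Lions strategy, adapted to the parabolic setting where one must treat the time derivative separately from the spatial second-order information. The starting point is to regularize $u$ and $v$ by sup-convolution. Specifically, I would consider, for small $\eta>0$,
\begin{equation*}
	u^\eta(x,t) = \sup_{(y,s)\in Q_1} \Bigl\{ u(y,s) - \tfrac{1}{2\eta}|x-y|^2 - \tfrac{1}{2\eta}(t-s)^2 \Bigr\},
\end{equation*}
and the analogous $v^\eta$. These are semiconvex (the function $u^\eta(x,t) + \tfrac{1}{2\eta}(|x|^2+t^2)$ is convex) and decrease to $u,v$ as $\eta \to 0$. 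By Alexandrov's theorem, semiconvex functions are twice differentiable a.e., and the key fact connecting sup-convolutions to semijets is that any element of $\mathscr{P}^{2,+}u^\eta(x,t)$ attained at an Alexandrov point produces, via the sup argmax, an element of $\overline{\mathscr{P}}^{2,+}u(\tilde x,\tilde s)$ for a nearby $(\tilde x,\tilde s)$.

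Next, since $w = u + v - \varphi$ attains a local maximum at $(x_0,y_0,t_0)$, a standard compactness and lower-semicontinuity argument shows that $w^\eta = u^\eta + v^\eta - \varphi$ attains a local maximum at some point $(x_\eta,y_\eta,t_\eta) \to (x_0,y_0,t_0)$. I would then invoke Jensen's perturbation lemma: by adding small linear perturbations $-\varepsilon_1\cdot x - \varepsilon_2 \cdot y - \mu t$ to $w^\eta$, the perturbed maximum can be forced to occur at a point where $u^\eta$ and $v^\eta$ are both twice differentiable in space and differentiable in time, and a positive-measure set of such perturbations exists by the Alexandrov property. At any such perturbed max, classical calculus applies: $D_x u^\eta + D_x\varphi = \varepsilon_1$, etc., and the spatial Hessian inequality $D^2_{(x,y)}(u^\eta \oplus v^\eta) \leq D^2_{(x,y)}\varphi$ is the matrix form of the second-order condition. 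The matrix bound in (ii) then follows by the standard linear-algebra trick of writing $A = D^2_{(x,y)}\varphi$ and applying the identity $(I+\delta A)^{-1} \geq I - \delta A$ to eliminate the $\delta^{-1}$ singularity, exactly as in \cite[Theorem 8.3]{CIL92}.

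The delicate point in the parabolic version — and the one requiring the hypothesis on the time components $\upsilon_1, \upsilon_2$ — is that the time derivatives split only as $\sigma_1 + \sigma_2 = \varphi_t$, and a priori one could have $\sigma_1 \to +\infty$ and $\sigma_2 \to -\infty$ along the sup-convolution limit. The uniform upper bound $\upsilon_1, \upsilon_2 \leq C$ assumed in the statement is precisely what prevents this: it gives a bilateral bound when combined with $\sigma_1 + \sigma_2 = \varphi_t$, ensuring that both sequences of time slopes remain bounded and hence admit convergent subsequences. Passing to such subsequences recovers limiting time components $\sigma_1, \sigma_2$ and, by definition of $\overline{\mathscr{P}}^{2,+}$, produces the required elements of the limiting superjets.

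The main obstacle, as usual in this lemma, is not the sup-convolution or the linear-algebra estimate, but rather the careful bookkeeping in the passage to the limit $\eta \to 0$ (and in the Jensen perturbation limit): one must simultaneously ensure that the perturbation points converge to $(x_0,y_0,t_0)$, that the spatial Hessians $X^\eta, Y^\eta$ remain uniformly bounded so as to extract a subsequence, and that the time slopes remain bounded by virtue of the hypothesis. Once these compactness ingredients are in place, the three conclusions (i)--(iii) follow by continuity of the sup-convolution argmax correspondence and by taking limits in the second-order and matrix inequalities.
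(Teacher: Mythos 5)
The paper does not prove this lemma; it merely cites it from \cite[Theorem~8.3]{CIL92}, so there is no in-paper proof to compare against. Your sketch correctly reproduces the standard Crandall--Ishii--Lions argument behind that reference (space-time sup-convolution, Jensen's perturbation via Alexandrov points, the magic-property transfer of semijets back to $u$ and $v$, and the use of the one-sided bound $\upsilon_1,\upsilon_2\leq C$ together with $\sigma_1+\sigma_2=\varphi_t$ to keep the time slopes from escaping in opposite directions), up to a sign typo in the first-order condition (it should read $D_x u^\eta - D_x\varphi = \varepsilon_1$).
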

%
%
\section{Boundary regularity of small perturbation solutions} \label{sec:small_pert}
In this section, we establish the boundary regularity of small perturbation solutions to the regularized problem
\begin{equation} \label{eq:ep-nu-p} 
\left\{\begin{aligned}
	u_t &= (|\nu Du + p|^2 + \varepsilon^2)^{\gamma/2} F(D^2 u) + f  && \text{in } \Omega \cap Q_1 \\
	u&=g && \text{on } \partial_p \Omega \cap Q_1.
\end{aligned}\right.
\end{equation}
To this end, we first derive interior Lipschitz estimates of viscosity solutions to \eqref{eq:general'} in $ Q_1$, and boundary Lipschitz estimates of viscosity solutions to \eqref{eq:ep-nu-p}.

Throughout this paper, we assume that  
\begin{equation*}
	\gamma>-1 \quad \text{and} \quad 
	0\leq \varepsilon\leq1.
\end{equation*}
\begin{theorem} [Boundary regularity of small perturbation solutions] \label{lem:small_bdry}
Assume that $F$ satisfies \textnormal{\ref{F1}}. Let $\alpha \in (0,1)$ and  let  $u$ be a viscosity solution to \eqref{eq:ep-nu-p} with  
\begin{equation*}
 	0\leq \nu \leq 1 ,\quad 
	1/2 \leq |p| \leq 2, \quad 
	\partial_p \Omega \in C^{1,\alpha}(0,0), 
\end{equation*}
 \begin{equation*}
	\|f\|_{L^\infty(\Omega\cap Q_1)} \leq 1, \quad
	g \in C^{1,\alpha}(0,0), \quad \text{and} \quad
	g(0,0)=0=|Dg(0,0)|.
 \end{equation*}
 Suppose that there exists a constant $\eta \in (0,1)$ depending only on $n$, $\lambda$, $\Lambda$, $\alpha$, $\gamma$, and $[\partial_p\Omega]_{C^{1,\alpha}(0,0)}$ such that
\begin{equation*}
	\|u\|_{L^\infty(\Omega\cap Q_1)} \leq \eta \quad \text{and} \quad \|g\|_{C^{1,\alpha}(0,0)} \leq \eta.
\end{equation*}
Then $u\in C^{1,\alpha}(0,0)$, that is, there exists a constant $a\in \mathbb{R}$ such that 
\begin{equation*}
	|u(x,t) - ax_n| \leq C( |x|^{1+\alpha} + |t|^{\frac{1+\alpha}{2}}) \quad \text{for all } (x,t) \in \Omega\cap Q_1
\end{equation*}
and $|a| \leq C\eta$, where $C>0$ is a constant depending only on $n$, $\lambda$, $\Lambda$, $\alpha$, and $\gamma$.
\end{theorem}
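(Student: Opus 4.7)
The plan is to prove this via a Savin-style small-perturbation/compactness argument, reducing matters to the boundary $C^{1,\alpha}$-regularity for uniformly parabolic equations given by Theorem~\ref{thm:LZ22-28}. The key observation is that since $1/2 \le |p| \le 2$, $\nu \le 1$, and the smallness $\|u\|_{L^\infty} \le \eta$ together with interior and boundary Lipschitz bounds on $|Du|$ force the coefficient $(|\nu Du + p|^2 + \varepsilon^2)^{\gamma/2}$ to stay in a compact subset of $(0, \infty)$, the equation is effectively uniformly parabolic near $(0,0)$ and the $|Du|^\gamma$-degeneracy is not triggered.

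I would first establish a one-step improvement-of-flatness lemma: there exist $\rho \in (0,1)$ and $\eta_0 > 0$, depending only on $n, \lambda, \Lambda, \alpha, \gamma$, and $[\partial_p \Omega]_{C^{1,\alpha}(0,0)}$, such that if the hypotheses hold with $\eta \le \eta_0$ then one can find $a \in \mathbb{R}$ with $|a| \le \bar{C}\eta$ satisfying
\[
\sup_{\Omega \cap Q_\rho} |u - ax_n| \le \rho^{1+\alpha} \eta.
\]
The rescaled function $\tilde{u}(x,t) := \rho^{-(1+\alpha)} \bigl(u(\rho x, \rho^2 t) - ax_n \bigr)/\eta$ then satisfies a regularized equation of the same form with suitably updated $p, \nu, \varepsilon$, and $f$, lying again in the admissible class. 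Iterating the lemma and summing the slope corrections as a geometric series produces the desired $C^{1,\alpha}$-expansion at $(0,0)$ with $|a| \le C\eta$.

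The one-step lemma is proved by contradiction through compactness. Suppose it fails; extract sequences $(u_k, F_k, f_k, g_k, \Omega_k, \nu_k, p_k, \varepsilon_k)$ satisfying the hypotheses with $\eta_k \to 0$, and set $v_k := u_k/\eta_k$, so that $\|v_k\|_{L^\infty} \le 1$. The interior and boundary Lipschitz estimates developed earlier in this section, combined with Arzel\`a–Ascoli, produce a locally uniform limit $v_k \to v_\infty$ along a subsequence. Along further subsequences, $\nu_k \eta_k \to 0$, $p_k \to p_\infty$ with $|p_\infty| \in [1/2, 2]$, $\varepsilon_k \to \varepsilon_\infty$, $F_k \to F_\infty$ uniformly on compacta of $\mathcal{S}^n$ (preserving \ref{F1}), and the quantitative $C^{1,\alpha}$-flatness of $\partial_p \Omega_k$ forces the domains to flatten to the half-space $\{x_n > 0\}$ at rate $O(\rho^{1+\alpha})$. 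Since $\|g_k\|_{C^{1,\alpha}} \le \eta_k$ with $g_k(0,0) = |Dg_k(0,0)| = 0$, the rescaled boundary values $g_k/\eta_k$ converge to zero on the limiting flat boundary. The stability result Theorem~\ref{thm:stability1} then identifies $v_\infty$ as a viscosity solution to the uniformly parabolic equation
\[
(v_\infty)_t = (|p_\infty|^2 + \varepsilon_\infty^2)^{\gamma/2} F_\infty(D^2 v_\infty)
\]
on $Q_1^+$ with zero Dirichlet data on $S_1$. Theorem~\ref{thm:LZ22-28} yields $a_\infty \in \mathbb{R}$ with $|a_\infty| \le C$ and $|v_\infty - a_\infty x_n| \le C\rho^{1+\bar{\alpha}}$ on $Q_\rho^+$ for some $\bar{\alpha} > \alpha$; choosing $\rho$ so that $C\rho^{\bar{\alpha}-\alpha} < 1$ and setting $a_k := \eta_k a_\infty$ contradicts the assumed failure for large $k$.

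The main obstacle will be controlling the rescaled inhomogeneity $f_k/\eta_k$ appearing in the equation for $v_k$, which is not automatically small and may diverge as $\eta_k \to 0$. To close the argument one needs either to exploit a matching smallness of $f$ (so that in the context where this lemma is applied $\|f\|_{L^\infty}$ is of order $\eta$) or to subtract an auxiliary particular solution driven by $f$ so that the limit problem remains homogeneous. A secondary technical point is the locally uniform convergence of the varying domains $\Omega_k$ to the half-space; this is handled quantitatively by the $C^{1,\alpha}$-flatness of $\partial_p \Omega_k$, which also justifies the stability of the viscosity notion across the domain limit through appropriate barrier constructions near the flattened boundary.
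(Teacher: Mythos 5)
Your overall strategy -- an improvement-of-flatness step proved by compactness, followed by iteration -- is the right one, and it is in fact the strategy the paper follows through Lemma~\ref{lem:small_1st}. However, there are two genuine gaps that prevent your proposal from closing, and the paper's proof avoids both via a different normalization in the one-step lemma.

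First, the inhomogeneity issue you flag yourself is real and is not resolved by either escape route you suggest. The theorem only assumes $\|f\|_{L^\infty}\leq 1$, so you cannot invoke ``matching smallness''; and subtracting a particular solution of the degenerate/singular operator in the viscosity setting is far from straightforward. The paper sidesteps this entirely: instead of setting $v_k=u_k/\eta_k$ with $\eta_k\to 0$, Lemma~\ref{lem:small_1st} sends the \emph{radius} $r_k\to 0$ and rescales $\tilde u_k(x,t)=\bigl(u_k(r_kx,r_k^2t)-a_kr_kx_n\bigr)/r_k^{1+\alpha}$. Under this scaling $\tilde f_k=r_k^{1-\alpha}f_k(r_k\,\cdot\,,r_k^2\,\cdot\,)$, so $\|\tilde f_k\|_{L^\infty}\leq r_k^{1-\alpha}\to 0$ automatically, and the rescaled boundary data and domain oscillation likewise vanish at rate $O(r_k^\alpha)$; the $\eta$-smallness is used only once, via Lemma~\ref{lem:bdry_lip1}, to initialize the iteration at the first radius $\delta_0$. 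Your $\eta$-normalization destroys this self-improving structure.

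Second, invoking Theorem~\ref{thm:LZ22-28} for the limit problem does not deliver the claimed conclusion for the prescribed $\alpha$. That theorem produces boundary $C^{1,\bar\alpha}$ regularity only for \emph{some} fixed $\bar\alpha\in(0,1)$ depending on $n,\lambda,\Lambda$; there is no reason this $\bar\alpha$ exceeds the $\alpha$ given in the theorem statement (which ranges over all of $(0,1)$), so the quantity $C\rho^{\bar\alpha-\alpha}$ need not be small. The paper instead applies the boundary $C^{2,\alpha}$ result Theorem~\ref{thm:LZ22-116} to the limit (a uniformly parabolic, homogeneous, flat-boundary problem with zero data), obtaining
\begin{equation*}
  \|\tilde u-\tilde a x_n\|_{L^\infty(Q_\tau^+)}\leq C\tau^2 \quad\text{for all } \tau\in(0,1/2),
\end{equation*}
and the quadratic decay $\tau^2$ beats $\tau^{1+\alpha}$ for \emph{every} $\alpha<1$, which is exactly what the iteration requires. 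Replace Theorem~\ref{thm:LZ22-28} by Theorem~\ref{thm:LZ22-116} and the radius-normalization for the $\eta$-normalization, and your argument matches the paper's proof.
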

This theorem is fundamentally based on a compactness argument showing that the locally uniform limit of a sequence of viscosity solutions remains a solution to a uniformly parabolic equation, owing to the assumption $1/2 \leq |p| \leq 2$. This structural property enables us to carry out the boundary regularity analysis in a uniformly parabolic framework, despite the original degeneracy.
\subsection{Interior Lipschitz estimates in the spatial variables}
In this subsection, we establish interior Lipschitz estimates in the spatial variables for viscosity solutions to \eqref{eq:general'} in $ Q_1$. 
We first consider a simplified case with $\nu=1$ and $p=0$, where we prove log-Lipschitz estimates. These estimates are improved to Lipschitz estimates. Finally, we extend the argument to \eqref{eq:general'}, thereby obtaining the desired Lipschitz regularity.
\begin{lemma} \label{lem:log-Lip}
Assume that $F$ satisfies \textnormal{\ref{F1}} and let $u$ be a viscosity solution to \eqref{eq:general'} in $Q_1$ with $\nu=1$ and $p=0$. Then, for each $t\in(-1/4,0]$, $u(\cdot,t)$ is log-Lipschitz continuous in $B_{1/2}$, that is,
\begin{equation*}
	|u(x,t)-u(y,t)| \leq C(\|u\|_{L^\infty(Q_1)} + \|u\|_{L^\infty(Q_1)}^\frac{1}{1+\gamma}+ \|f\|_{L^\infty(Q_1)}^\frac{1}{1+\gamma}) |x-y| \big| \log |x-y| \big| 
\end{equation*}
for all $(x,t),(y,t) \in Q_{1/2}$, where $C>0$ is a constant depending only on $n$, $\lambda$, $\Lambda$, and $\gamma$. 
\end{lemma}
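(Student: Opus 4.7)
The plan is to use the Ishii--Lions doubling-of-variables technique, in which the main contribution from the doubling produces a very negative eigenvalue that, balanced against the gradient factor $(|p|^2 + \varepsilon^2)^{\gamma/2}$, yields the log-Lipschitz modulus. Fix $t_0 \in (-1/4, 0]$ and $x_*, y_* \in B_{1/2}$, and let $\omega$ be a smooth concave modulus with $\omega(0) = 0$ that equals $r(1 - \log r)$ on $(0, r_0]$ for a small $r_0 > 0$ (so that $\omega'(r) = -\log r$ and $\omega''(r) = -1/r$ there), extended boundedly for $r \geq r_0$. With constants $\mu, L > 0$ to be fixed, set
\begin{equation*}
\Phi(x, y, t) = u(x, t) - u(y, t) - L\omega(|x-y|) - \mu\bigl(|x - x_*|^2 + |y - y_*|^2 + (t - t_0)^2\bigr)
\end{equation*}
on $\overline{B_1}\times\overline{B_1}\times[-1, 0]$. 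Choosing $\mu = C_0\|u\|_{L^\infty(Q_1)}$ with $C_0$ universal and large forces any maximizer $(\bar x, \bar y, \bar t)$ of $\Phi$ to lie in $B_{3/4}\times B_{3/4}\times(-1/2, 0]$. We aim to show that, for $L = C_*\bigl(\|u\|_{L^\infty(Q_1)} + \|u\|_{L^\infty(Q_1)}^{1/(1+\gamma)} + \|f\|_{L^\infty(Q_1)}^{1/(1+\gamma)}\bigr)$ and a suitable universal $C_*$, the inequality $\max\Phi \leq 0$ holds; evaluating at $(x_*, y_*, t_0)$ and invoking symmetry in $(x_*, y_*)$ then gives the log-Lipschitz estimate.

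Suppose for contradiction that $\max\Phi > 0$. Since $\Phi(\bar x, \bar x, \bar t) \leq 0$, the maximizer must satisfy $\bar x \neq \bar y$; denote $r = |\bar x - \bar y|$ and $\hat e = (\bar x - \bar y)/r$. For $L$ chosen large enough we may further assume $r \leq r_0$. Applying \Cref{lem:Jen-Ish} to $u(x, t) + (-u)(y, t) - \varphi(x, y, t)$ (where $\varphi$ is the penalty), we obtain, for each $\delta > 0$, triples
\begin{equation*}
(\sigma_1, p_1, X) \in \overline{\mathscr{P}}^{2,+}u(\bar x, \bar t), \qquad (-\sigma_2, -p_2, -Y) \in \overline{\mathscr{P}}^{2,-}u(\bar y, \bar t),
\end{equation*}
with $\sigma_1 + \sigma_2 = 2\mu(\bar t - t_0)$, gradients $p_1 = L\omega'(r)\hat e + 2\mu(\bar x - x_*)$ and $p_2 = -L\omega'(r)\hat e + 2\mu(\bar y - y_*)$, and matrices obeying the standard block matrix inequality. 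Testing this inequality against the pairs $(v, v)$ and $(\hat e, -\hat e)$ and using $\omega''(r) = -1/r$ yields
\begin{equation*}
X + Y \leq 4\mu I \quad \text{and} \quad \hat e^T(X + Y)\hat e \leq -\frac{4L}{r} + O(\mu),
\end{equation*}
which forces $\mathcal{M}^+_{\lambda, \Lambda}(X + Y) \leq -\tfrac{4\lambda L}{r} + C\mu$ for some $C = C(n, \Lambda)$.

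Subtracting the viscosity inequalities at $(\bar x, \bar t)$ and $(\bar y, \bar t)$ and using the ellipticity bound $F(X) - F(-Y) \leq \mathcal{M}^+_{\lambda, \Lambda}(X + Y)$ gives
\begin{equation*}
2\mu(\bar t - t_0) + f(\bar y, \bar t) - f(\bar x, \bar t) \leq (|p_1|^2 + \varepsilon^2)^{\gamma/2}F(X) - (|p_2|^2 + \varepsilon^2)^{\gamma/2}F(-Y).
\end{equation*}
For $L \geq 8\mu$ and $r \leq r_0$, both $|p_1|$ and $|p_2|$ are bounded below by $\tfrac{1}{2}L|\log r|$, so the leading term on the right is $(|p_1|^2 + \varepsilon^2)^{\gamma/2}\mathcal{M}^+_{\lambda,\Lambda}(X+Y)$, of order $-c\, L^{1+\gamma}|\log r|^\gamma/r$ with $c = c(n, \lambda, \Lambda) > 0$. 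The remaining term, produced by the mismatch $(|p_1|^2+\varepsilon^2)^{\gamma/2} - (|p_2|^2 + \varepsilon^2)^{\gamma/2}$, is of lower order in $L$ because $|p_1 + p_2| = O(\mu)$ makes the two factors comparable up to relative error $O(\mu/L)$, while $\|Y\| \lesssim L|\log r|/r$ controls $F(-Y)$. The left-hand side is bounded in absolute value by $2\mu + 2\|f\|_{L^\infty(Q_1)}$; since $|\log r|^\gamma/r \geq 1$ for small $r$, choosing $L$ as stated so that $L^{1+\gamma}$ dominates $\mu + \|f\|_{L^\infty(Q_1)}$ produces a contradiction.

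The main technical obstacle is preserving the interplay between the gradient factor $(|p|^2 + \varepsilon^2)^{\gamma/2}$ and the negative coercive contribution of the doubling uniformly across the full range $\gamma > -1$. The constraint $1 + \gamma > 0$ is exactly what makes $L^{1+\gamma}$ increase with $L$, thereby allowing the data $\mu + \|f\|_{L^\infty}$ to be absorbed into the coercive term; the exponent $1/(1+\gamma)$ in the final constant is a direct consequence. In the singular case $\gamma \in (-1, 0)$, one must also verify $|p_i|^2 \gg \varepsilon^2$ so that $(|p_i|^2 + \varepsilon^2)^{\gamma/2} \approx |p_i|^\gamma$; this is automatic once $L$ is large and $r$ small, since $L|\log r| \gg \varepsilon \leq 1$.
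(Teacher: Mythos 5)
Your proposal is correct and uses essentially the same strategy as the paper: an Ishii--Lions doubling-of-variables argument with a logarithmic modulus ($\omega'' = -1/r$), Jensen--Ishii's lemma to extract matrices $X$, $Y$ with a strongly negative eigenvalue of order $-L/r$ in the direction $\hat e$, and a split of $(|p_1|^2+\varepsilon^2)^{\gamma/2}F(X) - (|p_2|^2+\varepsilon^2)^{\gamma/2}F(-Y)$ into the coercive term $(|p_1|^2+\varepsilon^2)^{\gamma/2}\mathcal{M}^+_{\lambda,\Lambda}(X+Y)$ and a coefficient-mismatch remainder; in the paper's notation these are $T_2$ and $T_1$ respectively. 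The only real differences are cosmetic: the paper first normalizes $\|u\|_{L^\infty}, \|f\|_{L^\infty} \le 1$ and works at the origin, whereas you track $L$ and $\mu$ explicitly with a movable base point $(x_*, y_*, t_0)$ (which makes the appearance of the exponent $\tfrac{1}{1+\gamma}$ more transparent), and you would still need to fill in the bound $\|Y\| \lesssim L|\log r|/r + \mathrm{l.o.t.}$, which the paper obtains by feeding the viscosity inequality for $F(X)$ through uniform ellipticity (their estimate (3.6)--(3.7)); your assertion of it is correct but unproved in the sketch.
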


\begin{proof}
Without loss of generality, we may assume that $\|u\|_{L^\infty(Q_1)} \leq 1$ and $ \|f\|_{L^\infty(Q_1)}\leq1$. It then suffices to prove log-Lipschitz continuity at the origin, which is equivalent to verifying that
\begin{equation} \label{M<0}
	M\coloneqq \max_{\substack{x,y \in \overline{B_{1/2}}, \\ t \in [-1/4,0]}} \left\{ u(x,t)-u(y,t) - K_1 \phi(|x-y|) - \frac{K_2}{2} |x|^2- \frac{K_2}{2}|y|^2 - \frac{K_2}{2} t^2 \right\} \leq 0,
\end{equation}
where 
\begin{equation*}
\phi(r) =\begin{cases}
	-r \log r & \text{if } 0 \leq r \leq e^{-1} \\
	e^{-1} & \text{if } r \ge e^{-1}
\end{cases}
\end{equation*}

The proof of \eqref{M<0} proceeds by contradiction. Suppose, for contradiction, that there exist $(x_0,t_0),(y_0,t_0) \in \overline{Q_{1/2}}$ such that 
\begin{equation*}
	u(x_0,t_0)-u(y_0,t_0) - K_1 \phi(|x_0-y_0|) - \frac{K_2}{2} |x_0|^2- \frac{K_2}{2} |y_0|^2 - \frac{K_2}{2} t_0^2 > 0,
\end{equation*}
which implies that  
\begin{equation} \label{x0y0t0}
	x_0\neq y_0, \quad 
	\phi(\rho) \leq \frac{|u(x_0,t_0)-u(y_0,t_0)|}{K_1} \leq \frac{2}{K_1}, \quad \text{and} \quad 
	|x_0| + |y_0| + |t_0| \leq \frac{6}{\sqrt{K_2}} ,
\end{equation}
where $\rho=|z_0|$ and $z_0=x_0-y_0$. 

Choose $K_2>0$ sufficiently large so that $(x_0,t_0),(y_0,t_0) \in Q_{1/2}$. Furthermore, by chooing $K_1$ sufficiently large so that $\rho$ is small enough to safisty
\begin{equation} \label{pp'}
	\phi(\rho) \ge 2\rho  \quad\text{and} \quad
	\phi'(\rho) \ge 1.
\end{equation}
Then, by \Cref{lem:Jen-Ish}, for every $\delta>0$, there exist $\sigma_{x_0}, \sigma_{y_0}\in \mathbb{R}$ and $X,Y \in \mathcal{S}^n$
such that 
\begin{enumerate}  [label=(\roman*)]
	\item 
	$\Bigg\{\begin{aligned} 
		\sigma_{x_0} &\leq (|p_{x_0}|^2+\varepsilon^2)^{\gamma/2} F(X) + f(x_0, t_0) \\
		\sigma_{y_0} &\ge (|p_{y_0}|^2+\varepsilon^2)^{\gamma/2} F(Y) + f(y_0, t_0),
	\end{aligned} $
	\item $\begin{pmatrix}
			X & O \\
			O & -Y
		\end{pmatrix} \leq K_1
		\begin{pmatrix}
			Z & -Z \\
			-Z & Z
		\end{pmatrix} + (2K_2 +\delta)
		\begin{pmatrix}
			I & O \\
			O & I
		\end{pmatrix},$
	\item $\sigma_{x_0} - \sigma_{y_0}  = K_2 t_0$,
\end{enumerate}
where 
\begin{align*}
	p_{x_0} &= q + K_2 x_0 ,\quad
	p_{y_0}= q - K_2 y_0, \quad \text{and} \quad 
	Z= \phi''(\rho) \bar{z}_0 \otimes \bar{z}_0 + \rho^{-1}  \phi'(\rho)  (I - \bar{z}_0 \otimes \bar{z}_0)
\end{align*}
for $q = K_1 \phi'(\rho) \bar{z}_0$  and  $\bar{z}_0 = z_0 / \rho$. Combining (i) and (iii), we obtain
\begin{align}
	K_2 t_0 & \leq (|p_{x_0}|^2+\varepsilon^2)^{\gamma/2} F(X) -  (|p_{y_0}|^2+\varepsilon^2)^{\gamma/2} F(Y) + 2 \nonumber \\
	&\leq  \underbrace{C \|X\| \big| (|p_{x_0}|^2+\varepsilon^2)^{\gamma/2} -  (|p_{y_0}|^2+\varepsilon^2)^{\gamma/2} \big|}_{\eqqcolon T_1} +  \underbrace{(|p_{y_0}|^2+\varepsilon^2)^{\gamma/2} \mathcal{M}_{\lambda,\Lambda}^+(X-Y)}_{\eqqcolon T_2} + 2, \label{T1+T2}
\end{align}
where $C>0$ is a constant depending only on $n$ and $\Lambda$.

We now aim to find upper bound for $T_1$ and $T_2$. To this end, we first estimate the upper bound of $|q|$, $|p_{x_0}|$, $|p_{y_0}|$, $\|X\|$, and $\|Y\|$. By choosing $K_1$ sufficiently large, we ensure that $|q|$ is large enough, which yields 
\begin{equation} \label{px0py0}
	\frac{1}{2}|q| \leq |p_{x_0}| \leq 2|q| \quad \text{and} \quad
	\frac{1}{2}|q| \leq |p_{y_0}| \leq 2|q|.
\end{equation}
Since $\phi''  <0$, it follows by the matrix inequality (ii) that 
\begin{equation*}
	X, -Y \leq K_1\rho^{-1} \phi'(\rho) (I - \bar{z}_0 \otimes \bar{z}_0) + 3K_2 I,
\end{equation*}
hence, we have 
\begin{align}
	F(X) &\ge (|p_{x_0}|^2+\varepsilon^2)^{-\gamma/2}  (K_2 t - 2) + \frac{(|p_{y_0}|^2+\varepsilon^2)^{\gamma/2}}{(|p_{x_0}|^2+\varepsilon^2)^{\gamma/2}} F(Y) \nonumber \\
	& \ge - C\left(  |q|^{-\gamma} + K_1 \rho^{-1}  \phi'(\rho)  + 1\right), \label{FX_lower}
\end{align}
where $C>0$ is a constant depending only on $n$, $\lambda$, $\Lambda$, and $\gamma$. Thus, it follows from \ref{F1} that 
\begin{equation} \label{XY}
	\|X\|, \|Y\| \leq C\left(  |q|^{-\gamma} + K_1\rho^{-1} \phi'(\rho) + 1\right). 
\end{equation}
We now proceed to estimate $T_1$. By \eqref{x0y0t0}, \eqref{px0py0}, \eqref{XY}, and the mean value theorem, we obtain
\begin{equation} \label{T1}
	T_1 \leq C|q|^{\gamma-1} \|X\| |x_0+y_0| \leq C\left( |q|^{-1} + |q|^{\gamma-1} + \rho^{-1} |q|^\gamma \right).
\end{equation}

Next, we derive an estimate for $T_2$. Evaluating the matrix inequality (ii) at $(\xi,\xi)$ for any vector $\xi\in\mathbb{R}^n$, we have
\begin{equation*}
	(X-Y)\xi \cdot \xi \leq 6K_2 |\xi|^2,
\end{equation*}
which implies that all eigenvalues of $X-Y$ are less than or equal to $3K_2$. On the other hand, evaluating the matrix inequality (ii) at $(\bar{z}_0, -\bar{z}_0)$ yields
\begin{equation*}
	(X-Y)\bar{z}_0 \cdot \bar{z}_0 \leq 4K_1 \phi''(\rho)+ 6K_2,
\end{equation*}
which implies that at least one eigenvalue of $X-Y$ is less than or equal to $4K_1 \phi''(\rho)+ 6K_2$. Hence, we have 
\begin{equation} \label{T2}
	T_2 \leq C  |q|^\gamma(K_1 \phi''(\rho)+1) .
\end{equation}
Combining \eqref{T1+T2}, \eqref{T1}, and \eqref{T2} gives 
\begin{equation} \label{K1p''}
	-K_1 \phi''(\rho) \leq C (1 + \rho^{-1} + |q|^{-1}  + |q|^{-\gamma} +  |q|^{-\gamma-1}).
\end{equation}
We now show that, by choosing $K_1$ sufficiently large, \eqref{K1p''} does not hold. From \eqref{x0y0t0} and \eqref{pp'}, we have 
\begin{equation*} 
	\rho\leq \frac{1}{2}\phi(\rho) \leq \frac{1}{K_1}, \quad 
	K_1 \leq |q|= K_1 \phi'(\rho) \leq-K_1 \log \rho, \quad \text{and} \quad
	-\phi''(\rho)= \frac{1}{\rho} \ge  K_1,
\end{equation*}
By taking $K_1$ sufficiently large, we have
\begin{equation*} 
	C (1 + |q|^{-1}  + |q|^{-\gamma} +  |q|^{-\gamma-1}) 
	\leq \frac{1}{2}K_1^2 \leq -\frac{1}{2} K_1 \phi''(\rho), \quad \text{and} \quad
	C\rho^{-1}< \frac{1}{2}K_1 \rho^{-1}=-\frac{1}{2}K_1 \phi''(\rho) ,
\end{equation*}
which contradicts \eqref{K1p''}.
\end{proof}
\begin{lemma} \label{lem:int-lip}
Assume that $F$ satisfies \textnormal{\ref{F1}} and let $u$ be a viscosity solution to \eqref{eq:general'} in $Q_1$ with $\nu=1$ and $p=0$. Then $u(\cdot,t) \in C^{0,1}(B_{1/2})$ for all $t\in (-1/4,0]$, that is,
\begin{equation*}
	|u(x,t)-u(y,t)|  \leq C(\|u\|_{L^\infty(Q_1)} + \|u\|_{L^\infty(Q_1)}^\frac{1}{1+\gamma}+ \|f\|_{L^\infty(Q_1)}^\frac{1}{1+\gamma})  |x-y|
\end{equation*}
for all  $(x,t),(y,t) \in Q_{1/2}$, where $C>0$ is a constant depending only on $n$, $\lambda$, $\Lambda$, and $\gamma$. 
\end{lemma}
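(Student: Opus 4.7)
The strategy refines the doubling-of-variables argument from \Cref{lem:log-Lip} by replacing the log-type modulus with a linear-type one, and uses the log-Lipschitz estimate just proved as an a priori input to close the contradiction. After normalizing so that $\|u\|_{L^\infty(Q_1)} \leq 1$ and $\|f\|_{L^\infty(Q_1)} \leq 1$, it suffices to establish Lipschitz continuity at the origin with a universal constant $K_1$ depending only on $n$, $\lambda$, $\Lambda$, $\gamma$.

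I would introduce
\begin{equation*}
M = \max_{\substack{x,y \in \overline{B_{1/2}} \\ t \in [-1/4, 0]}} \left\{ u(x,t) - u(y,t) - K_1 \phi(|x-y|) - \tfrac{K_2}{2}|x|^2 - \tfrac{K_2}{2}|y|^2 - \tfrac{K_2}{2} t^2 \right\},
\end{equation*}
with $\phi \in C^2([0,\sqrt{2}])$ bounded, increasing and strictly concave, equal to $r - \omega_0 r^{1+\tau}$ on $[0, r_0]$ for fixed $\tau \in (0,1)$ and small $\omega_0, r_0 > 0$. The goal is to show $M \leq 0$. Arguing by contradiction, assume $M > 0$; choosing $K_2$ a universal constant large enough forces the maximum to be attained at an interior point $(x_0, y_0, t_0)$ with $\rho := |x_0 - y_0| > 0$, and $K_1 \phi(\rho) \leq 2$ gives $\rho \leq 4/K_1$. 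Applying \Cref{lem:Jen-Ish} produces matrices $X, Y \in \mathcal{S}^n$ satisfying the usual matrix inequality and the viscosity inequalities, with $q = K_1 \phi'(\rho) \bar z_0$ and $\phi''(\rho) = -\omega_0 \tau(1+\tau)\rho^{\tau-1}$. The crucial structural difference from \Cref{lem:log-Lip} is that $\phi'(0^+) = 1$, so $|q| \in [K_1/2, K_1]$ for $K_1$ large, and consequently $(|p_{x_0}|^2 + \varepsilon^2)^{\gamma/2}$ and $(|p_{y_0}|^2 + \varepsilon^2)^{\gamma/2}$ are both comparable to $K_1^\gamma$. Running the same decomposition $T_1 + T_2$ as in the proof of \Cref{lem:log-Lip}, with $\|X\|, \|Y\| \leq C(K_1/\rho + K_2)$ obtained by combining the matrix inequality with the Pucci bounds, I would arrive, after dividing by $K_1^\gamma$ and absorbing universal constants, at an inequality of the form
\begin{equation*}
c_0 K_1 \rho^{\tau - 1} \leq C \rho^{-1} + C,
\end{equation*}
where $c_0 = 4 \lambda \omega_0 \tau (1+\tau)$. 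For $\rho \leq 1$ the first term on the right dominates, yielding the upper bound $\rho \leq (2C/(c_0 K_1))^{1/\tau}$.

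To close the contradiction, I would invoke the log-Lipschitz estimate from \Cref{lem:log-Lip}: $u(x_0, t_0) - u(y_0, t_0) \leq C_{LL}\, \rho |\log \rho|$. Combined with $K_1 \phi(\rho) \leq u(x_0, t_0) - u(y_0, t_0)$ and $\phi(\rho) \geq \rho/2$, this gives $K_1 \leq 2 C_{LL} |\log \rho|$. Substituting the upper bound $\rho \leq (C/K_1)^{1/\tau}$ just derived yields $K_1 \leq (2 C_{LL}/\tau) \log K_1 + C'$, which fails for $K_1$ sufficiently large, contradicting $M > 0$. The main obstacle is controlling the error term
\begin{equation*}
\bigl[ (|p_{x_0}|^2 + \varepsilon^2)^{\gamma/2} - (|p_{y_0}|^2 + \varepsilon^2)^{\gamma/2} \bigr] F(X)
\end{equation*}
arising from the mismatch of the degenerate coefficients, which couples the intrinsically large quantity $\|X\| \sim K_1/\rho$ to the nonlinearity; the careful choice of $\phi$ (forcing $|q| \sim K_1$ so that the coefficient $(|p|^2+\varepsilon^2)^{\gamma/2}$ is uniform at both points) and, crucially, the use of the log-Lipschitz estimate as an a priori bound, are what convert the intermediate polynomial inequality for $\rho$ into a genuine contradiction.
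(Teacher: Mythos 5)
Your setup, choice of test function (your $\phi(r)=r-\omega_0 r^{1+\tau}$ is the paper's $\phi(r)=r-\tfrac{1}{2-\bar\gamma}r^{2-\bar\gamma}$ with $\bar\gamma=1-\tau$), the comparability $|q|\sim K_1$ giving uniform coefficients $(|p_{x_0}|^2+\varepsilon^2)^{\gamma/2}\sim K_1^\gamma$, the Jensen--Ishii application, and the $T_1+T_2$ decomposition are all correct and mirror the paper. The gap is in the final closing step, and it is a genuine one: you derive two bounds, $\rho\leq (C/K_1)^{1/\tau}$ from the viscosity inequality and $K_1\leq 2C_{LL}|\log\rho|$ from the log-Lipschitz estimate, and claim that substituting the first into the second gives $K_1\lesssim \log K_1$. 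But $\rho\leq(C/K_1)^{1/\tau}$ is an \emph{upper} bound on $\rho$, hence a \emph{lower} bound on $|\log\rho|$: it gives $|\log\rho|\geq\frac{1}{\tau}\log K_1-C'$. To convert $K_1\leq 2C_{LL}|\log\rho|$ into $K_1\lesssim\log K_1$ you would instead need an \emph{upper} bound on $|\log\rho|$, i.e.\ a \emph{lower} bound on $\rho$, which you do not have. The two inequalities are perfectly compatible (take $\rho\sim e^{-K_1}$, say), so no contradiction results and the induction on $K_1$ does not close.

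The paper's mechanism for injecting the log-Lipschitz estimate is different and essential: it does not bound $K_1$ a posteriori, but sharpens the $T_1$ term \emph{before} the inequality $-K_1\phi''(\rho)\leq\cdots$ is formed. Concretely, positivity of the doubling functional plus $u(x_0,t_0)-u(y_0,t_0)\leq C\rho|\log\rho|$ improves the bound $|x_0|+|y_0|+|t_0|\lesssim K_2^{-1/2}$ to $|x_0|+|y_0|+|t_0|\lesssim K_2^{-1/2}\sqrt{\rho|\log\rho|}$. Feeding this into $T_1\lesssim |q|^{\gamma-1}\|X\|\,|x_0+y_0|$ replaces your $C\rho^{-1}$ on the right-hand side with $C\rho^{-1}\sqrt{\rho|\log\rho|}=C\rho^{-1/2}\sqrt{|\log\rho|}$. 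For $\tau<1/2$ this is now of strictly lower order than the left-hand side $K_1\rho^{\tau-1}$ as $\rho\to 0$, and the resulting inequality $K_1\rho^{\tau-1}\lesssim \rho^{-1/2}\sqrt{|\log\rho|}+\text{(lower order)}$, combined with $\rho\leq 4/K_1$ and $|q|\geq K_1/2$, forces $K_1^{2-\tau}\lesssim K_1^{-\gamma}+K_1^{-1/2}\sqrt{\log K_1}+\cdots$, which fails for $K_1$ large since $\gamma>-1$. Your structural setup is sound; you need to move the log-Lipschitz input upstream into the estimate of $|x_0+y_0|$ rather than using it as a post hoc bound on $K_1$.
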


\begin{proof}
The proof proceeds in essentially the same manner as that of \Cref{lem:log-Lip} with differences arising only in certain estimates. We therefore employ the same notation as in \Cref{lem:log-Lip} and highlight only the points where modifications are required.

In this case, $\phi$ is given by
\begin{equation*}
\phi(r) =\begin{dcases}
	r- \frac{1}{2-\bar\gamma} r^{2-\bar\gamma}& \text{if } 0 \leq r \leq 1 \\
	1-\frac{1}{2-\bar\gamma} & \text{if } r \ge 1
\end{dcases}
\end{equation*}
for some $\bar\gamma \in (0,1)$. Accordingly, in view of the modified $\phi$, \eqref{pp'} is revised as follows
\begin{equation} \label{new_p_p'}  
	\phi(\rho) \ge \frac{1}{2}\rho  \quad\text{and} \quad
	\phi'(\rho) \ge \frac{1}{2}.
\end{equation}

By \Cref{lem:log-Lip}, we know that 
\begin{equation*}
	u(x_0,t_0)-u(y_0,t_0) \leq C\rho  |\log \rho|.
\end{equation*}
Consequently, the last estimate in \eqref{x0y0t0} can be sharpened to 
\begin{equation*}
	|x_0| + |y_0| + |t_0| \leq \frac{C}{\sqrt{K_2}} \sqrt{\rho |\log \rho|},
\end{equation*}
which refines \eqref{T1} as follows
\begin{equation*}
	T_1 \leq C|q|^{\gamma-1} \|X\| |x_0+y_0| \leq C\left( |q|^{-1} + |q|^{\gamma-1} + \rho^{-1} |q|^\gamma \right) \sqrt{
	\rho|\log\rho|}.
\end{equation*}
Therefore, \eqref{K1p''} is rewritten as
\begin{equation}\label{K1p''2}
	-K_1 \phi''(\rho) \leq C \left(1 + |q|^{-\gamma} + (\rho^{-1} + |q|^{-1} +  |q|^{-\gamma} + |q|^{-\gamma-1})\sqrt{
	\rho|\log\rho|} \right).
\end{equation}
By choosing $K_1$  sufficiently large so that  $\rho$ is small, we obtain
\begin{equation} \label{for_1/p}
	C\rho^{-1}\sqrt{\rho |\log \rho|} \leq \rho^{-\frac{1+2\bar\gamma}{4}} \sqrt{|\log \rho|} \leq \frac{1-\bar\gamma}{2} K_1 \rho^{-\bar\gamma}.
\end{equation}
Since $\phi''(\rho) = -(1-\bar\gamma)\rho^{-\bar\gamma}$, combining \eqref{K1p''2} and \eqref{for_1/p} yields
\begin{equation*}
	\frac{1-\bar\gamma}{2} K_1 \rho^{-\bar\gamma}\leq C \left(1 + |q|^{-\gamma} + ( |q|^{-1} +  |q|^{-\gamma} + |q|^{-\gamma-1})\sqrt{\rho|\log\rho|} \right).
\end{equation*}
From \eqref{new_p_p'}, we have
\begin{equation*}
	\rho \leq 2\phi(\rho) \leq \frac{4}{K_1} \quad\text{and}\quad
	|q|= K_1 \phi'(\rho) \ge \frac{1}{2} K_1,
\end{equation*}
and hence we obtain
\begin{equation} \label{final_est}
	K_1^{1+\bar\gamma} \leq C \left(1 + K_1^{-\gamma } + (1 + K_1^{-1} + K_1^{-\gamma} + K_1^{-\gamma-1})K_1^{-1/2}\sqrt{\log K_1} \right),
\end{equation}
Here, the term ``$+1$''  in the innermost parentheses ensures that this inequality holds regardless of the sign of $\gamma$. 
Moreover, because $\gamma>-1$ and $\bar\gamma \in (0,1)$, taking $K_1$ sufficiently large leads to a  contradiction with \eqref{final_est}, since  $1+\bar\gamma >-\gamma-1/2$.
\end{proof}
\begin{corollary} \label{cor:temp}
Assume that $F$ satisfies \textnormal{\ref{F1}} and $0<\nu \leq1$. Let $u$ be a viscosity solution to \eqref{eq:general'} in $Q_1$. If $\|u\|_{L^\infty(Q_1)} \leq 1$ and $ \|f\|_{L^\infty(Q_1)}\leq1$,  then 
 \begin{equation*}
	|u(x, t)-u(y, t)| \leq C\left(\nu^{-\frac{\gamma}{1+\gamma}}+\nu^{-1}\big(|p|+|p|^{\frac{1}{1+\gamma}}\big)\right)|x-y| \quad \text{for all } (x,t),(y,t) \in Q_{1/2},
\end{equation*}
where $C>0$ is a constant depending only on $n$, $\lambda$, $\Lambda$, and $\gamma$. 
\end{corollary}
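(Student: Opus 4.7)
The plan is to reduce to the normalized setting $\nu=1$, $p=0$ handled by \Cref{lem:int-lip} via the change of unknown described in \Cref{rmk:eq_tr}. Set $v(x,t) := \nu u(x,t) + p\cdot x$; since $\nu>0$, the function $v$ is a viscosity solution in $Q_1$ to
\begin{equation*}
v_t = (|Dv|^2 + \varepsilon^2)^{\gamma/2} \widetilde{F}(D^2 v) + \nu f, \qquad \widetilde{F}(M) = \nu F(\nu^{-1} M).
\end{equation*}
The operator $\widetilde F$ inherits \ref{F1} with the same ellipticity constants $\lambda,\Lambda$, since the $1$-homogeneity of the Pucci extremal operators gives $\widetilde{F}(M) - \widetilde{F}(N) = \nu\bigl[F(\nu^{-1}M) - F(\nu^{-1}N)\bigr]$, which lies between $\mathcal{M}^{\pm}_{\lambda,\Lambda}(M - N)$. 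Moreover, the trivial $L^\infty$ bounds $\|v\|_{L^\infty(Q_1)} \leq \nu + |p|$ and $\|\nu f\|_{L^\infty(Q_1)} \leq \nu$ hold.

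Next, I would apply \Cref{lem:int-lip} to $v$, which yields, for all $(x,t),(y,t) \in Q_{1/2}$,
\begin{equation*}
|v(x,t)-v(y,t)| \leq C\Bigl( (\nu+|p|) + (\nu+|p|)^{\frac{1}{1+\gamma}} + \nu^{\frac{1}{1+\gamma}} \Bigr)|x-y|.
\end{equation*}
Combining this with the algebraic identity $\nu\bigl(u(x,t) - u(y,t)\bigr) = \bigl(v(x,t) - v(y,t)\bigr) - p\cdot(x-y)$ and dividing through by $\nu$ gives
\begin{equation*}
|u(x,t)-u(y,t)| \leq \nu^{-1}|v(x,t)-v(y,t)| + \nu^{-1}|p||x-y|.
\end{equation*}

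The last step is purely algebraic. Using the elementary inequality $(\nu+|p|)^{1/(1+\gamma)} \leq C(\nu^{1/(1+\gamma)} + |p|^{1/(1+\gamma)})$ and the identities
\begin{equation*}
\nu^{-1}(\nu+|p|) = 1 + \nu^{-1}|p|, \qquad \nu^{-1}\nu^{\frac{1}{1+\gamma}} = \nu^{-\frac{\gamma}{1+\gamma}},
\end{equation*}
one obtains $\nu^{-1}(\nu+|p|)^{1/(1+\gamma)} \leq C\bigl(\nu^{-\gamma/(1+\gamma)} + \nu^{-1}|p|^{1/(1+\gamma)}\bigr)$, and assembling all the contributions produces the claimed bound after absorbing the additive constant into $\nu^{-\gamma/(1+\gamma)}$ and $\nu^{-1}|p|$ terms (noting that $\nu \leq 1$ forces $\nu^{-\gamma/(1+\gamma)} \geq 1$ in the degenerate regime $\gamma \geq 0$). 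The argument is essentially a formal consequence of the reduction and \Cref{lem:int-lip}, so the only real step to verify carefully is the preservation of \ref{F1} under the scaling $M \mapsto \nu F(\nu^{-1}M)$ and the bookkeeping of powers of $\nu$ in the concluding inequality; no genuine analytic obstacle arises.
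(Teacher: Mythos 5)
Your proof is correct and follows the paper's route exactly: transform via $v = \nu u + p\cdot x$ and \Cref{rmk:eq_tr}, apply \Cref{lem:int-lip} to $v$, then rewrite the resulting bound for $u$ using $\|v\|_{L^\infty(Q_1)}\le\nu\|u\|_{L^\infty(Q_1)}+|p|$ and elementary subadditivity of $s\mapsto s^{1/(1+\gamma)}$. Your parenthetical observation that the leftover additive constant is absorbed into $\nu^{-\gamma/(1+\gamma)}$ only when $\gamma\ge 0$ is actually sharper than the paper's own proof, which silently drops the $\|u\|_{L^\infty}$ term; in the singular range $\gamma\in(-1,0)$ the stated factor can vanish as $\nu\to 0$ with $p=0$, so strictly the displayed inequality should retain a $+1$, though this is harmless because the corollary is invoked only with $|p|\ge 1/2$, where the constant is absorbed into $\nu^{-1}|p|$ instead.
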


\begin{proof}
By \Cref{lem:int-lip} and \Cref{rmk:eq_tr}, we have
	\begin{equation} \label{temp1}
		|v(x, t)-v(y, t)| \leq C (\|v\|_{L^{\infty}(Q_1)} +\|v\|_{L^{\infty}(Q_1)}^{\frac{1}{1+\gamma}}+\|\nu f\|_{L^{\infty}(Q_1)}^{\frac{1}{1+\gamma}}  )|x-y|
	\end{equation}
for all  $(x,t),(y,t) \in Q_{1/2}$.

Because $\|v\|_{L^\infty(Q_1)} \leq \nu \|u\|_{L^\infty(Q_1)} + |p|$, rewriting \eqref{temp1} for $u$ gives
\begin{align*}
	&|u(x,t) - u(y,t)| \\
	&\quad \leq C\Big( \|u\|_{L^{\infty}(Q_1)} + \nu^{-1}(|p|+ |p|^{\frac{1}{1+\gamma}} ) + \nu^{-\frac{\gamma}{1+\gamma}}  \|u\|^{\frac{1}{1+\gamma}} _{L^{\infty}(Q_1)}  +\nu^{-\frac{\gamma}{1+\gamma}}  \|f\|^{\frac{1}{1+\gamma}} _{L^{\infty}(Q_1)} \Big)|x-y| 
\end{align*}
for all  $(x,t),(y,t) \in Q_{1/2}$, which gives the desired result, since $\|u\|_{L^\infty(Q_1)} \leq 1$ and $ \|f\|_{L^\infty(Q_1)}\leq1$.
\end{proof}
\begin{lemma} \label{lem:uni_hol}
Assume that $F$ satisfies \textnormal{\ref{F1}} and  
\begin{equation*} 
	1/2 \leq |p| \leq 1, \quad 
	\|u\|_{L^\infty(Q_1)} \leq 1, \quad\text{and} \quad 
	\|f\|_{L^\infty(Q_1)}\leq1.
\end{equation*}
Then there exist constants $\alpha_1, \nu_1 \in (0,1)$ depending only on $n$, $\lambda$, $\Lambda$, and $\gamma$ such that if $u$ is a viscosity solution to \eqref{eq:general'} in $Q_1$ with $0<\nu \leq \nu_1$, then $u(\cdot,t) \in C^{\alpha_1}(B_{1/2})$ for all $t\in (-1/4,0]$, that is,
\begin{equation*}
	|u(x,t)-u(y,t)|  \leq C |x-y|^{\alpha_1} \quad \text{for all  }(x,t),(y,t) \in Q_{1/2},
\end{equation*}
where $C>0$ is a constant depending only on $n$, $\lambda$, $\Lambda$, and $\gamma$. 
\end{lemma}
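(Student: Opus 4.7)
The plan is to establish the H\"older estimate via a doubling-variables argument combined with Jensen--Ishii's lemma, exploiting the smallness of $\nu$ to render the equation essentially uniformly parabolic. Following the template of Lemmas~3.3 and~3.4, and using the translation invariance of the equation, it suffices to prove the estimate at the origin; this reduces the problem to showing that
\begin{equation*}
M \coloneqq \max_{\substack{x, y \in \overline{B_{1/2}}\\ t \in [-1/4, 0]}} \left\{ u(x, t) - u(y, t) - K_1 |x - y|^{\alpha_1} - \tfrac{K_2}{2}(|x|^2 + |y|^2 + t^2) \right\} \leq 0
\end{equation*}
for suitable $\alpha_1, K_1, K_2 > 0$ and all $\nu \leq \nu_1$, by contradiction.

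Assume $M > 0$ is attained at an interior point $(x_0, y_0, t_0)$ with $r \coloneqq |x_0 - y_0| > 0$. The decisive step is to place $r$ in a regime where the slopes $q_1, q_2$ produced by Jensen--Ishii's lemma, each of magnitude comparable to $K_1 \alpha_1 r^{\alpha_1 - 1}$, satisfy $\nu |q_i| \leq 1/8$; this forces $|\nu q_i + p| \in [3/8, 9/8]$, making the coefficients $a_i \coloneqq (|\nu q_i + p|^2 + \varepsilon^2)^{\gamma/2}$ trapped in a fixed positive interval depending only on $\gamma$. Crucially, \Cref{cor:temp} (together with $|p| \leq 1$ and $\nu \leq 1$) provides the scale-invariant bound $\nu\, [u]_{\mathrm{Lip}(Q_{1/2})} \leq C_L$ with $C_L = C_L(n, \lambda, \Lambda, \gamma)$, since $\nu \cdot \nu^{-\gamma/(1+\gamma)} = \nu^{1/(1+\gamma)} \leq 1$. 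Writing $L = [u]_{\mathrm{Lip}(Q_{1/2})}$, the Lipschitz estimate forces $K_1 r^{\alpha_1} < L r$ at any positive maximum, hence $r \geq (K_1/L)^{1/(1-\alpha_1)}$; substituting this lower bound into $|q_i| \leq K_1 \alpha_1 r^{\alpha_1 - 1} + 4 \sqrt{K_2}$ gives $\nu |q_i| \leq \alpha_1 C_L + 4 \nu \sqrt{K_2}$. I then calibrate the constants in order: take $\alpha_1 \leq 1/(16 C_L)$, fix $K_2$ large enough to confine the maximum to the interior, and finally pick $\nu_1 \leq 1/(64 \sqrt{K_2})$, which secures $\nu |q_i| \leq 1/8$.

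Once the ellipticity regime is attained, the argument concludes in the spirit of \Cref{lem:int-lip}. The matrix inequality from Jensen--Ishii applied to $\phi(r) = r^{\alpha_1}$ yields a sharply negative eigenvalue estimate
\begin{equation*}
\mathcal{M}^+_{\lambda, \Lambda}(X + Y) \leq -c\, \lambda K_1 \alpha_1 (1 - \alpha_1) r^{\alpha_1 - 2} + C\, K_2,
\end{equation*}
while the local Lipschitz continuity of $v \mapsto (|v|^2 + \varepsilon^2)^{\gamma/2}$ on the good set produces $|a_1 - a_2| \leq C \nu \sqrt{K_2}$; after absorbing the cross-term $|a_1 - a_2|\,|F(-Y)| \leq C \nu \sqrt{K_2}\, K_1 r^{\alpha_1 - 2}$ into the dominant negative term (by shrinking $\nu_1$ once more), subtracting the two viscosity inequalities and using $|t_0| \leq 2/\sqrt{K_2}$ leads to $K_1 r^{\alpha_1 - 2} \leq C_\star(n, \lambda, \Lambda, \gamma, K_2)$. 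On the other hand, $\|u\|_{L^\infty} \leq 1$ and $M > 0$ imply $r \leq (2/K_1)^{1/\alpha_1}$, so $K_1 r^{\alpha_1 - 2} \geq 2^{-(2-\alpha_1)/\alpha_1} K_1^{2/\alpha_1}$; taking $K_1$ large (depending only on $n, \lambda, \Lambda, \gamma$) produces the desired contradiction. The main technical obstacle is this nested calibration: $\alpha_1$ must be small enough to absorb $\alpha_1 C_L$, $K_2$ large enough to keep the maximum interior while the error $\nu \sqrt{K_2}$ remains negligible, and $\nu_1$ shrunk after both choices to close the two separate ellipticity and error-absorption requirements.
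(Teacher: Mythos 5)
Your proposal is correct and follows essentially the same route as the paper's proof: doubling variables with $\phi(r) = r^{\alpha_1}$, Jensen--Ishii's lemma, the rescaled Lipschitz bound from \Cref{cor:temp} (observing $\nu^{1/(1+\gamma)} \le 1$) to force $\nu|q|$ to be of size $\alpha_1 C_L$, and closing via the comparison between $K_1 r^{\alpha_1-2} \le C$ and the lower bound $K_1 r^{\alpha_1-2} \gtrsim K_1^{2/\alpha_1}$. The one small technical difference is that you absorb the quadratic-penalty contribution $\nu K_2|x_0|$ to the semijet gradient by shrinking $\nu_1$ (after fixing $K_2$), whereas the paper disposes of it by noting that $|q|$ dominates $K_2|x_0|$ once $K_1$ is large (so that \eqref{px0py0} holds); both calibrations are legitimate.
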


\begin{proof}
The proof proceeds in essentially the same manner as that of \Cref{lem:log-Lip} with differences arising only in certain estimates. We therefore employ the same notation as in \Cref{lem:log-Lip} and highlight only the points where modifications are required.

In this case, we set $\phi(r)=r^{\alpha_1}$,  where $\alpha_1 \in (0,1)$ is a constant to be determined later. Since $u$ is a viscosity solution to \eqref{eq:general'}, (i) of \Cref{lem:log-Lip} is modified to
\begin{enumerate}  [label=(\roman*)]
	\item 
	$\Bigg\{\begin{aligned} 
		\sigma_{x_0} &\leq (|\nu p_{x_0} + p|^2+\varepsilon^2)^{\gamma/2} F(X) + f(x_0, t_0)  \\
		\sigma_{y_0} &\ge (|\nu p_{y_0} + p|^2+\varepsilon^2)^{\gamma/2} F(Y) + f(y_0, t_0),
	\end{aligned} $
\end{enumerate}
consequently, \eqref{T1+T2} becomes
\begin{equation}\label{K2t}
\begin{aligned}
	K_2 t_0  &\leq  \underbrace{C \|X\| \big| (|\nu p_{x_0}+p|^2+\varepsilon^2)^{\gamma/2} -  (|\nu p_{y_0}+p|^2+\varepsilon^2)^{\gamma/2} \big|}_{\eqqcolon T_1} \\
	&\quad +  \underbrace{(|\nu p_{y_0}+p|^2+\varepsilon^2)^{\gamma/2} \mathcal{M}_{\lambda,\Lambda}^+(X-Y)}_{\eqqcolon T_2} + 2.
\end{aligned}
\end{equation}

By choosing $\alpha_1\in (0,1)$ sufficiently small, \Cref{cor:temp} together with \eqref{x0y0t0} yields 
\begin{equation} \label{vuq}
	\nu |q|=  \frac{\alpha_1 \nu K_1 }{|x_0-y_0|} \phi(\rho) \leq \alpha_1 \nu\frac{|u(x_0,t_0)-u(y_0,t_0)|}{|x_0-y_0|} \leq  \alpha_1 C  \leq \frac{1}{8}\leq\frac{1}{4} |p|.
\end{equation}
Combining \eqref{px0py0} and \eqref{vuq} gives
\begin{equation*}  
	\frac{1}{2}|p| \leq |\nu p_{x_0}+p| \leq 2|p| \quad \text{and} \quad
	\frac{1}{2}|p| \leq |\nu p_{y_0}+p| \leq 2|p|.
\end{equation*}
These inequalities modify \eqref{FX_lower} into
\begin{equation*}
	F(X) 
	\ge - C\left(  |p|^{-\gamma} + K_1 \rho^{-1}  \phi'(\rho)  + 1\right) \\
	\ge -C (K_1 \rho^{-1}\phi'(\rho)+1).
\end{equation*}
Consequently, \eqref{T1} and \eqref{T2} take the form
\begin{align}  
	T_1 &\leq \nu C|p|^{\gamma-1} \|X\| |x_0+y_0| \leq \nu C(K_1 \rho^{-1}\phi'(\rho)+1), \label{T1T2-2}\\
	T_2 &\leq C  |p|^\gamma(K_1 \phi''(\rho)+1) \leq C(K_1 \phi''(\rho)+1) .\nonumber
\end{align}
Hence, \eqref{K1p''} is rewritten as 
\begin{equation*}
	-K_1 \phi''(\rho) \leq C (\nu K_1 \rho^{-1}\phi'(\rho) + 1).
\end{equation*}
Since $\phi''(\rho)= -\alpha_1(1-\alpha_1)\rho^{\alpha_1-2}=  -(1-\alpha_1)\rho^{-1}\phi'(\rho)$, choosing $\nu_1$​ sufficiently small yields
\begin{equation*}
	-K_1 \phi''(\rho) \leq C.
\end{equation*}
However, if $K_1$ is taken sufficiently large, then $\phi''(\rho)$ also becomes large, leading to a contradiction.
\end{proof}
\begin{lemma} \label{lem:uni_lip}
Assume that the hypotheses of  \Cref{lem:uni_hol} hold, and let $\nu_1$ be the constant given in \Cref{lem:uni_hol}. Then there exists a constant $\nu_2 \in (0,\nu_1)$ depending only on $n$, $\lambda$, $\Lambda$, and $\gamma$ such that if $u$ is a viscosity solution to \eqref{eq:general'} in $Q_1$ with  $0<\nu \leq \nu_2$, then $u(\cdot,t) \in C^{0,1}(B_{1/2})$ for all $t\in (-1/4,0]$, that is,
\begin{equation*}
	|u(x,t)-u(y,t)|  \leq C |x-y| \quad \text{for all  }(x,t),(y,t) \in Q_{1/2},
\end{equation*}
where $C>0$ is a constant depending only on $n$, $\lambda$, $\Lambda$, and $\gamma$. 
\end{lemma}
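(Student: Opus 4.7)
The plan is to repeat the doubling-variables argument of \Cref{lem:int-lip}, but with the bootstrap provided by \Cref{lem:uni_hol} replacing the log-Lipschitz input of \Cref{lem:log-Lip}. The essential new structural feature is that when $\nu$ is sufficiently small, the quantity $|\nu Du + p|$ stays comparable to $|p| \sim 1$, so the gradient factor $(|\nu Du + p|^2 + \varepsilon^2)^{\gamma/2}$ becomes a bounded and uniformly non-degenerate coefficient; this is what makes a universal Lipschitz bound possible despite the degeneracy or singularity. Accordingly, we set
\begin{equation*}
M \coloneqq \max_{\substack{x,y \in \overline{B_{1/2}} \\ t \in [-1/4, 0]}}\Bigl\{ u(x,t) - u(y,t) - K_1 \phi(|x-y|) - \tfrac{K_2}{2}(|x|^2 + |y|^2 + t^2) \Bigr\}
\end{equation*}
with $\phi(r) = r - \frac{1}{2-\bar\gamma} r^{2-\bar\gamma}$ on $[0,1]$ (extended constantly beyond) and $\bar\gamma \in (0,1)$ to be fixed. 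Supposing $M > 0$ by contradiction, \Cref{lem:Jen-Ish} furnishes, as in the earlier proofs, matrices $X, Y \in \mathcal{S}^n$ and the usual semi-jet relations at an interior maximizer $(x_0, y_0, t_0)$, with $q = K_1 \phi'(\rho) \bar z_0$, $p_{x_0} = q + K_2 x_0$, and $p_{y_0} = q - K_2 y_0$.

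Two parallel estimates drive the argument. First, since $\phi' \le 1$ we have $|q| \le K_1$; once the universal parameter $K_1$ is chosen, taking $\nu_2 \le |p|/(8 K_1)$ guarantees $\nu |q| \le |p|/4$ and hence $|\nu p_{x_0} + p|,\, |\nu p_{y_0} + p| \in [|p|/2, 2|p|]$ throughout, forcing the gradient factor into a universal interval $[c_0, C_0]$. Second, the uniform H\"older estimate from \Cref{lem:uni_hol} gives $u(x_0, t_0) - u(y_0, t_0) \le C \rho^{\alpha_1}$, which when combined with the maximality condition upgrades the trivial bound $|x_0| + |y_0| + |t_0| \le 6 K_2^{-1/2}$ to $|x_0| + |y_0| + |t_0| \le C \rho^{\alpha_1/2}$. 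Feeding these into the usual $T_1, T_2$ decomposition as in \Cref{lem:log-Lip}, using a mean value estimate for $v \mapsto (|v|^2 + \varepsilon^2)^{\gamma/2}$ between $\nu p_{x_0} + p$ and $\nu p_{y_0} + p$ together with the matrix bound $\|X\|,\,\|Y\| \le C(K_1 \rho^{-1}\phi'(\rho) + 1)$ inherited from the Jensen--Ishii inequality, produces the key relation
\begin{equation*}
(1 - \bar\gamma) K_1 \rho^{-\bar\gamma} \le C \nu K_1 \rho^{\alpha_1/2 - 1} + C.
\end{equation*}

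The main obstacle is the delicate balancing of the exponents $-\bar\gamma$ and $\alpha_1/2 - 1$. Dividing the displayed inequality by $K_1 \rho^{-\bar\gamma}$ yields
\begin{equation*}
1 - \bar\gamma \le C \nu \rho^{\alpha_1/2 - 1 + \bar\gamma} + C K_1^{-1} \rho^{\bar\gamma},
\end{equation*}
and the maximality of $M$ together with $\|u\|_{L^\infty(Q_1)} \le 1$ forces $\rho \le 4/K_1$. For the right-hand side to tend to zero as $K_1 \to \infty$, the exponent $\alpha_1/2 - 1 + \bar\gamma$ must be strictly positive, which dictates the forced choice $\bar\gamma \in (1 - \alpha_1/2, 1)$ (a non-empty interval since $\alpha_1 \in (0,1)$). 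With such $\bar\gamma$ fixed and $K_1$ taken universal and sufficiently large, the right-hand side drops below $1 - \bar\gamma$, contradicting the previous inequality; this forces $M \le 0$ and yields the uniform spatial Lipschitz bound with constant depending only on $n, \lambda, \Lambda, \gamma$. The threshold $\nu_2$ is then set as $\nu_2 = \min\{\nu_1,\, |p|/(8 K_1)\}$, depending only on the same universal constants.
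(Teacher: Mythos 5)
Your proposal is correct and follows essentially the same route as the paper: the same test function $\phi(r)=r-\frac{1}{2-\bar\gamma}r^{2-\bar\gamma}$, the same use of \Cref{lem:uni_hol} to sharpen the localization bound $|x_0|+|y_0|+|t_0|\le CK_2^{-1/2}\rho^{\alpha_1/2}$, the same smallness condition on $\nu$ to keep the gradient factor nondegenerate, and the same exponent comparison $\rho^{-\bar\gamma}$ vs.\ $\rho^{\alpha_1/2-1}$ driving the contradiction. The only genuine differences are cosmetic: the paper fixes $\bar\gamma=1-\alpha_1/4$ (a specific point of your admissible interval $(1-\alpha_1/2,1)$) and absorbs $\nu$ into the generic constant when writing the final inequality, whereas you keep $\nu$ explicit and divide through by $K_1\rho^{-\bar\gamma}$ before taking $K_1$ large; both presentations close the argument identically. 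One small slip to fix: you set $\nu_2=\min\{\nu_1,|p|/(8K_1)\}$ and call it universal, but $|p|$ is a datum of the problem; since the hypotheses of \Cref{lem:uni_hol} guarantee $|p|\ge 1/2$, you should replace this with $\nu_2=\min\{\nu_1,1/(16K_1)\}$ (or similar) so that $\nu_2$ truly depends only on $n,\lambda,\Lambda,\gamma$.
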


\begin{proof}
The proof proceeds in essentially the same manner as that of \Cref{lem:int-lip} with differences arising only in certain estimates. We therefore employ the same notation as in \Cref{lem:log-Lip} and highlight only the points where modifications are required.

By \Cref{lem:uni_hol}, we know that 
\begin{equation*}
	u(x_0,t_0)-u(y_0,t_0) \leq C\rho^{\alpha_1} .
\end{equation*}
Consequently, the last estimate in \eqref{x0y0t0} can be sharpened to 
\begin{equation*}
	|x_0| + |y_0| + |t_0| \leq \frac{C}{\sqrt{K_2}} \rho^{\alpha_1/2}.
\end{equation*}
Noting that $\phi$ coincides with that in \Cref{lem:int-lip}, we choose $\nu_2 \in (0,1)$ sufficiently small so that $\nu|q| \leq \nu K_1 \leq |p|/4$ for all $\nu \leq \nu_2$. Then, \eqref{T1T2-2} takes the form
\begin{equation*} 
	T_1 \leq \nu C(K_1 \rho^{-1}\phi'(\rho)+1) \rho^{\alpha_1/2}.
\end{equation*}
Thus, if we choose $\bar\gamma=1-\alpha_1/4$, then \eqref{K2t} becomes
\begin{equation*}
	K_2 t_0 \leq C(1+ K_1 \rho^{\alpha_1/2-1} -K_1 \rho^{\alpha_1/4-1}),
\end{equation*}
which implies
\begin{equation}\label{cont3}
	K_1\rho^{\alpha_1/4-1} \leq C (1+K_1\rho^{\alpha_1/2-1}).
\end{equation}

Finally, taking $K_1$ sufficiently large makes $\rho$ sufficiently small, thereby leading to a contradiction with  \eqref{cont3}.
\end{proof}
The range of $\nu$ in \Cref{lem:uni_lip} is extended as follows
\begin{corollary} \label{lem:uni_lip_full}
Assume that the hypotheses of  \Cref{lem:uni_hol} hold, and let $u$ be a viscosity solution to \eqref{eq:general'} in $Q_1$ with  $0\leq \nu \leq 1$, then $u(\cdot,t) \in C^{0,1}(B_{1/2})$ for all $t\in (-1/4,0]$, that is,
\begin{equation*}
	|u(x,t)-u(y,t)|  \leq C |x-y| \quad \text{for all  }(x,t),(y,t) \in Q_{1/2},
\end{equation*}
where $C>0$ is a constant depending only on $n$, $\lambda$, $\Lambda$, and $\gamma$. 
\end{corollary}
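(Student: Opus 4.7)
The plan is to partition the range of $\nu$ into subranges complementing the one already handled by \Cref{lem:uni_lip}, and paste together Lipschitz estimates whose constants each depend only on $n,\lambda,\Lambda,\gamma$.

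First, for $\nu\in[\nu_2,1]$ I would invoke \Cref{cor:temp} directly. Its explicit bound has prefactor $\nu^{-\gamma/(1+\gamma)}+\nu^{-1}(|p|+|p|^{1/(1+\gamma)})$, which is controlled by a constant depending only on $n,\lambda,\Lambda,\gamma$ once we use $\nu\ge\nu_2$ (and $\nu_2$ itself is a structural constant from \Cref{lem:uni_lip}) together with $|p|\le 1$.

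Second, for $\nu=0$ the equation collapses to $u_t=c\cdot F(D^2u)+f$, where the constant $c=(|p|^2+\varepsilon^2)^{\gamma/2}$ lies in a fixed interval $[c_1(\gamma),c_2(\gamma)]$ with $0<c_1\le c_2<\infty$, in view of $1/2\le|p|\le 1$ and $0\le\varepsilon\le 1$. This is a uniformly parabolic equation with ellipticity constants depending only on $\lambda,\Lambda,\gamma$, so classical interior Lipschitz regularity for viscosity solutions applies. Alternatively, one can observe that the proof of \Cref{lem:uni_lip} passes through unchanged when $\nu=0$: the pivotal control $\nu|q|\le |p|/4$ becomes trivial, $T_1$ vanishes, and the contradiction argument based on \eqref{cont3} still goes through.

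The only mild obstacle is bookkeeping, i.e., verifying that no spurious dependence on $\nu$ or $|p|$ leaks into the final constant after gluing the subranges $\{0\}$, $(0,\nu_2]$, and $[\nu_2,1]$. This is transparent because $\nu_2$ depends only on $n,\lambda,\Lambda,\gamma$ and $|p|$ varies within a compact interval bounded away from zero; taking the maximum of the three constants produces the uniform Lipschitz bound across $\nu\in[0,1]$ claimed in the statement.
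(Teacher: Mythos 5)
Your proof is correct and follows essentially the same route as the paper: it splits into $\nu=0$ (where the equation is uniformly parabolic), $0<\nu\le\nu_2$ (handled by \Cref{lem:uni_lip}), and $\nu\ge\nu_2$ (handled by the explicit Lipschitz bound), with the last case amounting to the paper's cited combination of \Cref{lem:int-lip} and \Cref{rmk:eq_tr}, which is precisely the content of \Cref{cor:temp}. The bookkeeping observation that all constants depend only on $n,\lambda,\Lambda,\gamma$ because $\nu_2$ is structural and $|p|\le 1$ is exactly what makes the gluing work.
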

\begin{proof}
If $\nu=0$, then \eqref{eq:general'} is uniformly parabolic and hence trivial. Let $\nu_2$ be the constant given in \Cref{lem:uni_lip}. By \Cref{lem:uni_lip}, it suffices to consider only the range $\nu > \nu_2$.

Finally, combining \Cref{lem:int-lip} with \Cref{rmk:eq_tr}, we obtain the desired conclusion.
\end{proof}
\subsection{Interior H\"older estimates in the time variable}
After establishing interior Lipschitz regularity in the spatial variables, we now aim to derive H\"older continuity in time. The core idea is that once Lipschitz estimates in the spatial variables are available, and the parameter $\nu$ is sufficiently small relative to $|p|$, the equation \eqref{eq:general'} behaves effectively like a uniformly parabolic equation. 
\begin{lemma} \label{lem:int_hol_time}
Assume that the hypotheses of  \Cref{lem:uni_hol} hold, and let $\nu_2$ be the constant given in \Cref{lem:uni_lip}. Then there exist constants $\alpha_2 \in(0,1)$ and $\nu_3 \in (0,\nu_2)$ depending only on $n$, $\lambda$, $\Lambda$, and $\gamma$ such that if $u$ is a viscosity solution to \eqref{eq:general'} in $Q_1$ with  $0<\nu \leq \nu_3$, then $u(x,\cdot) \in C^{\alpha_2/2}([-1/4,0])$ uniformly in $x\in B_{1/2}$, that is,
\begin{equation*}
	|u(x,t)-u(x,s)|  \leq C |t-s|^{\alpha_2/2} \quad \text{for all  }(x,t),(y,t) \in Q_{1/2},
\end{equation*}
where $C>0$ is a constant depending only on $n$, $\lambda$, $\Lambda$, and $\gamma$. 
\end{lemma}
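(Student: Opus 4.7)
The plan is to leverage the uniform spatial Lipschitz estimate from Corollary \ref{lem:uni_lip_full} to show that, after choosing $\nu_3$ sufficiently small, the equation \eqref{eq:general'} satisfied by $u$ reduces to a uniformly parabolic viscosity inequality, at which point the classical Krylov--Safonov/Wang interior H\"older estimate applies and in particular yields H\"older continuity in time with exponent $\alpha_2/2$.

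First I would record the following consequence of spatial Lipschitz: by \Cref{lem:uni_lip_full}, $u(\cdot,t)$ is Lipschitz on $B_{1/2}$ with a constant $L_0=L_0(n,\lambda,\Lambda,\gamma)$ uniform in $t\in(-1/4,0]$. If a smooth test function $\varphi$ touches $u$ from above (or from below) at $(x_0,t_0)\in Q_{1/2}$, then for every unit vector $v\in\mathbb{R}^n$ and every small $h>0$,
\begin{equation*}
\varphi(x_0+hv,t_0)-\varphi(x_0,t_0) \;\geq\; u(x_0+hv,t_0)-u(x_0,t_0) \;\geq\; -L_0 h,
\end{equation*}
which, after dividing by $h$ and letting $h\to 0$, gives $D\varphi(x_0,t_0)\cdot v\geq -L_0$ and hence $|D\varphi(x_0,t_0)|\leq L_0$.

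Next I would choose $\nu_3\in(0,\nu_2)$ so small that $\nu_3 L_0 \leq 1/4$. Combined with $|p|\in[1/2,1]$, this yields $|\nu D\varphi(x_0,t_0)+p|\in[1/4,5/4]$ for every such test function. In particular the gradient term is uniformly bounded away from zero, so alternative (ii) in the definition of viscosity solution is never triggered, and the scalar coefficient satisfies $c_1\leq (|\nu D\varphi+p|^2+\varepsilon^2)^{\gamma/2}\leq c_2$ with $c_1,c_2$ depending only on $\gamma$. Using the elementary inequality $cF(M)\leq \mathcal{M}^+_{c_1\lambda,c_2\Lambda}(M)$ for $c\in[c_1,c_2]$ (which follows by splitting the eigenvalues of $M$ into positive and negative parts) together with its lower analogue and \ref{F1}, I would conclude that $u$ solves the uniformly parabolic viscosity inequalities
\begin{equation*}
\mathcal{M}^-_{c_1\lambda,c_2\Lambda}(D^2u)-\|f\|_{L^\infty(Q_1)} \;\leq\; u_t \;\leq\; \mathcal{M}^+_{c_1\lambda,c_2\Lambda}(D^2u)+\|f\|_{L^\infty(Q_1)}
\end{equation*}
in $Q_{1/2}$.

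Finally I would invoke the interior H\"older estimate in the parabolic Krylov--Safonov class (as recorded in \cite{Wan92}) to obtain $u\in C^{\alpha_2}$ in the parabolic distance on $Q_{1/2}$ for some $\alpha_2=\alpha_2(n,\lambda,\Lambda,\gamma)\in(0,1)$; restricting to the time direction gives the claimed estimate with $C=C(n,\lambda,\Lambda,\gamma)$. The only delicate point is the justification that test function gradients are uniformly bounded by $L_0$ in the viscosity sense, and that this bound, rather than any properties of $u$ itself at a single point, is what makes the coefficient $(|\nu Du+p|^2+\varepsilon^2)^{\gamma/2}$ behave like a uniformly bounded ellipticity modulus; everything else is a direct appeal to classical parabolic extremal theory.
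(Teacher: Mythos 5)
Your proposal is correct and follows essentially the same approach as the paper: use the uniform spatial Lipschitz bound from the preceding lemma to choose $\nu_3$ small enough that $|\nu Du + p|$ stays comparable to $|p|\in[1/2,1]$, so that the equation becomes uniformly parabolic, and then invoke the parabolic Krylov--Safonov interior H\"older estimate. The extra care you take in making the argument rigorous at the level of viscosity test functions—bounding $|D\varphi(x_0,t_0)|$ at touching points and noting that case (ii) of the definition is never activated—is a welcome elaboration of what the paper treats more tersely, but it is the same argument.
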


\begin{proof}
By \Cref{lem:uni_lip}, we have $\|Du\|_{L^\infty(Q_{3/4})} \leq C$. Hence, we choose $\nu_3>0$ sufficiently small to ensure 
\begin{equation*}
	\frac{1}{2}|p| \leq |\nu Du + p| \leq 2|p| \quad \text{for all } \nu \leq \nu_3.
\end{equation*}
Since $1/2 \leq |p| \leq 1$,  we may regard $u$ as a subsolution to
\begin{equation*}
	u_t = \mathcal{M}_{\tilde{\lambda},\tilde{\Lambda}}^- (D^2u) + f \quad \text{in } Q_{3/4},
\end{equation*}
and as a supersolution to 
\begin{equation*}
	u_t = \mathcal{M}_{\tilde{\lambda},\tilde{\Lambda}}^+(D^2u)  + f  \quad \text{in } Q_{3/4},
\end{equation*}
where the new ellipticity constants $\tilde{\lambda}$, $\tilde{\Lambda}$ depend only on $\lambda$, $\Lambda$, and $\gamma$. 

Therefore, we obtain the desired conclusion from the Krylov--Safonov theory for uniformly parabolic equations.
\end{proof}
The range of $\nu$ in \Cref{lem:int_hol_time} is extended as follows, and its proof is exactly the same as that of \Cref{lem:uni_lip_full}
\begin{corollary} \label{lem:int_hol_time_full}
Assume that the hypotheses of  \Cref{lem:uni_hol} hold, and let $u$ be a viscosity solution to \eqref{eq:general'} in $Q_1$ with  $0\leq\nu \leq 1$, then $u(x,\cdot) \in C^{\alpha_2/2}([-1/4,0])$ uniformly in $x\in B_{1/2}$, that is,
\begin{equation*}
	|u(x,t)-u(x,s)|  \leq C |t-s|^{\alpha_2/2} \quad \text{for all  }(x,t),(y,t) \in Q_{1/2},
\end{equation*}
where $\alpha_2 \in (0,1)$ and $C>0$ are constants depending only on $n$, $\lambda$, $\Lambda$, and $\gamma$. 
\end{corollary}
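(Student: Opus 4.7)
The plan is to mirror the three-case structure used in the proof of \Cref{lem:uni_lip_full}, splitting according to the magnitude of $\nu$. When $\nu = 0$, equation \eqref{eq:general'} collapses to $u_t = (|p|^2 + \varepsilon^2)^{\gamma/2} F(D^2 u) + f$ with $|p| \ge 1/2$; the prefactor is a positive universal constant, so the equation is uniformly parabolic with ellipticity constants comparable to $\lambda$ and $\Lambda$, and the classical Krylov--Safonov theory immediately yields $C^{\alpha_2/2}$-regularity in time on interior subcylinders. When $0 < \nu \le \nu_3$, with $\nu_3$ the constant from \Cref{lem:int_hol_time}, the conclusion is precisely the content of that lemma.

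For the remaining regime $\nu_3 < \nu \le 1$, I would invoke \Cref{rmk:eq_tr}: setting $v(x,t) = \nu u(x,t) + p \cdot x$, the function $v$ solves
\begin{equation*}
v_t = (|Dv|^2 + \varepsilon^2)^{\gamma/2} \widetilde F(D^2 v) + \nu f \quad \text{in } Q_1,
\end{equation*}
with $\widetilde F(M) = \nu F(\nu^{-1} M)$ still satisfying \ref{F1}. Because $\nu_3$ depends only on the universal parameters, $\|v\|_{L^\infty(Q_1)}$ and $\|\nu f\|_{L^\infty(Q_1)}$ are universally bounded, and any $C^{\alpha_2/2}$-in-time estimate obtained for $v$ transfers back to $u$ with universal constants. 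What remains is therefore an interior $C^{\alpha_2/2}$-in-time estimate for equations in the $\nu = 1$, $p = 0$ form, that is, the time counterpart of \Cref{lem:int-lip}.

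I would produce this missing time estimate by the Jensen--Ishii doubling argument already used in \Cref{lem:log-Lip} and \Cref{lem:int-lip}, now carried out in the time variable. Concretely, I would study
\begin{equation*}
\max \Bigl\{ v(x,t) - v(x,s) - K_1 \phi(|t-s|) - \tfrac{K_2}{2}|x|^2 - \tfrac{K_2}{2}(t^2 + s^2) \Bigr\}
\end{equation*}
on $\overline{B_{1/2}} \times [-1/4,0]^2$ for a suitable modulus $\phi$, apply \Cref{lem:Jen-Ish} to extract jet information at an interior maximum, and derive a contradiction for large $K_1$. A cleaner alternative exploits the spatial Lipschitz bound for $v$ furnished by \Cref{lem:int-lip}: one barriers $v(x_0,\cdot)$ from above and below by functions of the form $v(x_0,t_0) \pm M(|x - x_0|^2 + \delta^2)^{1/2} \pm K(t - t_0)$, with $M$ comparable to the spatial Lipschitz constant and $K$ chosen large enough that each function becomes a super- or subsolution of the regularized equation on a small cylinder; \Cref{thm:comp_prin} then delivers $C^{1/2}$-H\"older regularity in $t$, which is stronger than needed.

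The main point to monitor throughout is that every constant depends only on $n$, $\lambda$, $\Lambda$, and $\gamma$. This is automatic in the first two cases, and in the third the strict lower bound $\nu > \nu_3$ keeps all quantities introduced by \Cref{rmk:eq_tr} (the effective ellipticity constants of $\widetilde F$ and the norm $\|\nu f\|_{L^\infty}$) under universal control. The principal technical obstacle is the time-doubling Jensen--Ishii step: the modulus $\phi$ must be calibrated against the degenerate or singular coefficient $(|Dv|^2 + \varepsilon^2)^{\gamma/2}$, so that the jet matrix inequality produces a useful bound for the difference $F(X) - F(Y)$. Once $\phi$ is correctly chosen, the algebraic manipulations parallel those already carried out in \Cref{lem:int-lip}.
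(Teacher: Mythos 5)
Your three-case split by the size of $\nu$ is exactly the paper's intended argument: the stated proof of this corollary is ``exactly the same as that of \Cref{lem:uni_lip_full},'' i.e.\ $\nu = 0$ is trivially uniformly parabolic, $0 < \nu \le \nu_3$ is \Cref{lem:int_hol_time}, and $\nu_3 < \nu \le 1$ is handled by \Cref{rmk:eq_tr} together with an interior time estimate for the $\nu = 1$, $p = 0$ equation. You correctly identify that last estimate as the missing ingredient. Where you and the paper part ways is in how it is obtained: the paper's counterpart of \Cref{lem:int-lip} for time regularity is the recalled result \Cref{lem43-LLY24} (namely \cite[Lemma~4.3]{LLY24}), which is simply quoted, not re-proved.

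Both of your proposed re-derivations have genuine obstructions in the singular range. The Jensen--Ishii path does not fit the lemma as stated in the paper: \Cref{lem:Jen-Ish} penalizes $u(x,t)+v(y,t)-\varphi(x,y,t)$ in the \emph{spatial} variables at a common time, whereas your functional $v(x,t)-v(x,s)-K_1\phi(|t-s|)-\cdots$ doubles the \emph{time} variable, and no time-doubled version is available in the paper. The barrier path is closer to the truth, but your candidate barrier $M(|x-x_0|^2+\delta^2)^{1/2}+K(\tau-s)$ has vanishing gradient at $x=x_0$ with positive-definite Hessian $(M/\delta)I$ there; when $\gamma<0$ and $\varepsilon$ is small (or zero), the factor $(|Dw|^2+\varepsilon^2)^{\gamma/2}$ is of order $\varepsilon^\gamma$ near the vertex, so the supersolution inequality forces $K\gtrsim \varepsilon^\gamma M/\delta$, which is not uniform in $\varepsilon$. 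This is precisely the kind of difficulty that \cite[Lemma~4.3]{LLY24} is designed to handle. In short, your route is the right one and matches the paper, but the $\nu>\nu_3$ case should be closed by invoking \Cref{lem43-LLY24} rather than by the Jensen--Ishii or barrier sketches, which as written do not survive the singular regime.
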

\subsection{Boundary regularity of small perturbation solutions}
We assume that $(0,0) \in \partial_p \Omega$, and for $C^{1,\alpha}$-domains, we further assume that $(e_n,0)$ is the inward normal vector to $\partial_p \Omega$ at the origin.
\begin{lemma}[Boundary Lipschitz estimates] \label{lem:bdry_lip1}
Let $0\leq \delta < 1/4$. Assume that $F$ satisfies \textnormal{\ref{F1}} and
\begin{equation*}
 	0\leq \nu \leq 1 ,\quad 
	1/2 \leq |p| \leq 1 , \quad 
	\|u\|_{L^\infty(\Omega\cap Q_1)} \leq 1, 
\end{equation*}
\begin{equation*}
	\|f\|_{L^\infty(\Omega\cap Q_1)} \leq \delta, \quad 
	\|g\|_{L^\infty(\partial_p\Omega \cap Q_1)} \leq \delta, \quad \text{and} \quad
	\underset{Q_1}{\mathrm{osc}} \, \partial_p\Omega \leq \delta.
\end{equation*}
Let $u$ be a viscosity solution to \eqref{eq:ep-nu-p}. Then we have
\begin{equation} \label{est:u<xn+d}
	|u(x,t)| \leq C(x_n +\delta) \quad \text{for all } (x,t) \in \Omega \cap Q_{1/2},
\end{equation}
 where $C>0$ is a constant depending only on $n$, $\lambda$, $\Lambda$, and $\gamma$.
 
 Moreover, if there exists a constant $c>0$ depending only on $n$, $\lambda$, $\Lambda$, and $\gamma$ such that $\nu \ge c$, then the estimate \eqref{est:u<xn+d} also holds when $|p|\leq 1$.
 
\end{lemma}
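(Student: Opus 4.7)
The proof proceeds via the classical barrier method combined with the comparison principle (\Cref{thm:comp_prin}). The key observation is that the hypothesis $|p|\ge 1/2$ restores effective uniform parabolicity on suitable barriers: for any classical $w$ satisfying $\nu|Dw|\le 1/4$, one has $|\nu Dw+p|\in[1/4,5/4]$, whence the coefficient $(|\nu Dw+p|^2+\varepsilon^2)^{\gamma/2}$ lies in a positive interval $[c_\gamma,C_\gamma]$ depending only on $\gamma$, uniformly in $\varepsilon\in[0,1]$. Consequently the supersolution inequality reduces, on such barriers with $F(D^2 w)\le 0$, to a classical uniformly parabolic inequality, so standard barrier constructions become available.

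\textbf{Upper barrier.} I construct a classical supersolution $w$ in $\Omega\cap Q_{3/4}$ of the form
\[ w(x,t)=A(x_n+2\delta)+B|x'|^2-Cx_n^2+\delta t, \]
with $A,B,C>0$ depending only on $n$, $\lambda$, $\Lambda$, $\gamma$. Since $D^2 w=\mathrm{diag}(2B,\ldots,2B,-2C)$ and $\mathcal{M}^+_{\lambda,\Lambda}(D^2 w)=2B(n-1)\Lambda-2C\lambda$, choosing $C$ sufficiently large gives $F(D^2 w)\le 0$, so $-(|\nu Dw+p|^2+\varepsilon^2)^{\gamma/2}F(D^2 w)\ge 0$; combined with $w_t=\delta\ge f$, this yields the supersolution inequality. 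The constants $A$ and $B$ are then tuned so that (i) $w\ge\delta\ge g$ on $\partial_p\Omega\cap Q_{3/4}$ (using $|x_n|\le\delta$ there and the $A(x_n+2\delta)$ term), and (ii) $w\ge 1\ge u$ on the lateral and bottom parts of $\partial_p Q_{3/4}\cap\overline\Omega$ (using the $B|x'|^2$ term when $|x'|$ is large); one also checks that $\nu|Dw|\le 1/4$ in the relevant region for these constants. By \Cref{thm:comp_prin}, $u\le w$ in $\Omega\cap Q_{3/4}$; applying the same argument to $-u$ (which satisfies an equation of the same form under $p\mapsto -p$, $F\mapsto -F(-\,\cdot\,)$, $f\mapsto -f$) gives $|u|\le w$.

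\textbf{Pointwise sharpening (the main obstacle).} The global $w$ above only yields a uniform bound $|u|\le w\le C$, not the sharp pointwise $|u(x,t)|\le C(x_n+\delta)$, because the $B|x'|^2$ term does not decay as $x_n+\delta\to 0$. Following the localization in \cite{LZ20,LZ22}, I handle each $(x_0,t_0)\in\Omega\cap Q_{1/2}$ separately: set $r_0\coloneqq x_{0,n}+\delta$ and repeat the barrier construction on a parabolic cylinder of size $r_0$ centered near the projection of $(x_0,t_0)$ onto the approximate boundary $\{x_n=-\delta\}$. The translation invariance and $1$-homogeneity of the Pucci extremal operators, together with the fact that $|p|\ge 1/2$ keeps $(|\nu Dw+p|^2+\varepsilon^2)^{\gamma/2}$ bounded above and below under the relevant rescaling, ensure that the localized barrier remains admissible. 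Its value at $(x_0,t_0)$ is $O(r_0)=C(x_{0,n}+\delta)$, which gives the desired pointwise bound. Tracking the coefficient's behavior under this scaling is the principal technical difficulty.

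\textbf{Moreover part.} When $\nu\ge c>0$ but $|p|\le 1$ carries no lower bound, the role of $|p|$ in the coefficient estimate can be taken over by $\nu|Dw|$: choose the barrier's gradient so that $|Dw|\gtrsim 2/c$, giving $\nu|Dw|\ge 2|p|$ and hence $|\nu Dw+p|\ge\nu|Dw|/2\ge c|Dw|/2\gtrsim 1$, which again makes $(|\nu Dw+p|^2+\varepsilon^2)^{\gamma/2}$ bounded above and below by positive constants. The remainder of the proof is unchanged, yielding \eqref{est:u<xn+d} with constants additionally depending on $c^{-1}$.
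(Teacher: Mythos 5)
Your barrier $w=A(x_n+2\delta)+B|x'|^2-Cx_n^2+\delta t$ does \emph{not} dominate $u$ on the initial-time slice of the comparison domain, which is part of the $\partial_p Q_{3/4}\cap\overline\Omega$ set on which you assert $w\ge 1\ge u$. On that slice one can have $x'=0$ and $x_n$ arbitrarily close to $-\delta$ (the oscillation hypothesis only excludes $\{x_n<-\delta\}$ from $\Omega$, not $\{-\delta<x_n\le\delta\}$), so $w=A(x_n+2\delta)-Cx_n^2+\delta t\le A\delta$, while $|u|$ can still equal $1$; your parenthetical ``using the $B|x'|^2$ term when $|x'|$ is large'' simply does not address the initial slice. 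To cover that slice the barrier must be $\ge 1$ as $t$ approaches the initial time, which is why the paper's $\varphi$ carries the factor $e^{-A|\cdot|^2/(t+1)}$ (and its $\psi$ carries $-t$). Any such barrier has $\varphi_t<0$ somewhere, and then the supersolution inequality $\varphi_t\ge(|\nu D\varphi+p|^2+\varepsilon^2)^{\gamma/2}F(D^2\varphi)+f$ is no longer free: one must bound the coefficient from below. That is exactly where the hypothesis $|p|\ge1/2$ (keeping $\nu|D\varphi|$ small so $|\nu D\varphi+p|\gtrsim 1$) and, in the ``moreover'' case, $\nu\ge c$ (making $\nu|D\psi|$ dominate $|p|$ via a large-gradient barrier) enter, and why the paper splits into $\nu\le1/(16A)$ and $\nu>1/(16A)$. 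The apparent superfluousness, in your draft, of the lower bound on $|p|$ is a symptom of the barrier not closing.

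Separately, your ``pointwise sharpening'' step has its own gap: rescaling to a parabolic cylinder of size $r_0=x_{0,n}+\delta$ does not work, because on the lateral boundary of such a small cylinder the only available control on $u$ is the global $|u|\lesssim 1$, not $O(r_0)$, so an $O(r_0)$-scale barrier cannot dominate $u$ there without already knowing the conclusion. The fix is much simpler than rescaling: the paper just translates. Once you have a single barrier $\varphi$ with the profile $\varphi(0',x_n,0)\le C(x_n+\delta)$, comparing $u$ against $\varphi(x'-y',x_n,t-s)$ for each $(y',0,s)\in S_{1/2}$ yields the pointwise estimate directly at $(y',x_n,s)$, with no change of scale and no issue of how $(|\nu D\varphi+p|^2+\varepsilon^2)^{\gamma/2}$ behaves under rescaling.
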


\begin{proof}
Let us consider the function  
\begin{equation*}
 	\varphi(x,t)=2e^A\left(e^{-A} - e^{-\frac{A|x+(1+\delta)e_n|^2}{t+1}}\right) + (2+t)\delta.
\end{equation*}
Choosing $A>0$ sufficiently large ensures that $|D\varphi| \leq 4A$. Moreover, since  $1/2\leq |p|\leq 1$, for $\nu \leq \frac{1}{16A}$, we have 
\begin{equation*}
	 (|\nu D\varphi + p|^2 + \varepsilon^2)^{\gamma/2} \ge c_1,
\end{equation*}
where $c_1>0$ is a constant depending only on $\gamma$. Furthermore, $\varphi$ satisfies 
\begin{equation*}  
\left\{\begin{aligned}
	\varphi_t &> (|\nu D\varphi + p|^2 + \varepsilon^2)^{\gamma/2} F(D^2 \varphi) + \|f\|_{L^\infty(\Omega\cap Q_1)} && \text{in } \Omega\cap Q_1^+(-\delta e_n,0)  \\
	\varphi&\ge \|g\|_{L^\infty(\partial_p\Omega\cap Q_1)}  && \text{on } \partial_p \Omega \cap Q_1^+(-\delta e_n,0)  \\
	\varphi&\ge\|u\|_{L^\infty(\Omega\cap Q_1)}  && \text{on }  \Omega \cap \partial_p Q_1^+(-\delta e_n,0) ,
\end{aligned}\right.
\end{equation*}
where $Q_1^+(-\delta e_n,0) \coloneqq Q_1(-\delta e_n,0) \cap \{x_n >-\delta\}$.

Therefore, by \Cref{thm:comp_prin}, we obtain
\begin{equation*}
	-C(x_n+\delta) \leq -\varphi(0',x_n,0) \leq u(0',x_n,0) \leq \varphi(0',x_n,0) \leq C(x_n +\delta)  
\end{equation*}
for all $(0',x_n,0) \in \Omega \cap Q_{1/2}^+(-\delta e_n,0)$. Now, considering the translation $\varphi(x'+y',x_n, t+s)$ for $(y',0,s) \in S_{1/2}$, we obtain
\begin{equation*}
	|u(x,t)| \leq C (x_n+\delta) \quad \text{for all } (x,t) \in \Omega \cap Q_{1/2}^+(-\delta e_n,0).
\end{equation*}

On the other hand, in the case $\nu > \frac{1}{16A}$, we modify the argument by replacing the barrier function $\varphi$ with 
\begin{equation*}
 	\psi(x,t)= A\cdot 3^B (1-|x+(1+\delta)e_n|^{-B})-t+\delta.
\end{equation*}
Then, for sufficiently large constant $B>0$, we have
\begin{align*}
	\psi_t -(|\nu D\psi + p|^2 + \varepsilon^2)^{\gamma/2} F(D^2 \psi) &\ge-1 + ABc_2 \cdot 3^B( (B+2) \lambda - n \Lambda) |x+(1+\delta)e_n|^{-B-2} \\
	& \ge \|f\|_{L^\infty(\Omega\cap Q_1)} \quad  \text{in } \Omega\cap Q_1^+(-\delta e_n,0)
\end{align*}
whenever $|p|\leq 1$. Here, $c_2>0$ is defined by
\begin{equation*}
c_2=\begin{dcases}
	(2A^2B^2 \cdot 3^{2B}+3)^{\gamma/2} & \text{if } -1<\gamma<0\\
	\left(\frac{B^2}{2^5 \cdot3^4} - 2 \right)^{\gamma/2} & \text{if } \gamma\ge0.
\end{dcases}
\end{equation*}
Furthermore, $\psi$ satisfies
\begin{equation*}  
\left\{\begin{aligned}
	\psi&\ge \|g\|_{L^\infty(\partial_p\Omega\cap Q_1)}  && \text{on } \partial_p \Omega \cap Q_1^+(-\delta e_n,0)  \\
	\psi&\ge\|u\|_{L^\infty(\Omega\cap Q_1)}  && \text{on }  \Omega \cap \partial_p Q_1^+(-\delta e_n,0) .
\end{aligned}\right.
\end{equation*}
The rest of the proof proceeds in the same way as for $\varphi$.
\end{proof}
\begin{lemma} \label{lem:small_1st}
Assume that $F$ satisfies \textnormal{\ref{F1}}. Then, for any $\alpha \in (0,1)$, there exist constants $C_0>0$ and  $\delta_0 \in (0,1)$ depending only on $n$, $\lambda$, $\Lambda$, $\alpha$, $\gamma$, and $[\partial_p\Omega]_{C^{1,\alpha}(0,0)}$ such that if $u$ is a viscosity solution to \eqref{eq:ep-nu-p} with 
\begin{equation*}
 	0\leq \nu  \leq 1 ,\quad 
	1/2 \leq |p| \leq 2, \quad  
	\partial_p \Omega \in C^{1,\alpha}(0,0), \quad
	\|f\|_{L^\infty(\Omega\cap Q_1)} \leq 1,
\end{equation*}
\begin{equation*}
	\|u- a_0 x_n \|_{L^\infty(\Omega\cap Q_r)} \leq r^{1+\alpha}, \quad \text{and} \quad 
	\|g\|_{L^{\infty}(\partial_p \Omega\cap Q_r)} \leq \delta_0 r^{1+\alpha}
\end{equation*}
for some $r \leq \delta_0$ and $a_0 \in \mathbb{R}$ with $|a_0| \leq C_0 \delta_0^\alpha$, then there exists a constant $a \in \mathbb{R}$ such that 
\begin{equation*}
	\|u- a x_n \|_{L^\infty(\Omega\cap Q_{\tau r})} \leq (\tau r)^{1+\alpha} \quad \text{and} \quad 
	|a-a_0| \leq C_0 (\tau r)^{\alpha},
\end{equation*}
where $\tau \in (0,1)$ is a constant depending only on $n$, $\lambda$, $\Lambda$, $\alpha$, and $\gamma$.
\end{lemma}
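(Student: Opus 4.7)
The plan is to argue by contradiction combined with a compactness/rescaling argument. The scheme is standard for iterative $C^{1,\alpha}$ improvements: rescale the problem so that the approximation error lives at scale $1$, use interior/boundary Lipschitz estimates to extract a locally uniform limit $w$, observe that $w$ solves a uniformly parabolic equation on the half-space $Q_1^+$ with zero Dirichlet data on $S_1$, and apply the boundary $C^{1,\alpha'}$-regularity of Lian--Zhang (\Cref{thm:LZ22-28}) with $\alpha'>\alpha$ to gain an improved linear approximation of $w$ at scale $\tau$. Choosing $\tau$ small depending on $\alpha'-\alpha$ transfers this back to the original equation. The role of the hypothesis $1/2 \leq |p| \leq 2$ together with $|a_0| \leq C_0 \delta_0^\alpha$ is to guarantee that the rescaled equation is uniformly parabolic in the limit, because the gradient-dependent weight $(|\nu Du + p|^2+\varepsilon^2)^{\gamma/2}$ stays bounded away from $0$ and $\infty$.

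Concretely, set
\[
\tilde u(x,t) = \frac{u(rx, r^2 t) - a_0 r x_n}{r^{1+\alpha}}, \qquad \tilde F(M) = r^{1-\alpha} F\!\left(\tfrac{M}{r^{1-\alpha}}\right),
\]
with $\tilde\nu = \nu r^\alpha$, $\tilde p = p + \nu a_0 e_n$, $\tilde \varepsilon = \varepsilon$, $\tilde f(x,t) = r^{1-\alpha} f(rx,r^2t)$, and rescaled domain $\tilde\Omega$ and boundary data $\tilde g(x,t) = (g(rx,r^2t) - a_0 rx_n)/r^{1+\alpha}$. A direct computation shows that $\tilde u$ solves \eqref{eq:ep-nu-p} with these parameters, and $\tilde F$ satisfies \ref{F1} with the same ellipticity constants. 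For $r \leq \delta_0$ one has $\tilde \nu \leq \delta_0^\alpha$, $\|\tilde f\|_\infty \leq \delta_0^{1-\alpha}$, $|\tilde p - p| \leq C_0 \delta_0^\alpha$, $[\partial_p \tilde\Omega]_{C^{1,\alpha}(0,0)} \leq [\partial_p\Omega]_{C^{1,\alpha}(0,0)}\, r^\alpha$, and (since $x_n$ is of order $r^\alpha$ on $\partial_p\tilde\Omega\cap Q_1$) $\|\tilde g\|_{L^\infty(\partial_p\tilde\Omega\cap Q_1)} \leq \delta_0 + 2 C_0 [\partial_p\Omega]_{C^{1,\alpha}} \delta_0^\alpha$. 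By hypothesis $\|\tilde u\|_{L^\infty(\tilde\Omega\cap Q_1)} \leq 1 + |a_0| \leq 2$.

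Now suppose the conclusion fails for some fixed $\tau \in (0,1)$ (to be chosen). Then there exist sequences of admissible inputs with $\delta_k \to 0$ for which the rescaled functions $\tilde u_k$ violate the improvement at scale $\tau$. The interior Lipschitz and time-Hölder estimates \Cref{lem:uni_lip_full} and \Cref{lem:int_hol_time_full} (applicable since $|\tilde p_k| \in [1/4,3]$) combined with the boundary Lipschitz estimate \Cref{lem:bdry_lip1} (applicable once the boundary oscillation and $\|\tilde g_k\|_\infty$ are small, which holds for $k$ large) give uniform equicontinuity up to the flattening boundary. Passing to subsequences, $\tilde\nu_k \to 0$, $\tilde p_k \to p_\star$ with $1/2 \leq |p_\star| \leq 2$, $\tilde\varepsilon_k \to \varepsilon_\star$, $\tilde F_k \to F_\star$ locally uniformly (by Arzelà--Ascoli on the uniformly elliptic family), $\tilde\Omega_k \to \{x_n > 0\}$, and $\tilde u_k \to w$ locally uniformly on $\overline{Q^+_{3/4}}$. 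Applying \Cref{thm:stability1}, $w$ is a viscosity solution of
\[
w_t = (|p_\star|^2+\varepsilon_\star^2)^{\gamma/2} F_\star(D^2 w) \quad \text{in } Q_1^+,
\]
with $w = 0$ on $S_1$ (using $|\tilde u_k| \leq C(x_n + \delta_k)$ from the boundary Lipschitz estimate). Since the multiplier is a positive constant, $w$ solves a uniformly parabolic equation, so $w$ satisfies the hypotheses of \Cref{thm:LZ22-28} and admits a linear approximation $|w(x,t) - b x_n| \leq C_1(|x|^{1+\alpha'} + |t|^{(1+\alpha')/2})$ in $Q_{1/2}^+$, with $|b| \leq C_1$, for some $\alpha' \in (0,1)$ depending only on $n,\lambda,\Lambda$.

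Choose $\tau$ so that $C_1 \tau^{\alpha'-\alpha} \leq 1/4$, and set $C_0 = C_1 \tau^{-\alpha}$. The display above gives $|w - bx_n| \leq \tfrac{1}{4}\tau^{1+\alpha}$ on $Q_\tau^+$; by locally uniform convergence, $|\tilde u_k - b x_n| \leq \tau^{1+\alpha}$ on $\tilde\Omega_k\cap Q_\tau$ for $k$ large. Unwinding the rescaling with $a_k \coloneqq a_0^k + b r_k^\alpha$ yields $\|u_k - a_k x_n\|_{L^\infty(\Omega_k \cap Q_{\tau r_k})} \leq (\tau r_k)^{1+\alpha}$ and $|a_k - a_0^k| = |b| r_k^\alpha \leq C_1 r_k^\alpha = C_0 (\tau r_k)^\alpha$, contradicting the assumed failure. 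The main obstacle I anticipate is verifying the compactness up to the nearly-flat boundary cleanly: one must use the quantitative boundary Lipschitz bound \Cref{lem:bdry_lip1} to obtain uniform $L^\infty$-control $|\tilde u_k| \leq C(x_n + \delta_k)$ on $\tilde\Omega_k \cap Q_{3/4}$, and check that the boundary data $\tilde g_k$ (defined on curved boundaries converging to $\{x_n = 0\}$) forces the limit $w$ to vanish on $S_1$. A second delicate point is the bookkeeping of the structural parameters through rescaling, in particular the requirement that $\tilde F_k$ converge to some admissible $F_\star$, which is guaranteed by the uniform ellipticity and the compactness of the class of $(\lambda,\Lambda)$-elliptic operators on bounded subsets of $\mathcal{S}^n$.
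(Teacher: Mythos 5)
Your overall strategy (rescale, take a contradiction sequence, pass to a uniformly parabolic limit via compactness, then read off an improved linear approximation at scale $\tau$) is exactly the one the paper uses, and your bookkeeping of the rescaled quantities $\tilde\nu_k$, $\tilde p_k$, $\tilde f_k$, $\tilde g_k$, $\widetilde F_k$ and the equicontinuity input from \Cref{lem:uni_lip_full}, \Cref{lem:int_hol_time_full}, and \Cref{lem:bdry_lip1} all match.

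However, there is a genuine gap in the final step. You invoke \Cref{thm:LZ22-28} and ask to ``choose $\alpha'>\alpha$,'' but that theorem delivers $w\in C^{1,\alpha'}(0,0)$ only for \emph{some} fixed exponent $\alpha'\in(0,1)$ depending on $n,\lambda,\Lambda$ alone; $\alpha'$ is not a parameter you can push above $\alpha$. The present lemma must hold for \emph{every} $\alpha\in(0,1)$, so your requirement $C_1\tau^{\alpha'-\alpha}\leq 1/4$ is unsatisfiable whenever $\alpha\geq\alpha'$ (then $\tau^{\alpha'-\alpha}\geq 1$ for $\tau<1$), and the contradiction cannot be derived. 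The paper circumvents this by noting that the limit $w$ solves a \emph{constant-coefficient} uniformly parabolic equation with zero flat boundary data, and therefore the boundary Schauder theorem \Cref{thm:LZ22-116} applies and yields $w\in C^{2,\alpha}(0,0)$. Because $w(0,0)=0$ and $w=0$ on $S_1$, the quadratic polynomial $P$ there reduces to having linear part $\tilde a x_n$, which gives the decay $\|w-\tilde a x_n\|_{L^\infty(Q_\tau^+)}\leq C\tau^2$. Since $\tau^2<\tfrac12\tau^{1+\alpha}$ for all $\tau$ small and \emph{every} $\alpha<1$, the choice of $\tau$ and $C_0$ becomes independent of any competition with a fixed $\alpha'$, and the iteration closes. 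If you replace your use of \Cref{thm:LZ22-28} by \Cref{thm:LZ22-116} (which is available here precisely because the limiting equation is uniformly parabolic with no $(x,t)$-dependence), your argument becomes correct and coincides with the paper's.
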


\begin{proof}
Let us assume that the conclusion is false, that is, there exist constants $\alpha\in(0,1)$, $A>0$ and sequences $F_k$, $u_k$, $f_k$, $g_k$, $\Omega_k$, $\varepsilon_k$, $\nu_k$, $p_k$, $r_k$, and $a_k$ such that $u_k$ is a viscosity solution to 
\begin{equation*} 
\left\{\begin{aligned}
	\partial_t u_k &= (|\nu_k Du_k + p_k| + \varepsilon_k^2)^{\gamma/2} F_k(D^2 u_k) + f_k  && \text{in } \Omega_k\cap Q_1 \\
	u_k&=g_k && \text{on } \partial_p \Omega_k\cap Q_1,
\end{aligned}\right.
\end{equation*}	
where $F_k$ satisfies \textnormal{\ref{F1}}, and 
\begin{equation*}
	0\leq \varepsilon_k, \nu_k \leq 1, \quad 
	1/2 \leq |p_k| \leq 2, \quad 
	[\partial_p\Omega_k]_{C^{1,\alpha}(0,0)} \leq A, \quad
	\|f_k\|_{L^\infty(\Omega_k\cap Q_1)} \leq 1,
\end{equation*}
\begin{equation*}
	\|u_k- a_k x_n \|_{L^\infty(\Omega_k\cap Q_{r_k})} \leq r_k^{1+\alpha}, \quad
	\|g_k\|_{L^{\infty}(\partial_p \Omega_k\cap Q_{r_k})} \leq \frac{r_k^{1+\alpha}}{k}, \quad 
	r_k \leq \frac{1}{k}, \quad \text{and} \quad
	|a_k| \leq \frac{C_0}{k^\alpha}.
\end{equation*}
Moreover, the following estimate holds:
\begin{equation} \label{est:uk-ax}
	\|u_k- a x_n \|_{L^\infty(\Omega_k\cap Q_{\tau r_k})} > (\tau r_k)^{1+\alpha} \quad \text{for all } a \in \mathbb{R}^n \text{ with } |a-a_k| \leq C_0 (\tau r_k)^{\alpha},
\end{equation}
where $C_0>0$ and $\tau \in (0,1)$ will be determined later. 

Then the function
\begin{equation*}
	\tilde{u}_k(x,t) = \frac{u_k(r_kx,r_k^2t) - a_k r_k x_n}{r_k^{1+\alpha}} 
\end{equation*}
is a viscosity solution to 
\begin{equation*} 
\left\{\begin{aligned}
	\partial_t \tilde{u}_k &= (| \tilde{\nu}_k D\tilde{u}+ \tilde{p}_k| + \varepsilon_k^2)^{\gamma/2} \widetilde{F}_k (D^2 \tilde{u}) + \tilde{f}_k  && \text{in } \widetilde{\Omega}_k\cap Q_1 \\
	\tilde{u}_k&=\tilde{g}_k && \text{on } \partial_p \widetilde{\Omega}_k\cap Q_1,
\end{aligned}\right.
\end{equation*}	
where 
\begin{equation*}
	\|\tilde{u}_k\|_{L^\infty(\widetilde{\Omega}_k\cap Q_1)} \leq 1,\quad
	\tilde{\nu}_k=\nu_k r_k^\alpha, \quad 
	\tilde{p}_k= \nu_k a_k e_n  + p_k, \quad
	\widetilde{\Omega}_k= \{(x,t) \mid (r_k x, r_k^2 t) \in \Omega_k\},
\end{equation*}
\begin{equation*}
	\widetilde{F}_k(M) = \frac{F_k(r_k^{\alpha-1} M)}{r_k^{\alpha-1}} , \quad 
	\tilde{f}_k(x,t)= \frac{f_k(r_kx,r_k^2t)}{r_k^{\alpha-1}}, \quad \text{and} \quad
	\tilde{g}_k(x,t)= \frac{g_k(r_kx,r_k^2t)-a_kr_k x_n}{r_k^{1+\alpha}}.
\end{equation*}

Since $r_k \to 0^+$, the Bolzano–Weierstrass theorem ensures that there exist $\tilde{\varepsilon} \in [0,1]$ and $\tilde{p} \in \mathbb{R}^n$ with $1/2 \leq |\tilde{p}| \leq 2$ such that 
\begin{equation*}
	\varepsilon_k \to \tilde{\varepsilon} ,\quad 
	\tilde{\nu}_k \to 0^+,  \quad \text{and} \quad
	\tilde{p}_k \to \tilde{p},
\end{equation*}
where, for notational convenience, we shall use the same index $k$ when referring to convergence up to a subsequence. Furthermore, since each $F_k$ is Lipschitz continuous, the sequence $\{F_k\}$ is equicontinuous. Hence, by the Arzelà--Ascoli theorem, there exists a nonlinear operator $\widetilde{F}$ such that $\widetilde{F}_k \to \widetilde{F}$ uniformly on any compact set of $\mathcal{S}^n$.

By \Cref{lem:uni_lip_full} and \Cref{lem:int_hol_time_full}  the sequence $\{\tilde{u}_k\}$ is equicontinuous. By the Arzelà--Ascoli theorem, there exists a function $\tilde{u}$ such that $\tilde{u}_k \to \tilde{u}$ uniformly on any compact set of $Q_1^+$. Moreover, for any test function $\varphi$ that touches $\tilde{u}$ from above, we have
\begin{equation*}
	\frac{1}{4} \leq | \tilde{\nu}_k D\varphi+ \tilde{p}_k|  \leq 4 \quad \text{for sufficiently large } k.
\end{equation*}
Hence, by \Cref{thm:stability1}, $u$ is a viscosity solution to
\begin{equation*} 
	\tilde{u}_t = (|\tilde{p}| + \tilde{\varepsilon}^2)^{\gamma/2} \widetilde{F} (D^2 \tilde{u})   \quad  \text{in } Q_1^+.
\end{equation*}	

On the other hand, noting that
\begin{equation*}
	\|\tilde{f}_k\|_{L^\infty(\widetilde{\Omega}_k\cap Q_1)} \leq r_k^{1-\alpha}, \quad
	\|\tilde{g}_k\|_{L^\infty(\partial_p\widetilde{\Omega}_k\cap Q_1)} \leq \frac{1}{k} + \frac{AC_0 }{k^\alpha}, \quad \text{and} \quad 
	\underset{Q_1}{\mathrm{osc}} \, \partial_p\widetilde{\Omega}_k \leq 2Ar_k^\alpha,
\end{equation*}
we may apply \Cref{lem:bdry_lip1} to obtain
\begin{equation*}
	|\tilde{u}_k(x,t)| \leq C(x_n + \delta_k) \quad \text{for all } (x,t) \in \widetilde{\Omega}_k \cap Q_{3/4},
\end{equation*}
where $\delta_k \to 0^+$ as $k\to \infty$. Thus, $\tilde{u}$ is a viscosity solution to the uniformly parabolic equation
\begin{equation*} 
\left\{\begin{aligned}
	\tilde{u}_t &= (|\tilde{p}| + \tilde{\varepsilon}^2)^{\gamma/2} \widetilde{F} (D^2 \tilde{u})  &&  \text{in } Q_{3/4}^+ \\
	\tilde{u}&=0 && \text{on } S_{3/4}.
\end{aligned}\right.
\end{equation*}	

By \Cref{thm:LZ22-116}, we have $\tilde{u} \in C^{2,\alpha}(0,0)$ for some $\alpha \in (0,1)$. In particular, there exists a constant $\tilde{a} \in \mathbb{R}$ such that 
\begin{equation} \label{est:order2}
	\|\tilde{u} - \tilde{a} x_n \|_{L^\infty(Q_\tau^+)} \leq C \tau^2  \quad \text{for all } \tau \in (0,1/2)
\end{equation}
and moreover $|\tilde{a}|\leq C$. Choose $\tau>0$ sufficiently small and $C_0>0$ sufficiently large so that
\begin{equation*}
	\|\tilde{u} - \tilde{a} x_n \|_{L^\infty(Q_\tau^+)} \leq \frac{1}{2} \tau^{1+\alpha} \quad \text{and}\quad
	|\tilde{a}| \leq C_0 \tau^\alpha.
\end{equation*}

Now, setting $b_k=a_k + r_k^\alpha \tilde{a}$, we have 
\begin{equation*}
	|b_k - a_k| \leq C_0(\tau r_k)^\alpha.
\end{equation*}
Hence, by \eqref{est:uk-ax}, we obtain
\begin{equation*}
	\|u_k- b_k x_n \|_{L^\infty(\Omega_k \cap Q_{\tau r_k})} > (\tau r_k)^{1+\alpha} ,
\end{equation*}
which, when expressed in terms of $\tilde{u}_k$, becomes 
\begin{equation} \label{contra2}
	\|\tilde{u}_k- \tilde{a} x_n \|_{L^\infty(\widetilde{\Omega}_k \cap Q_{\tau})} > \tau^{1+\alpha}.
\end{equation}
Finally, letting $k \to \infty$ in \eqref{contra2} yields a contradiction to \eqref{est:order2}.
\end{proof}

We are now ready to prove the main theorem of this section.
\begin{proof} [Proof of \Cref{lem:small_bdry}]
Let $C_0$, $\delta_0$, and $\tau$ be the constants from \Cref{lem:small_1st}. Then it suffices to show that there exists a sequence $\{a_k\}_{k=-1}^\infty$ such that for each $k\ge 0$,  
\begin{equation} \label{small_ind}
	\|u-a_kx_n\|_{L^\infty(\Omega \cap Q_{\delta_0 \tau^k})} \leq (\delta_0 \tau^k)^{1+\alpha} \quad \text{and} \quad
	|a_k-a_{k-1}| \leq C_0 (\delta_0 \tau^k)^{\alpha}.
\end{equation}
The proof proceeds by induction. First, set $a_{-1}=0=a_0$. Without loss of generality, we may apply  \Cref{lem:bdry_lip1}. For sufficiently small $\eta \in (0,1)$, we have
\begin{equation*}
	\|u\|_{L^\infty(\Omega \cap Q_{\delta_0})} \leq C\eta ([\partial_p\Omega]_{C^{1,\alpha}(0,0)} + 1) \leq \delta_0^{1+\alpha},
\end{equation*}
and thus \eqref{small_ind} holds for $k=0$.

Suppose that \eqref{small_ind} holds for each $k=0,1,\cdots,i$. Since $\tau <1/2$, we have
\begin{equation*}
	|a_i| \leq \sum_{k=1}^i |a_k-a_{k-1}| \leq C_0\delta_0^\alpha  \frac{\tau^\alpha}{1-\tau^\alpha} \leq C_0\delta_0^\alpha.
\end{equation*}
Furthermore, since $g(0,0)=0=|Dg(0,0)|$, choose $\eta>0$ sufficiently small so that
\begin{equation*}
	\|g\|_{L^{\infty}(\partial_p \Omega \cap Q_r)} 
	\leq C \|g\|_{C^{1,\alpha}(0,0)} r^{1+\alpha} 
	\leq C\eta r^{1+\alpha}
	\leq \delta_0 r^{1+\alpha},
\end{equation*}
where $r=\delta_0 \tau^i \leq \delta_0$. Then, by \Cref{lem:small_1st}, we deduce the existence of a constant $a_{i+1}\in\mathbb{R}$ satisfying 
\begin{equation*}
	\|u- a_{i+1} x_n \|_{L^\infty(\Omega\cap Q_{\tau r})} \leq (\tau r)^{1+\alpha}   \quad \text{and} \quad 
	|a_{i+1}-a_i| \leq C_0 (\tau r)^{\alpha} .
\end{equation*}
Consequently, \eqref{small_ind} holds for $k=i+1$ as well, completing the proof by induction.
\end{proof}

\section{The Dirichlet problem with zero boundary data on a flat boundary} \label{sec:model_problem}
While most earlier contributions to boundary regularity employed flattening transformations, Lian--Zhang \cite{LZ20, LZ22} proposed a novel approach that establishes boundary regularity directly without flattening the boundary. We shall adopt their method to establish boundary regularity. To this end, we first consider the following model problem with zero boundary data on a flat boundary.
\begin{equation} \label{eq:model} 
\left\{\begin{aligned}
	u_t &= (|\nu Du + p|^2 + \varepsilon^2)^{\gamma/2} F(D^2 u)  && \text{in } Q_1^+ \\
	u&=0 && \text{on } S_1.
\end{aligned}\right.
\end{equation}

The Boundary Lipschitz estimates can be regarded as the most basic ones for establishing boundary $C^{1,\alpha}$-regularity, and we have already derived them for \eqref{eq:model}.
\begin{remark} \label{rmk:b_lip}
In \Cref{lem:bdry_lip1}, when $\delta=0$, the problem \eqref{eq:ep-nu-p} reduces to problem \eqref{eq:model}, and therefore under the assumptions of \Cref{lem:bdry_lip1}, we have
\begin{equation*}
	|u(x,t)| \leq Cx_n \quad \text{for all } (x,t) \in Q_{1/2}^+.
\end{equation*}
\end{remark}
\begin{remark} \label{rmk:normal}
Let $u$ be a viscosity solution to \eqref{eq:model}. Then the function
\begin{equation*}
	\tilde{u}(x,t) = r\rho u(r^{-1}x, r^{-2} \rho^{\gamma}t)
\end{equation*}
is a viscosity solution to 
\begin{equation*}
\left\{\begin{aligned}
	\tilde{u}_t &= (|\nu D\tilde{u} + \rho p|^2 + \rho^{2} \varepsilon^2)^{\gamma/2} \widetilde{F}(D^2 \tilde{u})  && \text{in } Q_r^{\rho+} \\
	\tilde{u}&=0 && \text{on } S_r,
\end{aligned}\right.
\end{equation*}
where $\widetilde{F}(M) =r^{-1}\rho F(r\rho^{-1} M)$. Hence, by choosing $\rho \leq 1/(1+ \|Du\|_{L^\infty(Q_1)})$, we may assume
\begin{equation*}
	\|Du\|_{L^\infty(Q_1)} \leq 1.
\end{equation*}
\end{remark} 

To obtain global Lipschitz estimates, it is essential to first establish H\"older continuity in time. For this purpose, we employ a preliminary result which corresponds to \cite[Lemma 4.3]{LLY24}.
\begin{lemma} \label{lem43-LLY24}
Assume that $F$ satisfies \textnormal{\ref{F1}} and let $u$ be a viscosity solution to 
\begin{equation*} 
	u_t = (|Du|^2+\varepsilon^2)^{\gamma/2} F(D^2u) \quad \text{in } Q_1 .
\end{equation*}
Then
\begin{equation*}
	|u(x,t)-u(x,s)| \leq C|t-s|^{1/2} \quad \text{for all } (x,t), (x,s) \in Q_{1/2},
\end{equation*}
where $C>0$ is a constant depending only on $n$, $\lambda$, $\Lambda$, $\gamma$, and $\|u\|_{L^\infty(Q_1)}$.
\end{lemma}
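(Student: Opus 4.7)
My plan is to combine the interior Lipschitz estimate (\Cref{lem:int-lip}) with a barrier argument via the comparison principle (\Cref{thm:comp_prin}), the classical route for converting spatial regularity into temporal H\"older regularity for parabolic equations.

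First I would apply \Cref{lem:int-lip} to $u$ (which solves the equation with $\nu=1$, $p=0$, $f\equiv 0$) to conclude that for each $\tau\in(-1/4,0]$, the slice $u(\cdot,\tau)$ is Lipschitz on $B_{1/2}$ with constant $L$ depending only on $n,\lambda,\Lambda,\gamma$ and $M:=\|u\|_{L^\infty(Q_1)}$. Setting $A:=\max(L,8M)$, I would then fix a pair $(x_0,t_0),(x_0,s)\in Q_{1/2}$ with $h:=t_0-s>0$ small (the case $h\gtrsim 1$ being trivial via $|u|\le M$), and consider the candidate supersolution
\begin{equation*}
\Phi^+(x,\tau):=u(x_0,s)+A\sqrt{|x-x_0|^2+h}+\frac{K}{\sqrt{h}}(\tau-s),
\end{equation*}
with $K=K(n,\lambda,\Lambda,\gamma,M)>0$ to be chosen.

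I would then verify that $\Phi^+\ge u$ on $\partial_p Q^\star$, where $Q^\star:=B_{1/4}(x_0)\times(s,t_0]$: on the bottom face this follows from the spatial Lipschitz bound together with $A\ge L$, and on the lateral face from the $L^\infty$ bound $|u|\le M$ together with $A\ge 8M$. Direct computation gives $\Phi^+_\tau=K/\sqrt{h}$, $|D\Phi^+|\le A$, and $\|D^2\Phi^+\|\le nA/\sqrt{|x-x_0|^2+h}$. For $\gamma\ge 0$, the factor $(|D\Phi^+|^2+\varepsilon^2)^{\gamma/2}$ is uniformly bounded by $(A^2+1)^{\gamma/2}$, so choosing $K$ sufficiently large makes $\Phi^+$ a classical supersolution in $Q^\star\setminus\{x=x_0\}$; at $x=x_0$, where $D\Phi^+$ vanishes, condition (i) of the viscosity definition is vacuous and condition (ii) is automatically satisfied because $\Phi^+(\cdot,\tau)$ attains its spatial minimum at $x_0$ with $\partial_\tau\Phi^+(x_0,t_0)=K/\sqrt{h}>0$. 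Applying \Cref{thm:comp_prin} and evaluating at $(x_0,t_0)$ would then give $u(x_0,t_0)-u(x_0,s)\le(A+K)\sqrt{h}$; the symmetric barrier $\Phi^-(x,\tau):=u(x_0,s)-A\sqrt{|x-x_0|^2+h}-K(\tau-s)/\sqrt{h}$ yields the matching lower bound.

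\textbf{Main obstacle.} The hard part will be ensuring that $\Phi^+$ is actually a supersolution in the singular regime $\gamma\in(-1,0)$ with small or zero $\varepsilon$, since $(|D\Phi^+|^2+\varepsilon^2)^{\gamma/2}$ is unbounded as $x\to x_0$ and $\Phi^+$ fails the classical supersolution test in a whole neighborhood of $x_0$, not merely at the single degenerate point. I expect to resolve this by replacing $\sqrt{|x-x_0|^2+h}$ by a shifted variant such as $\sqrt{|x-x_0+\delta e_1|^2+h}$ with a suitable $\delta$ keeping $|D\Phi^+|$ bounded below, shrinking the comparison cylinder $B_{1/4}(x_0)$ accordingly to radius $\sim\sqrt{h}$ and replacing the $L^\infty$ lateral comparison by the spatial Lipschitz estimate at nearby time slices, which remains compatible with the $\sqrt{h}$-scaling of the final estimate.
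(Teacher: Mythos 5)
The paper does not prove this lemma; it cites \cite[Lemma 4.3]{LLY24} directly, so there is no in-paper argument to compare against. For $\gamma\ge 0$ your barrier argument is essentially sound: $|D\Phi^+|\le A$ and $0\le\varepsilon\le1$ give $(|D\Phi^+|^2+\varepsilon^2)^{\gamma/2}\le(A^2+1)^{\gamma/2}$, while $D^2\Phi^+\ge0$ with $\|D^2\Phi^+\|\lesssim A/\sqrt{h}$, so a suitable $K$ makes $\Phi^+$ a classical supersolution away from $x_0$; the single critical point is handled by the viscosity definition; the comparison on $\partial_p Q^\star$ uses only \Cref{lem:int-lip} and $\|u\|_{L^\infty}$, and \Cref{thm:comp_prin} closes it.

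The genuine gap is in the singular range $\gamma\in(-1,0)$ with small $\varepsilon$, and the fix you sketch is circular. For $0<|x-x_0|\ll\sqrt h$ one has $|D\Phi^+|\sim A|x-x_0|/\sqrt h$ while $\|D^2\Phi^+\|\sim A/\sqrt h$, so $(|D\Phi^+|^2+\varepsilon^2)^{\gamma/2}F(D^2\Phi^+)$ blows up; since $\Phi^+$ is smooth, the classical inequality must hold at every point where $D\Phi^+\ne0$, so $\Phi^+$ genuinely fails to be a viscosity supersolution — there is no viscosity loophole off $\{x=x_0\}$. Translating the center by $\delta\sim\sqrt h$ and shrinking the cylinder to radius $\sim\sqrt h$ does keep $|D\Phi^+|$ bounded below, but then on the new lateral boundary the barrier is only $u(x_0,s)+O(\sqrt h)$, so the $\|u\|_{L^\infty}$ comparison fails there; and substituting the spatial Lipschitz estimate gives $u(x,\tau)\le u(x_0,\tau)+L\sqrt h$, which forces you to control $u(x_0,\tau)-u(x_0,s)$ for intermediate $\tau$ — exactly the quantity under proof. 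You would need a continuous-induction argument whose constants close, and the proposal does not supply one. The real content of the cited lemma for $\gamma<0$ is a radial profile keeping $(\phi')^{\gamma}\max(\phi'',\phi'/r)$ of order $h^{-1/2}$ near the vertex while still dominating the Lipschitz cone on the bottom slice and $\|u\|_{L^\infty}$ on the side; these pull in opposite directions (a profile like $r^{1+1/(1+\gamma)}h^{-1/(2(1+\gamma))}$ tames the singular product but loses the bottom comparison, while $\sqrt{r^2+h}$ does the reverse), and nothing in the proposal resolves that tension.
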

\begin{lemma} \label{lem:G_hol_t}
Assume that $F$ satisfies \textnormal{\ref{F1}} and
\begin{equation*}
	|p| \leq 1 \quad \text{and} \quad 
	\|u\|_{L^\infty(Q_1^+)} \leq 1.
\end{equation*}
Let $u$ be a viscosity solution to \eqref{eq:model} with $\nu=1$. Then
\begin{equation*}
	|u(x,t) -u(y,s)| \leq C(|x-y| + |t-s|^{1/2}) \quad \text{for all } (x,t), (y,s) \in Q_{1/2}^+,
\end{equation*}
where $C>0$ is a constant depending only on $n$, $\lambda$, $\Lambda$, and $\gamma$.
\end{lemma}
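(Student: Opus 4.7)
The strategy is to decouple the spatial and temporal increments via the triangle inequality and to reduce each to an \textit{interior} estimate already established in the paper. The key reduction is to pass from $u$ to the shifted function $v(x,t) := u(x,t) + p \cdot x$, which, because $\nu = 1$, solves
\begin{equation*}
    v_t = (|Dv|^2 + \varepsilon^2)^{\gamma/2} F(D^2 v) \quad \text{in } Q_1^+,
\end{equation*}
exactly the form covered by \Cref{lem:int-lip} (interior spatial Lipschitz) and \Cref{lem43-LLY24} (interior half-H\"older in time). Since $|p \cdot (x-y)| \le |x-y|$, proving the estimate for $v$ is equivalent to proving it for $u$.

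For a reference point $(x_0, t_0) \in Q_{1/2}^+$, set $r_0 := \min(x_{0,n}, 1/4)$, which guarantees $Q_{r_0}(x_0, t_0) \subset Q_1^+$ and keeps the cylinder away from $\{x_n = 0\}$. Consider the rescaling
\begin{equation*}
    \tilde v(\xi, \tau) := \frac{v(x_0 + r_0 \xi,\, t_0 + r_0^2 \tau) - v(x_0, t_0)}{r_0},
\end{equation*}
which solves the same structural equation with $F$ replaced by $\tilde F(M) := r_0 F(r_0^{-1} M)$, preserving the ellipticity constants and $\tilde F(O) = 0$. The crucial uniform bound $\|\tilde v\|_{L^\infty(Q_1)} \le C$ follows from \Cref{rmk:b_lip}, which gives $|u| \le C x_n \le C(x_{0,n} + r_0)$ on $Q_{r_0}(x_0, t_0)$, together with $|p \cdot (x-x_0)| \le r_0$ and the elementary inequality $x_{0,n} \le 2 r_0$. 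Applying \Cref{lem:int-lip} and \Cref{lem43-LLY24} to $\tilde v$ and undoing the scaling then yields, with \emph{universal} constants,
\begin{equation*}
    |v(x, t_0) - v(y, t_0)| \le C|x-y| \text{ on } B_{r_0/2}(x_0), \qquad |v(x_0, t) - v(x_0, s)| \le C|t-s|^{1/2} \text{ when } |t-s| \le r_0^2/4.
\end{equation*}

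To globalize, I will apply a standard dichotomy against $r_0$. For $(x,t), (y,t) \in Q_{1/2}^+$ with $x_n \le y_n$, set $r_0 := \min(x_n, 1/4)$: either $|x-y| \le r_0/2$ and the interior estimate above applies, or $|x-y| > r_0/2$, in which case \Cref{rmk:b_lip} gives $|u(x,t)|, |u(y,t)| \le C(x_n + |x-y|) \le C|x-y|$ when $x_n < 1/4$, while the trivial bound $\|u\|_\infty \le 1$ suffices when $x_n \ge 1/4$ (since then $|x-y| > 1/8$). The time estimate is handled by the exact same dichotomy against $r_0^2/4$, with the ``far apart'' case giving $|u(x_0, t) - u(x_0, s)| \le 2C x_{0,n} \le C|t-s|^{1/2}$. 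A final triangle inequality $|u(x,t) - u(y,s)| \le |u(x,t) - u(y,t)| + |u(y,t) - u(y,s)|$ delivers the claim. The main technical subtlety is ensuring that the rescaled function $\tilde v$ has $L^\infty$-norm bounded by a universal constant at \emph{every} interior scale $r_0$; this is exactly where the boundary decay of \Cref{rmk:b_lip} and the hypothesis $|p| \le 1$ are essential, since without them the rescaling would produce a prefactor of order $r_0^{-1}$ and destroy the universality of the constants coming out of \Cref{lem:int-lip} and \Cref{lem43-LLY24}.
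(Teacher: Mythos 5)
Your proof is correct and follows essentially the same strategy as the paper's: scale to a cylinder whose radius is comparable to the distance to the flat boundary, use the boundary decay $|u|\le Cx_n$ from \Cref{rmk:b_lip} (via \Cref{lem:bdry_lip1}) to get a uniform $L^\infty$-bound on the rescaled function, apply the interior estimates \Cref{lem:int-lip} and \Cref{lem43-LLY24}, and dispose of the ``far'' case directly by the same boundary decay. The only cosmetic difference is that the paper centers the cylinder $Q_{r/2}(y,s)$ at the point with the larger $x_n$-coordinate and treats space and time together, whereas you decouple the two increments via a triangle inequality and use $r_0=\min(x_n,1/4)$.
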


\begin{proof}
Let $(x,t),(y,s) \in Q_{1/2}^+$ with $t \leq s $ and let $r=y_n$. Without loss of generality, assume $y_n \ge x_n$. Then by \Cref{rmk:eq_tr}, \Cref{lem:int-lip}, and \Cref{lem43-LLY24}, we have
\begin{equation}\label{est1}
	|u(x,t)-u(y,s)| \leq \widetilde{C}(|x-y| + |t-s|^{1/2}) \quad \text{for all } (x,t) \in Q_{r/2}(y,s),
\end{equation}
where $\widetilde{C}>0$ is a constant depending only on $n$, $\lambda$, $\Lambda$, $\gamma$, and $r^{-1}\|u\|_{L^\infty(Q_r(y,s))}$. In particular, by \Cref{lem:bdry_lip1}, we have 
\begin{equation*}
	|u(x,t)| \leq \widehat{C}x_n \leq  \widehat{C}r \quad \text{for all } (x,t) \in Q_r(y,s),
\end{equation*}
where $\widehat{C}>0$ is determined in \Cref{lem:bdry_lip1}. Hence, $\widetilde{C}$ does not depend on $r^{-1}\|u\|_{L^\infty(Q_r(y,s))}$.

Moreover, by \Cref{lem:bdry_lip1},  we have
\begin{equation} \label{est2}
	|u(x,t)-u(y,s)| 
	\leq 2\widehat{C} r
	\leq C(|x-y| + |t-s|^{1/2}) \quad \text{for all } (x,t) \in Q_{1/2}^+ \setminus Q_{r/2}(y,s).
\end{equation}
Finally, combining \eqref{est1} and \eqref{est2}, we obtain the desired conclusion.
\end{proof}
\subsection{Boundary $C^{1,\alpha}$-estimates for large $|p|$}
When boundary regularity results are obtained via the construction of a barrier function, if the gradient vector of the barrier function is chosen sufficiently small, then in the range 
\begin{equation*}
	1/2 \leq |p| \leq1,
\end{equation*}
the equation in \eqref{eq:model} can be regarded as uniformly parabolic. Accordingly, we aim to extend the main regularity results valid for uniformly parabolic equations to problem \eqref{eq:model} in this range.
\begin{lemma} [Strong maximum principle] \label{lem:SMP}
Assume that $F$ satisfies \textnormal{\ref{F1}} and 
\begin{equation*}
	0\leq \nu \leq 1  \quad \text{and} \quad
	1/2 \leq |p| \leq 1.
\end{equation*}
Let $u$ be a nonnegative viscosity solution to 
\begin{equation} \label{eq:gen_f=0} 
	u_t = (|\nu Du + p|^2 + \varepsilon^2)^{\gamma/2} F(D^2 u)  \quad \text{in } Q_1 .
\end{equation}
If $u(0,0) =0$, then 
\begin{equation*}
	u(x,t)=0 \quad \text{for all }(x,t) \in Q_1.
\end{equation*}
\end{lemma}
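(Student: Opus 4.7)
My plan is to argue by contradiction, using the weak Harnack inequality (\Cref{thm:WHI}) to propagate positivity. Suppose there were some $(x_1, t_1) \in Q_1$ with $u(x_1, t_1) > 0$; by continuity, $u \ge \delta > 0$ on a small cylinder around $(x_1, t_1)$. The goal is to chain applications of the weak Harnack inequality along a Harnack chain of cylinders that links $(x_1, t_1)$ forward in time to $(0,0)$, forcing $u(0,0) > 0$ and contradicting the hypothesis.

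The decisive step is to show that $u$ is a viscosity supersolution of a genuinely uniformly parabolic Pucci equation. For this, I would first invoke the interior Lipschitz estimate (\Cref{lem:uni_lip_full}) to obtain that $u$ is Lipschitz in space on compact subsets of $Q_1$ with some constant $K$. Any $C^2$ test function $\varphi$ touching $u$ from below at an interior point then inherits $|D\varphi| \le K$, and combined with $1/2 \le |p| \le 1$ this yields
\begin{equation*}
	|p| - \nu K \le |\nu D\varphi + p| \le 1 + K.
\end{equation*}
As long as $\nu K < 1/2$, the coefficient $(|\nu D\varphi + p|^2 + \varepsilon^2)^{\gamma/2}$ lies in an interval $[m, M]$ with $m>0$. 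Together with \textnormal{\ref{F1}}, the pointwise estimate
\begin{equation*}
	c \, \mathcal{M}^-_{\lambda,\Lambda}(D^2\varphi) \ge \mathcal{M}^-_{m\lambda, M\Lambda}(D^2\varphi) \quad \text{for any } c \in [m,M]
\end{equation*}
(valid because positive eigenvalues of $D^2\varphi$ get weight $\in[m\lambda, M\lambda]$ and negative ones get weight $\in[m\Lambda, M\Lambda]$) promotes the supersolution property to $u_t \ge \mathcal{M}^-_{m\lambda, M\Lambda}(D^2 u)$ in the viscosity sense, a uniformly parabolic relation with ellipticity constants depending only on $n$, $\lambda$, $\Lambda$, and $\gamma$.

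The main obstacle is that the Lipschitz constant $K$ supplied by \Cref{lem:uni_lip_full} depends on $\|u\|_{L^\infty(Q_1)}$ and need not satisfy $\nu K < 1/2$ outright. I would resolve this by a preliminary rescaling: since $u$ is continuous and $u(0,0)=0$, one has $\|u\|_{L^\infty(Q_r)} \to 0$ as $r \to 0^+$; passing from $u$ on $Q_r$ to a rescaled solution on $Q_1$ (in the spirit of the scaling computations in \Cref{rmk:normal}) yields a problem with arbitrarily small $L^\infty$ norm and hence arbitrarily small Lipschitz constant $K$, securing $\nu K < 1/2$. Once the uniformly parabolic supersolution property is in force, applying \Cref{thm:WHI} along a Harnack chain of cylinders connecting $(x_1,t_1)$ to $(0,0)$ upgrades a positive lower bound on an integral average in one cylinder to a positive infimum in a later cylinder, and finitely many steps suffice to reach $(0,0)$, completing the contradiction.
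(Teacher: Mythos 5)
Your proposal has a genuine gap at the rescaling step, and the gap is fatal because it sits exactly at the point where the degeneracy must be handled.

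You correctly identify the crux: to invoke the weak Harnack inequality you must first show that $u$ is a viscosity supersolution of a uniformly parabolic Pucci equation, which requires $|\nu D\varphi + p|$ to be bounded away from zero for test functions $\varphi$ touching $u$ from below. Your plan to secure this is to rescale so that $\nu K < 1/2$, where $K$ is the Lipschitz constant of $u$. But the claimed rescaling does not do what you say. The natural parabolic scaling $v(x,t) = r^{-1}u(rx, r^2 t)$ (which is the one that preserves the form of the equation, \Cref{rmk:normal}) preserves the spatial gradient exactly, $Dv(x,t) = Du(rx, r^2 t)$, so the Lipschitz constant is unchanged. Moreover, since $u$ is Lipschitz and $u(0,0)=0$, one has $\|u\|_{L^\infty(Q_r)} \lesssim r$, hence $\|v\|_{L^\infty(Q_1)} = r^{-1}\|u\|_{L^\infty(Q_r)}$ stays $O(1)$ — it does not become arbitrarily small. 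To drive the rescaled $L^\infty$ norm (and thereby the effective $\nu$) to zero, you would need $u(x,t) = o(|x| + |t|^{1/2})$ near the origin, i.e.\ a superlinear decay — but that is precisely the Hopf-type conclusion one is trying to prove, so the argument is circular. Secondly, even with $\|u\|_{L^\infty(Q_1)} \leq 1$, \Cref{lem:uni_lip_full} gives a Lipschitz constant $C$ depending on $n,\lambda,\Lambda,\gamma$ that is merely bounded, not small; normalizing the $L^\infty$ norm does not make $K$ small, so for $\nu$ close to $1$ one cannot rule out $\nu K \ge 1/2$ and the equation may genuinely degenerate on the set $\{\nu Du = -p\}$.

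The paper avoids this obstruction entirely by never trying to show $u$ itself satisfies a uniformly parabolic inequality. Instead it argues by contradiction on the zero set $S = \{u=0\}$: if $S$ is a proper nonempty closed subset, one places an annular barrier $\phi$ touching $S$ from the positive side. The key point is that the barrier's gradient — not $u$'s — can be made small by choosing the exponential parameter $A$ large, so that $|\nu D\phi + p| \ge 1/4 > 0$ along the barrier and $\phi$ becomes a strict subsolution. The comparison principle (\Cref{thm:comp_prin}) then forces $\phi \le u$ with equality at a touching point of $\partial S$, contradicting the strict inequality. This one-sided, barrier-based version of the strong maximum principle is exactly what sidesteps the lack of a priori nondegeneracy of $u$.

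To repair your approach you would need an independent argument that $|\nu Du + p|$ is bounded below, which is not available; I would recommend switching to the barrier-and-touching-point strategy.
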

\begin{proof}
The proof is by contradiction, that is, we assume that the conclusion does not hold. Then the nonempty set $S= \{(x,t) \in Q_1 : u(x,t)=0\}$ is a closed subset of $Q_1$, so there exist $r,\rho>0$ and $(x_0,t_0) \in Q_1 \setminus S$ satisfying
\begin{equation*}
	Q_{r}(x_0,t_0) \subset Q_1 \setminus S, \quad
	B_{\rho}(x_0) \times (t_0-r^2, t_0] \subset Q_1, \quad \text{and} \quad
	(\partial B_\rho (x_0) \times \{t_0\}) \cap S \neq \varnothing.
\end{equation*}

Consider the function
\begin{equation*}
	\phi(x,t)= \frac{1}{A} \left(e^{-\frac{A|x-x_0|^2}{t-t_0+r^2}} - e^{-\frac{A\rho^2}{r^2}} \right).
\end{equation*}
Choosing $A>0$ sufficiently large ensures that $|D\phi|$ becomes small, in fact, 
\begin{equation*}
	|D\phi| \leq \frac{\sqrt{2}}{r\sqrt{A}}e^{-1/2} \quad \text{in } \Omega = \{x\in B_1: r < |x-x_0| < \rho\}  \times (t_0-r^2, t_0] .
\end{equation*}
Consequently, $\phi$ satisfies
\begin{equation*} 
\left\{\begin{aligned}
	\phi_t &< (|\nu D\phi + p|^2 + \varepsilon^2)^{\gamma/2} F(D^2\phi) && \text{in } \Omega \\
	\phi & \leq u && \text{on }  \partial B_{r}(x_0) \times [t_0-r^2, t_0] \\
	\phi & \leq 0 && \text{on }  \partial B_{\rho}(x_0) \times [t_0-r^2, t_0] \\
	\phi & \leq 0 && \text{on }  \{t=t_0-r^2\}.
\end{aligned}\right.
\end{equation*}
Therefore, \Cref{thm:comp_prin} yields
\begin{equation} \label{area_in}
	\phi (x,t) \leq u(x,t) \quad \text{for all } (x,t) \in \Omega.
\end{equation}

Since $(\partial B_\rho (x_0) \times \{t_0\}) \cap S \neq \varnothing$, take some $(y,t_0) \in (\partial B_\rho (x_0) \times \{t_0\}) \cap S$. Then 
\begin{equation}\label{touch_pt}
	\phi (y,t_0)=0=u(y,t_0).
\end{equation}
Moreover,
\begin{equation*} 
	\phi (x,t) \leq 0 \leq u(x,t) \quad \text{for all } (x,t) \in \{x\in B_1: |x-x_0|\ge \rho\} \times [t_0-r^2, t_0],
\end{equation*}
and combining this with \eqref{area_in} and \eqref{touch_pt} show that $\phi$ touches $u$ from below at $(y,t_0)$, which contradicts the fact that
\begin{equation*}
	\phi_t < (|\nu D\phi + p|^2 + \varepsilon^2)^{\gamma/2} F(D^2\phi) \quad \text{in a neighborhood of } (y,t_0).
\end{equation*}
\end{proof}
\begin{lemma} [Harnack inequality] \label{harnack}
Under the assumptions of \Cref{lem:SMP}, let us further assume $\|u\|_{L^\infty(Q_1^+)} \leq 2$ and $u(0, -1/2) \ge 1$. Let $u$ be a nonnegative viscosity solution to \eqref{eq:gen_f=0}. Then 
\begin{equation*}
	u(x,t) \ge \mu  \quad \text{for all } (x,t) \in Q_{1/2},
\end{equation*}
where $\mu>0$ is a constant depending only on $n$, $\lambda$, $\Lambda$, and $\gamma$.
\end{lemma}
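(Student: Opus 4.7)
I would carry out a classical parabolic Harnack argument via sub-barrier comparison, closely following the template of \Cref{lem:SMP} but tracking constants quantitatively. The decisive observation is that with $|p|\in [1/2,1]$, one can choose sub-barriers $\phi$ whose gradients satisfy $|D\phi|\leq\delta$ for any prescribed small $\delta>0$; then $|\nu D\phi+p|\in[1/4,2]$ uniformly in $\nu\in[0,1]$, and the coefficient $(|\nu D\phi+p|^2+\varepsilon^2)^{\gamma/2}$ is pinched between two positive constants depending only on $\gamma$. Within this regime the verification that $\phi$ is a strict subsolution and the application of \Cref{thm:comp_prin} reduce to an essentially uniformly parabolic computation.

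\textbf{Construction.} First, I would obtain a quantitative initial positivity neighborhood of $(0,-1/2)$: combining the interior Lipschitz estimate \Cref{lem:uni_lip_full} with the interior H\"older-in-time estimate \Cref{lem:int_hol_time_full} (applied on a cylinder slightly larger than $Q_{1/2}(0,-1/2)$), together with $u(0,-1/2)\geq 1$, one concludes $u\geq 1/2$ on a cylinder $Q_{r_0}(0,-1/2)$ with $r_0$ depending only on $n,\lambda,\Lambda,\gamma$. Then, mirroring the barrier from \Cref{lem:SMP}, I would use
\begin{equation*}
    \phi(x,t) = \frac{c}{A}\Bigl(e^{-A|x-y|^2/(t-s+R^2)} - e^{-A\rho^2/R^2}\Bigr)
\end{equation*}
on the annular region $\{R<|x-y|<\rho\}\times(s-R^2,s]$ for suitable $0<R<\rho$ and base point $(y,s)$. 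Choosing $A$ large makes $|D\phi|$ of order $A^{-1/2}$ (so $|\nu D\phi+p|\approx|p|$) and makes the positive radial eigenvalue of $D^2\phi$ of size $A$ dominate the remaining negative eigenvalues, forcing $F(D^2\phi)>0$ via \textnormal{\ref{F1}}; taking $c$ small then produces the strict inequality
\begin{equation*}
    \phi_t \;<\; \bigl(|\nu D\phi+p|^2+\varepsilon^2\bigr)^{\gamma/2}F(D^2\phi).
\end{equation*}
On the outer parabolic boundary $\phi=0\leq u$ by nonnegativity, and on the inner boundary $\phi\leq u$ by the previously established lower bound. \Cref{thm:comp_prin} then yields $\phi\leq u$ inside the annulus, which enlarges the set on which $u$ is bounded below by a positive constant.

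\textbf{Iteration and main obstacle.} Iterating this one-step propagation along a finite forward-in-time Harnack chain connecting $(0,-1/2)$ to each point of $Q_{1/2}$ delivers the uniform bound $u\geq\mu$. The principal technical difficulty lies in balancing the barrier parameters across the iteration: large $A$ is required for the strict subsolution property, while the amplitude $c/A$ controls the quantitative constant passed on to the next step, so a naive choice degrades the lower bound geometrically. I would handle this by fixing $A$ (depending only on $n,\lambda,\Lambda,\gamma$) and then choosing a fixed ratio $\rho/R$, so that each step of the chain multiplies the lower bound by the same universal factor; a uniform finite number of steps then suffices to cover $Q_{1/2}$. The hypothesis $|p|\geq 1/2$ is precisely what decouples the barrier parameter choices from $\nu$ and $\varepsilon$ and makes this uniform construction possible.
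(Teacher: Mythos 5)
Your strategy is genuinely different from the paper's. The paper proves this Harnack inequality by contradiction: assuming a sequence of solutions $u_k$ with $u_k(x_k,t_k)\to 0$, it extracts a locally uniform limit $\tilde u$ (via \Cref{lem:uni_lip_full}, \Cref{lem:int_hol_time_full}, and \Cref{thm:stability1}), observes that $\tilde u$ still satisfies the hypotheses of the strong maximum principle \Cref{lem:SMP}, and derives a contradiction with $\tilde u(0,-1/2)\ge 1$. Your plan instead aims for a direct, quantitative barrier-chain argument. The key structural observation — that $|p|\in[1/2,1]$ plus a barrier with tiny gradient pins $(|\nu D\phi+p|^2+\varepsilon^2)^{\gamma/2}$ between universal positive constants, so the barrier computations reduce to the uniformly parabolic case — is exactly the one the paper exploits in \Cref{lem:SMP}, and your initial-positivity step (combining \Cref{lem:uni_lip_full} and \Cref{lem:int_hol_time_full} to get $u\ge 1/2$ on a universal small cylinder around $(0,-1/2)$) is sound modulo recentering the cylinders and a harmless factor of $2$ in $\|u\|_{L^\infty}$.

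The genuine gap is in the ``forward-in-time Harnack chain.'' The barrier you propose, which mirrors the one in \Cref{lem:SMP}, is
\begin{equation*}
\phi(x,t)=\frac{c}{A}\Bigl(e^{-A|x-y|^2/(t-s+R^2)}-e^{-A\rho^2/R^2}\Bigr)
\end{equation*}
on the \emph{annulus} $\{R<|x-y|<\rho\}\times(s-R^2,s]$. Indeed $\phi$ must be restricted to an annulus: the radial second derivative of the Gaussian is negative near the center, so $\phi$ is not a subsolution on a full ball. Consequently, comparison only gives a lower bound on $u$ \emph{inside that annulus for $t\le s$}; it never produces positivity at times later than $s$. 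Combined with the known positivity on the inner cylinder $B_R(y)\times(s-R^2,s]$, one obtains $u\ge c'$ on, say, $B_{2R}(y)\times(s-R^2/2,s]$: the spatial radius doubled, but the time interval \emph{halved}. Iterating this shrinks the available time interval geometrically relative to the required $\sim R^2$ time span of the next barrier, so the chain stalls; and there is no way to move the top time $s$ forward, because every sub-barrier of this shape sees only $t\le s$ and the next step would need positivity on an interval $(s'-R^2,s']$ with $s'>s$, which has not been established. Your ``main obstacle'' discussion addresses the multiplicative decay of the amplitude (which is indeed controlled by fixing $A$ and $\rho/R$), but not this time-interval mismatch, which is the real obstruction. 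For the linear heat equation one can use the exact heat kernel as a sub-barrier on a full ball and advance time; for fully nonlinear equations one typically resorts to the weak Harnack inequality \Cref{thm:WHI} or, as the paper does here, a compactness argument with the strong maximum principle — both avoid the need for an explicit forward-propagation barrier. (Applying \Cref{thm:WHI} directly is also delicate, because for $\gamma>0$ the coefficient $(|\nu Du+p|^2+\varepsilon^2)^{\gamma/2}$ is not uniformly bounded below.)

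If you still want a constructive route, you would need to replace the static annular barrier with a genuinely time-advancing construction (e.g.\ a sliding annular barrier whose center and inner radius move, arranged so that the inner lateral boundary remains inside the previously certified positivity region at all times of the new interval), or instead go through the weak Harnack inequality after first establishing that $u$ is a supersolution to a uniformly parabolic operator in a region where $|\nu Du+p|$ is bounded away from zero. Both are workable but require substantially more care than the proposal records.
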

\begin{proof}
	The proof is by contradiction, that is, we assume that the conclusion does not hold. Then there exist sequences $F_k$, $u_k$, $\varepsilon_k$, $\nu_k$, $p_k$, and $(x_k, t_k) \in Q_{1/2}$ such that $u_k$ is a nonnegative viscosity solution to 
\begin{equation*}
	\partial_t u_k = (|\nu_k Du_k + p_k|^2 + \varepsilon_k^2)^{\gamma/2} F_k(D^2 u_k)  \quad \text{in } Q_1 ,
\end{equation*}
where $F_k$ satisfies \textnormal{\ref{F1}}, and 
\begin{equation*}
	0\leq \varepsilon_k, \nu_k \leq 1, \quad
	1/2 \leq |p_k| \leq 1, \quad 
	\|u_k\|_{L^\infty(Q_1^+)} \leq 2,
\end{equation*}
\begin{equation*}
	u_k(0, -1/2) \ge 1,  \quad \text{and} \quad
	\lim_{k \to \infty} u_k(x_k,t_k) = 0.
\end{equation*}

As in the proof of \Cref{lem:small_1st}, there exist a point $(\tilde{x},\tilde{t}) \in \overline{Q_{1/2}}$ and a function $\tilde{u}$ such that $(x_k,t_k) \to (\tilde{x},\tilde{t})$ as $k \to \infty$ and $\tilde{u}$ is a nonnegative solution to
\begin{equation*}
	\tilde{u}_t = (|\tilde{\nu} D\tilde{u} + \tilde{p}|^2 + \tilde{\varepsilon}^2)^{\gamma/2} \widetilde{F}(D^2 \tilde{u})  \quad \text{in } Q_{3/4} ,
\end{equation*}
where  $0\leq \tilde{\varepsilon}, \tilde{\nu} \leq 1$ and $1/2 \leq |\tilde{p}| \leq 1$.

However, since $\tilde{u}(\tilde{x},\tilde{t})=0$ and $-1/4 \leq \tilde{t} \leq 0$, by \Cref{lem:SMP}, we have 
\begin{equation*}
	\tilde{u}(x,t) =0 \quad \text{for all } (x,t) \in Q_{3/4} \text{ with } t < \tilde{t},
\end{equation*}
which contradicts the fact that $u(0, -1/2) \ge 1$.
\end{proof}
\begin{lemma} [Hopf principle] \label{hopf}
Under the assumptions of \Cref{lem:SMP}, let us further assume $\|u\|_{L^\infty(Q_1^+)} \leq 2$ and $u(e_n/2, -3/4) \ge 1$. Let $u$ be a nonnegative viscosity solution to \eqref{eq:model}. Then 
\begin{equation*}
	u(x,t) \ge c_0 x_n \quad \text{for all } (x,t) \in Q_{1/2}^+,
\end{equation*}
where $c_0 \in (0,1)$ is a constant depending only on $n$, $\lambda$, $\Lambda$, and $\gamma$.
\end{lemma}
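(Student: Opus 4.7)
The plan is a classical parabolic Hopf lemma argument adapted to the present degenerate/singular framework. First, I would apply Lemma \ref{harnack} iteratively along a Harnack chain---performed in interior cylinders strictly contained in $\{x_n>0\}\cap Q_1$, with parabolic rescalings $v(y,s)=u(y_0+ry,t_0+r^2s)$ chosen (possibly after a multiplicative renormalization of $v$) so that the rescaled parameters $(\nu/r,p,\varepsilon)$ remain in the admissible regime of the lemma---to propagate the pointwise positivity at $(e_n/2,-3/4)$ into a uniform lower bound $u\geq\mu_1$ on an interior set
\[
K := \{x : x_n\geq 1/8,\ |x|\leq 7/8\}\times[-1/2,0]\subset Q_1^+.
\]

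For each boundary point $(y_0,s_0)\in S_{1/2}$, set $x_0=y_0+\rho e_n$ with $\rho=1/4$ and $r=1/8$, so that $B_\rho(x_0)$ is tangent to $\{x_n=0\}$ at $y_0$. On the annular cylinder $\Omega:=\{r<|x-x_0|<\rho\}\times(s_0-r^2,s_0]$, I would use exactly the parabolic barrier from the proof of Lemma \ref{lem:SMP},
\[
\phi(x,t) = \frac{1}{A}\left(e^{-A|x-x_0|^2/(t-s_0+r^2)} - e^{-A\rho^2/r^2}\right).
\]
As in that proof, for $A$ large (depending only on $n,\lambda,\Lambda,\gamma$), $|D\phi|$ is small enough that the prefactor $(|\nu D\phi+p|^2+\varepsilon^2)^{\gamma/2}$ stays between two positive constants uniformly in $\nu\in[0,1]$ and $|p|\in[1/2,1]$, and the strict inequality $\phi_t<(|\nu D\phi+p|^2+\varepsilon^2)^{\gamma/2}F(D^2\phi)$ holds in $\Omega$. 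On the outer sphere $|x-x_0|=\rho$ we have $\phi\leq 0\leq u$; as $t\to(s_0-r^2)^+$, $\phi\to -e^{-A\rho^2/r^2}/A<0\leq u$; and on the inner sphere $|x-x_0|=r$, $\phi\leq (e^{-A}-e^{-A\rho^2/r^2})/A<\mu_1$ for $A$ sufficiently large. Since the inner sphere lies in $K$, the Harnack bound gives $\phi\leq\mu_1\leq u$ there, so Theorem \ref{thm:comp_prin} yields $\phi\leq u$ throughout $\Omega$.

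Evaluating at time $s_0$ and using the identity $\rho^2-|x-x_0|^2 = 2\rho x_n-|x-y_0|^2$ together with $e^{a}-1\geq a$ for $a\geq 0$,
\[
\phi(x,s_0) \geq \frac{1}{r^2}e^{-A\rho^2/r^2}\bigl(2\rho x_n-|x-y_0|^2\bigr).
\]
For $(x,t)\in Q_{1/2}^+$ with $x_n\leq\rho-r=1/8$, applying this with the choice $(y_0,s_0)=((x',0),t)$ gives $u(x,t)\geq c_1 x_n$ for a uniform $c_1>0$; for $x_n>1/8$, the Harnack bound directly gives $u\geq\mu_1\geq 2\mu_1 x_n$ (using $x_n\leq 1/2$), and the conclusion follows with $c_0=\min(c_1,2\mu_1)$. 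The main technical point is preserving the strict subsolution property of $\phi$ in the presence of the potentially singular prefactor when $\gamma<0$; this is handled exactly as in Lemma \ref{lem:SMP} by the smallness of $|D\phi|$ relative to $|p|\geq 1/2$, which keeps $|\nu D\phi+p|$ in a fixed compact interval of $(0,\infty)$. A secondary bookkeeping obstacle lies in the Harnack chain, where each rescaling must be arranged so that the rescaled cylinder stays inside $\{x_n>0\}$ and the rescaled parameters stay within the admissible range of Lemma \ref{harnack}.
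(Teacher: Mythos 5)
Your proof takes essentially the same route as the paper's: obtain a uniform interior lower bound via the Harnack inequality of Lemma \ref{harnack}, then compare $u$ against an exponential annular barrier of exactly the kind used in Lemma \ref{lem:SMP} and read off the linear lower bound $c_0 x_n$ near the flat boundary, translating in $(x',t)$ to cover $Q_{1/2}^+$. The only cosmetic difference is that you build the translated barrier at each boundary point from the start (with radii $\rho=1/4$, $r=1/8$), whereas the paper uses a single barrier centered at $e_n/2$ with radii $1/4$ and $1/2$ and invokes the translation argument from Lemma \ref{lem:bdry_lip1} at the end; both you and the paper are equally terse about the Harnack-chain normalization bookkeeping, which is harmless since the compactness proof of Lemma \ref{harnack} works for any fixed $L^\infty$ bound.
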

\begin{proof}
By \Cref{harnack}, there exists a constant $\mu>0$ such that 
\begin{equation*}
	u(x,t) \ge \mu \quad \text{for all } (x,t) \in B_{1/4}(e_n/2) \times (-1/4, 0].
\end{equation*}

If we replace $\phi$ in the proof of \Cref{lem:SMP} with
\begin{equation*}
	\phi(x,t)= \frac{1}{A} \left(e^{-\frac{A|x-e_n/2|^2}{t+1/4}} - e^{-A} \right),
\end{equation*}
then $\phi$ satisfies
\begin{equation*} 
\left\{\begin{aligned}
	\phi_t &< (|\nu D\phi + p|^2 + \varepsilon^2)^{\gamma/2} F(D^2\phi) && \text{in } \Omega \\
	\phi & \leq \mu && \text{on }  \partial B_{1/4}(e_n/2) \times [-1/4, 0] \\
	\phi & \leq 0 && \text{on }  \partial B_{1/2}(e_n/2) \times [-1/4, 0] \\
	\phi & \leq 0 && \text{on }  \{t=-1/4\}, 
\end{aligned}\right.
\end{equation*}
where $\Omega = \{x\in B_1: 1/4 < |x-e_n/2| < 1/2\}  \times (-1/4, 0]$. Therefore, \Cref{thm:comp_prin} yields
\begin{equation*}
	u(0',x_n,0) \ge \phi(0',x_n,0) \ge 4e^{-A}x_n  \quad \text{for all } x_n  \in (0, 1/4).
\end{equation*}
The remaining part of the proof is the same as that of \Cref{lem:bdry_lip1}.
\end{proof}
\Cref{lem:bdry_lip1} and \Cref{hopf} play a crucial role in the derivation of the following $C^{1,\alpha}$-estimates.
\begin{lemma} [Boundary $C^{1,\alpha}$-estimates for large $|p|$] \label{lem:c1a_large_p}
Under the assumptions of \Cref{lem:SMP}, let us further assume $p_n=0$ and $\|Du\|_{L^\infty(Q_1^+)} \leq 1$. Let $u$ be a viscosity solution to \eqref{eq:model}. Then $u \in C^{1,\alpha}(0,0)$ for some $\alpha\in(0,1)$, that is, there exists a constant $a \in \mathbb{R}$ such that 
\begin{equation*}
	|u(x,t) - ax_n| \leq C x_n (|x|^\alpha + |t|^{\alpha/2}) \quad \text{for all } (x,t) \in Q_{1/2}^+,
\end{equation*}
where $C>0$ is a constant depending only on $n$, $\lambda$, $\Lambda$, and $\gamma$.
\end{lemma}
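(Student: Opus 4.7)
The plan is to derive the $C^{1,\alpha}$ estimate at $(0,0)$ via an oscillation-decay argument for the ratio $u/x_n$ on dyadic parabolic cylinders, using the Hopf principle (\Cref{hopf}) as the geometric-factor engine. The reason this works despite the degeneracy is that the hypothesis $p_n=0$ makes every linear shift $u\mapsto u-ax_n$ preserve the structure of the equation with a new gradient parameter $\tilde p=p+\nu a e_n$ satisfying $|\tilde p|^2=|p|^2+\nu^2 a^2$, so $|\tilde p|$ stays bounded between positive constants depending only on $n,\lambda,\Lambda,\gamma$.

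First I would observe that $\|Du\|_{L^\infty(Q_1^+)}\le 1$ together with $u|_{S_1}=0$ gives $\|u\|_{L^\infty(Q_1^+)}\le 1$, and then invoke \Cref{rmk:b_lip} (i.e.\ \Cref{lem:bdry_lip1} with $\delta=0$) to produce the boundary Lipschitz bound $|u(x,t)|\le C_1 x_n$ on $Q_{1/2}^+$, with $C_1=C_1(n,\lambda,\Lambda,\gamma)$. Setting $r_k=2^{-k-1}$, $M_k=\sup_{Q_{r_k}^+}u/x_n$, $m_k=\inf_{Q_{r_k}^+}u/x_n$, and $\omega_k=M_k-m_k\le 2C_1$, I would aim to prove $\omega_{k+1}\le(1-c_\ast)\omega_k$ for a universal $c_\ast\in(0,1)$. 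Consider the nonnegative functions $v_k=u-m_k x_n$ and $V_k=M_k x_n-u$ on $Q_{r_k}^+$, both vanishing on $S_{r_k}$ and summing to $\omega_k x_n$; each solves an equation of the same form as in \eqref{eq:model} (with $F$ possibly replaced by $\widetilde F(M)=-F(-M)$, still in \textnormal{\ref{F1}}) with shifted parameter $\tilde p_k$ satisfying $|\tilde p_k|^2=|p|^2+\nu^2 a_k^2$ for some $a_k\in\{m_k,M_k\}$, so $|\tilde p_k|\in[1/2,\sqrt{1+C_1^2}]$.

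A pigeonhole at the interior point $(r_k e_n/2,-3r_k^2/4)\in Q_{r_k}^+$ ensures that one of $v_k,V_k$ (WLOG $v_k$) has value $\ge \omega_k r_k/4$ there. Rescaling via $\tilde w_k(x,t)=\frac{4}{\omega_k r_k}\,v_k(r_k x,r_k^2 t)$ produces a nonnegative function on $Q_1^+$ vanishing on $S_1$, with $\tilde w_k(e_n/2,-3/4)\ge 1$ and $\|\tilde w_k\|_{L^\infty(Q_1^+)}\le 4$, satisfying the equation with $\hat\nu_k=\nu\omega_k/4$, $\hat p_k=\tilde p_k$, $\hat\varepsilon=\varepsilon$, and $\hat F_k(M)=\frac{4r_k}{\omega_k}F(\frac{\omega_k}{4r_k}M)$ still in \textnormal{\ref{F1}}. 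Applying \Cref{hopf} (in a suitably generalized form) yields $\tilde w_k\ge c_0 x_n$ on $Q_{1/2}^+$, which unscales to $v_k\ge(c_0\omega_k/4)x_n$ on $Q_{r_{k+1}}^+$, forcing $m_{k+1}\ge m_k+c_0\omega_k/4$ and $\omega_{k+1}\le(1-c_0/4)\omega_k$. Iterating gives $\omega_k\le C\theta^k$ with $\theta=1-c_0/4$; the monotone $M_k,m_k$ converge to a common limit $a\in[-C_1,C_1]$; and for $(x,t)\in Q_{1/2}^+$, choosing $k$ with $r_{k+1}<\max(|x|,|t|^{1/2})\le r_k$ yields $|u(x,t)-ax_n|\le x_n\omega_k\le C x_n(|x|^\alpha+|t|^{\alpha/2})$ with $\alpha=-\log_2\theta>0$.

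The main obstacle I anticipate is verifying that \Cref{hopf}, and the upstream \Cref{harnack} and \Cref{lem:SMP}, remain applicable to $\tilde w_k$, which mildly violates the stated hypotheses in three ways: $|\hat p_k|$ may exceed $1$ (up to $\sqrt{1+C_1^2}$); $\hat\nu_k$ may exceed $1$ for small $k$; and $\|\tilde w_k\|_\infty\le 4$ rather than $\le 2$. A careful reading of the barrier constructions in those three lemmas shows that only upper bounds on $|p|$, $\nu$, and $\|u\|_\infty$ enter the arguments, so the three results generalize with constants still depending only on $n,\lambda,\Lambda,\gamma$; the finitely many small-$k$ indices where $\hat\nu_k>1$ are absorbed directly via the Lipschitz bound, leaving the iteration valid for all sufficiently large $k$ with a universal $c_\ast$.
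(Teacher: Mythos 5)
Your proposal is correct and follows essentially the same route as the paper: a dyadic iteration in which the shifted, rescaled solution is pigeonholed at the interior point $(e_n/2,-3/4)$ and the Hopf principle (\Cref{hopf}) is invoked to improve the bound $a_k x_n \le u \le b_k x_n$ (your $m_k,M_k$ are just the sharp such constants). The paper likewise applies \Cref{hopf} with $|\tilde p|\in[1/2,2]$ and $\|v\|_\infty\le 4$ — slightly outside the stated hypotheses $|p|\le1$, $\|u\|_\infty\le2$ — so your explicit remark that the barrier construction tolerates this enlargement, with $c_0$ still universal, is a fair (and arguably more careful) reading of the same argument; note also that since $\|Du\|_{L^\infty(Q_1^+)}\le 1$ and $u$ vanishes on $S_1$ you already have $|u|\le x_n$, so $C_1=1$ and the worry about $\hat\nu_k>1$ never actually arises.
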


\begin{proof}
We first construct an increasing sequence $\{a_k\}_{k=0}^\infty$ and a decreasing sequence $\{b_k\}_{k=0}^\infty$ satisfying $a_0=-2$, $b_0=2$ and for each $k\ge 1$,
\begin{equation} \label{ind_c1a}
	a_k x_n \leq u \leq b_k x_n \quad \text{in }  Q_{2^{-k}}^+ \quad \text{and} \quad
	0\leq b_k - a_k \leq 2^{-\alpha}(b_{k-1}-a_{k-1}),
\end{equation}
where $a \in (0,1)$ is a constant depending only on $n$, $\lambda$, $\Lambda$, and $\gamma$. 

The proof will proceed by induction. First, in the case $k=1$, since $\|Du\|_{L^\infty(Q_1^+)} \leq 1$, we may choose $a_1=-1$ and $b_1=1$, and then \eqref{ind_c1a} holds. Next, assume that \eqref{ind_c1a} holds for $k=i$. If $a_i=b_i$, then by setting $a_{i+1}=a_i$ and $b_{i+1}=b_i$, we see that \eqref{ind_c1a} holds for $i+1$. Hence, without loss of generality, we may assume that $a_i > b_i$.

Let $r_i=2^{-i}$. By the induction hypothesis, there are two possible cases:
\begin{equation*}
	\text{either} \quad u\left(\frac{1}{2} r_i e_n, -\frac{3}{4} r_i^2 \right) \ge \frac{1}{4} r_i(a_i + b_i ) \quad
	\text{or} \quad  u\left(\frac{1}{2} r_ie_n, -\frac{3}{4} r_i^2 \right) < \frac{1}{4} r_i(a_i + b_i ).
\end{equation*}
Since the proofs of the two cases are similar, we only consider the first case. Then the function 
\begin{equation*}
	v(x,t) = \frac{4}{(b_i-a_i)r_i} (u(r_ix,r_i^2t) - a_i r_i x_n) 
\end{equation*}
satisfies 
\begin{equation*}
	\|v\|_{L^\infty( Q_1^+ )} \leq 4, \quad 
	v\left(\frac{1}{2} e_n, -\frac{3}{4} \right) \ge 1, \quad \text{and} \quad
	v_t = (|\tilde{\nu} Dv + \tilde{p}|^2 + \varepsilon^2)^{\gamma/2} \widetilde{F}(D^2 v)  \quad \text{in } Q_1^+,
\end{equation*}
where
\begin{equation*}
	\widetilde{F}(M) =\frac{4r_i}{b_i-a_i} F\left(\frac{b_i-a_i}{4r_i} M\right), \quad
	\tilde{\nu}=  \frac{b_i-a_i}{4} \nu, \quad\text{and}\quad
	\tilde{p}= p+ \nu a_i e_n.
\end{equation*}

Since $|a_i|\leq 1$ and $|b_i| \leq1$, it follows that 
\begin{equation*}
	0\leq \tilde{\nu} \leq \frac{1}{2} \quad \text{and} \quad 
	\frac{1}{2} \leq |p| \leq |\tilde{p}| \leq 2.
\end{equation*}
Hence, by applying \Cref{hopf} to $v$, it follows that
\begin{equation*}
	v(x,t) \ge c_0 x_n \quad \text{for all }  (x,t) \in Q_{1/2}^+,
\end{equation*}
where $c_0 \in (0,1)$ is determined in \Cref{hopf}. Now, rescaling back to $u$, we obtain
\begin{equation*}
	u(x,t) \ge \left( a_i  + \frac{b_i-a_i}{4}  c_0 \right)x_n \quad \text{for all }  (x,t) \in Q_{r_{i+1}}^+.
\end{equation*}
By setting 
\begin{equation*}
	a_{i+1}= a_i  + \frac{b_i-a_i}{4}  c_0, \quad
	b_{i+1} = b_i \quad \text{and}\quad
	\alpha = -\log_2 \left(1-\frac{c_0}{4}\right),
\end{equation*}
we obtain that \eqref{ind_c1a} also holds for $k=i+1$.

Finally, this construction ensures that there exists a constant $a \in \mathbb{R}$ such that 
\begin{equation*}
	\lim_{k\to \infty} a_k =a = \lim_{k\to \infty} b_k  \quad \text{and} \quad
	|a| \leq 1.
\end{equation*}
Moreover, for any $(x,t) \in Q_{1/2}^+$, by choosing $k \in \mathbb{N}$ satisfying $r_{k+1} < \max\{|x|, \sqrt{|t|}\} \leq r_k$, we obtain
\begin{equation*}
	u(x,t) - ax_n \leq (b_k-a_k)x_n \leq 4r_k^{\alpha} x_n \leq Cx_n (|x|^\alpha + |t|^{\alpha/2}).
\end{equation*}
Similarly, we obtain the lower bound.
 \end{proof}
 The $C^{1,\alpha}$-estimates established in \Cref{lem:c1a_large_p} can be improved as follows for any $
 \alpha \in (0,1)$.
\begin{lemma} [Boundary $C^{1,1^-}$-estimates for large $|p|$]   \label{lem:c11_large_p}
Under the assumptions of \Cref{lem:c1a_large_p}, let $u$ be a viscosity solution to \eqref{eq:model}. Then $u \in C^{1,\alpha}(0,0)$ for all $\alpha\in(0,1)$, that is, there exists a constant $a \in \mathbb{R}$ such that 
\begin{equation*}
	|u(x,t) - ax_n| \leq C(|x|^{1+\alpha} + |t|^{\frac{1+\alpha}{2}}) \quad \text{for all } (x,t) \in Q_{1/2}^+,
\end{equation*}
where $C>0$ is a constant depending only on $n$, $\lambda$, $\Lambda$, $\alpha$, and $\gamma$.
\end{lemma}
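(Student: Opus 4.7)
The plan is to reduce the problem to the setting of \Cref{lem:small_bdry} by subtracting the approximating linear function produced by \Cref{lem:c1a_large_p} and then rescaling so that the residual becomes a small perturbation in the sense of that lemma.

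First, I would apply \Cref{lem:c1a_large_p} to obtain some $\alpha_0 \in (0,1)$ and $a \in \mathbb{R}$ with $|a| \leq 1$ such that
\begin{equation*}
	|u(x,t) - ax_n| \leq Cx_n(|x|^{\alpha_0} + |t|^{\alpha_0/2}) \quad \text{in } Q_{1/2}^+.
\end{equation*}
Setting $v = u - ax_n$, a direct computation gives $v = 0$ on $S_1$ together with
\begin{equation*}
	v_t = (|\nu Dv + \tilde{p}|^2 + \varepsilon^2)^{\gamma/2} F(D^2 v) \quad \text{in } Q_1^+,
\end{equation*}
where $\tilde{p} = p + \nu a e_n$. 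Because $p_n = 0$, the vectors $p$ and $\nu a e_n$ are orthogonal, so $|\tilde{p}|^2 = |p|^2 + \nu^2 a^2 \in [1/4, 2]$, and hence $|\tilde{p}| \in [1/2, \sqrt{2}] \subset [1/2, 2]$.

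Next, for $r \in (0, 1/2)$, I would rescale by defining $v_r(x,t) = v(rx, r^2 t)/r$. A direct chain-rule computation shows that $v_r$ solves
\begin{equation*}
	\partial_t v_r = (|\nu Dv_r + \tilde{p}|^2 + \varepsilon^2)^{\gamma/2} \tilde{F}(D^2 v_r) \quad \text{in } Q_1^+, \qquad v_r = 0 \text{ on } S_1,
\end{equation*}
where $\tilde{F}(M) = rF(M/r)$ still satisfies \ref{F1} with the same ellipticity constants. Crucially, $\tilde{p}$ is preserved under this scaling, and the pointwise estimate on $v$ implies $\|v_r\|_{L^\infty(Q_1^+)} \leq Cr^{\alpha_0}$. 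Given $\alpha \in (0,1)$, let $\eta_\alpha$ denote the smallness constant from \Cref{lem:small_bdry} corresponding to exponent $\alpha$, applied with the flat boundary $\{x_n = 0\}$ and with $f \equiv 0$, $g \equiv 0$. I would then choose $r_\alpha \in (0, 1/2)$ small enough that $Cr_\alpha^{\alpha_0} \leq \eta_\alpha$, and apply \Cref{lem:small_bdry} to $v_{r_\alpha}$ to produce $a' \in \mathbb{R}$ with $|a'| \leq C$ and
\begin{equation*}
	|v_{r_\alpha}(x,t) - a' x_n| \leq C(|x|^{1+\alpha} + |t|^{(1+\alpha)/2}) \quad \text{in } Q_1^+.
\end{equation*}
Rescaling back yields $|u(y,s) - (a+a')y_n| \leq Cr_\alpha^{-\alpha}(|y|^{1+\alpha} + |s|^{(1+\alpha)/2})$ on $Q_{r_\alpha}^+$, and the estimate on $Q_{1/2}^+ \setminus Q_{r_\alpha}^+$ follows from the trivial bound $\|u\|_{L^\infty(Q_1^+)} \leq 1$ (consequence of $\|Du\|_{L^\infty} \leq 1$ and $u=0$ on $S_1$) combined with $|y|^{1+\alpha} + |s|^{(1+\alpha)/2} \geq r_\alpha^{1+\alpha}$ there.

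The main obstacle is ensuring that the rescaled problem remains in the uniformly parabolic regime required by \Cref{lem:small_bdry}, namely $1/2 \leq |\tilde{p}| \leq 2$. The assumption $p_n = 0$ carried from \Cref{lem:c1a_large_p} is essential here: it decouples $p$ from the perturbation $\nu a e_n$, so $|\tilde{p}|^2 = |p|^2 + \nu^2 a^2 \geq 1/4$ regardless of the size of $a$. Without this orthogonality, $\tilde{p}$ could in principle collapse and push the problem back into the degenerate regime, to which \Cref{lem:small_bdry} no longer applies. Fortunately, because the scaling $v_r(x,t) = v(rx, r^2 t)/r$ leaves $\tilde{p}$ unchanged, a single application of \Cref{lem:small_bdry} at a suitably chosen scale $r_\alpha$ suffices, with no iteration needed.
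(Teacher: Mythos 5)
Your proposal is correct and follows essentially the same route as the paper's proof: subtract the linear function from \Cref{lem:c1a_large_p}, rescale parabolically so that the residual becomes small, and then invoke \Cref{lem:small_bdry} at the appropriate scale. The one place where you add a bit more detail than the paper is the explicit orthogonality computation $|\tilde p|^2 = |p|^2 + \nu^2 a^2$ (using $p_n=0$) to justify $1/2 \le |\tilde p| \le 2$, which the paper simply asserts.
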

\begin{proof}
By \Cref{lem:c1a_large_p}, there exists a constant $\tilde{a} \in \mathbb{R}$ such that 
\begin{equation*}
	|u(x,t) - \tilde{a} x_n| \leq \widetilde{C} x_n (|x|^{\tilde{\alpha}} + |t|^{\tilde{\alpha}/2}) \quad \text{for all } (x,t) \in Q_{1/2}^+,
\end{equation*}
where $\tilde{\alpha}\in(0,1)$ and $\widetilde{C}>0$ are determined in \Cref{lem:c1a_large_p}. Then for $r \in (0,1/2)$, the function
\begin{equation*}
	v(x,t) = \frac{u(rx,r^2t) - ar x_n}{r} 
\end{equation*}
is a viscosity solution to 
\begin{equation*} 
\left\{\begin{aligned}
	v_t &=(|\nu Dv + \tilde{p}|^2 + \varepsilon^2)^{\gamma/2} \widetilde{F}(D^2 v)  && \text{in }Q_1^+ \\
	v&=0 && \text{on }S_1,
\end{aligned}\right.
\end{equation*}	
where 
\begin{equation*}
	\widetilde{F}(M) =rF(r^{-1} M ) \quad\text{and}\quad
	\tilde{p}= p+ \nu a e_n.
\end{equation*}
Moreover, since $1/2 \leq |\tilde{p}| \leq 2$, if we choose $r$ sufficiently small with respect to $\eta$ in \Cref{lem:small_bdry} so that
\begin{equation*}
	\|v\|_{L^\infty(Q_1^+)} \leq 2C r^{\tilde{\alpha}} \leq \eta,
\end{equation*}
then by \Cref{lem:small_bdry}, for any $\alpha \in (0,1)$, there exists a constant $a \in \mathbb{R}$ such that 
\begin{equation*}
	|v(x,t) - ax_n| \leq C (|x|^{1+\alpha} + |t|^{\frac{1+\alpha}{2}}) \quad \text{for all } (x,t) \in Q_{1/2}^+,
\end{equation*}
where $C>0$ is a constant depending only on $n$, $\lambda$, $\Lambda$, $\alpha$, and $\gamma$. 

Finally, after rescaling back to $u$, we arrive at the desired conclusion.
\end{proof}
\subsection{Boundary $C_\gamma^{1,\alpha}$-estimates on a flat boundary} \label{sec:bdry_reg_flat}
In the previous subsection, we established boundary $C^{1,\alpha}$-regularity under the assumption that $|p|$ is bounded away from zero. We now aim to extend these results to the full range $|p| \leq 1$. To this end, we differentiate the equation with respect to spatial variables in order to derive an equation for $Du$, which requires sufficient regularity of both the solution $u$ and the operator $F$. To ensure this level of regularity, we impose the structural condition \ref{F2} and consider the regularized operator $F^\varepsilon$ introduced in \Cref{sec:regul_F}. In addition, to ensure the well-posedness of the differentiated equation and control degeneracy or singularity, we also consider the regularized problem with $\varepsilon>0$.
\begin{equation} \label{eq:model2} 
\left\{\begin{aligned}
	u_t &= (|Du + p|^2 + \varepsilon^2)^{\gamma/2} F^\varepsilon(D^2 u)  && \text{in } Q_1^+ \\
	u&=0 && \text{on } S_1.
\end{aligned}\right.
\end{equation}

The following lemma can be regarded as an extension of \cite[Lemma 4.4]{LLY24} to the case where $p\neq 0$. Under the assumptions $|p|\leq 1/2$ and $l \in (3/4,1)$, the conclusion follows directly by applying \cite[Lemma 4.4]{LLY24} to the function $v=u+p\cdot x$, together with \Cref{rmk:eq_tr}.
\begin{lemma}\label{lem44-LLY24}
Assume that $F$ satisfies \textnormal{\ref{F1}}, \textnormal{\ref{F2}},
\begin{equation*}
	|p| \leq 1/2 \quad \text{and} \quad
	\|Du\|_{L^\infty(Q_1^+)} \leq 1.
\end{equation*}
Let $u$ be a viscosity solution to 
\begin{equation} \label{eq:v=1}
	u_t = (|Du+p|^2+\varepsilon^2)^{\gamma/2} F^\varepsilon(D^2u) \quad \text{in } Q_1.
\end{equation}
Then for any $l \in (3/4,1)$ and $\mu>0$, there exist constants $\tau, \delta>0$ depending only on $n$, $\lambda$, $\Lambda$, $\gamma$, $l$, and $\mu$ such that if 
\begin{equation*}
	|\{(x,t) \in Q_1: D_q u (x,t) \leq l \}| > \mu |Q_1|  \quad  \text{for all unit vector } q\in \mathbb{R}^n,
\end{equation*}
then
\begin{equation*}
	D_q u < 1 -\delta \quad \text{in } Q_\tau^{1-\delta}.
\end{equation*}
\end{lemma}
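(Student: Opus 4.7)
The plan is to reduce the lemma directly to its $p=0$ counterpart, namely \cite[Lemma 4.4]{LLY24}, via the linear substitution $v(x,t) = u(x,t) + p \cdot x$. Since $Dv = Du + p$ and $D^2 v = D^2 u$, Remark \ref{rmk:eq_tr} (applied with $\nu = 1$) guarantees that $v$ is a viscosity solution to
\begin{equation*}
    v_t = (|Dv|^2 + \varepsilon^2)^{\gamma/2} F^\varepsilon(D^2 v) \quad \text{in } Q_1,
\end{equation*}
which is precisely the form treated in \cite[Lemma 4.4]{LLY24}. The structural assumptions carry over because $F^\varepsilon$ still satisfies \ref{F1} and \ref{F2} by Lemma \ref{lem-F-approx}.

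Next I would transfer the measure hypothesis and the target conclusion through this substitution. For any unit vector $q$, one has $D_q v = D_q u + q \cdot p$, so
\begin{equation*}
    \{(x,t) \in Q_1 : D_q u \leq l\} = \{(x,t) \in Q_1 : D_q v \leq l + q \cdot p\},
\end{equation*}
and the assumptions $\|Du\|_{L^\infty} \leq 1$, $|p| \leq 1/2$, $l \in (3/4,1)$ yield the uniform estimates $\|Dv\|_{L^\infty(Q_1)} \leq 3/2$ and $l + q \cdot p \in (1/4, 3/2)$, both controlled independently of $q$. If the interior result in \cite{LLY24} is stated under the normalization $\|Du\|_{L^\infty} \leq 1$, I would further apply the equation-preserving rescaling $\tilde v(x,t) = c\,v(x, c^\gamma t)$ for some $c \in [2/3,1]$; a direct computation shows that $\tilde v$ solves an equation of the same form with $\varepsilon$ replaced by $c\varepsilon$ and $F^\varepsilon$ replaced by $M \mapsto c\,F^\varepsilon(c^{-1}M)$, both of which preserve \ref{F1} and \ref{F2}, and this brings $\|D\tilde v\|_{L^\infty}$ below $1$.

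Applying \cite[Lemma 4.4]{LLY24} to (the possibly normalized) $v$ with threshold $l' \coloneqq l + q \cdot p$ and measure level $\mu$ then produces constants $\tau, \delta > 0$ depending only on $n, \lambda, \Lambda, \gamma, l, \mu$; the dependence on $q$ drops out because $|q \cdot p| \leq 1/2$ is uniform and the thresholds $l + q \cdot p$ stay in a compact subinterval of $(0, \|Dv\|_{L^\infty})$. Translating back via $D_q u = D_q v - q \cdot p$ delivers the desired conclusion $D_q u < 1 - \delta$ in $Q_\tau^{1-\delta}$. The only real obstacle is careful bookkeeping for the rescaling constants so that the final $\tau, \delta$ genuinely depend only on the admissible parameters, but no new analytic difficulty arises beyond what is already handled in \cite[Lemma 4.4]{LLY24}.
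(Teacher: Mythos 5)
Your proposal is faithful to the paper's own justification, which only remarks that ``the conclusion follows directly by applying \cite[Lemma 4.4]{LLY24} to the function $v=u+p\cdot x$, together with \Cref{rmk:eq_tr}'' and supplies no further detail; you correctly reconstruct the substitution, the use of Remark \ref{rmk:eq_tr}, the shifted threshold $l + q\cdot p$, and the normalization $\tilde v = v/(1+|p|)$ needed to bring $\|D\tilde v\|_{L^\infty}$ back below $1$.

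However, the ``translating back'' step, which both you and the paper pass over in a single line, contains what looks to me like a genuine gap. The interior lemma applied to $\tilde v$ gives $D_q\tilde v < 1-\tilde\delta$ for all $q$, i.e.\ $|Dv| < (1+|p|)(1-\tilde\delta)$; undoing the substitution, $D_q u = D_q v - q\cdot p < (1+|p|)(1-\tilde\delta) - q\cdot p$, and in the direction $q=-p/|p|$ this bound is $1+2|p|-\tilde\delta(1+|p|)$, which is \emph{larger} than $1$ whenever $\tilde\delta < 2|p|/(1+|p|)$. Since the admissible improvement constant is constrained by $\tilde\delta < 1-\tilde l = (1-l)/(1+|p|)\le 1/4$, this failure occurs for all but very small $|p|$, and in particular at the allowed endpoint $|p|=1/2$. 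Geometrically, $Du$ is confined to $B_1(0)\cap B_{(1+|p|)(1-\tilde\delta)}(-p)$, and that intersection still contains the boundary point $-p/|p|$ of $B_1(0)$, so no quantitative improvement of the a priori bound $|Du|\le 1$ is actually extracted. In short, ``$|Dv|$ improves from $1+|p|$ to $(1+|p|)(1-\tilde\delta)$'' is not equivalent to the needed ``$D_q u$ improves from $1$ to $1-\delta$.'' Unless \cite[Lemma 4.4]{LLY24} has a more flexible conclusion than a plain gradient-norm improvement (which I cannot rule out without its exact statement), the robust path is to run the subsolution argument of \Cref{lem:Du<1-d} directly on the $p$-shifted equation, constructing $w=\max\{D_q u-\tilde l+\rho|Du|^2,0\}$ for $u$ itself, where the assumptions $l>3/4$ and $|p|\le 1/2$ guarantee $|Du+p|>1/4$ on $\{w>0\}$ and hence uniform parabolicity, rather than reducing to the $p=0$ case via the linear substitution.
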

Suppose that the subset of $Q_1^+$ where $\pm u_n$ stays strictly below 1 has positive measure. 
In this case, the oscillation of $Du$ can be improved in a smaller cylinder. To achieve this, we construct an auxiliary function $w$ based on the gradient vector and show that it is a subsolution to a uniformly parabolic equation in $Q_{1/2}^+$. We then extend $w$ by zero to $Q_{1/2}$. To ensure that this extension is well-defined, particularly that $w$ vanishes along $S_{1/2}$, we estimate $\pm u_n$ on $S_{1/2}$. The details of this step are provided in the following argument.
\begin{lemma}  \label{lem:decay}
Under the assumptions of \Cref{lem44-LLY24}, let $u$ be a viscosity solution to  \eqref{eq:model} with $\nu=1$. Then for any $l \in (3/4,1)$ and $\mu \in (0,1)$, there exists a constant $\theta \in (0, 1-l)$ depending only on $n$, $\lambda$, $\Lambda$, $\gamma$, $l$, and $\mu$ such that if 
\begin{equation*}
	|\{(x,t) \in Q_1^+: D_e u (x,t) \leq l \}| > \mu |Q_1^+|  \quad \text{for all } e \in \{e_n, -e_n\},
\end{equation*}
then
\begin{equation*}
	D_e u < l+ \theta \quad \text{on } S_{1/2}.
\end{equation*}
\end{lemma}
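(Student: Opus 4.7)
The proof proceeds by studying $v:=u_n$ directly, after smoothing through the regularization. Since $F=F^\varepsilon$ is smooth and $\varepsilon>0$, \Cref{thm:LLY24-412} yields $u\in C^\infty(Q_1^+)$. By symmetry (applying the same argument to $-u$, which satisfies an analogous equation with $p$ replaced by $-p$ and $F$ by $M\mapsto -F(-M)$, still obeying \ref{F1} and \ref{F2}), it suffices to treat $e=e_n$. Differentiating the equation in $x_n$ produces a linear parabolic equation $v_t=a^{ij}v_{ij}+b^iv_i$ with $a^{ij}=(|Du+p|^2+\varepsilon^2)^{\gamma/2}F^\varepsilon_{M_{ij}}(D^2u)$ and bounded drift $b^i$. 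The crucial observation is that on $\{v\ge l\}$ the bounds $|Du|\le 1$ and $|p|\le 1/2$ force
\begin{equation*}
	|Du+p|\ge u_n-|p_n|\ge l-1/2\ge 1/4,
\end{equation*}
so $(|Du+p|^2+\varepsilon^2)^{\gamma/2}$ is pinched between universal positive constants depending only on $\gamma$ and $l$, uniformly in $\varepsilon$.

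Hence the truncation $w:=(v-l)_+$, extended by zero across $\{v\le l\}$, is a viscosity subsolution in $Q_1^+$ to a Pucci-plus-drift inequality
\begin{equation*}
	w_t\le \mathcal{M}^+_{\lambda',\Lambda'}(D^2w)+C_0|Dw|,
\end{equation*}
with $\lambda'$, $\Lambda'$, $C_0$ depending only on $n,\lambda,\Lambda,\gamma,l$; moreover $0\le w\le 1-l$ and $|\{w=0\}\cap Q_1^+|>\mu|Q_1^+|$. A pigeonhole applied to the measure hypothesis yields an interior cylinder $Q_R(z_0)\subset Q_1^+$, with $R$ and the distance from $Q_R(z_0)$ to $S_1$ depending only on $n$ and $\mu$, in which $\{w=0\}$ occupies at least a fixed positive fraction of the volume. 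Applying the weak Harnack inequality (\Cref{thm:WHI}, in the standard version accommodating bounded drift) to the nonnegative supersolution $(1-l)-w$ then produces a constant $c_1\in(0,1)$ depending only on $n,\lambda,\Lambda,\gamma,l,\mu$ such that
\begin{equation*}
	w\le M_1:=(1-c_1)(1-l)<1-l\quad\text{in } Q_{R/2}(z_0).
\end{equation*}

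Finally, I propagate this strict interior bound down to $S_{1/2}$ through a barrier-comparison argument in a strip $\Omega\subset Q_1^+$ whose parabolic boundary meets $S_{1/2}$ along the bottom and reaches $Q_{R/2}(z_0)$ in its upper portion. I construct a Hopf-type barrier $\Phi$, modeled on those used in \Cref{lem:bdry_lip1} and \Cref{hopf}, arranged so that $\Phi_t\ge \mathcal{M}^+_{\lambda',\Lambda'}(D^2\Phi)+C_0|D\Phi|$ in $\Omega$, $\Phi\ge M_1$ on the top face of $\Omega$, $\Phi\ge 1-l$ on its lateral and initial-time faces (where only the a priori bound $w\le 1-l$ is available), and $\Phi\le\theta$ on $S_{1/2}$ for a suitable $\theta\in(0,1-l)$ depending on $c_1$ and the geometry. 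The standard comparison principle for Pucci-plus-drift uniformly parabolic equations then forces $w\le\Phi\le\theta$ on $S_{1/2}$, i.e., $u_n<l+\theta$ there; the parallel argument applied to $-v$ covers $e=-e_n$. The main technical obstacle is the barrier engineering in this last step: the interior gap $(1-l)-M_1$ produced by the weak Harnack must be amplified, through a carefully tuned exponential profile, into a genuine uniform boundary gap $\theta<1-l$ on $S_{1/2}$, in the presence of both the first-order drift $C_0|D\Phi|$ and the discrepancy between the bound $M_1$ available on the top face and the weaker bound $1-l$ available on the lateral and initial-time faces.
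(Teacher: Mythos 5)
Your interior argument---differentiating the regularized equation, truncating $w=(u_n-l)_+$, and applying a weak Harnack inequality to $(1-l)-w$ in a fixed interior cylinder---is sound and parallels what the paper already extracts from \Cref{lem44-LLY24}. The gap is in the final propagation step. You propose to deduce $w\le\theta$ on $S_{1/2}$ by comparing $w$ against a barrier $\Phi$ in a strip $\Omega\subset Q_1^+$ whose parabolic boundary includes a portion of $S_{1/2}$. But the comparison principle requires $w\le\Phi$ on \emph{all} of $\partial_p\Omega$, and on the portion lying in $S_{1/2}$ the only a priori information on $w$ is the trivial bound $w\le 1-l$ (from $\|Du\|_{L^\infty}\le 1$), forcing $\Phi\ge 1-l$ there---directly contradicting your requirement $\Phi\le\theta<1-l$ on $S_{1/2}$. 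The quantity $u_n$ carries no usable Dirichlet data on the flat boundary, so a barrier argument at the level of $u_n$ (or $w$) is circular: the boundary bound you need to launch the comparison is exactly the conclusion you are trying to reach. (This is also why \Cref{lem:Du<1-d} cannot proceed without \Cref{lem:decay} as an input: the extension of $\tilde w$ by zero across $S_{1/2}$ is only legitimate once one already knows $w$ vanishes on $S_{1/2}$.)

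The paper sidesteps this by running the barrier argument for $u$ itself, not $u_n$, since $u$ does have the Dirichlet data $u=0$ on $S_1$. Concretely: from $\|Du\|\le 1$ and $u=0$ on $S_1$ one gets $u\le x_n$ globally; the interior bound $u_n<1-\delta$ from \Cref{lem44-LLY24} is then integrated in $x_n$ to give the strictly improved estimate $u\le x_n-\tfrac12\tau\delta$ in an interior slab $\mathcal{Q}_{1/2}$; a Hopf-type supersolution $v=x_n-\phi$ is then compared against $u$ in $Q\setminus\overline{\mathcal{Q}_{1/2}}$, with the two pointwise bounds on $u$ supplying the boundary inequality $u\le v$; finally, since $u(0,0)=v(0,0)=0$, the touching at the boundary point gives $u_n(0,0)\le v_n(0,0)=1-\phi_n(0,0)\eqqcolon l+\theta$, and translation covers all of $S_{1/2}$. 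So the correct route is to convert your interior bound on $u_n$ into a pointwise upper bound on $u$ by integration in $x_n$, then build the barrier for $u$, for which the boundary data on $S_{1/2}$ is genuinely zero.
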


\begin{proof}
Since the arguments are analogous, it suffices to consider the case $e=e_n$. 
\begin{equation} \label{u_upper1}
	u(x,t) = \int_0^{x_n} u_n(x',z,t)\,dz \leq x_n \quad \text{for all }  (x,t) \in Q_1^+.
\end{equation}
By \Cref{lem44-LLY24}, there exist $\tau, \delta \in (0,1)$ depending only on $n$, $\lambda$, $\Lambda$, $\gamma$, $l$, and $\mu$ such that
\begin{equation*}
	u_n(x,t) < 1 - \delta \quad \text{for all } (x,t) \in \mathcal{Q}_1,
\end{equation*}
where $\mathcal{Q}_r= \{ (x,t) \in Q_1^+: |x'| < \tau,~ |x_n-1/2| < r \tau,~ -(1-\delta)^{-\gamma}\tau^2 < t \leq 0\}$. Then for any $(x,t) \in \mathcal{Q}_{1/2}$, we have 
\begin{align}
	u(x,t) &= \int_{0}^{1/2-\tau} u_n (x',z,t)\,dz +  \int_{1/2-\tau}^{x_n} u_n (x',z,t)\,dz \nonumber \\
	&< \frac{1}{2} -\tau + (1-\delta) \left(x_n -\frac{1}{2} + \tau\right) \nonumber \\
	&\leq x_n -\frac{1}{2} \tau \delta. \label{u_upper2}
\end{align}

If we replace $\phi$ in the proof of \Cref{lem:SMP} with
\begin{equation*}
	\phi(x,t)= \frac{1}{A} \left(e^{-\frac{A|x-e_n/2|^2}{t+(1-\delta)^{-\gamma}\tau^2}} - e^{-\frac{A}{4(1-\delta)^{-\gamma}\tau^2}} \right),
\end{equation*}
then $v= x_n - \phi$ satisfies
\begin{equation*} 
\left\{\begin{aligned}
	v_t &> (|Dv + p|^2 + \varepsilon^2)^{\gamma/2} F(D^2v) && \text{in } Q \setminus \overline{\mathcal{Q}_{1/2}} \\
	v & \ge u && \text{on }  \partial_p \mathcal{Q}_{1/2} \\
	v & \ge u  && \text{on }   \partial_p Q, 
\end{aligned}\right.
\end{equation*}
where $Q = B_{1/2}(e_n/2) \times (-(1-\delta)^{-\gamma}\tau^2 , 0]$. Here, the boundary comparison follows from \eqref{u_upper1} and \eqref{u_upper2}. Therefore, \Cref{thm:comp_prin} yields
\begin{equation*}
	u \leq v    \quad  \text{in } Q \setminus \overline{\mathcal{Q}_{1/2}}.
\end{equation*}
Since $u(0,0) = 0 = v(0,0)$, we have 
\begin{equation*}
	u_n(0,0) \leq v_n (0,0) = 1 - \phi_n(0,0).
\end{equation*}
If $A>0$ is taken sufficiently large, then $|D\phi|$ becomes sufficiently small. Hence, by employing the translation argument from the proof of \Cref{lem:bdry_lip1}, there exists a constnat $\theta \in (0,1-l)$ such that
\begin{equation*}
	u_n < l+\theta \quad \text{on } S_{1/2}.
\end{equation*} 
\end{proof}
\begin{lemma} \label{lem:Du<1-d}
Under the assumptions of \Cref{lem44-LLY24}, let $u$ be a viscosity solution to \eqref{eq:model2}. Then for any $l \in (3/4,1)$ and $\mu \in (0,1)$, there exist constants $\delta,\tau\in (0, 1/4)$ depending only on $n$, $\lambda$, $\Lambda$, $\gamma$, $l$, and $\mu$ such that if 
\begin{equation*}
	|\{(x,t) \in Q_1^+: D_e u (x,t) \leq l \}| > \mu |Q_1^+|  \quad \text{for all } e \in \{e_n, -e_n\},
\end{equation*}
then
\begin{equation*}
	|D u| < 1-\delta \quad \text{in } Q_\tau^{(1-\delta)+}.
\end{equation*}
\end{lemma}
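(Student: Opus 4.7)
The plan is to propagate the boundary bound on $u_n$ from \Cref{lem:decay} into the interior by differentiating \eqref{eq:model2} and applying a boundary improvement of oscillation to $1 - D_e u$.

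First, I would apply \Cref{lem:decay} separately with $e = e_n$ and $e = -e_n$; both measure hypotheses are supplied by the present assumption, so there is a constant $\theta \in (0, 1-l)$ with $|u_n(x,t)| < l + \theta$ on $S_{1/2}$. Since $u \equiv 0$ on $S_1$, the tangential derivatives $u_i$ ($i < n$) vanish identically on $S_1$, so $Du = u_n e_n$ throughout $S_{1/2}$ and therefore $|D_e u| \leq l + \theta$ on $S_{1/2}$ for every unit vector $e$. This is the key boundary information for what follows.

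Because $F^\varepsilon$ is smooth and $\varepsilon > 0$ removes the degeneracy, \Cref{thm:LLY24-412} yields $u \in C^\infty_{\mathrm{loc}}(Q_1^+)$, so \eqref{eq:model2} may be differentiated in any direction. For each unit vector $e$, the function $w_e := D_e u$ satisfies a linear equation $\partial_t w_e = a^{kl}(x,t) D_{kl} w_e + b^k(x,t) D_k w_e$ whose ellipticity ratio is $\lambda/\Lambda$ (independent of $\varepsilon$), while the common scalar factor $(|Du+p|^2 + \varepsilon^2)^{\gamma/2}$ encodes the natural time scale of the intrinsic cylinder $Q_r^{\rho+}$. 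Then $v_e := 1 - w_e$ is nonnegative (since $|Du| \leq 1$), solves the same linear equation, and by the previous step satisfies $v_e \geq 1-(l+\theta) > 0$ on $S_{1/2}$.

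For $e = \pm e_n$, the hypothesis additionally provides the measure condition $|\{v_e \geq 1-l\} \cap Q_1^+| > \mu |Q_1^+|$. Combined with the boundary lower bound, a boundary version of the improvement of oscillation (modeled on \Cref{lem:IO}, using the positive boundary bound as a supersolution barrier across $S_{1/2}$) yields constants $\delta_1,\tau_1 > 0$ with $|u_n| \leq 1-\delta_1$ throughout $Q_{\tau_1}^{(1-\delta_1)+}$. For the tangential indices $i < n$, the same linear equation holds for $u_i$ with zero Dirichlet data on $S_1$, so a boundary Hölder estimate (\Cref{thm:LZ22-28}) gives $|u_i(x,t)| \leq C x_n^{\alpha_0}$ in a small cylinder near the origin. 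Combining these, $|Du|^2 = u_n^2 + \sum_{i<n} u_i^2 < (1-\delta)^2$ in a suitable intrinsic cylinder $Q_\tau^{(1-\delta)+}$.

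The main obstacle is obtaining $\varepsilon$-uniform constants. The scalar factor $(|Du+p|^2+\varepsilon^2)^{\gamma/2}$ appearing in the linearized equation admits no two-sided bound independent of $\varepsilon$ on the set where $Du+p$ may vanish, so a weak Harnack inequality applied in Euclidean time would produce $\varepsilon$-dependent constants. The remedy is to execute the boundary improvement of oscillation directly in the intrinsic cylinder $Q_\tau^{(1-\delta)+}$, whose time length $(1-\delta)^{-\gamma}\tau^2$ is precisely calibrated to the target gradient size $1-\delta$; this tuning absorbs the scalar factor and produces $\delta, \tau$ depending only on $n$, $\lambda$, $\Lambda$, $\gamma$, $l$, and $\mu$, as required.
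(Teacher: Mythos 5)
Your proposal correctly identifies the boundary input (\Cref{lem:decay}), the need to differentiate the regularized equation, and even the central obstruction — that the scalar factor $(|Du+p|^2+\varepsilon^2)^{\gamma/2}$ provides no $\varepsilon$-uniform two-sided bound wherever $Du+p$ may vanish. However, the remedy you offer does not resolve that obstruction, and this is a genuine gap.

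Rescaling to the intrinsic cylinder $Q_\tau^{(1-\delta)+}$ calibrates the time length to the \emph{target} gradient size $1-\delta$, but it does nothing to control the \emph{actual} value of $|Du+p|$ at points inside the cylinder, which can be arbitrarily close to zero. Since the linearized equation for $D_e u$ (or $1-D_e u$) carries the factor $(|Du+p|^2+\varepsilon^2)^{\gamma/2}$ in its second-order coefficients, no time rescaling by a fixed constant removes the $\varepsilon$-dependence of the ellipticity on the full domain $Q_{1/2}^+$. For the same reason, your treatment of the tangential derivatives $u_i$ via \Cref{thm:LZ22-28} is not available: that theorem requires $\mathcal{M}^-_{\lambda,\Lambda}(D^2 u_i)\le \partial_t u_i\le \mathcal{M}^+_{\lambda,\Lambda}(D^2 u_i)$ with constants independent of $\varepsilon$, which the linearized equation does not supply globally.

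The missing idea is a localization in gradient space, not in space-time. The paper introduces the auxiliary function $w = \max\{D_q u - \tilde l + \rho |Du|^2,\,0\}$ with $\tilde l = \tfrac12(1+l+\theta)$ and $\rho = \tfrac14(1-l-\theta)$. The point of this choice is that on the open set $\{w>0\}$ one automatically has $|Du|>3/4$, hence $|Du+p|>1/4$ since $|p|\le 1/2$; there the scalar factor is bounded above and below by constants depending only on $\gamma$, not on $\varepsilon$. A computation with the differentiated equation and Young's inequality then shows $w$ is a viscosity subsolution to $\mathcal{M}^+_{\lambda^*,\Lambda^*}(D^2 w)+C_2|Dw|^2$ in $Q_{1/2}^+$ with $\varepsilon$-uniform constants, it vanishes on $S_{1/2}$ (thanks to the boundary bound from \Cref{lem:decay}, applied to every direction $q$ at once because $Du=u_n e_n$ there), so it extends by zero across $x_n=0$; an exponential transform removes the $|Dw|^2$ term, and the \emph{interior} improvement of oscillation (\Cref{lem:IO}) in the standard cylinder $Q_{1/2}$ then applies. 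Your proposal, working directly with $1-D_e u$, never achieves this restriction to the non-degenerate region, so the weak-Harnack-type step cannot be made $\varepsilon$-uniform; that is the step that would fail.
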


\begin{proof}
We aim to show that 
\begin{equation*}
	D_q u < 1-\delta \quad \text{in } Q_\tau^{(1-\delta)+} \quad \text{for all unit vector } q \in \mathbb{R}^n.
\end{equation*}

By \Cref{lem:decay}, there exists $\theta \in (0,1-l)$ such that
\begin{equation} \label{Dq_decay}
	D_e u < l+ \theta \quad \text{on } S_{1/2} \quad \text{for all } e \in \{e_n, -e_n\}.
\end{equation}
\Cref{thm:LLY24-12} and \Cref{thm:LLY24-412}, combined with \Cref{rmk:eq_tr}, implies that $u$ is a smooth solution to 
\begin{equation*}
	u_t = (|Du + p|^2 + \varepsilon^2)^{\gamma/2} F^\varepsilon(D^2 u) \quad \text{in } Q_{3/4}^+.
\end{equation*}
Let $v=|Du|^2$ and consider the function
\begin{equation*}
	w = \max\{D_q u - \tilde{l} + \rho v, 0\} \quad \text{for } 
	\tilde{l} = \frac{1}{2}(1+l+\theta) \text{ and } 
	\rho = \frac{1}{4} (1-l-\theta).
\end{equation*}
Then in the region $\Omega_+ =\{(x,t) \in Q_{3/4}^+: w(x,t)>0\}$, we have
\begin{equation*}
	|Du+p| \ge |Du|- |p|> \frac{3}{4} -\frac{1}{2} = \frac{1}{4}.
\end{equation*}

Now, let us show that $w$ is a subsolution to a fully nonlinear uniformly parabolic equation. Differentiating the equation in \eqref{eq:model2} with respect to $x_k$, we obtain
\begin{equation} \label{eq:uk}
	\partial_ t u_k = (|Du + p|^2 + \varepsilon^2)^{\gamma/2} F_{ij}^\varepsilon(D^2 u)D_{ij}u_k + \gamma(|Du + p|^2 + \varepsilon^2)^{\gamma/2-1}  F^\varepsilon(D^2 u) (Du + p) \cdot Du_k,
\end{equation}
where $F_{ij}= \partial  F/ \partial M_{ij}$. Then we have
\begin{equation} \label{eq:Dqu-l}
\begin{aligned}
	\partial_t (D_q u - \tilde{l}) &=(|Du + p|^2 + \varepsilon^2)^{\gamma/2} F_{ij}^\varepsilon(D^2 u)D_{ij} (D_q u - \tilde{l}) \\
	&\quad +  \gamma(|Du + p|^2 + \varepsilon^2)^{\gamma/2-1}F^\varepsilon(D^2 u)  (Du + p) \cdot D(D_q u - \tilde{l})  .
\end{aligned}
\end{equation}
Moreover, multiplying both sides of \eqref{eq:uk} by $2u_k$ and summing over $k=1$ to $n$, we obtain
\begin{equation} \label{eq:vt}
\begin{aligned}
	v_ t  &= (|Du + p|^2 + \varepsilon^2)^{\gamma/2} F_{ij}^\varepsilon(D^2 u) D_{ij}v \\
	&\quad + \gamma(|Du + p|^2 + \varepsilon^2)^{\gamma/2-1}F^\varepsilon(D^2 u) (Du + p) \cdot Dv \\
	&\quad - 2(|Du + p|^2 + \varepsilon^2)^{\gamma/2} F_{ij}^\varepsilon(D^2 u) Du_i \cdot  Du_j.
\end{aligned}
\end{equation}

Since $1/4 < |Du+p| \leq 3/2 $ in $\Omega_+$ and 
\begin{equation*}
	\lambda \|D^2u\|^2  \leq F_{ij}^\varepsilon (D^2 u) Du_i \cdot  Du_j \leq \Lambda \|D^2u\|^2 \quad \text{for all } \xi \in \mathbb{R}^n,
\end{equation*}
it follows from \eqref{eq:Dqu-l}, \eqref{eq:vt}, and Young’s inequality that 
\begin{align*}
	w_ t  &\leq (|Du + p|^2 + \varepsilon^2)^{\gamma/2} F_{ij}^\varepsilon(D^2 u) D_{ij} w \\
	&\quad + C_1(|Du + p|^2 + \varepsilon^2)^{\gamma/2-1} \|D^2 u\| |Dw| - 2\lambda(|Du + p|^2 + \varepsilon^2)^{\gamma/2} \|D^2u\| \\
	&\leq (|Du + p|^2 + \varepsilon^2)^{\gamma/2} F_{ij}^\varepsilon(D^2 u) D_{ij} w  + \frac{C_1^2}{8\lambda}(|Du + p|^2 + \varepsilon^2)^{\gamma/2-2}|Dw|^2  \\
	&\leq \mathcal{M}^+_{\lambda^*,\Lambda^*}(D^2 w) + C_2 |Dw|^2 \quad \text{in } \Omega^+, 
\end{align*}
where $\lambda^*$ and $\Lambda^*$ are constants depending only on $\lambda$, $\Lambda$, and $\gamma$; $C_1>0$ is a constant depending only on $n$, $\Lambda$, and $\gamma$; and $C_2>0$  is a constant depending only on $n$, $\lambda$, $\Lambda$, and $\gamma$.

From \eqref{Dq_decay}, we have
\begin{equation*}
	q_n u_n - \tilde{l} + \rho u_n^2 
	\leq \frac{1}{4}(l+\theta -1) <0 \quad \text{on } S_{1/2},
\end{equation*}
which implies that
\begin{align*}
	w= \max\{q_n u_n - \tilde{l} + \rho u_n^2, 0\} =0 \quad \text{on } S_{1/2}.
\end{align*}
Then the function
\begin{equation*}
\tilde{w}(x,t) = 
	\begin{cases}
		w(x,t) & \text{if } (x,t) \in Q_{1/2}^+\\
		0 & \text{if } (x,t) \in Q_{1/2} \cap \{x_n\leq0\}
	\end{cases}
\end{equation*}
satisfies
\begin{equation*}
	\tilde{w}_t \leq \mathcal{M}^+_{\lambda^*,\Lambda^*}(D^2 \tilde{w}) + C |D\tilde{w}|^2 \quad \text{in } Q_{1/2} \text{ in the viscosity sense.}
\end{equation*}

Consider the nonnegative function
\begin{equation*}
	h= \frac{\lambda^*}{C_2}(1-e^{\frac{C_2}{\lambda^*}(\tilde{w}-1+\tilde{l}-\rho)}).
\end{equation*}
Then $h$ satisfies
\begin{align*}
	h_t & \ge \mathcal{M}^-_{\lambda^*,\Lambda^*}\left(D^2h+ \frac{C_2}{\lambda^*} e^{-\frac{C_2}{\lambda^*}(\tilde{w}-1+\tilde{l}-\rho)} Dh\otimes Dh\right) - C_2 e^{-\frac{C_2}{\lambda^*}(\tilde{w}-1+\tilde{l}-\rho)} |Dh|^2  \\
	&\ge  \mathcal{M}^-_{\lambda^*,\Lambda^*}(D^2 h) \quad \text{in } Q_{1/2} \text{ in the viscosity sense.}
\end{align*}
Since 
\begin{equation*}
	Q_{1/2}\cap \{x_n\leq0\}\subset \{(x,t) \in Q_{1/2} : \tilde{w}\leq 0 \}\subset \{(x,t) \in Q_{1/2} : h\ge \lambda^*(1-e^{\frac{C_2}{\lambda^*}(\tilde{l}-1-\rho)})/C_2 \} ,
\end{equation*}
we have 
\begin{equation*}
	|\{(x,t) \in Q_{1/2} : h \ge \lambda^*(1-e^{\frac{C_2}{\lambda^*}(\tilde{l}-1-\rho)})/C_2 \} | > \tilde{\mu} |Q_{1/2}| \quad \text{for all } \tilde{\mu} \in (0,1/2).
\end{equation*}
By \Cref{lem:IO}, there exist constants $r>0$ and $c>0$ such that 
\begin{equation*}
	1 - \tilde{l} + \rho -\tilde{w} \ge h\ge c \quad \text{in } Q_r,
\end{equation*}
which implies that
\begin{equation*}
	D_q u + \rho |D_qu|^2\leq D_q u + \rho |Du|^2 \leq 1 +\rho - c \quad \text{in } Q_r.
\end{equation*}
Therefore, we conclude that
\begin{equation*}
	D_q u \leq \frac{-1+\sqrt{1+4(1+\rho-c)}}{2\rho} < 1\quad \text{in } Q_r.
\end{equation*}
By setting 
\begin{equation*}
	\delta=1- \frac{-1+\sqrt{1+4(1+\rho-c)}}{2\rho} \quad \text{and} \quad
	\tau =
	\begin{cases}
		r & \text{if } \gamma <0 \\
		r(1-\delta)^{\gamma/2} & \text{if } \gamma \ge 0	,
	\end{cases}
\end{equation*}
we obtain the desired conclusion.
\end{proof}
The oscillation of $Du$ continues to improve under iteration and scaling, which ultimately guarantees the H\"older continuity of $Du$ at the origin.
\begin{lemma} \label{cor:Du<1-d}
Under the assumptions of \Cref{lem44-LLY24}, let $u$ be a viscosity solution to \eqref{eq:model2}. Then for any $l \in (3/4,1)$ and $\mu \in (0,1)$, there exist constants $\delta,\tau\in (0, 1/4)$ depending only on $n$, $\lambda$, $\Lambda$, $\gamma$, $l$, and $\mu$ such that for any nonnegative integer
\begin{equation*}
	k \leq \min\left\{ \frac{\log(2\varepsilon)}{\log (1-\delta)}, \frac{\log(2|p|)}{\log(1-\delta)} \right\},
\end{equation*}
and for any $e \in \{e_n, -e_n\}$, if
\begin{equation*}
	|\{(x,t) \in Q_{\tau^i}^{(1-\delta)^i+}: D_e u (x,t) \leq l (1-\delta)^i\}| > \mu |Q_{\tau^i}^{(1-\delta)^i+}| \quad \text{for all } i=0,1,\cdots, k,
\end{equation*}
then
\begin{equation*} 
	|D u| < (1-\delta)^{i+1} \quad \text{in } Q_{\tau^{i+1}}^{(1-\delta)^{i+1}+} \quad \text{for all } i=0,1,\cdots, k.
\end{equation*}
\end{lemma}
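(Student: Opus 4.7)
The plan is to induct on $i$ from $0$ to $k$ and apply \Cref{lem:Du<1-d} at each step after a self-similar rescaling of the problem. For the base case $i=0$, the measure hypothesis at index $0$ is precisely the assumption in \Cref{lem:Du<1-d}, and the running assumption $\|Du\|_{L^\infty(Q_1^+)}\le 1$ from \Cref{lem44-LLY24} is in force, so the conclusion $|Du|<1-\delta$ on $Q_\tau^{(1-\delta)+}$ follows directly.

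For the inductive step, suppose the conclusion holds for $i=0,\ldots,j-1$ with $1\le j\le k$; in particular $|Du|<(1-\delta)^j$ on $Q_{\tau^j}^{(1-\delta)^j+}$. Set $\sigma=\tau^j$, $\rho=(1-\delta)^j$, and define
$$v(x,t)=\frac{u(\sigma x,\sigma^2\rho^{-\gamma}t)}{\rho\sigma}.$$
The map $(x,t)\mapsto(\sigma x,\sigma^2\rho^{-\gamma}t)$ carries $Q_1^+$ bijectively onto $Q_\sigma^{\rho+}$ and $S_1$ onto $S_\sigma$, so $v$ is defined on $Q_1^+$ with $v=0$ on $S_1$. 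A direct computation using $Du=\rho\,Dv$, $D^2u=\rho\sigma^{-1}D^2v$, and $u_t=\rho^{\gamma+1}\sigma^{-1}v_t$ shows that $v$ solves
$$v_t=(|Dv+p'|^2+(\varepsilon')^2)^{\gamma/2}\,\widetilde{F}(D^2v)\quad\text{in }Q_1^+,$$
where $p'=p/\rho$, $\varepsilon'=\varepsilon/\rho$, and $\widetilde{F}(M)=(\sigma/\rho)F^\varepsilon(\rho M/\sigma)$ is smooth and satisfies \textnormal{\ref{F1}} and \textnormal{\ref{F2}} with the same ellipticity constants. The upper bound $k\le\log(2\varepsilon)/\log(1-\delta)$ (resp.\ $k\le\log(2|p|)/\log(1-\delta)$) is calibrated exactly so that $\varepsilon'\le 1/2$ (resp.\ $|p'|\le 1/2$) for every $j\le k$, while the inductive hypothesis yields $\|Dv\|_{L^\infty(Q_1^+)}<1$. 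Changing variables $y=\sigma x$, $s=\sigma^2\rho^{-\gamma}t$ in the measure hypothesis at index $j$ cancels a common Jacobian on both sides and produces, for each $e\in\{e_n,-e_n\}$,
$$|\{(x,t)\in Q_1^+:D_ev(x,t)\le l\}|>\mu|Q_1^+|.$$
Applying \Cref{lem:Du<1-d} to $v$ then gives $|Dv|<1-\delta$ on $Q_\tau^{(1-\delta)+}$, and undoing the rescaling translates this to $|Du|<(1-\delta)^{j+1}$ on $Q_{\tau^{j+1}}^{(1-\delta)^{j+1}+}$.

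The main obstacle is the careful tracking of the self-similar rescaling: one must verify that the non-standard parabolic cylinder $Q_r^\rho$ transforms cleanly under time dilation by $\rho^{-\gamma}$—this is the structural reason behind the particular $\gamma$-dependent definition of $\tau$ at the end of the proof of \Cref{lem:Du<1-d}—and that the rescaled operator $\widetilde{F}$ retains every property used in that proof, most importantly smoothness, since the argument there differentiates the equation. The restrictions on $k$ are then precisely what is needed to keep $\varepsilon'$ and $|p'|$ within the range where both \Cref{lem:Du<1-d} and the underlying \Cref{lem44-LLY24} remain valid throughout the iteration.
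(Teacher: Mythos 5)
Your proof is correct and takes essentially the same approach as the paper's: an induction on $i$ where the inductive step applies \Cref{lem:Du<1-d} to the rescaled function $v(x,t)=u(\tau^j x,\tau^{2j}(1-\delta)^{-j\gamma}t)/(\tau^j(1-\delta)^j)$, which is exactly the rescaling the paper uses. Your write-up is in fact more explicit than the paper's one-line proof about why the rescaled operator, the rescaled $p'$ and $\varepsilon'$, the gradient bound, the boundary condition, and the measure hypothesis all satisfy the assumptions of \Cref{lem:Du<1-d}, and about the role of the constraint on $k$ and of the $\gamma$-dependent choice of $\tau$ in keeping the cylinders nested.
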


\begin{proof}
The proof proceeds by induction. The case $i = 0$ follows from \Cref{lem:Du<1-d}. Now assume that this corollary holds for some $i$. Applying  \Cref{lem:Du<1-d} to the function 
\begin{equation*}
	v(x,t)= \frac{1}{\tau^i(1-\delta)^i} u(\tau^ix, \tau^{2i}(1-\delta)^{-i \gamma}t)
\end{equation*}
and scaling back to $u$ yields this corollary holds for $i + 1$.
\end{proof}
If $|Du(0,0)| \neq 0$, then the iteration in \Cref{cor:Du<1-d} must terminate after finitely many steps. Intuitively, this implies that $Du$ is nearly aligned with the direction $e \in \{e_n,-e_n\}$ in a large portion of $Q_{\tau^k}^{(1-\delta)^k+}$. Consequently, the solution $u$ can be well approximated by a linear function $L(x)=e\cdot x$.
\begin{lemma} \label{lem:Du=e}
Assume that $F$ satisfies \textnormal{\ref{F1}} and  
\begin{equation*}
	|p| \leq 1 \quad \text{and} \quad
	\|Du\|_{L^\infty(Q_1^+)} \leq 1.
\end{equation*}
Let $u$ be a viscosity solution to  \eqref{eq:model} with $\nu=1$. Then for any $\eta \in (0,1)$, there exist two sufficiently small constants $\delta_1>0$ and $\delta_2 >0$ depending only on $n$, $\lambda$, $\Lambda$, $\gamma$, $\eta$, and $\|u\|_{L^\infty(Q_1^+)}$ such that 
if 
\begin{equation*}
	|\{(x,t) \in Q_1^+ : |Du(x,t)-e| > \delta_1\} | \leq \delta_2 \quad \text{for all } e \in \{e_n, -e_n\},
\end{equation*}
then 
\begin{equation*}
	|u(x,t) - e\cdot x| \leq \eta \quad \text{for all }(x,t) \in Q_{1/2}^+.
\end{equation*}
\end{lemma}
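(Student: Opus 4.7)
The plan is a contradiction argument via compactness. Suppose the lemma fails: there exist $\eta \in (0,1)$ and sequences $F_k$, $\varepsilon_k \in [0,1]$, $p_k$ with $|p_k| \leq 1$, viscosity solutions $u_k$ of the corresponding model problem in $Q_1^+$ with $\|Du_k\|_{L^\infty(Q_1^+)} \leq 1$, together with $\delta_{1,k}, \delta_{2,k} \to 0^+$, $e_k \in \{e_n, -e_n\}$, and $(x_k, t_k) \in Q_{1/2}^+$ such that the hypothesis holds with these parameters but $|u_k(x_k,t_k) - e_k \cdot x_k| > \eta$. Since $\{e_n, -e_n\}$ is finite, pass to a subsequence so that $e_k \equiv e$ is constant throughout.

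The first step is to extract a uniform limit on $\overline{Q_{3/4}^+}$. Spatial Lipschitz continuity of $u_k$ is immediate from $\|Du_k\|_{L^\infty(Q_1^+)} \leq 1$, and combined with $u_k = 0$ on $S_1$ this also yields $\|u_k\|_{L^\infty(Q_1^+)} \leq 1$. The H\"older continuity in time, uniform up to the flat boundary $S_1$, is supplied by \Cref{lem:G_hol_t}. By Arzel\`a--Ascoli and standard compactness, a subsequence satisfies $u_k \to \bar u$ uniformly on $\overline{Q_{3/4}^+}$, $p_k \to p_*$, $\varepsilon_k \to \varepsilon_*$, and $(x_k,t_k) \to (x_*, t_*) \in \overline{Q_{1/2}^+}$; in particular $\bar u \equiv 0$ on $S_{3/4}$.

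The second and decisive step is to identify $\bar u(x,t) = e \cdot x$. Decomposing $Q_1^+$ into $\{|Du_k - e| \leq \delta_{1,k}\}$ and its complement of measure at most $\delta_{2,k}$ and using $|Du_k| \leq 1$ almost everywhere, one obtains
\begin{equation*}
\|Du_k - e\|_{L^1(Q_1^+)} \leq \delta_{1,k} |Q_1^+| + 2\, \delta_{2,k} \to 0,
\end{equation*}
so $Du_k \to e$ in $L^1(Q_1^+)$. For any $\phi \in C_c^\infty(Q_{3/4}^+)$ and every index $i$, integration by parts (valid since each $u_k$ is Lipschitz) together with uniform convergence of $u_k$ and $L^1$ convergence of $\partial_i u_k$ gives
\begin{equation*}
\int \bar u \, \partial_i \phi = \lim_{k\to\infty} \int u_k \, \partial_i \phi = -\lim_{k\to\infty} \int \partial_i u_k \cdot \phi = -\int e_i \phi,
\end{equation*}
so $D\bar u = e$ in the distributional sense on $Q_{3/4}^+$. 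Continuity of $\bar u$ then forces $\bar u(x,t) = e \cdot x + g(t)$ for some function $g$, and $\bar u = 0$ on $S_{3/4}$ together with $e \cdot x = 0$ at $x_n = 0$ forces $g \equiv 0$. Passing to the limit in $|u_k(x_k,t_k) - e \cdot x_k| > \eta$ yields $|\bar u(x_*, t_*) - e \cdot x_*| \geq \eta$, contradicting $\bar u \equiv e \cdot x$.

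The only nontrivial step is proving uniform equicontinuity of $\{u_k\}$ up to the flat boundary $S_1$; interior H\"older estimates alone would not suffice, which is precisely where \Cref{lem:G_hol_t} is used. The subsequent identification of the limit is elementary and, notably, requires no stability theorem for viscosity solutions: the PDE structure enters solely through the role of \Cref{lem:G_hol_t} in delivering compactness.
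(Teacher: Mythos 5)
Your proof is correct, but it takes a genuinely different route from the paper's. The paper argues directly and quantitatively: it slices $Q_1^+$ in time via Fubini, singles out the small set $E$ of ``bad'' times where $|A(t)|\geq\sqrt{\delta_2}$, applies Morrey's inequality (using that $u(\cdot,t)-x_n$ vanishes on the flat part of the boundary) on good time slices to get $\|u(\cdot,t)-x_n\|_{L^\infty(B_{1/2}^+)}\lesssim \delta_1+\delta_2^{1/(4n)}$, and then uses \Cref{lem:G_hol_t} to transfer the estimate to bad times since every $t\in E$ is within $\sqrt{\delta_2}$ of a good time. This gives $\delta_1,\delta_2$ traceably in terms of $\eta$. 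You instead run a compactness/contradiction argument: normalize $\delta_{1,k},\delta_{2,k}\to 0$, extract a uniform limit $\bar u$ on the closed half-cylinder using spatial Lipschitz plus the time H\"older estimate of \Cref{lem:G_hol_t}, observe that $Du_k\to e$ in $L^1$ so that $D\bar u=e$ distributionally, and identify $\bar u=e\cdot x$ from the boundary condition — contradicting the failure of the conclusion at the limit point. Both proofs hinge on \Cref{lem:G_hol_t} as the sole PDE input; yours is cleaner and avoids Morrey entirely, at the cost of producing nonconstructive $\delta_1,\delta_2$. Two small points to tidy: \Cref{lem:G_hol_t} gives uniform equicontinuity only on $Q_{1/2}^+$, not $Q_{3/4}^+$ as you wrote, so either rescale/translate that lemma or simply run the identification of $\bar u$ and the test-function argument on $Q_{1/2}^+$ (which suffices, since $(x_k,t_k)\in Q_{1/2}^+$); and when you pass to the limit in $|u_k(x_k,t_k)-e\cdot x_k|>\eta$, you should note that this uses both uniform convergence and continuity of $\bar u$ on $\overline{Q_{1/2}^+}$, since $(x_*,t_*)$ may land on the boundary.
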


\begin{proof}
Since the arguments are analogous, it suffices to consider the case $e=e_n$.  Let 
\begin{equation*}
	A(t)=\{x \in B_1^+ : |Du(x,t)-e_n| > \delta_1\}  \quad  \text{for } t \in (-1,0].
\end{equation*}
Then, by Fubini’s theorem, we obtain
\begin{equation*}
	\int_{-1}^0 |A(t)| \, dt =|\{(x,t) \in Q_1^+ : |Du(x,t)-e| > \delta_1\} | \leq \delta_2.
\end{equation*}
Let $E=\{t \in (-1,0] : |A(t)| \ge \sqrt{\delta_2}\}$. \\

\noindent(\textbf{Case 1}: $t \in (-1/4,0] \setminus E$.) In this case, Morrey's inequality yields 
\begin{equation*}
	\|u(\cdot,t) -x_n \|_{L^\infty(B_{1/2}^+)} \leq C_1 \|Du(\cdot,t) - e_n \|_{L^{2n}(B_{1/2}^+)},
\end{equation*}
where $C_1>0$ is a constant depending only on $n$. Moreover, since
\begin{align*}
     \|Du(\cdot,t)-e_n\|_{L^{2n}(B_{1/2}^+)} 
    &= \left( \int_{B_{1/2}^+\setminus A(t)} |Du(x,t)-e_n|^{2n}\,dx + \int_{B_{1/2}^+\cap A(t)} |Du(x,t)-e_n|^{2n}\,dx  \right)^{\frac{1}{2n}}\\
     & \leq \left( \int_{B_{1/2}^+\setminus A(t)} \delta_1^{2n}\,dx + \int_{B_{1/2}^+\cap A(t)} 2^{2n}\,dx \right)^{\frac{1}{2n}} \\
     & \leq \left( \delta_1^{2n}|B_{1/2}^+| + 2^{2n} \sqrt{\delta_2} \right)^{\frac{1}{2n}}  
     \leq  C_2(\delta_1+\delta_2^{\frac{1}{4n}}),
 \end{align*}
we obtain
\begin{equation}\label{out_E}
	|u(x,t) -x_n | \leq C_2(\delta_1+\delta_2^{\frac{1}{4n}}) \quad \text{for all } x \in B_{1/2}^+.
\end{equation}

\noindent(\textbf{Case 2}: $t \in (-1/4,0] \cap E$.) Since 
\begin{equation*}
	|E| \leq \frac{1}{\sqrt{\delta_2}} \int_E f(t) \,dt \leq \frac{1}{\sqrt{\delta_2}} \int_{-1}^0 f(t) \,dt =\sqrt{\delta_2},
\end{equation*}
for each $t \in E$, there exists $s \in (-1/4,0] \setminus E$ such that $|t-s| \leq \sqrt{\delta_2}$. Then, for each $x \in B_{1/2}^+$,  combining \Cref{lem:G_hol_t} with \eqref{out_E} yields
\begin{align*}
	|u(x,t) -x_n| & \leq |u(x,t) - u(x,s)| + |u(x,s) -x_n|  \\
	&\leq \widetilde{C}|t-s|^{1/2} + C_2 (\delta_1 + \delta_2^{\frac{1}{4n}}) \\
	&\leq \widetilde{C}\delta_2^{1/4} + C_2 (\delta_1 + \delta_2^{\frac{1}{4n}}),
\end{align*}
where $\widetilde{C}>0$ is a constant depending only on $n$, $\lambda$, $\Lambda$, $\gamma$, and $\|u\|_{L^\infty(Q_1^+)}$.

Finally, choosing $\delta_1$ and $\delta_2$ sufficiently small yields the desired conclusion.
\end{proof}

We now state a uniform $C_\gamma^{1,\alpha}$-estimate for the regularized problem \eqref{eq:model2}, which is independent of $\varepsilon$.
\begin{lemma} \label{lem:C1a_small_p}
Assume that $F$ satisfies \textnormal{\ref{F1}}, \textnormal{\ref{F2}},
\begin{equation*}
	|p| \leq 1, \quad 
	p_n=0, \quad\text{and} \quad
	\|Du\|_{L^\infty(Q_1^+)} \leq 1.
\end{equation*}
Let $u$ be a viscosity solution to \eqref{eq:model2}. Then $u \in C_\gamma^{1,\bar{\alpha}}(0,0)$ for some $\bar{\alpha}\in(0,1)$ with $\bar\alpha<\frac{1}{1+\gamma}$, that is, there exists a constant $a\in \mathbb{R}$ such that
\begin{equation*}
	|u(x,t)-ax_n| \leq C(|x|^{1+\bar{\alpha}}+|t|^{\frac{1+\bar{\alpha}}{2-\bar{\alpha} \gamma}}) \quad \text{for all }(x,t) \in Q_{1/2}^+,
\end{equation*}
where $C>0$ is a constant depending only on $n$, $\lambda$, $\Lambda$, $\gamma$, and $\|u\|_{L^\infty(Q_1^+)}$.
\end{lemma}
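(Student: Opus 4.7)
The plan is to run a dichotomy based on iterating \Cref{cor:Du<1-d}. Fix $l \in (3/4, 1)$ close to $1$ and $\mu \in (0,1)$ small (both specified below), let $\delta, \tau \in (0, 1/4)$ be the constants from \Cref{cor:Du<1-d}, and set the target Hölder exponent
\[
\bar\alpha := \min\Bigl\{\tfrac{\log(1-\delta)}{\log \tau},\ \tfrac{1-\sigma}{1+\gamma}\Bigr\}
\]
for a fixed small $\sigma > 0$ (the second term enforces $\bar\alpha < 1/(1+\gamma)$ when $\gamma > 0$; no cap is needed when $\gamma \leq 0$). Writing $r_k = \tau^k$ and $s_k = (1-\delta)^k$, the identity $s_k = r_k^{\bar\alpha}$ holds, so $Q_{r_k}^{s_k+}$ coincides with the natural $C_\gamma^{1,\bar\alpha}$-cylinder at scale $r_k$. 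Let $k_0$ be the largest nonnegative integer with $s_{k_0} \geq 2\max(|p|, \varepsilon)$ (and $k_0 = \infty$ when $p = 0 = \varepsilon$), and let $k^* \in \{0, 1, \ldots, k_0 + 1\}$ be the smallest index at which either the measure hypothesis of \Cref{cor:Du<1-d} fails for some $e \in \{\pm e_n\}$, or $k^* > k_0$.

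For every $0 \leq k < k^*$, iterating \Cref{cor:Du<1-d} gives $|Du| \leq s_k$ on $Q_{r_k}^{s_k+}$, so integrating from $S_{r_k}$ yields $|u(x,t)| \leq s_k\, x_n \leq r_k s_k = r_k^{1+\bar\alpha}$. For any $(x,t) \in Q_{1/2}^+ \setminus Q_{r_{k^*}}^{s_{k^*}+}$, picking the unique $0 \leq k < k^*$ with $(x,t) \in Q_{r_k}^{s_k+} \setminus Q_{r_{k+1}}^{s_{k+1}+}$ forces $\max\{|x|,\, |t|^{1/(2-\bar\alpha\gamma)}\} \geq \tau r_k$, whence $|u(x,t)| \leq r_k^{1+\bar\alpha} \leq C\bigl(|x|^{1+\bar\alpha} + |t|^{(1+\bar\alpha)/(2-\bar\alpha\gamma)}\bigr)$ with candidate linear function $L \equiv 0$. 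If $k^* = \infty$ the proof is complete; otherwise it remains to control $u$ on $Q_{r_{k^*}}^{s_{k^*}+}$.

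Rescale $v(x,t) := u(r_{k^*} x,\, r_{k^*}^2 s_{k^*}^{-\gamma} t) / (r_{k^*} s_{k^*})$; then $v$ solves an equation of the same form on $Q_1^+$ with parameters $\tilde p = p / s_{k^*}$, $\tilde\varepsilon = \varepsilon / s_{k^*}$ and a rescaled operator preserving \ref{F1} and \ref{F2}, together with $v = 0$ on $S_1$ and $\|Dv\|_{L^\infty(Q_1^+)} \leq 1$. In Case A ($k^* \leq k_0$), $|\tilde p|, \tilde\varepsilon \leq 1/2$ while the failure of the measure condition provides $|\{D_e v > l\}| \geq (1-\mu)|Q_1^+|$ for some $e = \pm e_n$; combined with $|Dv| \leq 1$ this forces $|Dv - e| \leq 2\sqrt{1-l}$ on that set, so choosing $l$ with $2\sqrt{1-l} \leq \delta_1$ and $\mu$ with $\mu |Q_1^+| \leq \delta_2$ allows \Cref{lem:Du=e} to conclude $|v - e \cdot x| \leq \eta$ on $Q_{1/2}^+$. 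Then $w := v - e \cdot x$ satisfies a rescaled version of \eqref{eq:ep-nu-p} whose constant direction $e + \tilde p$ has length in $[1/2, 3/2]$, with zero interior and boundary data, so \Cref{lem:small_bdry} (with $\eta$ accordingly small) yields $|w - a' x_n| \leq C(|x|^{1+\alpha} + |t|^{(1+\alpha)/2})$ on $Q_{1/2}^+$ for any preassigned $\alpha \in (0,1)$. In Case B ($k^* = k_0 + 1$), one has $\max(|\tilde p|, \tilde\varepsilon) > 1/2$, which renders $(|Dv + \tilde p|^2 + \tilde\varepsilon^2)^{\gamma/2}$ uniformly bounded above and below on $Q_1^+$; thus $v$ solves a fully nonlinear uniformly parabolic equation and the same $C^{1,\alpha}$-modulus follows from \Cref{lem:c11_large_p} (directly when $|\tilde p| \geq 1/2$, and via an identical small-perturbation argument based on \Cref{lem:small_bdry} when the uniform lower bound instead comes from $\tilde\varepsilon$).

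Scaling $v$ back to $u$ and setting $L(x) := a' s_{k^*} x_n$ produces an estimate on $Q_{r_{k^*}/2}^{s_{k^*}+}$ which glues with the outer-region estimate above to complete the proof. The main technical obstacle is verifying that a $(1+\alpha, (1+\alpha)/2)$-scaling estimate on $v$, when pulled back through $(x,t) \mapsto (r_{k^*} x,\, r_{k^*}^2 s_{k^*}^{-\gamma} t)$, implies the $(1+\bar\alpha, (1+\bar\alpha)/(2-\bar\alpha\gamma))$-scaling estimate on $u$; using $s_{k^*} = r_{k^*}^{\bar\alpha}$, this reduces to the inequality $(1+\alpha)(2-\bar\alpha\gamma) \geq 2(1+\bar\alpha)$, which is satisfied precisely when $\alpha$ is taken close enough to $1$ together with $\bar\alpha < 1/(1+\gamma)$ for $\gamma > 0$. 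This exponent-matching is also the origin of the restriction $\bar\alpha < 1/(1+\gamma)$ in the statement.
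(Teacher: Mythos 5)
Your overall strategy matches the paper's: iterate \Cref{cor:Du<1-d} to shrink the oscillation of $Du$, stop either when the measure hypothesis fails or when the rescaled parameters $(\tilde p,\tilde\varepsilon)$ leave the allowed range, and then conclude in the stopped cylinder via a uniformly-parabolic/small-perturbation argument (and glue with the outer estimate $|Du|\leq s_k$ on each annulus). However, there are two concrete gaps.

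First, defining $\bar\alpha$ as a minimum breaks the identity $s_k=r_k^{\bar\alpha}$, which you invoke repeatedly (it is what makes $Q_{r_k}^{s_k+}$ the natural $C_\gamma^{1,\bar\alpha}$ cylinder, and what matches the time exponent $\tfrac{\bar\alpha}{2-\bar\alpha\gamma}$ in \eqref{est:Du_out} and in the final gluing). If $\gamma>0$ is large the second term $\tfrac{1-\sigma}{1+\gamma}$ in your min can be strictly smaller than $\tfrac{\log(1-\delta)}{\log\tau}$, and then $s_k\neq r_k^{\bar\alpha}$ and the annulus estimate $|u|\leq r_k^{1+\bar\alpha}$ no longer follows from $|Du|\leq s_k$. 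The paper avoids this by observing that the conclusion of \Cref{cor:Du<1-d} persists under shrinking $\tau$; one therefore first shrinks $\tau$ until $\tau<(1-\delta)^{1+\gamma}$ and only then sets $\bar\alpha=\log_\tau(1-\delta)$, which keeps the scaling identity exact while forcing $\bar\alpha<\tfrac{1}{1+\gamma}$. Shrinking $\tau$ (making $\bar\alpha$ small) is also the cleaner way to resolve the apparent circularity in your exponent-matching constraint $\alpha\geq\bar\alpha(2+\gamma)/(2-\bar\alpha\gamma)$: fix $\alpha$ from the outset, generate $\eta,\delta_1,\delta_2,l,\mu,\delta,\tau$, and then shrink $\tau$ further until $\bar\alpha$ is small enough for your inequality, rather than pushing $\alpha\to1$ (which would re-enter the dependency chain through $\eta$).

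Second, your Case~B splits according to whether $|\tilde p|\geq 1/2$ or $\tilde\varepsilon\geq 1/2$. The branch $|\tilde p|\geq 1/2$ is indeed \Cref{lem:c11_large_p}. But when the uniform lower bound comes from $\tilde\varepsilon$ (i.e.\ $\varepsilon>|p|$), $|\tilde p|=|p|/s_{k^*}$ may be arbitrarily small, and \Cref{lem:small_bdry} requires $1/2\leq|p|\leq 2$ and small $\|u\|_{L^\infty}$; neither hypothesis is available for the rescaled $v$. "An identical small-perturbation argument" does not close this. The paper handles this branch differently: since $\tilde\varepsilon\in(1/2,1)$, the factor $(|Dv+\tilde p|^2+\tilde\varepsilon^2)^{\gamma/2}$ is bounded above and below and the equation is genuinely uniformly parabolic, so one first applies \Cref{thm:LZ22-28} to get boundary $C^{1,\alpha}$ and then \Cref{thm:LZ22-116} (regarding the gradient as an $(x,t)$-dependent coefficient) to upgrade to boundary $C^{2,\alpha}$, which dominates the required $\bar\alpha$-modulus. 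You should replace the appeal to \Cref{lem:small_bdry} in this sub-case by the Lian--Zhang boundary regularity theorems.
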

\begin{proof}
Let $\eta \in (0,1)$ be the constant given in \Cref{lem:small_bdry}. For this choice of $\eta$, let $\delta_1>0$ and $\delta_2>0$ be the constants given in \Cref{lem:Du=e}. Since $\delta_1$ and $\delta_2$ are sufficiently small, we may assume that
\begin{equation*}
	l=1 - \frac{1}{2} \delta_1^2 \in (3/4,1)\quad \text{and} \quad
	\mu= \frac{\delta_2}{|Q_1^+|} \in (0,1).
\end{equation*}

For $(x,t) \in \{(x,t) \in Q_1^+ : |Du(x,t)-e| > \delta_1\}$, we have
\begin{equation*}
	2-2 D_e u(x,t) \ge |Du(x,t)|^2 -2 D_e u(x,t) + 1 = |Du(x,t)-e|^2 > \delta_1^2,
\end{equation*}
which implies that
\begin{equation*}
	\{(x,t) \in Q_1^+ : |Du(x,t)-e| > \delta_1\} \subset \{(x,t) \in Q_1^+: D_e u (x,t) \leq l \}.
\end{equation*}
Hence, if 
\begin{equation*}
	|\{(x,t) \in Q_1^+: D_e u (x,t) \leq l \}| \leq \mu |Q_1^+|  \quad \text{for all } e \in \{e_n, -e_n\},
\end{equation*}
then
\begin{equation} \label{rmk_in_proof}
	\{(x,t)\in Q_1^+:|Du(x,t)-e| > \delta_1 \} \leq \delta_2.
\end{equation}

Let $\delta, \tau \in (0,1/4)$ be the constants given in \Cref{cor:Du<1-d}. Here, $\tau$ can be chosen sufficiently small so that it satisfies
\begin{equation*}
	\tau < (1-\delta)^{1+\gamma} \quad \text{for all } \gamma >-1.
\end{equation*}
Let $\bar\alpha= \log_\tau(1-\delta) \in (0,\frac{1}{1+\gamma})$. By \Cref{cor:Du<1-d},  for any nonnegative integer
\begin{equation} \label{k_upper}
	k \leq \min\left\{ \frac{\log(2\varepsilon)}{\bar\alpha \log \tau}, \frac{\log(2|p|)}{\bar\alpha \log \tau} \right\},
\end{equation}
and for any $e \in \{e_n, -e_n\}$, if
\begin{equation}\label{i-th:Deu<1-d}
	|\{(x,t) \in Q_{\tau^i}^{\tau^{i\bar\alpha}+}: D_e u (x,t) \leq l \tau^{i\bar\alpha}\}| > \mu |Q_{\tau^i}^{\tau^{i\bar\alpha}+}| \quad \text{for all } i=0,1,\cdots, k,
\end{equation}
then
\begin{equation} \label{Du<tau}
	|D u| <\tau^{(i+1)\bar\alpha} \quad \text{in } Q_{\tau^{i+1}}^{\tau^{(i+1)\bar\alpha}+} \quad \text{for all } i=0,1,\cdots, k.
\end{equation}
Let $k_* \ge 1$ be the smallest integer such that both \eqref{k_upper} and \eqref{i-th:Deu<1-d} hold for all $k \leq k_*-1$, but at least one of them fails for $k=k_*$. From \eqref{Du<tau} for $k=k_*-1$, we obtain
\begin{equation} \label{est:Du_out}
	|Du(x,t)-q| \leq C(|x|^{\bar\alpha}+|t|^{\frac{\bar\alpha}{2-\bar\alpha\gamma}}) \quad \text{for all } (x,t) \in Q_1^+ \setminus Q_{\tau^{k_*+1}}^{\tau^{(k_*+1)\bar\alpha}+}
\end{equation}
for all $q \in \mathbb{R}^n$ with $|q| \leq \tau^{k_*\bar\alpha}$.

Let us now consider the cases where either \eqref{k_upper} or \eqref{i-th:Deu<1-d} fails. We shall show that the desired conclusion holds in each case.\\

\noindent(\textbf{Case 1}: \eqref{k_upper} fails for $k=k_*$.)  First, we consider the case $\varepsilon \ge |p|$.
Then the function
\begin{equation*}
	\tilde{u}(x,t) = \frac{1}{\tau^{k_*(1+\bar\alpha)}} u(\tau^{k_*}x, \tau^{k_*(2-\bar\alpha\gamma)}t)
\end{equation*}
satisfies $\|D\tilde{u}\|_{L^\infty(Q_1^+)}\leq 1$ and
\begin{equation*} 
\left\{\begin{aligned}
	\tilde{u}_t &= (|D\tilde{u} + \tilde{p}|^2 + \tilde{\varepsilon}^2)^{\gamma/2} \widetilde{F}(D^2 \tilde{u})  && \text{in } Q_1^+ \\
	\tilde{u}&=0 && \text{on } S_1,
\end{aligned}\right.
\end{equation*}
where $\tilde{p}=\tau^{- k_*\bar\alpha}p$, $\tilde{\varepsilon}= \tau^{- k_*\bar\alpha} \varepsilon$ and $\widetilde{F}(M)=\tau^{k_*(1-\bar\alpha)} F^\varepsilon(\tau^{k_*(\bar\alpha-1)}M)$.

Since $\varepsilon \ge |p|$ and  \eqref{k_upper} fails for $k=k_*$, it follows that $k_*-1 \leq \frac{\log(2\varepsilon)}{\bar\alpha \log \tau} < k_*$, and hence we obatin
\begin{equation*}
	|\tilde{p}| \leq \tau^{- k_*\bar\alpha} \varepsilon <1 \quad \text{and} \quad
	\frac{1}{2} < \tilde{\varepsilon} \leq \frac{1}{2\tau^{\bar\alpha}} <1.
\end{equation*}
Therefore, there exist constants $\tilde\lambda$ and $\tilde{\Lambda}$ depending only on $\lambda$, $\Lambda$, and $\gamma$ such that $\tilde{u}$ satisfies
\begin{equation*}
	\mathcal{M}_{\tilde\lambda,\tilde\Lambda}^-(D^2 \tilde{u}) \leq \tilde{u}_t \leq \mathcal{M}_{\tilde\lambda,\tilde\Lambda}^+(D^2 \tilde{u}) \quad  \text{in } Q_1^+ \text{ in the viscosity sense.}
\end{equation*}
By \Cref{thm:LZ22-28}, we have $D\tilde{u} \in C^{\alpha}(0,0)$ for some $\alpha \in (0,1)$, and hence, by \Cref{thm:LZ22-116}, it follows that $\tilde{u} \in C^{2,\alpha}(0,0)$, that is, there exists a constant $\tilde{a} \in [-1,1]$ such that
\begin{equation*}
	|D\tilde{u}(x,t)- \tilde{a}e_n| \leq C(|x|+|t|^{1/2}) \leq C(|x|^{\bar\alpha}+|t|^{\frac{\bar\alpha}{2-\bar\alpha\gamma}}) \quad \text{for all } (x,t) \in Q_\tau^{\tau^{\bar\alpha}+}.
\end{equation*}
Scaling back to $u$, we obtain
\begin{equation*}
	|Du(x,t) - \tau^{k_* \bar\alpha}\tilde{a}e_n|  \leq C(|x|^{\bar\alpha}+|t|^{\frac{\bar\alpha}{2-\bar\alpha\gamma}}) \quad \text{for all } (x,t) \in Q_{\tau^{k_*+1}}^{\tau^{(k_*+1)\bar\alpha}+}.
\end{equation*}
Combining this with \eqref{est:Du_out}, we obtain 
\begin{equation*}
	|Du(x,t)-\tau^{k_* \bar\alpha}\tilde{a}e_n| \leq C(|x|^{\bar\alpha}+|t|^{\frac{\bar\alpha}{2-\bar\alpha\gamma}}) \quad \text{for all } (x,t) \in Q_1^+.
\end{equation*}

Next, we consider the case $\varepsilon < |p|$. Then $k_*-1 \leq \frac{\log(2|p|)}{\bar\alpha \log \tau} < k_*$ and hence we obatin
\begin{equation*}
	\frac{1}{2}< |\tilde{p}| = \tau^{- k_*\bar\alpha} |p| \leq \frac{1}{2\tau^{\bar\alpha}} <1 \quad \text{and} \quad
	0<\tilde{\varepsilon} < \tau^{- k_*\bar\alpha} |p| <1 .
\end{equation*}
Hence, by \Cref{lem:c11_large_p}, we have $\tilde{u} \in C^{1,\alpha}(0,0)$ for all $\alpha \in (0,1)$, scaling back to $u$, we obtain
\begin{equation*}
	|u(x,t)-\check{a}x_n| \leq C(|x|^{1+\bar\alpha}+|t|^{\frac{1+\bar\alpha}{2-\bar\alpha\gamma}}) \quad \text{for all } (x,t) \in Q_{1/2}^+,
\end{equation*}
for some $\check{a} \in [-1,1]$. \\

\noindent(\textbf{Case 2}: For $k=k_*$, \eqref{i-th:Deu<1-d} fails while \eqref{k_upper} holds.) Since the arguments are analogous, it suffices to consider the case $e=e_n$, that is, 
\begin{equation*} 
	|\{(x,t) \in Q_{\tau^{k_*}}^{\tau^{k_*\bar\alpha}+}: u_n(x,t) \leq l \tau^{k_*\bar\alpha}\}| \leq \mu |Q_{\tau^{k_*}}^{\tau^{k_*\bar\alpha}+}|,
\end{equation*}
which is equivalent to
\begin{equation*} 
	|\{(x,t) \in Q_1^+: \tilde{u}_n(x,t) \leq l| \leq \mu |Q_1^+|.
\end{equation*}
Moreover, since \eqref{k_upper} holds for $k=k_*$, we have
\begin{equation*}
	|\tilde{p}| = \tau^{- k_*\bar\alpha}|p| \leq \frac{1}{2} \quad \text{and} \quad
	0<\tilde{\varepsilon}= \tau^{- k_*\bar\alpha} \varepsilon \leq \frac{1}{2}.
\end{equation*}
It follows from \eqref{rmk_in_proof} that
\begin{equation*}
	\{(x,t)\in Q_1^+:|D\tilde{u}(x,t)-e_n| > \delta_1 \} \leq \delta_2,
\end{equation*}
and hence, by \Cref{lem:Du=e}, we obtain
\begin{equation*}
	|\tilde{u}(x,t) -x_n| \leq \eta \quad \text{for all } (x,t) \in Q_{1/2}^+.
\end{equation*}
Then $v=\tilde{u}-x_n$ is a solution to 
\begin{equation*} 
\left\{\begin{aligned}
	v_t &= (|Dv + \hat{p}|^2 + \tilde{\varepsilon}^2)^{\gamma/2} \widetilde{F}(D^2 \tilde{u})  && \text{in } Q_1^+ \\
	v&=0 && \text{on } S_1,
\end{aligned}\right.
\end{equation*}
where $\hat{p} = \tilde{p} + e_n$. Since $\tilde{p}_n=0$ and $|\tilde{p}|\leq 1/2$, we have $1\leq |\hat{p} | \leq 3/2$. Therefore, by \Cref{lem:small_bdry}, we have $v \in C^{1,\alpha}(0,0)$ for all $\alpha \in (0,1)$, scaling back to $u$, we obtain
\begin{equation*}
	|u(x,t)-\hat{a}x_n| \leq C(|x|^{1+\bar\alpha}+|t|^{\frac{1+\bar\alpha}{2-\bar\alpha\gamma}}) \quad \text{for all } (x,t) \in Q_{1/2}^+,
\end{equation*}
for some $\hat{a} \in [-1,1]$. 
\end{proof}
From now on, we denote by $\bar\alpha$ the constant appearing in \Cref{lem:C1a_small_p}.
\begin{proof} [Proof of \Cref{thm:c1a_flat}]
Without loss of generality, we may assume that $u \in C(\overline{Q_1^+})$. Let $\Omega \subset \mathbb{R}^{n+1}$ be a smooth domain satisfying $Q_{3/4}^+ \subset \Omega \subset Q_1^+$. By \Cref{thm:LLY24-12}, \Cref{thm:LLY24-412}, and \Cref{rmk:eq_tr}, there exists a unique solution $u^\varepsilon \in C(\overline{\Omega})\cap C^\infty(\Omega)$ to
\begin{equation*} 
\left\{\begin{aligned}
	\partial_t u^\varepsilon &= (|Du^\varepsilon + p|^2 + \varepsilon^2)^{\gamma/2} F^\varepsilon(D^2 u^\varepsilon)  && \text{in } \Omega \\
	u^\varepsilon&=u && \text{on } \partial_p \Omega.
\end{aligned}\right.
\end{equation*}

By \Cref{lem:G_hol_t}, there exists a constant $C_1>0$ depending only on  $n$, $\lambda$, $\Lambda$, $\gamma$, and $\|u^\varepsilon\|_{L^\infty(Q_{3/4}^+)}$ such that $Du^\varepsilon \in L^\infty(Q_{3/4}^+) $. Setting $\rho=1/(1+\|Du^\varepsilon\|_{L^\infty(Q_{3/4}^+)} )$ in \Cref{rmk:normal}, we see that the assumptions of \Cref{lem:C1a_small_p} are satisfied.

Moreover, by the Arzelà--Ascoli theorem, there exists a function $\bar{u}\in C(\overline{\Omega})$ such that $u^\varepsilon \to \bar{u}$, and by \Cref{thm:stability2}, $\bar{u}$ is a solution to
\begin{equation*} 
\left\{\begin{aligned}
	\bar{u}_t &= |D\bar{u}+ p|^\gamma F(D^2 \bar{u})  && \text{in } \Omega  \\
	\bar{u}&=u&& \text{on } \partial_p \Omega .
\end{aligned}\right.
\end{equation*}
By \Cref{thm:comp_prin}, we conclude that $\bar{u}=u$. Finally, by \Cref{lem:C1a_small_p}, there exists a constant $a\in \mathbb{R}$ such that
\begin{equation*}
	|u^\varepsilon(x,t)-ax_n| \leq C(|x|^{1+\bar{\alpha}}+|t|^{\frac{1+\bar{\alpha}}{2-\bar{\alpha} \gamma}}) \quad \text{for all }(x,t) \in Q_{1/2}^+.
\end{equation*}
Letting $\varepsilon \to 0$, we arrive at the desired conclusion.
\end{proof}
From now on, we denote by $C_\star$ the constant appearing in \Cref{thm:c1a_flat}.
%
%
\section{Boundary $C^{1,\alpha}_\gamma$-estimates on general boundaries} \label{sec:gen_bdry}
In this section, we establish the boundary $C^{1,\alpha}_\gamma$-regularity of viscosity solutions to \eqref{eq:main} leading to the proof of our main theorems.

The key idea behind the following lemma is to argue by contradiction. Assuming that the desired estimate fails, we construct a sequence of viscosity solutions with normalized data that violate the estimate. By employing a compactness argument, we extract a locally uniformly convergent subsequence whose limit solves a model problem \eqref{model}. However, the regularity of this limiting solution has already been established in the model setting by \Cref{thm:c1a_flat}. This leads to a contradiction.
\begin{lemma} \label{lem:gen_1st}
Assume that $F$ satisfies \textnormal{\ref{F1}} and \textnormal{\ref{F2}}. Then, for any $\alpha\in(0,\bar\alpha)$ and $\eta \in (0,1)$, there exists $\delta \in (0,\eta]$ depending only on $n$, $\lambda$, $\Lambda$, $\gamma$, $\alpha$, and $\eta$ such that if $u$ is a viscosity solution to 
\begin{equation*}  
\left\{\begin{aligned}
	u_t &= |Du + p|^\gamma F(D^2 u)  + f&& \text{in } \Omega\cap Q_1 \\
	u&=g && \text{on } \partial_p \Omega\cap Q_1
\end{aligned}\right.
\end{equation*}
with
\begin{equation*}
	 |p| \leq 1, \quad 
	 p_n=0, \quad
	 \|u\|_{L^\infty(\Omega\cap Q_1)} \leq 2,
\end{equation*}
\begin{equation*}
	 \|f\|_{L^\infty(\Omega\cap Q_1)} \leq \delta, \quad
	 \|g\|_{L^\infty(\partial_p\Omega\cap Q_1)} \leq \delta, \quad\text{and}\quad
	 \osc_{Q_1} \partial_p\Omega \leq \delta,
\end{equation*}
then there exists a constant $a\in \mathbb{R}$ such that
\begin{equation*}
	\|u-ax_n\|_{L^\infty(\Omega\cap Q_\tau^{\tau^\alpha})} \leq \eta \tau^{1+\alpha} \quad\text{and}\quad
	|a| \leq C_\star,
\end{equation*}
where  $\tau \in (0,1)$ is a constant depending only on $n$, $\lambda$, $\Lambda$, $\gamma$, $\alpha$, and $\eta$.
\end{lemma}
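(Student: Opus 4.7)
The plan is a contradiction/compactness argument that reduces the claim to the boundary estimate already established for the model problem \eqref{model}. Fix $\alpha \in (0,\bar\alpha)$ and $\eta \in (0,1)$. Using $\alpha<\bar\alpha$ and $\gamma>-1$, the identity
\[
(2-\alpha\gamma)\frac{1+\bar\alpha}{2-\bar\alpha\gamma}-(1+\alpha)=\frac{(2+\gamma)(\bar\alpha-\alpha)}{2-\bar\alpha\gamma}>0
\]
lets us fix $\tau\in(0,1)$ (depending only on $n,\lambda,\Lambda,\gamma,\alpha,\eta$) so small that
\[
C_\star\bigl(\tau^{1+\bar\alpha}+\tau^{(2-\alpha\gamma)(1+\bar\alpha)/(2-\bar\alpha\gamma)}\bigr)\leq\tfrac12\,\eta\,\tau^{1+\alpha}.
\]
Suppose no $\delta$ works for this $\tau$: extract a sequence $(F_k,\Omega_k,u_k,f_k,g_k,p_k)$ satisfying the hypotheses with $\delta_k\to0^+$ but such that no $a$ with $|a|\leq C_\star$ yields $\|u_k-ax_n\|_{L^\infty(\Omega_k\cap Q_\tau^{\tau^\alpha})}\leq\eta\tau^{1+\alpha}$. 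Since each $F_k$ is $(n\Lambda)$-Lipschitz on $\mathcal{S}^n$ with $F_k(O)=0$ and is convex or concave, Arzel\`a--Ascoli (combined with compactness of $\{p_k\}\subset\{|p|\leq1,\ p_n=0\}$) gives, after passing to a subsequence, $F_k\to F_\infty$ locally uniformly and $p_k\to p_\infty$, with $F_\infty$ satisfying \ref{F1} and \ref{F2} and $|p_\infty|\leq1$, $p_{\infty,n}=0$.

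Equicontinuity of $\{u_k\}$ up to the asymptotic flat boundary $S_{1/2}$ is secured by combining the boundary Lipschitz bound $|u_k(x,t)|\leq C(x_n+\delta_k)$ (the second part of \Cref{lem:bdry_lip1}, which applies because $\nu=1$) with the interior spatial Lipschitz estimate of \Cref{lem:int-lip} applied to $v_k=u_k+p_k\cdot x$ (via \Cref{rmk:eq_tr}) and the interior H\"older-in-time estimate of the type proved in Section~\ref{sec:small_pert}; the constants in the interior estimates, which depend on the local $L^\infty$ norm, stay bounded as $x_n\to0$ because that norm is itself controlled linearly by $x_n+\delta_k$ on balls of radius $\sim x_n$. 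A further subsequence yields $u_k\to u_\infty$ locally uniformly on $Q_{1/2}^+$, with $u_\infty\equiv0$ on $S_{1/2}$ by the boundary bound. Since $\delta_k,\|f_k\|_\infty\to0$ and the effective domains $\Omega_k\cap Q_1$ converge in Hausdorff sense to $Q_1^+$, the stability result \Cref{thm:stability1} identifies $u_\infty$ as a viscosity solution of \eqref{model} on $Q_{1/2}^+$ with operator $F_\infty$ and vector $p_\infty$.

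\Cref{thm:c1a_flat} then produces $a\in\mathbb{R}$ with $|a|\leq C_\star$ and
\[
|u_\infty(x,t)-ax_n|\leq C_\star\bigl(|x|^{1+\bar\alpha}+|t|^{(1+\bar\alpha)/(2-\bar\alpha\gamma)}\bigr)\quad\text{on }Q_{1/2}^+,
\]
which by the choice of $\tau$ is $\leq\tfrac12\eta\tau^{1+\alpha}$ on $Q_\tau^{\tau^\alpha}\cap\{x_n>0\}$. To transfer the estimate to $u_k$, split $\Omega_k\cap Q_\tau^{\tau^\alpha}$ into the bulk region $\{x_n\geq\sqrt{\delta_k}\}$, on which $u_k\to u_\infty$ uniformly, and the thin strip $\{0<x_n<\sqrt{\delta_k}\}$, on which $|u_k(x,t)-ax_n|\leq C(\sqrt{\delta_k}+\delta_k)+C_\star\sqrt{\delta_k}\to0$; both contributions give $\|u_k-ax_n\|_{L^\infty(\Omega_k\cap Q_\tau^{\tau^\alpha})}\leq\eta\tau^{1+\alpha}$ for all large $k$, the desired contradiction. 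The principal technical obstacle is obtaining equicontinuity of $\{u_k\}$ up to the (non-flat, $k$-dependent) boundary without any diffeomorphic flattening, and it is precisely the combination of the linear-in-$x_n$ boundary bound of \Cref{lem:bdry_lip1} with the interior theory of Section~\ref{sec:small_pert} that is tailored to resolve this.
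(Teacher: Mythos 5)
Your proposal is correct and follows essentially the same route as the paper: a contradiction/compactness argument that extracts a limiting solution of the model problem \eqref{model} on $Q_{1/2}^+$ (using \Cref{lem:bdry_lip1} with $\nu=1$ to force zero boundary data), then applies \Cref{thm:c1a_flat} and chooses $\tau$ exactly as in the paper's exponent computation to produce a contradiction. The one place where you are a bit more explicit than the paper is the final transfer step: the paper simply ``lets $k\to\infty$'' in the contradiction estimate, while you carry out a bulk/thin-strip decomposition to handle the fact that convergence $u_k\to u_\infty$ is only locally uniform and that the domains $\Omega_k\cap Q_\tau^{\tau^\alpha}$ are $k$-dependent; this is a legitimate filling-in of a step the paper compresses, not a different argument.
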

\begin{proof}
Let us assume that the conclusion is false, that is, there exist constants $\alpha \in (0,\bar\alpha)$, $\eta\in(0,1)$ and sequences $F_k$, $u_k$, $f_k$, $g_k$, $\Omega_k$, and $p_k$ such that $u_k$ is a viscosity solution to 
\begin{equation*}  
\left\{\begin{aligned}
	\partial_t u_k &= |Du_k + p_k|^\gamma F_k(D^2 u_k)  + f_k&& \text{in } \Omega_k\cap Q_1 \\
	u_k&=g_k && \text{on } \partial_p \Omega_k\cap Q_1,
\end{aligned}\right.
\end{equation*}
where $F_k$ satisfies \textnormal{\ref{F1}}, \textnormal{\ref{F2}}, and
\begin{equation*}
	 |p_k| \leq 1, \quad 
	 p_k\cdot e_n=0, \quad
	 \|u_k\|_{L^\infty(\Omega_k\cap Q_1)} \leq 2,
\end{equation*}
\begin{equation*}
	 \|f_k\|_{L^\infty(\Omega_k\cap Q_1)} \leq \frac{1}{k}, \quad
	 \|g_k\|_{L^\infty(\partial_p\Omega_k\cap Q_1)} \leq  \frac{1}{k}, \quad\text{and}\quad
	 \osc_{Q_1} \partial_p\Omega_k \leq  \frac{1}{k}.
\end{equation*}
Moreover, the following estimate holds:
\begin{equation} \label{contra1}
	\|u_k-ax_n\| _{L^\infty(\Omega_k\cap Q_\tau^{\tau^\alpha})}> \eta \tau^{1+\alpha} \quad \text{for all } a \in \mathbb{R} \text{ with } |a| \leq C_\star,
\end{equation}
where $\tau \in (0,1)$ will be determined later.

Then, as in the proof of \Cref{lem:small_1st}, there exist a vector $\tilde{p}$, a function $\tilde{u}$ and an operator $\widetilde{F}$ such that  
\begin{equation*}
	p_k \to p, \quad
	u_k \to \tilde{u} \text{ locally uniformly in } \Omega_k \quad \text{and} \quad
	F_k \to \widetilde{F} \text{ locally uniformly in } \mathcal{S}^n. 
\end{equation*}

By \Cref{lem:bdry_lip1}, we obtain
\begin{equation*}
	|u_k(x,t)| \leq C\left(x_n + \frac{1}{k}\right) \quad \text{for all } (x,t) \in \Omega_k\cap Q_{3/4},
\end{equation*}
and letting $k\to\infty$, we deduce
\begin{equation*}
	|\tilde{u}(x,t)| \leq Cx_n \quad \text{for all } (x,t) \in Q_{3/4}^+.
\end{equation*}
Thus, $\tilde{u}$ is a viscosity solution to
\begin{equation*}  
\left\{\begin{aligned}
	\tilde{u}_t &= |D\tilde{u} + \tilde{p}|^\gamma \widetilde{F}(D^2 \tilde{u}) && \text{in } Q_{3/4}^+\\
	\tilde{u}&=0 && \text{on } S_{3/4},
\end{aligned}\right.
\end{equation*}
and by \Cref{thm:c1a_flat}, there exists a constant $\tilde{a}\in \mathbb{R}$ such that
\begin{equation}\label{est:from416}
	|\tilde{u}(x,t)-\tilde{a}x_n| \leq C_\star(|x|^{1+\bar{\alpha}}+|t|^{\frac{1+\bar{\alpha}}{2-\bar{\alpha} \gamma}}) \quad \text{for all }(x,t) \in Q_{1/2}^+
\end{equation}
and $|\tilde{a}| \leq C_\star$.

If we now choose $\tau\in(0,1)$ sufficiently small so that
\begin{equation*}
	C_\star(\tau^{\bar\alpha-\alpha}+\tau^{\frac{2-\alpha\gamma}{2-\bar\alpha\gamma}(1+\bar\alpha)-1-\alpha}) \leq \frac{1}{2}\eta,
\end{equation*}
then by \eqref{est:from416} we obtain
\begin{equation*}
	\|\tilde{u}-\tilde{a}x_n\|_{L^\infty(Q_\tau^{\tau^\alpha+})} \leq C_\star(\tau^{\bar\alpha-\alpha} + \tau^{\frac{2-\alpha\gamma}{2-\bar\alpha\gamma}(1+\bar\alpha)-1-\alpha}) \tau^{1+\alpha} \leq \frac{1}{2}\eta\tau^{1+\alpha}.
\end{equation*}
However, letting $k\to\infty$, \eqref{contra1} yields
\begin{equation*}  
	\|\tilde{u}-\tilde{a}x_n\| _{L^\infty(Q_\tau^{\tau^\alpha+})}\ge \eta \tau^{1+\alpha},
\end{equation*}
which leads to a contradiction.
\end{proof}
\begin{lemma}  \label{lem:gen_next}
Assume that $F$ satisfies \textnormal{\ref{F1}} and \textnormal{\ref{F2}}. Let $\alpha$, $\eta$, $\delta$, and $\tau$ be the constants given in \Cref{lem:gen_1st} and let $r \in (0,1]$. If $u$ is a viscosity solution to 
\begin{equation*}  
\left\{\begin{aligned}
	u_t &= |Du + p|^\gamma F(D^2 u)  + f&& \text{in } \Omega\cap Q_r^{r^\alpha} \\
	u&=g && \text{on } \partial_p \Omega\cap Q_r^{r^\alpha}
\end{aligned}\right.
\end{equation*}
with
\begin{equation*}
	 |p| \leq r^\alpha, \quad 
	 p_n=0, \quad
	 \|u\|_{L^\infty(\Omega\cap Q_r^{r^\alpha})} \leq 2 r^{1+\alpha},
\end{equation*}
\begin{equation*}
	 \|f\|_{L^\infty( \Omega\cap Q_r^{r^\alpha})} \leq \delta, \quad
	 \|g\|_{L^\infty(\partial_p\Omega\cap Q_r^{r^\alpha})} \leq \delta r^{1+\alpha}, \quad\text{and}\quad
	 \osc_{Q_r^{r^\alpha}} \partial_p\Omega \leq \delta r^{1+\alpha},
\end{equation*}
then there exists a constant $a\in \mathbb{R}$ such that
\begin{equation*}
	\|u-ax_n\|_{L^\infty(\Omega\cap Q_{r\tau}^{(r\tau)^\alpha})} \leq \eta (r\tau)^{1+\alpha} \quad\text{and}\quad
	|a| \leq C_\star r^\alpha.
\end{equation*}
\end{lemma}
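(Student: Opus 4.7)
The plan is to reduce this lemma to \Cref{lem:gen_1st} via the parabolic rescaling naturally associated with $Q_r^{r^\alpha} = B_r \times (-r^{2-\alpha\gamma}, 0]$. I would introduce
$$
\tilde u(x,t) = \frac{u(rx, r^{2-\alpha\gamma} t)}{r^{1+\alpha}}, \qquad \tilde \Omega = \{(x,t) : (rx, r^{2-\alpha\gamma} t) \in \Omega\},
$$
so that $\tilde u$ lives on $\tilde\Omega \cap Q_1$. Using $Du = r^\alpha D\tilde u$ and $D^2 u = r^{\alpha-1} D^2 \tilde u$, a direct computation shows that $\tilde u$ satisfies
$$
\tilde u_t = |D\tilde u + \tilde p|^\gamma \tilde F(D^2 \tilde u) + \tilde f \text{ in } \tilde\Omega \cap Q_1, \qquad \tilde u = \tilde g \text{ on } \partial_p \tilde\Omega \cap Q_1,
$$
with $\tilde p = r^{-\alpha} p$, $\tilde F(M) = r^{1-\alpha} F(r^{\alpha-1} M)$, $\tilde f(x,t) = r^{1-\alpha-\alpha\gamma} f(rx, r^{2-\alpha\gamma} t)$, and $\tilde g$ rescaled analogously to $\tilde u$. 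The operator $\tilde F$ inherits \ref{F1} and \ref{F2} with the same ellipticity constants.

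Next I would verify the hypotheses of \Cref{lem:gen_1st} for the tilded data. From $|p| \leq r^\alpha$ and $p_n = 0$ we get $|\tilde p| \leq 1$ and $\tilde p_n = 0$; the assumption $\|u\|_{L^\infty} \leq 2 r^{1+\alpha}$ gives $\|\tilde u\|_{L^\infty} \leq 2$; and the bound on $g$ gives $\|\tilde g\|_{L^\infty} \leq \delta$. Since the oscillation of the boundary scales by $r^{-1}$ in the $x_n$-direction, $\osc_{Q_1} \partial_p \tilde\Omega \leq r^{-1} \cdot \delta r^{1+\alpha} = \delta r^\alpha \leq \delta$. Applying \Cref{lem:gen_1st} to $\tilde u$ then yields $\tilde a \in \mathbb{R}$ with $|\tilde a| \leq C_\star$ and $\|\tilde u - \tilde a x_n\|_{L^\infty(\tilde\Omega \cap Q_\tau^{\tau^\alpha})} \leq \eta \tau^{1+\alpha}$. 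Setting $a := r^\alpha \tilde a$ and using that $(x,t) \mapsto (rx, r^{2-\alpha\gamma} t)$ sends $Q_\tau^{\tau^\alpha}$ onto $Q_{r\tau}^{(r\tau)^\alpha}$ (thanks to the consistency $(r\tau)^{2-\alpha\gamma} = r^{2-\alpha\gamma} \tau^{2-\alpha\gamma}$), unwinding the rescaling produces exactly $|a| \leq C_\star r^\alpha$ and $\|u - a x_n\|_{L^\infty(\Omega \cap Q_{r\tau}^{(r\tau)^\alpha})} \leq \eta (r\tau)^{1+\alpha}$.

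The only nontrivial point, and the main obstacle, is the smallness of $\tilde f$. One has $\|\tilde f\|_{L^\infty} \leq r^{1-\alpha-\alpha\gamma} \delta$, so the required bound $\|\tilde f\|_{L^\infty} \leq \delta$ depends on the exponent $1 - \alpha(1+\gamma)$ being nonnegative. This is precisely ensured by the standing hypothesis $\alpha < \bar\alpha < \frac{1}{1+\gamma}$ together with $r \leq 1$, and is the structural reason why $\bar\alpha$ is capped at $1/(1+\gamma)$ in \Cref{thm:c1a_flat}. Beyond this observation, the argument is pure scaling bookkeeping, with no genuinely new analytic input beyond \Cref{lem:gen_1st}.
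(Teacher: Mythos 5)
Your proposal is correct and follows the paper's argument exactly: the same rescaling $\tilde u(x,t)=r^{-(1+\alpha)}u(rx,r^{2-\alpha\gamma}t)$, the same verification of the normalized hypotheses, and the same observation that $1-\alpha(1+\gamma)>0$ (guaranteed by $\alpha<\bar\alpha<\frac{1}{1+\gamma}$) keeps $\tilde f$ small. The unwinding of the scaling to obtain $a=r^\alpha\tilde a$ and the $L^\infty$ bound on $Q_{r\tau}^{(r\tau)^\alpha}$ matches the paper's conclusion.
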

\begin{proof}
Let us consider the function  
\begin{equation*}
 	\tilde{u}(x,t)=\frac{1}{r^{1+\alpha}} u(rx, r^{2-\alpha\gamma}t).
\end{equation*}
Then $\tilde{u}$ is a viscosity solution to 
\begin{equation*} 
\left\{\begin{aligned}
	\tilde{u}_t &= |D\tilde{u} + \tilde{p}|^\gamma \widetilde{F}(D^2 \tilde{u}) + \tilde{f} && \text{in } \widetilde{\Omega}\cap Q_1 \\
	\tilde{u}&=\tilde{g} && \text{on } \partial_p \widetilde{\Omega}\cap Q_1,
\end{aligned}\right.
\end{equation*}
where 
\begin{equation*}
	\tilde{p}=r^{-\alpha}p, \quad
	\tilde{f}(x,t)=r^{1-\alpha\gamma -\alpha}f(rx, r^{2-\alpha\gamma}t), \quad
	\tilde{g}(x,t)=r^{-1-\alpha}g(rx, r^{2-\alpha\gamma}t), 
\end{equation*}
\begin{equation*}
	\widetilde{F}(M)=r^{1-\alpha} F(r^{\alpha-1}M), \quad\text{and}\quad
	\widetilde{\Omega}=\{(x,t)\mid(rx, r^{2-\alpha\gamma}t) \in \Omega \}.
\end{equation*}
Since $\alpha < \frac{1}{1+\gamma}$, we have
\begin{equation*}
	 |\tilde{p}| \leq 1, \quad 
	 \tilde{p}_n=0, \quad
	 \|\tilde{u}\|_{L^\infty(\widetilde{\Omega}\cap Q_1)} \leq 2,
\end{equation*}
\begin{equation*}
	 \|f\|_{L^\infty(\widetilde{\Omega}\cap Q_1)} \leq \delta, \quad
	 \|g\|_{L^\infty(\partial_p\widetilde{\Omega}\cap Q_1)} \leq \delta, \quad\text{and}\quad
	 \osc_{Q_1} \partial_p\widetilde{\Omega} \leq \delta.
\end{equation*}
Therefore, by \Cref{lem:gen_1st}, there exists constant $\tilde{a}\in \mathbb{R}$ such that
\begin{equation*}
	\|\tilde{u}-\tilde{a}x_n\|_{L^\infty(\widetilde{\Omega}\cap Q_\tau^{\tau^\alpha})} \leq \eta \tau^{1+\alpha} \quad\text{and}\quad
	|\tilde{a}| \leq C_\star.
\end{equation*}
Scaling back to $u$, we obtain the desired conclusion.
\end{proof}
The following lemma is obtained by iteratively applying \Cref{lem:gen_next}. The argument naturally divides into two cases depending on whether $p$ is nonzero or not. If $p\neq 0$, then the iteration terminates after finitely many steps, and \Cref{lem:small_bdry} yields the $C^{1,\alpha}$-estimate. On the other hand, if $p=0$, the iteration may continue infinitely, which leads to the $C_\gamma^{1,\alpha}$-regularity.
\begin{lemma}  \label{lem:degenerate}
Assume that $F$ satisfies \textnormal{\ref{F1}}, \textnormal{\ref{F2}}, and $\gamma \ge 0$. For any given $\alpha \in (0,\bar\alpha)$, let $\eta$ be the constant from \Cref{lem:small_bdry}, and let $\delta$ be the constant from \Cref{lem:gen_1st} associated with this $\alpha$ and $\eta$. Let $u$ be a viscosity solution to 
\begin{equation*}  
\left\{\begin{aligned}
	u_t &= |Du + p|^\gamma F(D^2 u)  + f&& \text{in } \Omega\cap Q_1\\
	u&=g && \text{on } \partial_p \Omega\cap Q_1
\end{aligned}\right.
\end{equation*}
with
\begin{equation*}
	 |p| \leq 1, \quad 
	 p_n=0, \quad
	 \|u\|_{L^\infty(\Omega\cap Q_1)} \leq 1, \quad
	\|f\|_{L^\infty( \Omega\cap Q_1)} \leq \delta,
\end{equation*}
\begin{equation} \label{cond:g_bdry}
	 \|g\|_{L^\infty(\partial_p\Omega\cap Q_r^{r^\alpha})} \leq \frac{1}{2}\delta r^{1+\alpha}, \quad\text{and}\quad
	 \osc_{Q_r^{r^\alpha}} \partial_p\Omega \leq \frac{1}{2C_\star}\delta r^{1+\alpha} \quad \text{for all } r \in (0,1].
\end{equation}
Then $u \in C^{1,\alpha}(0,0)$, that is, there exists a constant $a\in \mathbb{R}$ such that
\begin{equation*}
	|u(x,t)-ax_n| \leq C(|x|^{1+\alpha}+|t|^{\frac{1+\alpha}{2}}) \quad \text{for all } (x,t) \in \Omega \cap Q_1
\end{equation*}
and $|a| \leq C$, where $C>0$ is a constant depending only on $n$, $\lambda$, $\Lambda$, $\gamma$, and $\alpha$.
\end{lemma}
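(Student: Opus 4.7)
The strategy is a dyadic iteration in the spirit of the proof of \Cref{lem:small_bdry}, but driven by \Cref{lem:gen_next} rather than \Cref{lem:small_1st}. Fixing $\eta$ small (in particular no larger than the $\eta$ required by \Cref{lem:small_bdry}) and letting $\tau, \delta$ be the corresponding constants from \Cref{lem:gen_1st}, I would construct a sequence $\{a_k\}_{k\ge 0}\subset\mathbb R$ with $a_0=0$ satisfying, for every $k\ge 1$,
\begin{equation*}
\|u - a_k x_n\|_{L^\infty(\Omega \cap Q_{\tau^k}^{\tau^{k\alpha}})} \le \tau^{k(1+\alpha)}
\qquad\text{and}\qquad
|a_k - a_{k-1}| \le C_\star\,\tau^{(k-1)\alpha}.
\end{equation*}
The base step $k=1$ is a direct application of \Cref{lem:gen_next} at $r=1$: the hypotheses $\|u\|_{L^\infty(\Omega\cap Q_1)}\le 1\le 2$, $|p|\le 1$, $p_n=0$, $\|f\|_{L^\infty}\le\delta$, and the $r=1$ instance of \eqref{cond:g_bdry} are all satisfied.

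For the inductive step, assuming the estimates hold up to step $k$, I would rescale by
\begin{equation*}
v(x,t) = \tau^{-k(1+\alpha)}\bigl(u(\tau^k x,\tau^{k(2-\alpha\gamma)}t) - a_k\tau^k x_n\bigr),
\end{equation*}
which satisfies $\|v\|_{L^\infty(\widetilde\Omega \cap Q_1)} \le 1$ and, by a direct calculation, is a viscosity solution of $v_t = |Dv + \tilde p|^\gamma \widetilde F(D^2 v) + \tilde f$ with $\tilde p = \tau^{-k\alpha}(p + a_k e_n)$, $\widetilde F(M)=\tau^{k(1-\alpha)}F(\tau^{k(\alpha-1)}M)$ still obeying \textnormal{\ref{F1}}--\textnormal{\ref{F2}}, and $\tilde f(x,t)=\tau^{k(1-\alpha-\alpha\gamma)}f(\tau^k x,\tau^{k(2-\alpha\gamma)}t)$. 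Because $\gamma\ge 0$ and $\alpha(1+\gamma)<1$, the exponent $1-\alpha-\alpha\gamma$ is nonnegative and hence $\|\tilde f\|_{L^\infty}\le\delta$; moreover the prefactors $1/2$ and $1/(2C_\star)$ in \eqref{cond:g_bdry} are calibrated precisely so that, using the telescoping bound $|a_k|\le C_\star/(1-\tau^\alpha)$, the rescaled boundary data satisfies $\|\tilde g\|_{L^\infty}\le\delta$ and $\osc_{Q_1}\partial_p\widetilde\Omega\le\delta$ (the correction $-a_k x_n$ on the curved boundary contributes at most $|a_k|\cdot\osc\partial_p\Omega$, absorbed by $1/(2C_\star)$).

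The main difficulty is that $\tilde p$ generally has nonzero $n$-component $\tilde p_n = \tau^{-k\alpha}a_k$, which formally violates the hypothesis $p_n=0$ of \Cref{lem:gen_next}. I would overcome this by splitting into two regimes according to $|\tilde p|$. In the degenerate regime $|\tilde p|\le 1$, the compactness proof of \Cref{lem:gen_1st} extends verbatim to bounded $p$ with $p_n\ne 0$: the extracted subsequential limit $\tilde p^\star$ stays bounded, and the substitution $w = \tilde u + \tilde p^\star_n x_n$ preserves the zero Dirichlet data on the flat piece $S_{3/4}$ and reduces the limiting model to the $p_n=0$ case covered by \Cref{thm:c1a_flat}, producing the next slope $a_{k+1}$ with the required bound $|a_{k+1}-a_k|\le C_\star\tau^{k\alpha}$. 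In the uniformly parabolic regime $|\tilde p|>1$, the equation for $v$ is essentially uniformly parabolic near the origin; since $|\tilde p|$ also stays bounded above at the transition (from $|\tilde p_{k-1}|\le 1$ together with the estimate $|\tilde p_k-\tau^{-\alpha}\tilde p_{k-1}| = \tau^{-k\alpha}|a_k-a_{k-1}| \le C_\star\tau^{-\alpha}$), \Cref{lem:small_bdry} applies to $v$ directly---after at most a trivial linear rescaling to normalize $|\tilde p|\in[1/2,2]$---and yields the full $C^{1,\alpha}$-estimate of $v$ at the origin in one shot; scaling back gives the conclusion for $u$. In either case, using $\gamma\ge 0$ so that $Q_{\tau^k}\subset Q_{\tau^k}^{\tau^{k\alpha}}$, the accumulated estimates translate into the claimed bound $|u(x,t)-ax_n|\le C(|x|^{1+\alpha}+|t|^{(1+\alpha)/2})$ on $\Omega\cap Q_1$ with $|a|\le C$, where $a=\lim_{k\to\infty} a_k$ in the degenerate regime, and $a$ is produced by \Cref{lem:small_bdry} in the uniformly parabolic regime.
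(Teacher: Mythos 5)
Your proposal is in the right spirit — a dyadic iteration with a dichotomy between a degenerate regime (keep iterating) and a uniformly parabolic one (finish with \Cref{lem:small_bdry}) — and this is indeed what the paper does. However, the mechanics of your iteration diverge from the paper's in a way that creates genuine technical gaps.

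The crucial difference is that the paper applies \Cref{lem:gen_next} to $u$ \emph{itself} with the original $p$ (hence $p_n = 0$ is preserved at every step), verifying the hypothesis $\|u\|_{L^\infty(\Omega\cap Q_r^{r^\alpha})} \le 2r^{1+\alpha}$ from the inductive bounds $\|u - a_{i-1}x_n\| \le r^{1+\alpha}$ and $|a_{i-1}| \le r^\alpha$. The conclusion of \Cref{lem:gen_next} then furnishes a \emph{fresh} slope with the direct bound $|a_i| \le C_\star\tau^{(i-1)\alpha}$, and the iteration continues only while $|a_k| \le \tau^{k\alpha}$ and $|p| \le \tau^{k\alpha}$. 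In contrast, your rescaling of $u - a_k\tau^k x_n$ immediately produces $\tilde p_k$ with $\tilde p_{k,n} = \tau^{-k\alpha}a_k \ne 0$, which violates the $p_n=0$ hypothesis of \Cref{lem:gen_1st}/\Cref{lem:gen_next} and forces you to re-prove those lemmas for general $p$. This is avoidable, and the paper avoids it.

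Two further points in your write-up are not actually established and would need real work. First, your claim that the compactness proof of \Cref{lem:gen_1st} ``extends verbatim'' to $p_n \ne 0$ is not correct as stated: the substitution $w = \tilde u + \tilde p_n^\star x_n$ changes $\|w\|_{L^\infty}$, hence the constant $C_\star$ in \Cref{thm:c1a_flat}, so the output slope bound is no longer exactly $C_\star$. This matters because the prefactors $\tfrac12$ and $\tfrac{1}{2C_\star}$ in \eqref{cond:g_bdry} are calibrated precisely so that, after subtracting $a_K x_n$ with $|a_K| \le C_\star\tau^{(K-1)\alpha}$, the rescaled boundary data satisfies $\|\tilde g\| \le \delta$; your weaker telescoping bound $|a_k| \le C_\star/(1-\tau^\alpha)$ breaks this calibration. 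Second, the ``trivial linear rescaling'' to normalize $|\tilde p_k|$ into $[1/2,2]$ is not trivial: simply pulling the factor $|\tilde p_k|^\gamma$ out of the gradient term changes the ellipticity constants of $\widetilde F$, and multiplying $v$ by a constant rescales its $L^\infty$ norm above the $\eta$-threshold needed for \Cref{lem:small_bdry}. The paper handles this by choosing the time scaling $\rho^{-\gamma}\tau^{2K}$ with $\rho = |p + a_K e_n|$ (rather than $\tau^{K(2-\alpha\gamma)}$) precisely so that the factor $\rho^\gamma$ is absorbed, yielding $|\tilde\nu D\tilde u + \tilde p|^\gamma \widetilde F(D^2\tilde u)$ with $|\tilde p| = 1$, $\tilde\nu = \rho^{-1}\tau^{K\alpha}\le 1$, and $\widetilde F$ retaining the original ellipticity constants; that is the structural reason \Cref{lem:small_bdry} carries a parameter $\nu\in[0,1]$. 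You should adopt the paper's scheme of applying \Cref{lem:gen_next} directly to $u$ — it eliminates all three complications in one stroke.
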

\begin{proof}
Let $\tau$ be the constant from  \Cref{lem:gen_1st}. Let us first consider the case when $p\neq 0$. We will show that there exist a integer $K \in \mathbb{N}$ and a sequence $\{a_k\}_{k=0}^{K}$ such that 
\begin{equation}\label{est:u-ax}
	\|u-a_kx_n\|_{L^\infty(\Omega \cap Q_{\tau^k}^{\tau^{k\alpha}})} \leq \tau^{k(1+\alpha)},\quad
	|a_k| \leq \tau^{k\alpha}, \quad \text{and} \quad
	|p| \leq \tau^{k\alpha}
\end{equation}
hold for all $k=0,1,\cdots, K-1$, but only the first estimate in \eqref{est:u-ax} holds for $k=K$.

First, if we set $a_0=0$, it is clear that \eqref{est:u-ax} holds for $k=0$. Suppose that \eqref{est:u-ax} hold for $k=  i-1$ and let $r=\tau^{i-1}$. Since
\begin{equation*}
	\|u \|_{L^\infty(\Omega \cap Q_r^{r^\alpha})} \leq \|u -a_{i-1}x_n\|_{L^\infty(\Omega \cap Q_r^{r^\alpha})} + |a_{i-1}| r \leq 2r^{1+\alpha}\quad \text{and}\quad 
	|p| \leq r^{\alpha},
\end{equation*}
it follows from \Cref{lem:gen_next} that there exists a constant $a_i\in \mathbb{R}$ such that
\begin{equation} \label{est:u-ax_a}
	\|u-a_ix_n\|_{L^\infty(\Omega\cap Q_{\tau^i}^{\tau^{i\alpha}})} \leq \eta \tau^{i(1+\alpha)} \quad\text{and}\quad
	|a_i| \leq C_\star \tau^{(i-1)\alpha}.
\end{equation}
If 
\begin{equation} \label{est:ak-p}
	|a_i| \leq \tau^{i\alpha} \quad \text{and} \quad
	|p| \leq \tau^{i\alpha},
\end{equation}
then this process can be continued. Since $p\neq 0$, we can find the smallest integer $K\in \mathbb{N}$ such that \eqref{est:u-ax_a} hold for $i=K$, but ​\eqref{est:ak-p} does not hold for $i=K$. 
  
Now, let us consider the function 
\begin{equation*}
 	\tilde{u}(x,t)=\frac{u(\tau^{K}x, \rho^{-\gamma} \tau^{2K}t)-a_{K}
	\tau^K x_n}{\tau^{K(1+\alpha)}} ,
\end{equation*}
which is a viscosity solution to
\begin{equation*} 
\left\{\begin{aligned}
	\tilde{u}_t &= |\tilde{\nu} D\tilde{u} + \tilde{p}|^\gamma \widetilde{F}(D^2 \tilde{u}) + \tilde{f} && \text{in } \widetilde{\Omega}\cap Q_1 \\
	\tilde{u}&=\tilde{g} && \text{on } \partial_p \widetilde{\Omega}\cap Q_1,
\end{aligned}\right.
\end{equation*}
where 
\begin{equation*}
	\rho=|p+a_{K} e_n| ,\quad
	\tilde{\nu}=\rho^{-1} \tau^{K\alpha}, \quad
	\tilde{p}=\rho^{-1}(p+a_{K} e_n), 
\end{equation*}
\begin{equation*}
	\tilde{f}(x,t)=\rho^{-\gamma}\tau^{K(1-\alpha)}f(\tau^{K}x, \rho^{-\gamma} \tau^{2K}t), \quad
	\tilde{g}(x,t)=\tau^{-K(1+\alpha)}(g(\tau^{K}x, \rho^{-\gamma} \tau^{2K}t)-a_K\tau^K x_n), 
\end{equation*}
\begin{equation*}
	\widetilde{F}(M)=\tau^{K(1-\alpha)} F(\tau^{K(\alpha-1)}M), \quad\text{and}\quad
	\widetilde{\Omega}=\{(x,t)\mid(\tau^{K}x, \rho^{-\gamma} \tau^{2K}t) \in \Omega \}.
\end{equation*}

Since $|p| \leq \tau^{(K-1)\alpha}$, $p_n=0$, and \eqref{est:u-ax_a} hold for $i=K$, but \eqref{est:ak-p} does not hold for $i=K$, we have
\begin{equation*}
	\tau^{K\alpha} \leq  \rho \leq  (1+C_\star) \tau^{(K-1)\alpha},
\end{equation*}
which implies that
\begin{equation*}
	\frac{\tau^\alpha}{1+C_\star} \leq \nu \leq 1 \quad \text{and}  \quad Q_{\tau^K}^\rho \subset Q_{\tau^K}^{\tau^{K\alpha}}.
\end{equation*}
Combining \eqref{cond:g_bdry} and \eqref{est:u-ax_a} for $i=k$ gives 
\begin{equation*}
	\|\tilde{g}\|_{L^\infty(\partial_p \widetilde\Omega \cap Q_1)} \leq \frac{1}{\tau^{K(1+\alpha)}} \left(  \|g\|_{L^\infty(\partial_p\Omega\cap Q_{\tau^K}^{\rho})} + |a_K| \osc_{Q_{\tau^K}^{\rho}} \partial_p\Omega\right) \leq  \delta \leq \eta.
\end{equation*}
From \eqref{cond:g_bdry}, we have $g\in C_\gamma^{1,\alpha}(0,0)$ with the estimate
\begin{equation*}
	|g(x,t)| \leq \frac{1}{2} \delta (|x|^{1+\alpha} + |t|^{\frac{1+\alpha}{2-\alpha\gamma}})\quad \text{for all } (x,t) \in  \partial_p \Omega \cap Q_1
\end{equation*}
and $\partial_p \Omega \in C_\gamma^{1,\alpha}(0,0)$ with the estimate
\begin{equation*}
	|x_n| \leq \frac{1}{2C_\star} \delta (|x|^{1+\alpha} + |t|^{\frac{1+\alpha}{2-\alpha\gamma}})\quad \text{for all } (x,t) \in  \partial_p \Omega \cap Q_1.
\end{equation*}
Hence, we obtain
\begin{equation*}
	|\tilde{g}(x,t)| \leq \delta (|x|^{1+\alpha} + |t|^{\frac{1+\alpha}{2-\alpha\gamma}})\quad \text{for all } (x,t) \in  \partial_p \widetilde{\Omega} \cap Q_1,
\end{equation*}
which implies that
\begin{equation*}
	\tilde{g}(0,0) = 0 = |D\tilde{g}(0,0)|.
\end{equation*}

Moreover, since 
\begin{equation*}
	|\tilde{p}|=1,\quad
	\|\tilde{u}\|_{L^\infty(\widetilde{\Omega}\cap Q_1)} \leq \eta, \quad \text{and} \quad
	\|\tilde{f}\|_{L^\infty(\widetilde{\Omega}\cap Q_1)} \leq 1,
\end{equation*}
it follows from \Cref{lem:small_bdry} that there exists a contant $\tilde{a}\in\mathbb{R}$ such that 
\begin{equation*}
	|\tilde{u}(x,t) - \tilde{a}x_n| \leq C( |x|^{1+\alpha} + |t|^{\frac{1+\alpha}{2}}) \quad \text{for all } (x,t) \in \Omega\cap Q_1
\end{equation*}
and  $|\tilde{a}| \leq C\eta$. Scaling back to $u$, we obtain the desired conclusion.

Next, let us consider the case when $p=0$. In this case, the sequence $\{a_k\}$ satisfying \eqref{est:u-ax} may be either finite or infinite. If $\{a_k\}$ is finite, then $|a_K| > \tau^{K\alpha}$, and the proof proceeds exactly as in the case  $p\neq 0$. Therefore, it suffices to prove the result in the case where $\{a_k\}$ is an infinite sequence. Then, for any $(x,t) \in \Omega \cap Q_1$, there exists $k\in \mathbb{N}$ such that $(x,t) \in \Omega \cap (Q_{\tau^{k-1}}^{\tau^{(k-1)
\alpha}} \setminus Q_{\tau^k}^{\tau^{k\alpha}})$ and hence 
\begin{align*}
	|u(x,t)| & \leq |u(x,t)-a_k x_n| + |a_k x_n|  \leq \tau^{(k-1)(1+\alpha)} + \tau^{(k-1)\alpha} \tau^{(k-1)} \\
	& \leq \frac{2}{\tau^{1+\alpha}} (|x|^{1+\alpha} + |t|^{\frac{1+\alpha}{2-\alpha\gamma}} ) \leq C (|x|^{1+\alpha} + |t|^{\frac{1+\alpha}{2}} ) ,
\end{align*}
which completes the proof.
\end{proof}
We are now in a position to prove \Cref{thm:main_deg}. The proof will be completed by showing, through a suitable reduction, that the assumptions of \Cref{lem:degenerate} are satisfied.
\begin{proof} [Proof of \Cref{thm:main_deg}]
Without loss of generality, We may assume $(x_0,t_0) =(0,0)$, and set the constant $r=1$ in \Cref{def:function} and \Cref{def:domain}.

Since $g \in C_\gamma^{1,\alpha}(0,0)$ and $\partial_p \Omega \in C_{\gamma}^{1,\alpha}(0,0)$, there exist constants $a,b \in \mathbb{R}$ and a vector $p \in \mathbb{R}^n$ such that $p_n=0$,
\begin{equation} \label{est:c1a_g}
	|g(x,t) - a -bx_n - p\cdot x| \leq \|g\|_{C_\gamma^{1,\alpha}(0,0)} (|x|^{1+\alpha} + |t|^{\frac{1+\alpha}{2-\alpha\gamma}}) \quad \text{for all }(x,t)\in \partial_p \Omega \cap Q_1
\end{equation}
and 
\begin{equation} \label{est:c1a_Om}
	|x_n| \leq [\partial_p\Omega]_{C_\gamma^{1,\alpha}(0,0)}  (|x|^{1+\alpha} + |t|^{\frac{1+\alpha}{2-\alpha\gamma}}) \quad \text{for all } (x,t)\in\partial_p \Omega \cap Q_1.
\end{equation}
Since $|b| \leq  \|g\|_{C_\gamma^{1,\alpha}(0,0)}$, combining \eqref{est:c1a_g} and \eqref{est:c1a_Om} yields
\begin{align*}
	|\tilde{g}(x,t)| &\leq |g(x,t) - a -bx_n - p\cdot x| + |bx_n| \\
	& \leq (1+ [\partial_p\Omega]_{C_\gamma^{1,\alpha}(0,0)} ) \|g\|_{C_\gamma^{1,\alpha}(0,0)}  (|x|^{1+\alpha} + |t|^{\frac{1+\alpha}{2-\alpha\gamma}}) \quad \text{for all } (x,t)\in\partial_p \Omega \cap Q_1.
\end{align*}

Let us consider the function 
\begin{equation*}
 	\tilde{u}(x,t)= \frac{ u(r^{-1}x,r^{-2(1+\gamma)}t)- a -  p\cdot (r^{-1}x)}{r} ,
\end{equation*}
which is a viscosity solution to
\begin{equation*}  
\left\{\begin{aligned}
	\tilde{u}_t &= |D\tilde{u} + \tilde{p}|^\gamma \widetilde{F}(D^2 \tilde{u})  + \tilde{f} && \text{in } \widetilde{\Omega}\cap Q_1\\
	\tilde{u}&=\tilde{g}  && \text{on } \partial_p \Omega\cap Q_1,
\end{aligned}\right.
\end{equation*}
where 
\begin{equation*}
	r= \frac{1}{\varepsilon}\left(1+\|u\|_{L^\infty(\Omega)} + \|f\|_{L^\infty(\Omega)} + [\partial_p\Omega]_{C_\gamma^{1,\alpha}(0,0)} + (1+ [\partial_p\Omega]_{C_\gamma^{1,\alpha}(0,0)} ) \|g\|_{C_\gamma^{1,\alpha}(0,0)}\right)
\end{equation*}
\begin{equation*}
	\tilde{f}(x,t) = \frac{f(r^{-1}x,r^{-2(1+\gamma)}t)}{r^{3+2\gamma}},\quad 
	\tilde{g}(x,t)= \frac{g(r^{-1}x,r^{-2(1+\gamma)}t)-a -p\cdot (r^{-1}x)}{r}.
\end{equation*}
\begin{equation*}
	 \widetilde{F}(M)=r^{-3} F(r^3 M), \quad
	\tilde{p}= r^{-2} p, \quad\text{and}\quad 
	\widetilde{\Omega} =\{(x,t) \mid (r^{-1}x,r^{-2(1+\gamma)}t) \in \Omega\},
\end{equation*}
Let $\delta$ be the constant from \Cref{lem:degenerate}. By choosing $\varepsilon$ sufficiently small so that
\begin{align*}
	\|\tilde{f}\|_{L^\infty(\Omega \cap Q_1)}& \leq \frac{ \|f\|_{L^\infty(\Omega)} }{r^{3+2\gamma}} \leq \varepsilon^{3+2\gamma} \leq \delta, \\
	\|\tilde{g}\|_{C_\gamma^{1,\alpha}(0,0)} &\leq \frac{(1+ [\partial_p\Omega]_{C_\gamma^{1,\alpha}(0,0)} )  \|g\|_{C_\gamma^{1,\alpha}(0,0)}}{r} \leq \varepsilon \leq \frac{1}{2} \delta, \\
	 [\partial_p\widetilde{\Omega} ]_{C_\gamma^{1,\alpha}(0,0)} &\leq \frac{1}{r^\alpha} [\partial_p\Omega ]_{C_\gamma^{1,\alpha}(0,0)} \leq \varepsilon^\alpha \leq \frac{1}{2C_\star} \delta,
\end{align*}
all the assumptions of \Cref{lem:degenerate} are satisfied. Hence, by  \Cref{lem:degenerate}, there exists a constant $\tilde{a}\in \mathbb{R}$ such that
\begin{equation*}
	|\tilde{u}(x,t)-\tilde{a}x_n| \leq C(|x|^{1+\alpha}+|t|^{\frac{1+\alpha}{2}}) \quad \text{for all } (x,t) \in \widetilde{\Omega} \cap Q_1
\end{equation*}
and $|\tilde{a}| \leq C$. Scaling back to $u$, we obtain the desired conclusion.
\end{proof}
The following is the singular version of \Cref{lem:degenerate} , and its proof is exactly the same as that of \Cref{lem:degenerate}. Likewise, the proof of \Cref{thm:main_sing} is exactly the same as that of \Cref{thm:main_deg}. For these reasons, we omit the proofs.
\begin{lemma}  \label{lem:singular}
Assume that $F$ satisfies \textnormal{\ref{F1}}, \textnormal{\ref{F2}}, and $\gamma < 0$. For any given $\alpha \in (0,\bar\alpha)$, let $\eta$ be the constant from \Cref{lem:small_bdry}, and let $\delta$ be the constant from \Cref{lem:gen_1st} associated with this $\alpha$ and $\eta$. Let $u$ be a viscosity solution to 
\begin{equation*}  
\left\{\begin{aligned}
	u_t &= |Du + p|^\gamma F(D^2 u)  + f&& \text{in } \Omega\cap Q_1\\
	u&=g && \text{on } \partial_p \Omega\cap Q_1
\end{aligned}\right.
\end{equation*}
with
\begin{equation*}
	 |p| \leq 1, \quad 
	 p_n=0, \quad
	 \|u\|_{L^\infty(\Omega\cap Q_1)} \leq 1, \quad
	\|f\|_{L^\infty( \Omega\cap Q_1)} \leq \delta,
\end{equation*}
\begin{equation*}  
	 \|g\|_{L^\infty(\partial_p\Omega\cap Q_r)} \leq \frac{1}{2}\delta r^{1+\alpha}, \quad\text{and}\quad
	 \osc_{Q_r} \partial_p\Omega \leq \frac{1}{2C_\star}\delta r^{1+\alpha} \quad \text{for all } r \in (0,1].
\end{equation*}
Then $u \in C_\gamma^{1,\alpha}(0,0)$, that is, there exists a constant $a\in \mathbb{R}$ such that
\begin{equation*}
	|u(x,t)-ax_n| \leq C(|x|^{1+\alpha}+|t|^{\frac{1+\alpha}{2-\alpha\gamma}}) \quad \text{for all } (x,t) \in \Omega \cap Q_1
\end{equation*}
and $|a| \leq C$, where $C>0$ is a constant depending only on $n$, $\lambda$, $\Lambda$, $\gamma$, and $\alpha$.
\end{lemma}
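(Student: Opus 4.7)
The plan is to follow the same iterative scheme as in the proof of \Cref{lem:degenerate}, with only minor modifications accounting for the sign of $\gamma$. First I would observe that since $\gamma < 0$ and $r \leq 1$, one has $r^{-\gamma} \leq 1$, so $Q_r^{r^\alpha} = B_r \times (-r^{2-\alpha\gamma}, 0] \subset Q_r$. Consequently, the hypotheses of \Cref{lem:singular} imposed over $Q_r$ imply the same bounds over the smaller cylinder $Q_r^{r^\alpha}$, which is precisely what is needed to invoke \Cref{lem:gen_next} at every scale.

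Starting with $a_0 = 0$, I would apply \Cref{lem:gen_next} inductively at scale $r = \tau^{i-1}$ (where $\tau$ is the constant from \Cref{lem:gen_1st}) to construct a sequence $\{a_k\}$ satisfying
\begin{equation*}
\|u - a_k x_n\|_{L^\infty(\Omega \cap Q_{\tau^k}^{\tau^{k\alpha}})} \leq \tau^{k(1+\alpha)}, \qquad |a_k| \leq \tau^{k\alpha}, \qquad |p| \leq \tau^{k\alpha}.
\end{equation*}
As in the degenerate case, the proof splits according to whether $p \neq 0$ (or $|a_K| > \tau^{K\alpha}$ forces the iteration to terminate at some smallest index $K$) or the sequence is infinite with $p = 0$. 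In the terminating case, I would perform the rescaling
\begin{equation*}
\tilde u(x,t) = \frac{u(\tau^K x,\, \rho^{-\gamma}\tau^{2K} t) - a_K \tau^K x_n}{\tau^{K(1+\alpha)}}, \qquad \rho = |p + a_K e_n|,
\end{equation*}
obtaining an equation with a normalized gradient shift $|\tilde p| = 1$ and effective parameter $\tilde \nu \in [\tau^\alpha/(1+C_\star),\, 1]$. The smallness assumptions of \Cref{lem:small_bdry} follow directly from the iterated estimate, so the small perturbation boundary regularity theorem yields a $C^{1,\alpha}$-estimate for $\tilde u$ with parabolic time scaling $|t|^{(1+\alpha)/2}$. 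Since $\gamma < 0$ implies $(1+\alpha)/2 > (1+\alpha)/(2-\alpha\gamma)$ and $|t| \leq 1$, this majorizes the desired singular scaling, and scaling back to $u$ delivers the conclusion.

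When the sequence $\{a_k\}$ is infinite (necessarily with $p = 0$), I would piece together the scale-by-scale estimates: for any $(x,t) \in \Omega \cap Q_1$, choose $k \in \mathbb{N}$ with $(x,t) \in Q_{\tau^{k-1}}^{\tau^{(k-1)\alpha}} \setminus Q_{\tau^k}^{\tau^{k\alpha}}$, so that $\max\{|x|/\tau^k,\, |t|^{1/(2-\alpha\gamma)}/\tau^k\} > 1$. Combining $|u(x,t) - a_k x_n| \leq \tau^{k(1+\alpha)}$ with $|a_k x_n| \leq \tau^{k\alpha}|x| \leq \tau^{(k-1)(1+\alpha)}$ then gives
\begin{equation*}
|u(x,t)| \leq C\bigl(|x|^{1+\alpha} + |t|^{(1+\alpha)/(2-\alpha\gamma)}\bigr),
\end{equation*}
which is the claimed bound with the linear part $a = 0$. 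The main obstacle, as in the degenerate case, lies in the rescaling argument of the terminating case: one must verify that the transformed operator $\widetilde{F}(M) = \tau^{K(1-\alpha)}F(\tau^{K(\alpha-1)}M)$ still satisfies \ref{F1} and \ref{F2} with uniform constants, and that the transformed data $\tilde f,\tilde g$ and domain $\partial_p\widetilde{\Omega}$ remain small in the relevant norms. The delicate point specific to the singular regime is that the time rescaling factor $\rho^{-\gamma}$ now blows up rather than degenerates as $\rho \to 0$, but the two-sided bounds $\tau^{K\alpha} \leq \rho \leq (1+C_\star)\tau^{(K-1)\alpha}$ keep $\rho^{-\gamma}$ within a fixed interval determined by $\tau$ and $\gamma$, so that $\tilde \nu$ stays uniformly away from zero and bounded above by $1$, preserving all quantitative constants through the rescaling.
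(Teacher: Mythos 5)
Your argument follows the paper's route exactly: the paper itself only remarks that the proof of this lemma is ``exactly the same'' as that of \Cref{lem:degenerate}, and the adaptations you identify — the cylinder inclusion $Q_r^{r^\alpha}\subset Q_r$ for $\gamma<0$, so that the hypotheses over $Q_r$ feed directly into \Cref{lem:gen_next}, together with the iteration/terminating-case dichotomy and the two-sided bound $\tau^\alpha/(1+C_\star)\leq\tilde\nu\leq 1$ — are the correct ones.

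Two small slips are worth flagging, though neither is fatal. First, the sign is reversed in your heuristic remark: for $\gamma<0$ one has $\rho^{-\gamma}=\rho^{|\gamma|}\to 0$ as $\rho\to 0$, i.e.\ the time factor \emph{degenerates}, not ``blows up'' (it blows up in the degenerate regime $\gamma>0$); moreover $\rho^{-\gamma}$ is not confined to a fixed interval as $K$ varies — what is uniformly controlled is $\tilde\nu=\rho^{-1}\tau^{K\alpha}\in[\tau^\alpha/(1+C_\star),1]$, as you do correctly conclude. Second, the ``majorizes'' observation $(1+\alpha)/2>(1+\alpha)/(2-\alpha\gamma)$ for $|t|\leq 1$ compares the exponents on $\tilde u$, but does not by itself justify scaling back: the substitution $t=\rho^\gamma\tau^{-2K}s$ produces a prefactor $\rho^{\gamma(1+\alpha)/2}$ on the time term, and one must invoke $\rho\geq\tau^{K\alpha}$ together with $|s|\leq\rho^{-\gamma}\tau^{2K}$ to see that this prefactor cancels against the $\tau$-powers, giving exactly the $|s|^{(1+\alpha)/(2-\alpha\gamma)}$ bound on $Q_{\tau^K}^\rho$. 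Both points are easy to repair and do not affect the soundness of the overall scheme.
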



%
%


\end{document}